\numberwithin{equation}{section}
\theoremstyle{plain}
\definecolor{orange}{rgb}{1.0,0.3,0}
\newtheorem{theorem}[equation]{Theorem}
\newtheorem{thm}[equation]{Theorem}
\newtheorem{proposition}[equation]{Proposition}
\newtheorem{lemma}[equation]{Lemma}
\newtheorem{corollary}[equation]{Corollary}
\theoremstyle{remark}
\newtheorem{remark}[equation]{Remark}
\theoremstyle{definition}
\newtheorem{definition}[equation]{Definition}
\newtheorem{defn}[equation]{Definition}
\newtheorem{question}[equation]{Question}
\newtheorem*{question*}{Question}
\newtheorem{example}[equation]{Example}
\newcommand{\C}{{\mathcal C}}
\newcommand{\h}{{\mathcal H}}
\renewcommand{\L}{{\mathcal L}}
\newcommand{\N}{\mathbb N}
\newcommand{\R}{\mathbb R}
\newcommand{\calr}{{\mathcal R}}
\newcommand{\Z}{\mathbb Z}
\newcommand{\acts}{\curvearrowright}
\newcommand{\diam}{\operatorname{diam}}
\newcommand{\dist}{\operatorname{dist}}
\newcommand{\gen}{\operatorname{gen}}
\newcommand{\id}{\operatorname{id}}
\newcommand{\im}{\operatorname{Im}}
\newcommand{\Int}{\operatorname{Int}}
\newcommand{\length}{\operatorname{length}}
\newcommand{\Lip}{\operatorname{Lip}}
\newcommand{\LIP}{\operatorname{LIP}}
\newcommand{\Star}{\operatorname{St}}
\newcommand{\al}{\alpha}
\newcommand{\D}{\partial}
\newcommand{\De}{\Delta}
\newcommand{\eps}{\varepsilon}
\newcommand{\ga}{\gamma}
\newcommand{\Ga}{\Gamma}
\newcommand{\la}{\lambda}
\newcommand{\lra}{\longrightarrow}
\newcommand{\on}{\:\mbox{\rule{0.1ex}{1.2ex}\rule{1.1ex}{0.1ex}}\:}
\newcommand{\ra}{\rightarrow}
\newcommand{\restr}{\mbox{\Large \(|\)\normalsize}}
\newcommand{\si}{\sigma}
\newcommand{\ul}{\underline}
\def\Xint#1{\mathchoice
   {\XXint\displaystyle\textstyle{#1}}%
   {\XXint\textstyle\scriptstyle{#1}}%
   {\XXint\scriptstyle\scriptscriptstyle{#1}}%
   {\XXint\scriptscriptstyle\scriptscriptstyle{#1}}%
   \!\int}
\def\XXint#1#2#3{{\setbox0=\hbox{$#1{#2#3}{\int}$}
     \vcenter{\hbox{$#2#3$}}\kern-.5\wd0}}
\def\av{\Xint-}
\def\ccgroup{{\mathbb G}}
\def\horalg{{\mathfrak{h}}}
\DeclareMathOperator\sgn{sgn}
\DeclareMathOperator\spt{spt} 
\def\pcreatedst#1#2{\expandafter\def\csname #1dst\endcsname##1,##2.{#2(##1,##2)}
\expandafter\def\csname #1dstp\endcsname##1.{{#2}_{##1}}
\expandafter\def\csname #1dname\endcsname{#2}} 
\def\pcreatenrm#1#2{\expandafter\def\csname
  #1nrm\endcsname##1.{\left\|##1\right\|_{#2}} \expandafter\def\csname
  #1nrmname\endcsname{\left\|\,\cdot\,\right\|_{#2}}} 
\def\albrep#1.{{\mathcal A}_{#1}} 
\def\mpush#1.{{#1}_{\sharp}} 
\def\tnorm#1.{{{\left\|#1\right\|_{\Lip^*}}}}
\def\ptnorm#1,#2.{{{\left\|#1\right\|_{#2,\Lip^*}}}}
\def\cotnorm#1.{{{\left\|#1\right\|_{\Lip}}}}
\def\lebmeas#1.{\setbox1=\hbox{$#1$\unskip}{\mathcal L}^{\ifdim\wd1>0pt
    #1 \else 1 \fi}} 
\def\glip#1.{{\bf L}(#1)} 
\def\lipalg#1.{{\rm Lip}_{\text{\normalfont b}}(#1)} 
\def\lipfun#1.{{\rm Lip}(#1)} 
\def\lipalgb#1,#2.{{\rm Lip}_{\text{\normalfont b},#2}(#1)}
\def\ulipalg#1.{{\rm Lip}_1(#1)} 
\def\ball#1,#2.{B(#1,#2)} 
\def\clball#1,#2.{\bar B(#1,#2)} 
\def\dst#1,#2.{d(#1,#2)} 
\def\dstp#1.{d_{#1}} 
\def\hmeas#1.{\mathscr{H}^{\setbox0=\hbox{$#1\unskip$}\ifdim\wd0=0pt 1
    \else #1\fi}} 
\def\rhmeas#1.{\mathscr{H}^{\setbox0=\hbox{$#1\unskip$}\ifdim\wd0=0pt 1
    \else #1\fi}_{\rdname}} 
\def\natural{{\mathbb N}} 
\let\zahlen=\integers
\def\real{{\mathbb R}} 
\def\cbb#1.{\text{\normalfont CBB}({\setbox0=\hbox{$#1\unskip$}\ifdim\wd0=0pt 0
    \else #1\fi})}              
\def\cba#1.{\text{\normalfont CBA}({\setbox0=\hbox{$#1\unskip$}\ifdim\wd0=0pt 0
    \else #1\fi})}              
\def\cat#1.{\text{\normalfont CAT}({\setbox0=\hbox{$#1\unskip$}\ifdim\wd0=0pt 0
    \else #1\fi})}              
\def\dset{D_X}
\def\dirdst#1,#2,#3.{\setbox1=\hbox{$#1$\unskip}\setbox2=\hbox{$#2$\unskip}\setbox3=\hbox{$#3$\unskip}
{\hat d_{\ifdim\wd3>0pt
    #3\else\infty\fi}}{\ifdim\wd1>0pt\left(#1,{\ifdim\wd2>0pt #2 \else\cdot\fi}\right)\fi}}
\def\dirprj#1,#2,#3.{\setbox1=\hbox{$#1$\unskip}\setbox2=\hbox{$#2$\unskip}\setbox3=\hbox{$#3$\unskip}
    {\ifdim\wd3>0pt (\fi}{\hat\pi_{\ifdim\wd1>0pt #1 \else
        \fi}^{\ifdim\wd2>0pt #2\else\infty\fi}} {\ifdim\wd3>0pt
      )^{-1}\fi}}
  \def\ndirprj#1,#2,#3.{\setbox1=\hbox{$#1$\unskip}\setbox2=\hbox{$#2$\unskip}\setbox3=\hbox{$#3$\unskip}
    {\ifdim\wd3>0pt (\fi}{\pi_{\ifdim\wd1>0pt #1 \else
        0\fi}^{\ifdim\wd2>0pt #2\else\infty\fi}} {\ifdim\wd3>0pt
      )^{-1}\fi}}
    \def\nbdirprj#1,#2,#3.{\setbox1=\hbox{$#1$\unskip}\setbox2=\hbox{$#2$\unskip}\setbox3=\hbox{$#3$\unskip}
    {\ifdim\wd3>0pt (\fi}{\bar\pi_{\ifdim\wd1>0pt #1 \else
        0\fi}^{\ifdim\wd2>0pt #2\else\infty\fi}} {\ifdim\wd3>0pt
      )^{-1}\fi}}
  \def\zahlen{{\mathbb Z}} 
    \def\zset#1,#2.{J(#1,#2)}
  \def\infzset#1,#2.{\underline J(#1,#2)}
\def\hmeas#1.{\mathscr{H}^{\setbox0=\hbox{$#1\unskip$}\ifdim\wd0=0pt Q
    \else #1\fi}} 
\begin{document}

\title[PI spaces with analytic dimension 1]{PI spaces with analytic dimension $1$ and arbitrary topological dimension}
\author{Bruce Kleiner}
\author{Andrea Schioppa}
\thanks{The first author was supported by a Simons Fellowship and NSF grant
DMS-1405899.
The second author was supported by the ``ETH Zurich Postdoctoral Fellowship Program and the Marie Curie Actions
for People COFUND Program''}
\date{\today}
\maketitle

\begin{abstract}
For every $n$, we construct
 a  metric measure space that is doubling,  satisfies a 
Poincare inequality in the sense of Heinonen-Koskela, has   topological
dimension $n$, and has a measurable tangent bundle of dimension $1$.
\end{abstract}

\tableofcontents

\section{Introduction}
Since they were introduced in \cite{heinonen_koskela}, 
PI spaces (metric measure
spaces that are doubling and satisfy a Poincar\'e inequality) have been
investigated extensively, leading to  progress in many 
directions.
In spite of this, there remains a gap between the structural 
constraints imposed by the existing
theory  and the properties exhibited by
known examples.  On the one hand existing examples come from a 
variety of different sources:
\begin{enumerate}
\item
Sub-Riemannian manifolds.
\item Limits of sequences of Riemannian manifolds with a lower 
bound on the Ricci curvature \cite{cheeger_colding_ricci3}.
\item Metric measure spaces satisfying 
  synthetic Ricci curvature conditions \cite{rajala}.
\item  Certain Ahlfors-regular topological
manifolds \cite{semmes}.
\item Examples with ``small'' singular sets 
\cite{laakso_a_infty,semmes_no_good_parametrizations}.
\item Boundaries of hyperbolic groups \cite{bourpaj}.
\item Quotient constructions \cite{laakso}.
\item Inverse limit constructions \cite{piex}.
\item Spaces obtained from the above by taking products, 
gluing \cite[Thm 6.15]{heinonen_koskela},  or passing to 
nice subsets  \cite{mackay_tyson_wildrick}. 
\end{enumerate}
On the other hand, in some respects this list is somewhat limited.  For instance,
the examples in  (2), (4), (5) are rectifiable, and  many in (3) are known to be
rectifiable \cite{mondino_naber,gigli,gigli_mondino_rajala}, while those
in (6), (7) and (8) admit a common description as  limits of inverse systems  
and have very similar properties.  
Moreover, if $(X,\mu)$ denotes one of  the above examples,
then the infinitesimal structure of $(X,\mu)$ has a special
form, in the sense that when one blows-up  $(X,\mu)$ at $\mu$-a.e. point, one
gets a metric measure space that
is bilipschitz equivalent to the product of a Carnot group with an example as in (8).

Our purpose in this paper is to  
construct PI spaces that have different characteristics from previously
known examples.
Before stating our theorem, we recall that any PI space $(X,\mu)$
has a measurable (co)tangent bundle \cite{cheeger}; we will refer to its
dimension as the {\em analytic dimension} of $(X,\mu)$.

\begin{theorem}
\label{thm_top_dim_n_anal_dim_1}
For every $n$, there is a complete self-similar PI space $(X_\infty,\mu_\infty)$ with
analytic dimension $1$ and topological dimension $n$.
Furthermore, 
for some $\al\in (0,1)$:
\begin{enumerate}
\item  
There is a surjective
David-Semmes regular map $\hat\pi^\infty:(\R^n,d_\al)\ra X_\infty$,
where $d_\al$ is the partial snowflake metric on $\R^n$ given by 
$$d_\al(p,p')=|p_1-p_1'|+\sum_{i=2}^n|p_i-p_i'|^\al\,.$$
In particular, 
letting $\L^n$ denote Lebesgue measure,
 for $Q=1+(n-1)\al^{-1}$,
the pushforward measure 
  $\mu_\infty=\hat\pi^\infty_{\#}(\lebmeas n.)$
is comparable to $Q$-dimensional Hausdorff measure $\h^Q$ on $X_\infty$, and 
$X_\infty$ is Ahlfors $Q$-regular.
\item $X_\infty$ has topological and Assouad-Nagata dimension $n$
(see Section \ref{sec_assouad_nagata}).
\item $(X_\infty,\h^Q)$ satisfies a $(1,1)$-Poincar\'e inequality
   (see
  Section~\ref{sec_poincare_inequality}).
\item 
\label{item_analytic_dimension_1}
$(X_\infty,\h^Q)$ has  analytic dimension $1$: there is a single Lipschitz
function $x_\infty:X_\infty\ra\R$ that is a chart on all of $X_\infty$, i.e. it defines
the measurable differentiable structure for $(X_\infty,\h^Q)$
 (see Section~\ref{sec:anal_dim}).
\item 
\label{item_horizontal_a_rep}
Let $\hat\Ga$ be the family of horizontal lines in 
$\R^n$, equipped with the obvious measure.
Then the pushforward of $\hat\Ga$ under the map 
$\hat\pi^\infty:\R^n\ra X_\infty$ gives a
universal Alberti representation in the sense of
\cite{bate} for $(X_\infty,\mu_\infty)$  (see Section~\ref{sec:anal_dim}).
\item 
\label{item_ditto_for_limits}
If $\{p_k\}\subset X_\infty$, $\la_k\subset (0,\infty)$ are arbitrary 
sequences, and $(Z,z)$ is a pointed Gromov-Hausdorff limit of the 
sequence 
$\{(\la_kX_\infty,p_k)\}$ of pointed rescalings of $X_\infty$, then (2)-(4) hold for
$(Z,z)$.   Moreover, there is a collection of at most $N=N(n)$
David-Semmes regular
maps $(\R^n,d_\al)\ra X_\infty$ whose images cover $X_\infty$. 
\end{enumerate}
\end{theorem}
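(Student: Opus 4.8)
The plan is to build $X_\infty$ as a self-similar PI space of Cheeger--Kleiner inverse-limit type \cite{piex}, designed so that it unfolds onto the partial snowflake: there is to be a surjective quotient map $\hat\pi^\infty\colon(\R^n,d_\al)\to X_\infty$ which collapses a self-similar family of Laakso-type ``wormhole'' identifications \cite{laakso} that act only in the snowflaked coordinate directions $x_2,\dots,x_n$ and fix the $x_1$-coordinate. Concretely, I would fix a large integer scale factor $\la$ and a finite wormhole pattern on the unit box, iterate it down the scales using the anisotropic dilation $(p_1,\dots,p_n)\mapsto(\la p_1,\la^{1/\al}p_2,\dots,\la^{1/\al}p_n)$, and take $X_\infty$ to be the inverse limit of the resulting tower of approximations --- which one can also describe as the quotient of $(\R^n,d_\al)$ by the full family of identifications, equipped with the associated quotient length metric. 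Self-similarity is automatic since the same pattern is used at every scale, and I set $\mu_\infty:=\hat\pi^\infty_{\#}\mathcal L^n$.

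I would then verify that, from the bounded geometry of the wormhole pattern, $X_\infty$ is complete, doubling, and Ahlfors $Q$-regular with $Q=1+(n-1)\al^{-1}$ --- the Hausdorff dimension of $(\R^n,d_\al)$ --- so that $\mu_\infty\asymp\h^Q$, and that $\hat\pi^\infty$ is David--Semmes regular, the key point being that the finiteness of the identification pattern forces the preimage of any ball to meet only boundedly many scale-comparable boxes of $(\R^n,d_\al)$. This is (1). The Poincar\'e inequality (3) is where the pattern must be chosen with care: it must be rich enough in the snowflaked directions that $X_\infty$ carries an abundant supply of Laakso-type curves --- curves built by concatenating horizontal segments across the wormholes, so that at every scale they run in the $x_1$-direction of the local box while drifting transversally --- in the quantity demanded by the inverse-limit Poincar\'e criterion of \cite{piex}. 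Property (5) is then the statement that the simpler family obtained by pushing the horizontal lines $\hat\Ga$ of $(\R^n,d_\al)$ forward under $\hat\pi^\infty$ is itself an Alberti representation of $\mu_\infty$, of unit metric speed in the direction of the $1$-Lipschitz function $x_\infty\colon X_\infty\to\R$ induced by $x_1$ (it descends precisely because the identifications fix $x_1$), and that this representation is \emph{universal} in the sense of \cite{bate}: for every Lipschitz $f\colon X_\infty\to\R$ the pointwise Lipschitz constant $\lip f$ coincides $\mu_\infty$-a.e.\ with the metric speed of $f$ along the curves of the family, the heuristic being that the snowflaked directions carry no rectifiable curves, so all of $f$'s ``fast variation'' happens along horizontal fragments.

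From universality, (4) follows by \cite{bate,cheeger}: it forces the measurable cotangent bundle to have rank $1$, spanned $\mu_\infty$-a.e.\ by $dx_\infty$, so $x_\infty$ is a global chart and the analytic dimension is $1$. For (2), the lower bound $\dim_{\mathrm{top}}X_\infty\ge n$ follows from the Hurewicz theorem, since the restriction of $\hat\pi^\infty$ to the compact space $([0,1]^n,d_\al)$ --- which has topological dimension $n$ --- is a closed surjection onto its image with finite fibers and hence cannot lower the topological dimension; together with $\dim_{\mathrm{top}}\le\dim_{\mathrm{AN}}$ this also gives the Assouad--Nagata lower bound, while the matching upper bound $\dim_{\mathrm{AN}}X_\infty\le n$ comes from covers of $X_\infty$ at each scale by pushforwards of boxes adapted to the partial-snowflake structure, whose combinatorics is that of $\R^n$. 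Finally (6): every estimate above is scale-invariant with constants depending only on $n$ and the construction is self-similar, so any pointed Gromov--Hausdorff limit $(Z,z)$ of rescalings $\la_kX_\infty$ is again of inverse-limit type with the same combinatorial data; properties (2)--(4) persist under Gromov--Hausdorff convergence with unchanged constants, and a compactness argument over the family of rescalings gives the uniform bound $N=N(n)$ on the number of David--Semmes regular charts from $(\R^n,d_\al)$ needed to cover $X_\infty$ (and likewise $Z$).

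The step I expect to be the main obstacle is the design of the wormhole pattern, and specifically the tension running through the two middle paragraphs: it must be simultaneously dense enough in the snowflaked directions to force the Poincar\'e inequality, yet sparse enough that $\hat\pi^\infty$ stays light, so that the topological dimension remains exactly $n$ --- all the while being self-similar and compatible with the partial-snowflake dilations. That $X_\infty$ is then differentiably one-dimensional is, by contrast, the expected behavior of Laakso-type inverse limits, forced here by the non-rectifiability of the snowflaked directions.
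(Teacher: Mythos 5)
Your proposal is correct in broad strokes and follows essentially the same strategy as the paper: a self-similar family of vertical ``wormhole'' identifications on $(\R^n,d_\al)$ that fix the $x_1$-coordinate, yielding a quotient space carrying horizontal geodesics along which all differentiation happens, and then David--Semmes regularity, Semmes pencils for the Poincar\'e inequality, universal Alberti representations for analytic dimension $1$, light-map/Hurewicz for the lower dimension bound, explicit covers for the Assouad--Nagata upper bound, and compactness of the construction under rescaling for the weak tangents.

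Two points deserve comment. First, you frame the construction as an ``inverse limit'' in the sense of \cite{piex}. This is the wrong direction: the space is a \emph{direct} limit of quotients $X_j=\R^n/\calr_j$, equivalently a quotient of $(\R^n,d_\al)$. This is more than a slip of terminology, because the paper explicitly remarks that the na\"{\i}ve anisotropic variant of the inverse-limit construction of \cite[Sec.~11]{piex} produces a Lipschitz light map from $X_\infty$ onto the snowflaked cube, which blocks any Poincar\'e inequality; the whole point of going to a direct limit (collapsing rather than resolving) is to avoid this obstruction. You do also give the correct quotient description and proceed from it, so the plan is salvageable, but the framing as stated would steer you toward the construction the authors specifically discarded. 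Second, you identify the main obstacle as the tension between a pattern dense enough for Poincar\'e and sparse enough for lightness. That tension is real, but the single hardest technical step in the paper is something you do not flag: for a quotient construction one must rule out catastrophic metric collapse. The paper's key estimate is the compression bound $\hat d_j(p,p')-2m^{-j}\le \hat d_\infty(\pi_j^\infty(p),\pi_j^\infty(p'))\le \hat d_j(p,p')$ (Proposition~\ref{prop_distance_lower_bound}), proved by a delicate chain-reduction argument that crucially uses $m_v\gg m$; it is this bound that makes $\hat d_\infty$ a genuine metric, gives Gromov--Hausdorff convergence $X_j\to X_\infty$, and underlies David--Semmes regularity and the covers used for the Assouad--Nagata bound. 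Any execution of your proposal would have to confront and prove this estimate, and I would have expected it, rather than the density/sparsity tension, to be named as the main obstacle.
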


For comparison, we note that all
the previously known examples with analytic dimension $1$ 
have topological dimension $1$ (see \cite{bourpaj,laakso,piex}). 

We refer the reader to Section \ref{sec_overview}
for an overview of the proof of Theorem \ref{thm_top_dim_n_anal_dim_1},
and to Section \ref{sec_generalizations} for some generalizations.

We now pose some questions 
concerning the relation between the topological  and the analytical structure
of PI spaces.

The  examples  in Theorem \ref{thm_top_dim_n_anal_dim_1}
have small analytic dimension and arbitrarily large topological dimension. 
 One may ask if the topological
dimension can be small while the analytic dimension is large.  This is not
an interesting question, though:  there are compact subsets
$X\subset [0,1]^n$ with positive Lebesgue measure such that the 
metric measure space $(X,\L^n)$ is a PI space 
with analytic dimension $n$ 
and topological dimension $1$ (see \cite{mackay_tyson_wildrick}
for the $n=2$ case).  
Nonetheless, such examples are rectifiable and look on the small scale like $\R^n$
itself, in the sense that typical blow-ups are copies of 
$\R^n$.  This motivates the following revised version of the above question:
\begin{question}
\label{ques_analytic_dim_n}
Pick $n\geq 2$.  Is there a  PI space of analytic dimension $n$
and   Assouad-Nagata dimension $1$?
\end{question}
\noindent
The
Assouad-Nagata dimension is a  notion  that is metric based, 
scale invariant,   well behaved
with respect to Gromov-Hausdorff limits, and is bounded below by the 
topological dimension (see Section \ref{sec_assouad_nagata});    
in particular, the rectifiable examples mentioned above 
have Assouad-Nagata dimension $n$. 
Rather than using Assouad-Nagata dimension in Question \ref{ques_analytic_dim_n}, 
one could require instead 
that every blow-up (i.e. weak tangent) of $X$ has topological dimension $1$.

The spaces in Theorem \ref{thm_top_dim_n_anal_dim_1} have complicated local topology.
One may wonder if there are examples with similar properties that are topological
manifolds:
\begin{question}
Pick $n\geq 2$.  Suppose  $(X,\mu)$ is a PI space homeomorphic to $\R^n$, such
that 
any pointed Gromov-Hausdorff limit of any sequence of
pointed rescalings of $X$ is also homeomorphic to $\R^n$.
What are the possibilities for the analytic dimension of $(X,\mu)$?  The 
Hausdorff dimension?
For instance,
is there  such a PI space homeomorphic to $\R^3$ (or even $\R^2$)
with analytic dimension $1$?
Or one which is Ahlfors $Q$-regular for large $Q$?
\end{question}

\subsection*{Organization of the paper}
In Section \ref{sec_overview} we define the metric space $X_\infty$ in the 
$n=2$ case and then
discuss some of the key points in
the proof of Theorem \ref{thm_top_dim_n_anal_dim_1}.
The $n=2$ case of Theorem \ref{thm_top_dim_n_anal_dim_1} is proven in  
Sections \ref{sec_cell_structure}-\ref{sec_n_equals_2_case_concluded}, and the case
of general $n$ is treated in Section \ref{sec_n_dim_case}.
In Section \ref{sec_generalizations} we consider a more general class of 
examples of direct systems that have many of the same features.

\section{Overview of the proof of Theorem \ref{thm_top_dim_n_anal_dim_1}}
\label{sec_overview}

In this section we define $X_\infty$ and related objects in the $n=2$ case.  We also 
have an informal discussion of  the
proof of Theorem \ref{thm_top_dim_n_anal_dim_1}.  The proof itself appears
in  Sections 
\ref{sec_cell_structure}-\ref{sec_n_dim_case}.
\par {\bf Standing assumptions:}
The objects and notation introduced in this section will be retained up through Section 
\ref{sec_n_equals_2_case_concluded}.

\subsection*{A combinatorial description of  partial snowflake metrics on 
$\R^2$} Pick integers $m,m_v$ with $2\leq m<m_v$.  

For $j\in\Z$, let $Y_j$ be the cell complex associated with the tiling
of $\R^2$ by the translates of the rectangle $[0,m^{-j}]\times [0,m_v^{-j}]$. Thus 
the translation group
$(m^{-j}\Z)\times(m_v^{-j}\Z)$ acts by cellular isomorphisms on $Y_j$.
Given $k\geq 0$, we may view $Y_{j+k}$ as a $k$-fold iterated 
subdivision of $Y_j$, where 
at each iteration the $2$-cells are
subdivided $m$ times in the horizontal direction
and $m_v$ times in the vertical direction.
Let $\Phi:\R^2\ra\R^2$ be the linear transformation 
$\Phi((x,y))=(m^{-1}x,m_v^{-1}y)$.  Then $\Phi^k:\R^2\ra\R^2$ induces a cellular 
isomorphism $Y_j\ra Y_{j+k}$ for all $j,k\in\Z$.

We now define a metric on $\R^2$ based on the combinatorial structure of
the $Y_j$'s.  Let
 $\hat d_\infty^Y$ be the largest pseudodistance on $\R^2$ with the property
that for every $j$, each cell of $Y_j$ has $\hat d_\infty^Y$-diameter at most
$m^{-j}$.  One readily checks (see Lemma \ref{lem_hat_d_y_partial_snowflake}) that
$\hat d_\infty^Y$ is comparable to the partial snowflake metric 
$$
d_\al((p_1,p_2),(p_1',p_2'))=|p_1-p_1'|+|p_2-p_2'|^\al\,,
$$
where $\al=\frac{\log m}{\log m_v}$.

\subsection*{The definition of $X_\infty$}
We will define the space $X_\infty$ as a quotient of $(\R^2,\hat d_\infty^Y)$, 
where the quotient is generated by certain
identifications that respect the $x$-coordinate.    
One may compare this with Laakso's construction
of PI spaces as quotients of the product $[0,1]\times C$, where $C$ is a Cantor
set \cite{laakso}.    
Henceforth we will call
a  $1$-cell of $Y_j$  {\bf horizontal} (respectively {\bf vertical}) if it is a 
translate of
$[0,m^{-j}]\times \{0\}$ (respectively $\{0\}\times[0,m_v^{-j}]$).

Choose a large integer $L$ (e.g. $L=100$), and set $m=4$ and $m_v=3L$.  

For all $k,\ell\in \Z$, $i\in \{1,2,3\}$, we define the following pair of
vertical $1$-cells of $Y_1$ (see Figure \ref{fig_modified_gluing}):
 $$
a_{k,l,i}=\left\{\frac{i}{4}+k\right\}\times \left[(3\ell+i-1)m_v^{-1},(3\ell+i)m_v^{-1}\right]\,,
$$
$$ 
a'_{k,l,i}=\left\{\frac{i}{4}+k\right\}\times \left[(3\ell+i)m_v^{-1},(3\ell+i+1)m_v^{-1}\right],
$$
Note that
$a_{k,\ell,i}'$ is the image of $a_{k,\ell,i}$ under the vertical
translation $(x,y)\mapsto (x,y+m_v^{-1})$. 
The collections $\{a_{k,\ell,i}\mid k,\ell\in\Z,\; 1\leq i\leq 3\}$, 
$\{a_{k,\ell,i}'\mid k,\ell\in\Z,\;1\leq i\leq 3\}$ are invariant
under translation by $\Z^2$ and  are contained in the union of vertical lines
$\{(x,y)\in\R^2\mid x\in \frac{1}{4}\Z\setminus \Z\}$.  
\begin{figure}[htb!] 

\begin{center}  
\includegraphics[scale=.4]{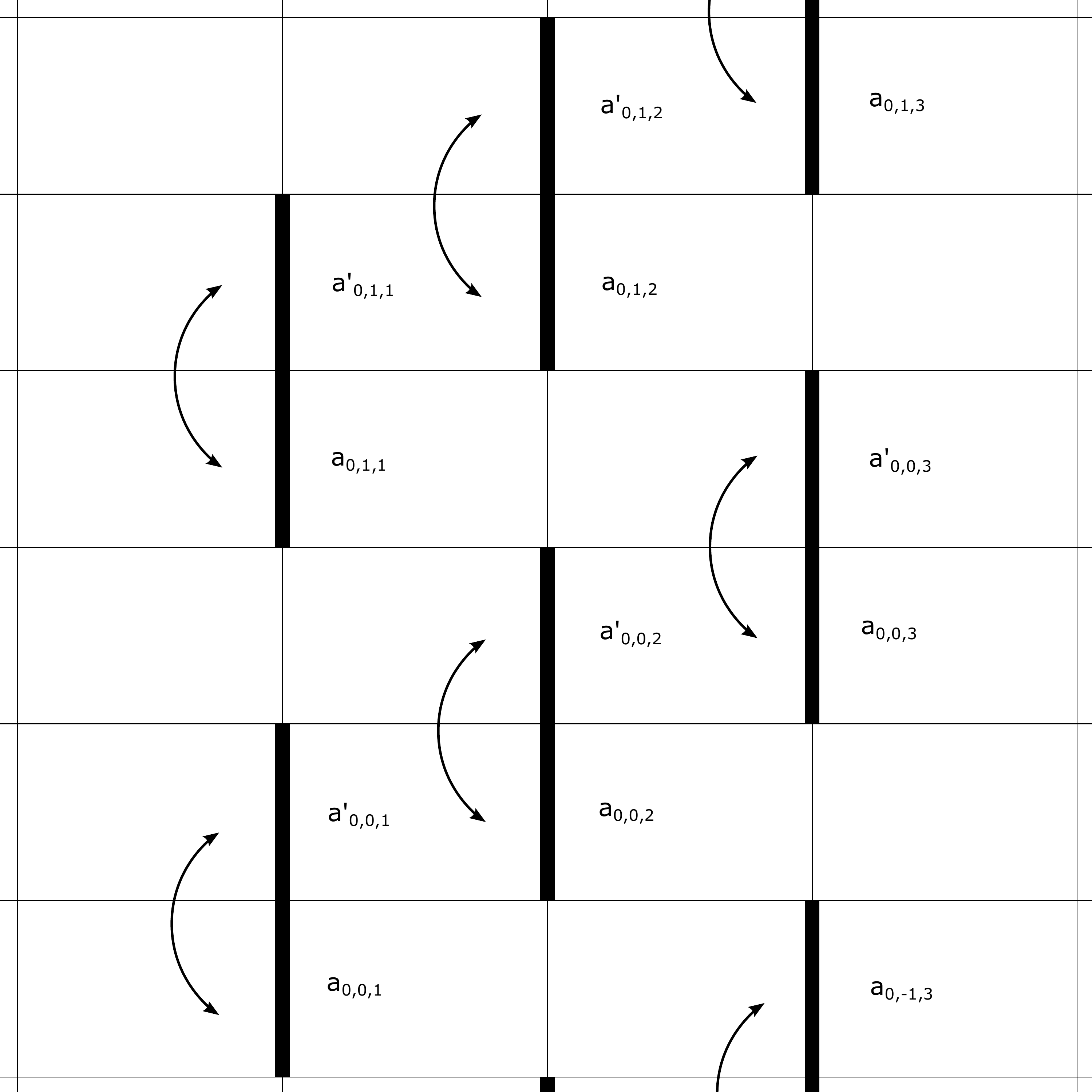} 
\caption{\label{fig_modified_gluing}Figure \ref{fig_modified_gluing}: $L=2$ case}
\end{center}

\end{figure}

Next, we define an equivalence relation
$\calr$ on $\R^2$ by identifying 
$a_{k,\ell,i}$ with $a_{k,\ell,i}'$ by the vertical translation 
$(x,y)\mapsto (x,y+m_v^{-1})$ for all $k,\ell\in \Z$, $1\leq i\leq 3$.
 Note that $\calr$ is invariant under translation by $\Z^2$.  

 Let $\calr_\infty$ be the equivalence relation on $\R^2$
generated by 
the collection of
pushforwards $\Phi^{j}_*\calr$ for all $j\in\Z$.  
We define $X_\infty$ 
to be the
quotient $\R^2/\calr_\infty$, and let $\hat\pi^\infty:\R^2\ra X_\infty$ be the quotient
map.
We metrize $X_\infty$ using the largest pseudodistance $\hat d_\infty^X$ 
on $X_\infty$ such that for every $j\in\Z$ and every
$2$-cell $\hat\si$ of $Y_j$, the  projection $\hat\pi^\infty(\hat\si)\subset X_\infty$
has $\hat d_\infty^X$-diameter at most 
$m^{-j}$.  It is not hard to see that $\hat d_\infty^X$ is the largest pseudodistance
on $X_\infty$ such that 
$\hat\pi^\infty:(\R^2,\hat d_\infty^Y)\ra (X_\infty,\hat d_\infty^X)$ 
is $1$-Lipschitz.  Henceforth we use $\hat d_\infty$ instead of 
$\hat d_\infty^X$ when there is no risk of confusion.

\subsection*{$X_\infty$ as a direct limit of cell complexes}
While the definition of $X_\infty$ as a quotient  $\R^2/\calr_\infty$ 
is transparent, it does not provide a convenient framework for understanding
the structure of $X_\infty$.  Instead, we will
analyze $X_\infty$ by representing it as a direct limit.  

For every $j\in\Z$, let $\calr_j$ be the equivalence relation on 
$\R^2$ generated by $\Phi^i_*\calr$ for all $i\in \Z$ with $i < j$, 
 and let  
$X_j$ be the quotient $\R^2/\calr_{j}$ equipped with quotient topology. 
Since $\calr_{j-1}\subset \calr_j$,
the quotient maps induce a direct system of topological spaces
$$
\ldots\stackrel{\pi_{-1}}{\lra}
X_0\stackrel{\pi_0}{\lra}X_1\stackrel{\pi_1}{\lra}\ldots
\stackrel{\pi_{j-1}}{\lra}X_j\stackrel{\pi_j}{\lra}\ldots
$$
For all $i\in \Z$, $j\in \Z\cup\{\infty\}$
with  $i\leq j$,  we denote the projection map $X_i\ra X_j$  by $\pi_i^j$,
the quotient map $\R^2\ra X_j$ by $\hat\pi^j$, and the
composition $\pi_i^j\circ\hat\pi^i: Y_i\ra X_j$
by $\hat \pi_i^j$. 

We metrize $X_j$ using
the  largest pseudodistance $\hat d_j$ on $X_j$ such that 
for every 
$i\leq j$ and every cell $\hat\si$ of $Y_i$, the image $\hat\pi^j(\hat\si)\subset X_j$
has $\hat d_j$-diameter at most $m^{-i}$.

\begin{remark}
Our
examples were partly inspired by \cite[Sec. 11]{piex}, which
gives  a construction of  PI spaces of
 topological and analytic
dimension $n$.  In fact, $X_\infty$
arose when we attempted to find an ``anisotropic'' variant of the cube
complex examples of \cite[Sec. 11]{piex}.  
However, this leads to a situation where the projection map
$\pi_0^\infty: X_\infty\ra [0,1]^n$ 
is Lipschitz with respect to the 
partial snowflake metric on $[0,1]^n$, and is moreover a light map.
This is incompatible with
the existence of a Poincar\'e inequality in $X_\infty$. 
\end{remark}

\subsection*{Discussion of the proof}
We now give an indication of some of the key points in the proof of 
Theorem \ref{thm_top_dim_n_anal_dim_1}.

The first part of the proof, which appears
in Sections \ref{sec_cell_structure}-\ref{sec_metric_structure},
develops the combinatorial and metric structure of the direct system
$\{X_j\}$.  
Because the equivalence relation $\calr_j$ may be generated by 
identifying pairs of 
vertical $1$-cells of the cell complex $Y_j$ by vertical translations,
the cell structure of $Y_j$ descends to a cell structure on $X_j$.  
This cell complex has controlled combinatorics;
because the distance $\hat d_j$ is defined  combinatorially, this
implies that $(X_j,\hat d_j)$ is doubling at 
the scale $m^{-j}$.  A key estimate 
(Proposition \ref{prop_distance_lower_bound}) 
compares $(X_j,\hat d_j)$ with $(X_\infty,\hat d_\infty)$
is that for every $p,p'\in X_j$ one
has 
\begin{equation}
\label{eqn_compression_overview}
\hat d_j(p,p')-2m^{-j}\leq \hat d_\infty(\hat\pi_j^\infty(p),\hat \pi_j^\infty(p'))
\leq \hat d_j(p,p')\,.
\end{equation}
In particular, this implies that
the sequence $\{X_j\}$ Gromov-Hausdorff converges to $X_\infty$.  

With the foundation laid in Sections \ref{sec_cell_structure}-\ref{sec_metric_structure},
several parts of Theorem \ref{thm_top_dim_n_anal_dim_1} follow fairly
easily:  
\begin{itemize}
\item By a 
short argument
(Lemma \ref{lem_david_semmes_regular_map}), 
one deduces   the David-Semmes regularity of
the projection  $\hat\pi^\infty:(\R^2,\hat d_\infty^Y)\ra X_\infty$, which
gives part (1) of Theorem \ref{thm_top_dim_n_anal_dim_1}. 
\item The restriction of the projection
$\hat\pi^\infty:\R^2\ra X_\infty$ to the boundary 
of the unit square $\D [0,1]^2$ is injective.  By an elementary topological
argument this implies that the topological dimension of $X_\infty$ is
at least $2$ (see Lemma \ref{lem_david_semmes_regular_map}).
\item Using (\ref{eqn_compression_overview}), one shows that there is a ``good cover''
of $X_\infty$ whose inverse image under $\pi_j^\infty:X_j\ra X_\infty$
approximates the decomposition of $X_j$ into open cells.  This proves that the
Assouad-Nagata dimension of $X_\infty$ is at most $2$ (see Theorem
\ref{thm:nagata_bound}).
 \end{itemize}

The remaining assertions of Theorem \ref{thm_top_dim_n_anal_dim_1} have to 
do with the analytical structure of $X_\infty$, and are largely based on the existence of 
good families of curves, whose construction we now describe.

The starting point is the 
observation that the first coordinate
$x:\R^2\ra \R$ descends to a $1$-Lipschitz
function $x_\infty:X_\infty\ra\R$, and hence any horizontal geodesic
segment in $\R^2$ projects to a geodesic in $X_\infty$.  Every
$2$-cell $\hat \si$ of $Y_j$ is a rectangle of width $m^{-j}$ and
height $m_v^{-j}$ so it yields a family 
$\Ga_{\hat\si}$ of geodesic segments 
in $X_\infty$ of length
$m^{-j}$; since $\Ga_{\hat\si}$ has a natural parametrization by 
an interval of length $m_v^{-j}$, it carries a natural measure $\nu_{\hat\si}$.

If 
$\hat\si$, $\hat\si'$ are $2$-cells
of $Y_j$ such that the projections
$\hat\pi^j(\si)$, $\hat\pi^j(\si')$ 
share a vertical $1$-cell of $X_j$, then the 
corresponding measured families of curves
$(\Ga_{\hat\si},\nu_{\hat\si})$, $(\Ga_{\hat\si'},\nu_{\hat\si'})$ may
be concatenated to form a new measured family of curves.
More generally, if 
$\si_1,\ldots,\si_\ell$ is a sequence of $2$-cells of 
$X_j$ that form a horizontal gallery\footnote{We have borrowed
the term ``gallery'' from the theory of Coxeter complexes and Tits 
buildings.} in $X_j$ 
(i.e. $\si_{i-1}$ shares a vertical $1$-cell
with $\si_i$ for all $1<i\leq\ell$) then one may concatenate
the corresponding curve families.
To produce an abundance of such curve families,
a crucial property  is  the  ``horizontal
gallery accessibility'' (Lemma \ref{lem:gall_joins}), 
which is an analog of the gallery diameter bound
of \cite[Sec. 11, condition (3)]{piex}.  This says that
if $\si$, $\si'$ are $2$-cells of $X_j$
with controlled combinatorial distance in $X_j$, then they
may be joined by a horizontal gallery 
of controlled length.  This accessibility property
is due to the choice of the equivalence relation
$\calr$: notice that if $\hat\si$, $\hat\si'$ are $2$-cells of $Y_1$, then
one may form a horizontal gallery between their projections
$\si,\si'\in X_1$ by using the vertical identifications $a_{k,\ell,i}\leftrightarrow
a_{k,\ell,i}'$ that define $\calr$.

The $(1,1)$-Poincar\'e inequality is proved using the standard approach  
via ``pencils'' of curves \cite{semmes}.  The pencils are built by combining
the   measured families of curves  described above, for horizontal galleries of different 
scales, see  Section \ref{sec_poincare_inequality}.

To prove that the analytic dimension of $X_\infty$ is $1$, we
define a  
``horizontal derivative'' 
$D_\infty u$ for any Lipschitz function $u:X_\infty\ra\R$. 
 By using  horizontal galleries again,
we show that at an approximate continuity point $p$ of 
 $D_\infty u$, we have
$$
u(q)-u(p)=(D_\infty u(p))(x_\infty(q)-x_\infty(p))+o(d(q,p))\,,
$$
see Section \ref{sec:anal_dim}.

\begin{remark}
Rather than metrizing $X_\infty$ using $\hat d_\infty$, 
an alternate approach is to define a metric on $X_\infty$ using a distinguished
set of paths.  For instance, following \cite{laakso}
 one could define, for all $p,p'\in X_\infty$, the distance $d(p,p')$ to be the infimal 
$1$-dimensional Hausdorff measure of $(\hat\pi^\infty)^{-1}(\ga)$, 
where
$\ga$ is a path from $p$ to $p'$.  This leads to an essentially equivalent 
analysis, with the details organized somewhat differently.
We found the 
approach using $\hat d_\infty$ more transparent.
\end{remark}

\subsection*{Notational conventions}
\label{subsec:conventions}

 In the following $a\approx b$ will indicate that $a$ and $b$ are
comparable up to a uniformly bounded multiplicative factor $C$, and
sometimes we will also write $a\approx_C b$ to highlight $C$. We will
similarly use expressions like $a\lesssim b$ and $a\gtrsim b$.

\section{The cellular structure of the direct system $\{X_j\}$}
\label{sec_cell_structure}
In this section we examine different aspects of the combinatorial
structure of the $X_j$'s.
We remind the reader that we
will retain the notation from Section 2 through Section 
\ref{sec_n_equals_2_case_concluded}. 
The following lemma lists the main properties of $\calr$ and $\calr_j$ that 
will be used later.
\par 
\begin{lemma}[Properties of $\calr$]
\label{lem_properties_of_calr_j}

\mbox{}
\begin{enumerate}
\item 
\label{item_nontrivial_cosets}
Nontrivial cosets of 
$\Phi^{j-1}_*\calr$ belong to the union of vertical
lines  $\{(x,y)\in\R^2\mid x\in m^{-j}\Z\setminus m^{-(j-1)}\Z\}$, and any two
points in the same coset lie in a vertical edge path of $Y_j$ of combinatorial
length at most $2$.
\item 
\label{item_coset_description}
Cosets of $\calr_j$ are contained in orbits of the action
$m^{-j}\Z\times m_v^{-j}\Z\acts\R^2$ by translations.
\item 
\label{item_pair_identifications_descend} 
For all $j\in\Z$, let $\bar\calr_{j+1}$ be the equivalence relation on $X_j$
generated by the pushforward $(\hat\pi_j\circ\Phi^{j})_*(\calr)$.
Then $\bar\calr_{j+1}$ is an equivalence relation on $X_j$, 
and the cosets of $\bar\calr_{j+1}$ are the fibers of $\pi_j:X_j\ra X_{j+1}$.
\item 
\label{item_entire_cells_identified}
For $i=0,1$, let $\hat\si_i$ be a cell of $Y_j$, and 
$\hat p_i$ be an interior point of $\hat\si_i$.  If 
$\hat p_0\sim_{\calr_j}\hat p_1$, then there is a translation 
$t\in m^{-j}\Z\times m_v^{-j}\Z$ such that $t(\hat \si_0)=\hat \si_1$,
and $\hat q_0\sim_{\calr_j}t(\hat q_0)$ for all $\hat q_0\in\hat\si_0$.
\end{enumerate}
\end{lemma}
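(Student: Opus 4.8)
The plan is to establish the four assertions in roughly increasing order of difficulty, all by working directly with the combinatorial definitions of $\calr$, $\calr_j$, and $\calr_\infty$ as equivalence relations generated by explicit families of vertical translations of vertical $1$-cells.

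First I would prove (\ref{item_nontrivial_cosets}). The relation $\Phi^{j-1}_*\calr$ is generated by the identifications $\Phi^{j-1}(a_{k,\ell,i}) \leftrightarrow \Phi^{j-1}(a'_{k,\ell,i})$. Since the $a_{k,\ell,i}$, $a'_{k,\ell,i}$ lie on the lines $\{x\in\tfrac14\Z\setminus\Z\}$, applying $\Phi^{j-1}$ (which scales the $x$-coordinate by $m^{-(j-1)} = 4^{-(j-1)}$) puts them on the lines $\{x\in m^{-j}\Z\setminus m^{-(j-1)}\Z\}$ --- here one uses $m=4$ so that $\tfrac14\Z$ scales to $m^{-1}\cdot m^{-(j-1)}\Z = m^{-j}\Z$, and $\Z$ scales to $m^{-(j-1)}\Z$. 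Each generating identification glues $a_{k,\ell,i}$ to $a'_{k,\ell,i}$, which together form a vertical edge path of two consecutive vertical $1$-cells of $Y_j$ (after applying $\Phi^{j-1}$, cells of $Y_1$ become cells of $Y_j$); so within a single line, the coset through a point is obtained by chaining such length-$2$ pieces, but the crucial point is that the $a$'s and $a'$'s were chosen so that consecutive triples do not overlap --- for fixed $k$ and $i$, the cells $a_{k,\ell,i}$ as $\ell$ varies are separated, so no chaining occurs and each nontrivial coset consists of exactly two points related by one generator, lying in a vertical edge path of combinatorial length $\le 2$. I expect the verification that the generating identifications do not chain into longer cosets to be the main technical point here, and it is exactly what the specific arithmetic (the gap of $3$ in the vertical index $3\ell+i$) is designed to guarantee.

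Next, (\ref{item_coset_description}) follows by induction on the number of generators applied: each generator $\Phi^i_*\calr$ with $i<j$ is a union of vertical translations by $m_v^{-(i+1)}\in m_v^{-j}\Z$ (since $i+1\le j$), and also preserves the $x$-coordinate, which lies in $m^{-j}\Z$ for all the relevant cells; composing such translations stays within a single orbit of $m^{-j}\Z\times m_v^{-j}\Z$. Then (\ref{item_entire_cells_identified}) is a refinement: given interior points $\hat p_0\sim_{\calr_j}\hat p_1$ of cells $\hat\si_0,\hat\si_1$, part (\ref{item_coset_description}) supplies a translation $t\in m^{-j}\Z\times m_v^{-j}\Z$ with $t(\hat p_0)=\hat p_1$; since interior points of cells of $Y_j$ determine their cell and $t$ maps cells of $Y_j$ to cells of $Y_j$, we get $t(\hat\si_0)=\hat\si_1$. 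The statement $\hat q_0\sim_{\calr_j} t(\hat q_0)$ for \emph{all} $\hat q_0\in\hat\si_0$ requires knowing that each generator, restricted to a cell it acts on, acts by a translation of the \emph{whole} cell; this again reduces via (\ref{item_nontrivial_cosets}) to the fact that the generating cells $a_{k,\ell,i}$ and $a'_{k,\ell,i}$ are glued by a translation carrying one entire $1$-cell onto the other, and one propagates this through the generating chain.

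Finally, (\ref{item_pair_identifications_descend}) is essentially bookkeeping about nested equivalence relations. We have $\calr_{j+1}$ generated by $\{\Phi^i_*\calr : i<j+1\} = \{\Phi^i_*\calr : i<j\}\cup\{\Phi^j_*\calr\}$, i.e. $\calr_{j+1}$ is generated by $\calr_j$ together with $\Phi^j_*\calr$. Passing to the quotient $X_j=\R^2/\calr_j$, the image of $\Phi^j_*\calr$ under $\hat\pi_j$ --- which is well-defined as a relation on $X_j$ once one checks, using (\ref{item_entire_cells_identified}) for $i\le j$, that the relevant cells of $Y_j$ either coincide or are disjoint in $X_j$ --- generates exactly the quotient relation $\calr_{j+1}/\calr_j$ on $X_j$, whose cosets are by definition the fibers of $X_j=\R^2/\calr_j \to \R^2/\calr_{j+1}=X_{j+1}$, i.e. of $\pi_j$. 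The only subtlety is well-definedness of $\bar\calr_{j+1}$ as a relation \emph{on $X_j$} (as opposed to a relation on $\R^2$ that merely descends), for which one again invokes (\ref{item_entire_cells_identified}): the generating cells for $\Phi^j_*\calr$ are mapped by $\hat\pi_j$ either injectively or onto genuine cells of $X_j$, so the identifications make sense downstairs. The verification that $\bar\calr_{j+1}$ is actually \emph{transitive} (an equivalence relation, not just a relation) is immediate since we define it as the relation generated by that pushforward.

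I expect the genuine obstacle to be concentrated entirely in part (\ref{item_nontrivial_cosets}) --- specifically, ruling out unwanted chaining of the generating identifications --- since everything else is a formal consequence of (\ref{item_nontrivial_cosets}) together with the product/orbit structure, propagated through finitely generated equivalence relations by straightforward induction.
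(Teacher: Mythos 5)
Your proposal is essentially correct and follows the same route as the paper's (very terse) proof: parts (1), (2) are read off directly from the definitions of $\calr$ and $\Phi$, part (4) is deduced from the fact that each generator $\Phi^i_*\calr$ ($i<j$) identifies whole cells of $Y_j$ by translation, and part (3) is the standard compatibility of nested equivalence relations with quotients. Your fleshed-out checks (the scaling $\frac14\Z\setminus\Z \mapsto m^{-j}\Z\setminus m^{-(j-1)}\Z$, the translation amounts $m_v^{-(i+1)}\in m_v^{-j}\Z$, the non-overlap of nontrivial cosets across different $i$) all hold.

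One factual slip in your treatment of (\ref{item_nontrivial_cosets}): you claim each nontrivial coset of $\Phi^{j-1}_*\calr$ has exactly two points. That is false. The cells $a_{k,\ell,i}$ and $a'_{k,\ell,i}$ share the endpoint at $y=(3\ell+i)m_v^{-1}$, and the generating translation $y\mapsto y+m_v^{-1}$ sends the bottom endpoint of $a_{k,\ell,i}$ to the shared endpoint and the shared endpoint to the top endpoint of $a'_{k,\ell,i}$. Hence the three points at $y=(3\ell+i-1)m_v^{-1},\,(3\ell+i)m_v^{-1},\,(3\ell+i+1)m_v^{-1}$ form a single coset. Your no-chaining verification (the arithmetic $3(\ell'-\ell)=2$ has no integer solution) is exactly the right computation and shows the coset cannot grow past these three points, so the stated conclusion --- all of them lie in the length-two vertical edge path $a_{k,\ell,i}\cup a'_{k,\ell,i}$ --- still holds. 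But the assertion ``exactly two points'' should be ``at most three points,'' and the distinction is not cosmetic: the paper later (Lemma \ref{lem_bounded_complexity}(\ref{item_0_cell_inverse})) relies on precisely this bound of $3$, not $2$, for the cardinality of point inverses $(\hat\pi^j)^{-1}(p)$.
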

\begin{proof}
(\ref{item_nontrivial_cosets}) and
(\ref{item_coset_description}) follow immediately from the definition of
$\calr$ and $\Phi$.

(\ref{item_pair_identifications_descend}).  By (\ref{item_nontrivial_cosets})
the nontrivial cosets of $\Phi^{j-1}\calr$ intersect only trivial cosets of
$\calr_{j-1}$.  This implies that $\bar\calr_{j+1}$ is an equivalence relation
on $X_j$, and that its cosets are the fibers of $\pi_j:X_j\ra X_{j+1}$.

(\ref{item_entire_cells_identified}).  Since $\calr_j$ is generated by
$\Phi^i_*\calr$ for $i<j$, it is generated by pairwise identifications of
cells of $Y_j$ by translations.  This implies 
(\ref{item_entire_cells_identified}).

\end{proof}

\begin{lemma}[Cell structure of $X_j$]
\label{lem_cell_structure}
\mbox{}
\begin{enumerate}
\item 
\label{item_open_cells_cw_structure}
The collection
of  images of the
open cells of $Y_j$ under the projection map $\hat\pi^j:Y_j\ra X_j$ defines
a CW complex structure on $X_j$.
\item For every cell $\hat\si$ of $Y_j$, the restriction of $\hat\pi^j$ to 
$\hat\si$ is a characteristic map for the open cell $\hat\pi^j(\Int(\hat\si))
\subset X_j$.
\item 
\label{item_face_identification}
If $\hat \si_0$, $\hat \si_1$ are cells of $Y_j$ then 
$\hat\pi^j(\Int(\hat\si_0))\cap \hat\pi^j(\Int(\hat\si_1))\neq\emptyset$
 if and only if $\calr_j$ identifies $\hat\si_0$ with $\hat\si_1$ by translation.
In particular,
If $\hat\si$ is a cell of $Y_j$, then $\hat\pi^j\restr_\si:\si\ra X_j$
identifies certain pairs of faces of $\hat\si$ by translation. 
\item 
\label{item_x_j_k_cw_structure}
The cells of $Y_{j+k}$ project under $\hat\pi^j:\R^2\ra X_j$ to define
a cell complex $X_j^{(k)}$, which is the $k$-fold iterated
subdivision of $X_j$.
\end{enumerate}
\end{lemma}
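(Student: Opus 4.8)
The plan is to deduce everything from Lemma \ref{lem_properties_of_calr_j}, which tells us that $\calr_j$ is generated by translations identifying entire cells of $Y_j$. The key structural fact is part \eqref{item_entire_cells_identified}: whenever two interior points of cells are $\calr_j$-equivalent, there is a single translation $t\in m^{-j}\Z\times m_v^{-j}\Z$ carrying one cell onto the other and inducing the identification on the whole cell. Since $Y_j$ is a locally finite CW complex and $\calr_j$ is generated by such cellular identifications by translations (each $\Phi^i_*\calr$ for $i<j$ identifies pairs of vertical $1$-cells, hence is cellular), the quotient $X_j=\R^2/\calr_j$ is obtained from $Y_j$ by a cellular quotient in the standard sense. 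I would first verify that the collection of subsets $\hat\pi^j(\Int(\hat\si))$, as $\hat\si$ ranges over open cells of $Y_j$, partitions $X_j$: they cover $X_j$ because the open cells cover $\R^2$, and they are pairwise disjoint or equal because by \eqref{item_entire_cells_identified} two open cells have intersecting images exactly when a translation in $m^{-j}\Z\times m_v^{-j}\Z$ identifies them, in which case (being translates of the same standard cell) their images coincide. In particular cells of different dimension never have intersecting images, so the dimension of $\hat\pi^j(\Int(\hat\si))$ is well-defined.

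Next I would check that $\hat\pi^j\restr_{\hat\si}:\hat\si\to X_j$ is a characteristic map for the open cell $\hat\pi^j(\Int(\hat\si))$, establishing parts (1) and (2) together. By \eqref{item_entire_cells_identified}, $\hat\pi^j\restr_{\hat\si}$ is injective on $\Int(\hat\si)$ (any identification of two interior points of $\hat\si$ would come from a nontrivial translation fixing $\hat\si$ setwise, which is impossible for a rectangle), it is continuous, and $\Int(\hat\si)$ is compact-exhausted so the map is a homeomorphism onto its image by invariance of domain together with the fact that the image is open in its skeleton; on the boundary $\D\hat\si$ it factors through the attaching data of the lower skeleta, identifying certain pairs of faces of $\hat\si$ by translation — this is the content of part \eqref{item_face_identification}, which follows by applying \eqref{item_entire_cells_identified} to the faces. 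One then checks the closure-finiteness and weak-topology axioms: closure-finiteness because each closed cell $\hat\pi^j(\hat\si)$ is the image of the compact set $\hat\si$, which meets only finitely many cells of $Y_j$, so its image meets only finitely many open cells of $X_j$; the weak topology because $X_j$ carries the quotient topology from $\R^2=Y_j$, which has the weak topology with respect to its (locally finite) cell decomposition, and quotient-by-a-cellular-equivalence preserves this. This gives parts (1), (2), (3).

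For part \eqref{item_x_j_k_cw_structure}, I would note that $Y_{j+k}$ is, by construction, the $k$-fold iterated subdivision of $Y_j$ (each $2$-cell subdivided $m$ times horizontally and $m_v$ times vertically), and that $\calr_j$ — being generated by translations in $m^{-j}\Z\times m_v^{-j}\Z$ identifying cells of $Y_j$ — also identifies cells of the finer complex $Y_{j+k}$ by the same translations, since every such translation is cellular for $Y_{j+k}$ as well. Hence $\hat\pi^j$ carries the cell decomposition of $Y_{j+k}$ to a cell decomposition $X_j^{(k)}$ of $X_j$ by the argument of parts (1)–(3) applied with $Y_{j+k}$ in place of $Y_j$, and because subdivision commutes with the quotient (the partition of each $2$-cell of $X_j$ is the image of the partition of the corresponding $2$-cell of $Y_j$, which is exactly the standard subdivision), $X_j^{(k)}$ is the $k$-fold iterated subdivision of $X_j$. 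The main obstacle here is purely bookkeeping: carefully matching the attaching maps of $X_j^{(k)}$ to the subdivided attaching maps of $X_j$, which amounts to observing that the identifications $\calr_j$ respect the subdivision since they are translations by vectors in the coarser lattice $m^{-j}\Z\times m_v^{-j}\Z$. I expect the only genuinely delicate point in the whole proof to be confirming that $\hat\pi^j\restr_{\hat\si}$ is a homeomorphism onto its image on the open cell — i.e. that no identification folds a cell onto itself — but this is immediate from \eqref{item_nontrivial_cosets}, since nontrivial cosets live on vertical lines $x\in m^{-j}\Z\setminus m^{-(j-1)}\Z$ and pair points in distinct vertical $1$-cells, never interior points of a common cell.
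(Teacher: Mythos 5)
Your high-level plan — reduce everything to the structural facts in Lemma~\ref{lem_properties_of_calr_j}, especially part~(\ref{item_entire_cells_identified}) — matches the paper's. However, there are two genuine gaps in how you propose to carry it out.

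First, you never verify that $X_j$ is Hausdorff. This is not automatic: a quotient of a Hausdorff space by an equivalence relation need not be Hausdorff, and Hausdorffness is part of the definition of a CW complex, so it must be established before anything else. The paper spends the first half of its proof on exactly this, proving the claim that $N_r((\hat\pi^j)^{-1}(p))$ is a union of $\calr_j$-cosets (using Lemma~\ref{lem_properties_of_calr_j}(\ref{item_entire_cells_identified}) and the lattice invariance of $Y_j$), which then lets one separate any two distinct fibers by disjoint saturated open sets.

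Second, your argument that $\hat\pi^j$ maps $\Int(\hat\si)$ homeomorphically onto its image does not go through as stated. You invoke ``invariance of domain together with the fact that the image is open in its skeleton,'' but invariance of domain applies to maps into $\R^n$ or a manifold, and $X_j$ is neither; and appealing to the skeleton of $X_j$ presupposes the CW structure you are in the middle of constructing, which is circular. The paper sidesteps this cleanly: by Lemma~\ref{lem_properties_of_calr_j}(\ref{item_coset_description}), the cosets of $\calr_j$ lie in $(m^{-j}\Z\times m_v^{-j}\Z)$-orbits, so there is a well-defined continuous map $X_j\to T^2_j=\R^2/(m^{-j}\Z\times m_v^{-j}\Z)$. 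The composite $Y_j\to X_j\to T^2_j$ is a homeomorphism of $\Int(\hat\si)$ onto an open cell of the honest CW complex $T^2_j$, and since the first factor is continuous and injective on $\Int(\hat\si)$, it too is a homeomorphism onto its image. Once you have these two points, the rest of your outline (the partition into open cells, closure-finiteness, weak topology, part~(\ref{item_face_identification}) via Lemma~\ref{lem_properties_of_calr_j}(\ref{item_entire_cells_identified}), and part~(\ref{item_x_j_k_cw_structure}) by running the argument with $Y_{j+k}$ in place of $Y_j$) is in line with the paper's proof.
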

\begin{proof}
(\ref{item_open_cells_cw_structure})-(\ref{item_face_identification}).
We first show that $X_j$ is Hausdorff.  

Pick $p\in X_j$.  
By Lemma \ref{lem_properties_of_calr_j}(\ref{item_coset_description}) and 
the fact that $Y_j$ is invariant under $m^{-j}\Z\times m_v^{-j}\Z$,
there is an
$r>0$ such that any cell $\hat\si$ of $Y_j$ that intersects
$N_r((\hat\pi^j)^{-1}(p))$
must intersect $(\hat\pi^j)^{-1}(p)$. Here $N_r(S)=\{q\in \R^2\mid d_{\R^2}(q,S)\leq r\}$ 
denotes the Euclidean metric $r$-neighborhood.

{\em Claim. $N_r((\hat\pi^j)^{-1}(p))$ is a union of cosets of $\calr_j$.}

Suppose $\hat q'\in \R^2$ and 
$\hat q'\sim_{\calr_j}\hat q$ for some $\hat q\in N_r((\hat\pi^j)^{-1}(p))$.
By the choice of $r$, if $\Int(\hat\si)$ is the open cell of $Y_j$ 
containing $\hat q$, then $\hat\si$ contains some point $\hat p\in (\hat\pi^j)^{-1}(p)$.
By Lemma \ref{lem_properties_of_calr_j}(\ref{item_entire_cells_identified}) there
is a translation $t\in m^{-j}\Z\times m_v^{-j}\Z$ such that  $t(\hat q)=\hat q'$
and $\hat p\sim_{\calr_j}t(\hat p)$.  But then $\hat q'\in B(t(\hat p),r)
\subset N_r((\hat\pi^j)^{-1}(p))$, proving the claim.

For every pair of distinct points $p_0,p_1\in X_j$, the sets
$(\hat\pi^j)^{-1}(p_0)$, $(\hat\pi^j)^{-1}(p_1)$ are disjoint
and lie in orbits of $m^{-j}\Z\times m_v^{-j}\Z$, and hence
have positive distance from each other.  By the claim, if
$r>0$ is sufficiently
small, then $\hat\pi^j(N_r((\hat\pi^j)^{-1}(p_0)))$, and
$\hat\pi^j(N_r((\hat\pi^j)^{-1}(p_1)))$ are disjoint open subsets
of $X_j$.  This 
proves that $X_j$ is Hausdorff.

Let $T^2_j$ denote the quotient space
$\R^2/(m^{-j}\Z\times m_v^{-j}\Z)$, and  $\pi_{T^2_j}:\R^2\ra T^2_j$ be the 
quotient map.  The open cells of $Y_j$ project under $\pi_{T^2_j}$
to open cells of $T^2_j$, inducing the standard cell structure on 
$T^2_j$.  
Note that
by Lemma \ref{lem_properties_of_calr_j}(\ref{item_coset_description}) there
is a well-defined continuous map $X_j\ra T^2_j$.  

Now consider a cell $\hat\si$ of $Y_j$.  The composition 
$Y_j\stackrel{\hat\pi^j}{\ra}X_j\ra T^2_j$ maps the interior of $\hat\si$
homeomorphically onto an open cell of $T^2_j$; it follows that $\hat\pi^j:Y_j\ra X_j$
maps the interior of
$\hat\si$ homeomorphically onto its image, which is therefore an open cell.
Thus the
restriction of $\hat\pi^j$ to $\hat\si$
of $Y_j$ is a characteristic map for the open cell $\hat\pi^j(\Int(\hat\si))$.

If  $\hat\si_0$, $\hat\si_1$ are cells of $Y_j$ such that 
$\hat\pi^j(\Int(\hat\si_0))\cap \hat\pi^j(\Int(\hat\si_1))\neq\emptyset$,
then by 
Lemma \ref{lem_properties_of_calr_j}(\ref{item_entire_cells_identified})
we have 
 $\hat\pi^j(\Int(\hat\si_0))= \hat\pi^j(\Int(\hat\si_1))$.
This shows that the collection of images of open cells of $Y_j$ is
a  decomposition of $X_j$ into disjoint open cells.  

For any cell $\hat\si$ of $Y_j$, the closure of $\hat\pi^j(\Int(\hat\si))$ is
just $\hat\pi^j(\hat\si)$, and is therefore contained in the union of 
the images of the open cells contained in $\hat\si$.  The closure of any open
cell of $X_j$ intersects only finitely many open cells.

 Finally, if $C\subset X_j$, then $C$ is closed if and only if $(\hat\pi^j)^{-1}(C)$ is  
closed, which is equivalent to $(\hat\pi^j)^{-1}(C)\cap \hat\si$ being closed
for every cell $\hat\si$ of $Y_j$, which happens if and only if $C\cap \hat\pi^j(\hat\si)$ is closed.

Thus we have verified 
(\ref{item_open_cells_cw_structure})-(\ref{item_face_identification}).

The proof of (\ref{item_x_j_k_cw_structure}) is similar to the proof of
(\ref{item_open_cells_cw_structure}).

\end{proof}

Henceforth the notation $X_j$ and $X_j^{(k)}$ will refer to the cell complex
structure established in Lemma \ref{lem_cell_structure}.
A $1$-cell of 
$X_j^{(k)}$ is {\bf horizontal} ({\bf vertical}) if it is the image 
of a horizontal (vertical) cell of $Y_{j+k}$ under the projection map
$\hat\pi^j:\R^2\ra X_j$.
\par We now analyze the behavior of the projection maps with respect to
the combinatorial structure.
\begin{lemma}
\label{lem_pi_j_structure}
\begin{enumerate}
\item 
\label{item_distance_2}
If $p,p'\in X_j$ are distinct points 
and $\pi_j(p)=\pi_j(p')$, then there is a vertical edge path 
in the $1$-skeleton of $X_j^{(1)}$ that contains $p$, $p'$
and has combinatorial
length at most $2$.
\item
\label{item_si_si'_intersect} 
If $\si$, $\si'$ are $2$-cells of $X_j$, 
and  $\pi_j(\si)\cap \pi_j(\si')\neq \emptyset$, then
$\si\cap\si'\neq\emptyset$.
\item 
\label{item_limits_intersect}
Suppose $0\leq j\leq j'<\infty$ and
$\hat\si$, $\hat\si'$ are  $2$-cells of $Y_j$ and $Y_{j'}$ respectively.  If
$\hat\pi^\infty(\hat\si)\cap \pi^\infty(\hat\si')\neq\emptyset$, then 
$\hat\pi^{j'}(\hat\si)\cap \hat\pi^{j'}(\hat\si')\neq\emptyset$.
\end{enumerate}
\end{lemma}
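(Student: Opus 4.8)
The plan is to derive all three parts from a precise description of the fibers of $\pi_j\colon X_j\to X_{j+1}$. By Lemma~\ref{lem_properties_of_calr_j}(\ref{item_pair_identifications_descend}) these fibers are the cosets of the relation $\bar\calr_{j+1}$ on $X_j$, whose generators are the $\hat\pi^j$-images of the identifications comprising $\Phi^j_*\calr$. The key structural point I would establish first is that all these generators are supported over the vertical lines $\{x\in m^{-(j+1)}\Z\setminus m^{-j}\Z\}$, whereas $\calr_j$ is trivial over those lines --- the latter because, by Lemma~\ref{lem_properties_of_calr_j}(\ref{item_nontrivial_cosets}) applied to the generators $\Phi^i_*\calr$ of $\calr_j$ (which have $i<j$), the nontrivial cosets of $\calr_j$ lie over $m^{-j}\Z$. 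Consequently $\hat\pi^j$ is injective on those lines, a point of $X_j$ lying over one of them has exactly one $\hat\pi^j$-preimage, and the cosets of $\bar\calr_{j+1}$ are precisely the $\hat\pi^j$-images of cosets of $\Phi^j_*\calr$. By Lemma~\ref{lem_properties_of_calr_j}(\ref{item_nontrivial_cosets}) each coset of $\Phi^j_*\calr$ is contained in a vertical edge path of $Y_{j+1}$ of combinatorial length $\le 2$ lying over a single line $\{x=c\}$ with $c\in m^{-(j+1)}\Z\setminus m^{-j}\Z$; pushing forward and using injectivity, its image is an honest vertical edge path of $X_j^{(1)}$ of length $\le 2$. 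In particular, if $p\ne p'$ in $X_j$ satisfy $\pi_j(p)=\pi_j(p')$ then they lie in a common nontrivial coset, hence in the image of such a path, which proves~(\ref{item_distance_2}).

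For parts~(\ref{item_si_si'_intersect}) and~(\ref{item_limits_intersect}) I would next record an elementary combinatorial fact about $Y_j$, call it $(\dagger)$: if $S\subset\R^2$ is a vertical segment whose $x$-coordinate lies outside $m^{-j}\Z$ and whose length is $<m_v^{-j}$, then for any two points of $S$ the closures of the $Y_j$-cells containing them in their relative interiors intersect. The proof is a short case analysis: the $Y_j$-cells meeting $S$ in their relative interiors form either a single $2$-cell, possibly with one of its horizontal edges, or two vertically adjacent $2$-cells together with their shared horizontal edge; so their closures pairwise intersect. Here the off-grid $x$-coordinate is used to exclude vertical edges and vertices, and the length bound $|S|<m_v^{-j}$ to force "at most two, adjacent" (in particular $S$ cannot meet two opposite horizontal edges of a single $2$-cell). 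The length-$\le2$ vertical edge paths of the first paragraph are instances of such an $S$: since $m=4$ and $i\in\{1,2,3\}$ they sit over lines with $x\notin m^{-j}\Z$, and since $m_v=3L\ge 6$ their length $2m_v^{-(j+1)}$ is $<m_v^{-j}$. I expect this case check --- specifically, ruling out the degenerate configurations where points of $S$ land on $Y_j$-grid lines --- to be the main technical hurdle, and it is precisely what the choices $m=4$ and $m_v\ge 6$ are there to make harmless.

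To prove~(\ref{item_si_si'_intersect}) I would write $\si=\hat\pi^j(\hat\si_0)$ and $\si'=\hat\pi^j(\hat\si_0')$ for $2$-cells $\hat\si_0,\hat\si_0'$ of $Y_j$ (Lemma~\ref{lem_cell_structure}), pick $q\in\pi_j(\si)\cap\pi_j(\si')$ with $q=\pi_j(r)=\pi_j(r')$, $r\in\si$, $r'\in\si'$, and dispose of the trivial case $r=r'$. If $r\ne r'$, then $r,r'$ span one nontrivial coset of $\bar\calr_{j+1}$, so by the first paragraph they have unique $\hat\pi^j$-preimages $\hat r,\hat r'$ lying on a common vertical edge path $\gamma$ of the type entering $(\dagger)$; uniqueness of preimages forces $\hat r\in\hat\si_0$ (from $r\in\si$) and $\hat r'\in\hat\si_0'$ (from $r'\in\si'$). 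The closed $Y_j$-cell whose relative interior contains $\hat r$ is a face of $\hat\si_0$, hence lies in $\hat\si_0$; similarly for $\hat r'$ and $\hat\si_0'$. Since $(\dagger)$ applied to $\gamma$ says these two closed cells intersect, $\hat\si_0\cap\hat\si_0'\ne\emptyset$, and therefore $\si\cap\si'\supset\hat\pi^j(\hat\si_0\cap\hat\si_0')\ne\emptyset$.

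For~(\ref{item_limits_intersect}) the same argument, with the two $2$-cells replaced by arbitrary subcomplexes, gives the one-step statement: if $A,B$ are unions of closed cells of $Y_{j''}$ with $\hat\pi^{j''}(A)\cap\hat\pi^{j''}(B)=\emptyset$, then $\hat\pi^{j''+1}(A)\cap\hat\pi^{j''+1}(B)=\emptyset$ (the only change is that a subcomplex contains the closed cell whose relative interior contains any prescribed one of its points). Since $\hat\si$ and $\hat\si'$ are unions of closed cells of $Y_{j''}$ for every $j''\ge j'$, iterating this one-step statement downward from a large level to $j'$ shows that $\hat\pi^{j''}(\hat\si)\cap\hat\pi^{j''}(\hat\si')\ne\emptyset$ for some $j''\ge j'$ already implies $\hat\pi^{j'}(\hat\si)\cap\hat\pi^{j'}(\hat\si')\ne\emptyset$. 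Finally $\calr_\infty=\bigcup_{j''}\calr_{j''}$ --- any $\calr_\infty$-chain uses finitely many generators $\Phi^i_*\calr$ and so lies in $\calr_{j''}$ once $j''$ exceeds all those $i$ --- so a point of $\hat\pi^\infty(\hat\si)\cap\hat\pi^\infty(\hat\si')$ is realized by an identification already present in some $\calr_{j''}$; enlarging $j''$ to have $j''\ge j'$ gives a point of $\hat\pi^{j''}(\hat\si)\cap\hat\pi^{j''}(\hat\si')$, and the preceding sentence finishes the proof.
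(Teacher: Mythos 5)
Your proof is correct and follows essentially the same approach as the paper's, differing only in that you supply the explicit combinatorial lemma $(\dagger)$ where the paper dispatches part~(\ref{item_si_si'_intersect}) with the brief remark that the subdivision of $\si$ in $X_j^{(1)}$ has combinatorial height $m_v$, and in that for part~(\ref{item_limits_intersect}) you iterate a one-step descent from an arbitrary level where the images meet down to $j'$, whereas the paper takes the minimal level $k_0+1$ where they meet and derives a single contradiction to part~(\ref{item_si_si'_intersect}). These are the same arguments, with yours spelled out in more detail.
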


\begin{proof}
(\ref{item_distance_2}).  This follows from (\ref{item_nontrivial_cosets})
and (\ref{item_pair_identifications_descend}) of 
Lemma \ref{lem_properties_of_calr_j}.

(\ref{item_si_si'_intersect}). Suppose that $p\in \si$, $p'\in \si'$, and
$\pi_j(p)=\pi_j(p')$.  By (\ref{item_distance_2}) there is a vertical 
edge path in $X_j^{(1)}$ of combinatorial length at most two joining $p$ to
$p'$.  Since the subdivision of $\si$ in $X_j^{(1)}$ has
combinatorial ``height'' $m_v$, it follows that $\si\cap \si'\neq\emptyset$.

(\ref{item_limits_intersect}). 
By the definition of the direct limit, we have
 $\hat\pi^{\ell}(\hat\si)\cap\hat\pi^{\ell}(\hat\si')\neq\emptyset$
for some $\ell$.  Letting $k_0+1$ be the minimal such $\ell$, suppose we have 
$k_0\geq j'$.
Since $\si$, $\si'$ are finite unions of $2$-cells of $Y_{k_0}$, we
may assume without loss of generality that $j=j'=k_0$.  
Then  $\hat\pi^{k_0}(\hat\si)$,  $\hat\pi^{k_0}(\hat\si')$ are disjoint
$2$-cells of $X_{k_0}$
whose projections to $X_{k_0+1}$ intersect.  This contradicts 
(\ref{item_si_si'_intersect}).  Thus $k_0+1\leq j'$, proving
(\ref{item_limits_intersect}).

\end{proof}
The next lemma shows that the $X_j$'s and the projection maps have
bounded complexity.
\begin{lemma}
\label{lem_bounded_complexity}

\mbox{}
\begin{enumerate}
\item 
\label{item_injective_1_skeleton}
$\pi_j:X_j\ra X_{j+1}$ is injective on the $1$-skeleton of $X_j$.
\item 
\label{item_0_skeleton_no_collapsing}
If $p\in X_j$ is a $0$-cell, then $\pi_j^{-1}(\pi_j(p))=\{p\}$, i.e. $0$-cells
experience no collapsing. 
\item 
\label{item_0_cell_inverse}
For every $p\in X_j$, the point inverse $(\hat\pi^j)^{-1}(p)$ contains at most
$3$ points.  
\item 
\label{item_bounded_link_complexity}
For every $j$, the link of any cell in $X_j$ contains at most $24$ cells.
\item 
\label{item_links_connected}
For every $j$ and every $0$-cell $v\in X_j$, the link of $v$ is connected:
any two cells $\si$, $\si'$ of $X_j$ containing $v$ may joined by a
sequence $\si=\tau_1,\ldots,\tau_\ell=\si'$ of cells of $X_j$ containing
$v$, where
$\tau_{i-1}$ and $\tau_i$ share a $1$-cell.
\item 
\label{item_bounded_cell_complexity}
Every  cell of $X_j$ contains at most $9$ open cells.
\item 
\label{item_ball_complexity}
There is an $N=N(n)$ such that for every $j\in \Z$, every combinatorial 
$n$-ball in $X_j$ contains at most $N$ cells.
\item
\label{item_cell_inverse_bounded}
For every $j\in \Z$ and every $2$-cell $\si$ of 
$X_j$, the inverse image $(\hat\pi^j)^{-1}(\si)$ may be covered by at most 
$27$ $2$-cells of $Y_j$.
\end{enumerate}
\end{lemma}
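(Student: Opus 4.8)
The plan is to derive all eight items from the local picture around a point $p\in X_j$, exploiting the fact (Lemma~\ref{lem_properties_of_calr_j}) that $\calr_j$ is generated by translations identifying vertical $1$-cells, and the very explicit description of $\calr$ in terms of the cells $a_{k,\ell,i}$, $a'_{k,\ell,i}$. First I would record the basic local structure of $\calr_\infty$ restricted to a single scale: by Lemma~\ref{lem_properties_of_calr_j}(\ref{item_nontrivial_cosets}), nontrivial cosets of $\Phi^{j-1}_*\calr$ lie on vertical lines $x\in m^{-j}\Z\setminus m^{-(j-1)}\Z$ and have combinatorial diameter $\le 2$ in the vertical $1$-skeleton of $Y_j$; combining the identifications at scales $<j$ one sees that a coset of $\calr_j$ through a point $\hat p$ has at most $3$ points (the three cells $a_{k,\ell,1},a_{k,\ell,2},a_{k,\ell,3}$ sit on three distinct vertical lines $x=\tfrac14+k,\tfrac24+k,\tfrac34+k$, and only one of these lines can pass through any given interior point, while a $0$-cell lies on at most... ) — this is exactly item (\ref{item_0_cell_inverse}), and item (\ref{item_cell_inverse_bounded}) follows because a $2$-cell of $X_j$ pulls back to at most $3$ translates of a $2$-cell of $Y_j$, each of which, together with its neighbors needed to see all identified faces, is covered by at most $9$ cells (hence $\le 27$). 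Item (\ref{item_0_skeleton_no_collapsing}) is immediate from Lemma~\ref{lem_properties_of_calr_j}(\ref{item_nontrivial_cosets}): the nontrivial cosets of $\Phi^j_*\calr$ (which generate the fibers of $\pi_j$ by Lemma~\ref{lem_properties_of_calr_j}(\ref{item_pair_identifications_descend})) are contained in the interiors of vertical $1$-cells of $X_j^{(1)}$, hence miss the $0$-skeleton of $X_j$; and item (\ref{item_injective_1_skeleton}) follows similarly once one checks that the vertical edge path of length $\le 2$ in Lemma~\ref{lem_pi_j_structure}(\ref{item_distance_2}) joining two $\pi_j$-identified points of the $1$-skeleton of $X_j$ is forced to be constant (the cells $a_{k,\ell,i}$ are $1$-cells of $Y_1$, hence lie strictly inside $2$-cells of $X_j$, so a point on the $1$-skeleton of $X_j$ cannot be a non-degenerate endpoint of such an edge path) — here I should be careful that "$1$-skeleton of $X_j$" and "$1$-skeleton of $X_j^{(1)}$" interact correctly, which is the one spot requiring genuine checking.

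Next I would treat the combinatorial finiteness items (\ref{item_bounded_link_complexity}), (\ref{item_bounded_cell_complexity}), (\ref{item_ball_complexity}). By Lemma~\ref{lem_cell_structure}, the cell structure of $X_j$ is obtained from the standard square tiling $Y_j$ by identifying finitely many pairs of vertical $1$-cells per fundamental domain via translations, and this identification pattern is $\Z^2$-periodic (after rescaling by $\Phi^j$); moreover by item (\ref{item_0_cell_inverse}) at most $3$ cells of $Y_j$ of each dimension are glued together. A closed $2$-cell of $X_j$ is the image of a closed square of $Y_j$, so it has at most $1$ two-cell, $4$ one-cells and $4$ zero-cells before identifications and $\le 9$ open cells after — giving item (\ref{item_bounded_cell_complexity}). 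For the link of a $0$-cell $v$: by item (\ref{item_0_skeleton_no_collapsing}), $(\hat\pi^j)^{-1}(v)$ is a single $0$-cell $\hat v$ of $Y_j$, which meets exactly $4$ squares, $4$ vertical and $4$ horizontal $1$-cells of $Y_j$; since $\hat\pi^j$ is injective on the cells incident to $\hat v$ except possibly for pairs of vertical $1$-cells identified with each other (and such an identification is $\Z^2$-periodic so does not occur among the four edges at a single vertex after a moment's thought, or at worst halves the count), the link of $v$ contains at most $24$ cells — item (\ref{item_bounded_link_complexity}) — with the bound $24$ being deliberately generous. For links of higher cells the count only goes down. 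Item (\ref{item_ball_complexity}) then follows by induction on the radius of a combinatorial ball: passing from radius $r$ to $r+1$ multiplies the number of cells by at most the maximal link size, i.e. by at most $24$, so a combinatorial $n$-ball has at most $24^{2n}=:N(n)$ cells; here $n=2$ in Sections~\ref{sec_cell_structure}--\ref{sec_n_equals_2_case_concluded} but I will phrase it with general $n$ since the same argument applies in Section~\ref{sec_n_dim_case}.

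For item (\ref{item_links_connected}) — connectivity of the link of a $0$-cell $v$ — I would argue as follows. Again $(\hat\pi^j)^{-1}(v)=\{\hat v\}$ is a single vertex of $Y_j$. In $Y_j$ the link of $\hat v$ is a circle (the four incident squares cyclically sharing the four incident edges), in particular connected, and its cells map to cells of $X_j$ containing $v$; any two cells of $X_j$ at $v$ thus lift to cells of $Y_j$ at $\hat v$, which can be joined in $\lk_{Y_j}(\hat v)$ by a path of squares consecutively sharing edges, and this path projects to the desired gallery in $\lk_{X_j}(v)$, because adjacency of two squares along a $1$-cell in $Y_j$ projects to adjacency of their (possibly identified) images along the image $1$-cell in $X_j$. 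The only subtlety is that a priori there could be a cell of $X_j$ at $v$ that is the image of a cell of $Y_j$ \emph{not} incident to $\hat v$ but incident to some other point of $(\hat\pi^j)^{-1}(\hat v)$ — but $(\hat\pi^j)^{-1}(v)$ is exactly $\{\hat v\}$, so this cannot happen. Assembling: every cell of $X_j$ at $v$ is the projection of a cell of $Y_j$ at $\hat v$, and projection is surjective on links, so connectivity of $\lk_{Y_j}(\hat v)$ yields connectivity of $\lk_{X_j}(v)$.

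The main obstacle, I expect, is bookkeeping rather than conceptual: one must keep straight the three cell structures in play ($Y_j$, $X_j$, $X_j^{(1)}$), verify that the identifications defining $\calr_\infty$ never collapse more than one scale's worth of structure at a time (this is the content of Lemma~\ref{lem_properties_of_calr_j}(\ref{item_nontrivial_cosets}) and must be invoked precisely), and check that the various "at most $3$ / at most $9$ / at most $24$" counts are honest — in particular confirming that no two of the \emph{four} edges or \emph{four} squares incident to a single vertex of $Y_j$ get identified with each other under $\calr_j$, which uses that the identified pairs $a_{k,\ell,i}\leftrightarrow a'_{k,\ell,i}$ are \emph{vertical translates of each other} and hence never share an endpoint, let alone meet at a common vertex from opposite sides. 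Every stated bound is intentionally loose, so the estimates themselves are routine once the periodic identification pattern is fixed in mind.
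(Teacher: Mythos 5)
The central flaw is a persistent conflation of the one‑step projection $\pi_j\colon X_j\to X_{j+1}$ with the total quotient map $\hat\pi^j\colon\R^2\to X_j$. In your treatment of items~(\ref{item_bounded_link_complexity}) and~(\ref{item_links_connected}) you write ``by item~(\ref{item_0_skeleton_no_collapsing}), $(\hat\pi^j)^{-1}(v)$ is a single $0$-cell $\hat v$ of $Y_j$''; but item~(\ref{item_0_skeleton_no_collapsing}) concerns the fibers of $\pi_j$, not of $\hat\pi^j$, and by item~(\ref{item_0_cell_inverse}) --- which you prove correctly --- the set $(\hat\pi^j)^{-1}(v)$ has up to \emph{three} points. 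This is not a corner case: it is exactly what happens at the common endpoint $(\frac{i}{4}+k,(3\ell+i)m_v^{-1})$ of $a_{k,\ell,i}$ and $a'_{k,\ell,i}$ (after rescaling), whose $\calr$-coset is a triple of $0$-cells. The error is fatal for your argument for item~(\ref{item_links_connected}): when $(\hat\pi^j)^{-1}(v)=\{\hat v,\hat v_+,\hat v_-\}$, the link of $v$ in $X_j$ is \emph{not} isomorphic to the circle $\lk_{Y_j}(\hat v)$, but to the quotient of the disjoint union of three circles obtained by identifying pairs of vertical $1$-cells, and connectivity of that quotient is exactly what needs to be verified. The paper does so by noting that each of the two vertical edges at $\hat v$ is glued to one vertical edge at $\hat v_+$ or $\hat v_-$, so the three circles are wedged into a connected complex; your argument skips this entirely. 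The count in item~(\ref{item_bounded_link_complexity}) also starts from the wrong premise (and a secondary count error: a vertex of $Y_j$ is incident to $4$, not $8$, $1$-cells), though the number $24$ you land on happens to coincide with the correct $3\cdot(4+4)=24$.

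There are two further inaccuracies worth flagging. In item~(\ref{item_injective_1_skeleton}) you claim the $1$-cells $a_{k,\ell,i}$ (rescaled) ``lie strictly inside $2$-cells of $X_j$''; this is false, as their endpoints can lie on the horizontal $1$-skeleton of $Y_j$. The correct mechanism is the scale/coordinate comparison: fibers of $\pi_j$ live on vertical lines with $x\in m^{-(j+1)}\Z\setminus m^{-j}\Z$ (so they miss the vertical $1$-skeleton of $X_j$), and have vertical extent at most $2m_v^{-(j+1)}<m_v^{-j}$ (so they cannot contain two distinct points of the horizontal $1$-skeleton of $X_j$); together these force injectivity on the $1$-skeleton, matching what the paper's terse citation of Lemma~\ref{lem_properties_of_calr_j}(\ref{item_pair_identifications_descend}) and Lemma~\ref{lem_pi_j_structure}(\ref{item_distance_2}) is meant to encode. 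Finally, your assertion that $a_{k,\ell,i}$ and $a'_{k,\ell,i}$ ``never share an endpoint'' is simply wrong --- they are consecutive cells sharing the endpoint at $y=(3\ell+i)m_v^{-1}$; this is precisely the source of the $3$-point fibers above, and it is the heart of why items~(\ref{item_bounded_link_complexity}) and~(\ref{item_links_connected}) require the more careful argument. The remaining items~(\ref{item_0_skeleton_no_collapsing}), (\ref{item_0_cell_inverse}), (\ref{item_bounded_cell_complexity}), (\ref{item_ball_complexity}), (\ref{item_cell_inverse_bounded}) are argued along essentially the same lines as the paper.
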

\begin{proof}
(\ref{item_injective_1_skeleton}) and (\ref{item_0_skeleton_no_collapsing}) 
follow from Lemma \ref{lem_properties_of_calr_j}(\ref{item_pair_identifications_descend}) and 
 Lemma \ref{lem_pi_j_structure}(\ref{item_distance_2}).

(\ref{item_0_cell_inverse}).  By Lemma \ref{lem_properties_of_calr_j}, a nontrivial
coset of $\calr_j$ is a nontrivial coset of $\Phi^i\calr$ for some $i<j$, and 
therefore contains at most $3$ elements.

(\ref{item_bounded_link_complexity}).
It suffices to verify this
 for $0$-cells.  If $v$ is a $0$-cell of $X_j$, then 
$(\hat\pi^j)^{-1}(v)$  contains at most $3$ vertices of $Y_j$ by
(\ref{item_0_cell_inverse}).   Therefore
$v$ is contained in at most $12$ $2$-cells and  $12$ $1$-cells.

(\ref{item_links_connected}).  If  $(\hat\pi^j)^{-1}(v)$ contains only one point 
$\hat v\in\R^2$,
then the link of $v$ is isomorphic to the link of $\hat v$.  Otherwise for some
$i<j$ the set $(\hat\pi^j)^{-1}(v)$ is a nontrivial coset of $\Phi^i\calr$, and
has the form $\{\hat v, \hat v\pm m_v^{-(i+1)}\}$ for some $0$-cell
$\hat v$ of $Y_{i+1}$, and the two vertical $1$-cells emanating from
$\hat v$ are identified by $\Phi^i\calr$ with the vertical $1$-cells
emanating from $\hat v\pm m_v^{-(i+1)}$.
Thus the link of $v$ is homeomorphic to the quotient of three disjoint circles
$S^1, S^1_+, S^1_-$ by 
by identifying $p_\pm\in S^1$ with $q_\pm\in S^1_\pm$; this is connected.

(\ref{item_bounded_cell_complexity}).
This follows from Lemma \ref{lem_cell_structure}(\ref{item_face_identification}).
 
(\ref{item_ball_complexity}).  This follows by induction on $n$, using
(\ref{item_bounded_link_complexity}) and (\ref{item_bounded_cell_complexity}).

(\ref{item_cell_inverse_bounded}).  
Suppose $\tau\subset\si$ is an open cell of dimension $d$.   By 
Lemma \ref{lem_cell_structure}(\ref{item_face_identification}) the inverse image
of $\tau$ under the projection $\hat\pi^j:Y_j\ra X_j$ is the disjoint union  of the
interiors of a collection $\hat\tau_1,\ldots,\hat\tau_\ell$ of $d$-cells of $Y_j$.  
By (\ref{item_0_cell_inverse})  we have $\ell\leq 3$.  Combining this with
(\ref{item_bounded_cell_complexity}) we get that $(\hat\pi^j)^{-1}(\si)$
can be covered by at most  $27$ cells of $Y_j$.

\end{proof}

Let $x,y$ denote the coordinate functions on $\R^2$.   For every $j\in\Z$,
define $\hat y_j:\R^2\ra S^1(m_v^{-j})=\R/m_v^{-j}\Z$ to be the composition of 
$y:\R^2\ra\R$
with the quotient map $\R\lra \R/m_v^{-j}\Z$.  We will metrize
$S^1(r)=\R/r\Z$ with the quotient metric $d_{S^1(r)}$.
\begin{lemma}[The functions $x_j$ and $y_j$]
\label{lem_x_and_y_descend}

\mbox{}
\begin{enumerate}
\item 
For all $j\in \Z\cup \{\infty\}$ the function $x$ descends to a function $x_j:X_j\ra \R$ and 
for all $j\in \Z$ the function $\hat y_j$ descends to a function
$y_j:X_j\lra S^1(m_v^{-j})=\R/m_v^{-j}\Z$.
\item 
\label{item_x_on_2_cell}
If $\si$ is a $2$-cell of $X_j$, then the image of $\si$ under $x_j:X_j\ra \R$
is an interval of length $m^{-j}$ whose endpoints are the images of the vertical
$1$-cells of $\si$.
\item
\label{item_y_j_on_fiber} If $p,p'\in X_j$ and $\pi_j(p)=\pi_j(p')$, then 
$d(y_j(p),y_j(p'))\leq 2\, m_v^{-(j+1)}$.
\item
\label{item_y_j_diameter} 
If $\bar\si$ is a $2$-cell of $X_{j+1}$, then the inverse image 
$\pi_j^{-1}(\bar \si)\subset X_j$
maps under $y_j:X_j\ra \R/m_v^{-j}\Z$ to a set of diameter at most 
$5m_v^{-(j+1)}$.
\end{enumerate}
\end{lemma}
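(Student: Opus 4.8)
The plan is to verify the four assertions in order, each reducing to a combinatorial fact already established in Lemmas~\ref{lem_properties_of_calr_j}--\ref{lem_x_and_y_descend}. For (1), the point is that the equivalence relations $\calr_j$ are generated by \emph{vertical} translations: by Lemma~\ref{lem_properties_of_calr_j}(\ref{item_coset_description}), any $\calr_j$-coset lies in a single $(m^{-j}\Z)\times(m_v^{-j}\Z)$-orbit, and inspecting the generators $\Phi^i_*\calr$ one sees that the identifications only move points by vertical translations in $m_v^{-(i+1)}\Z \subset m_v^{-j}\Z$. Hence $x$ is constant on $\calr_j$-cosets for every $j$ (including $j=\infty$, since $\calr_\infty=\bigcup_j\calr_j$), and so descends to $x_j:X_j\to\R$; similarly $\hat y_j=y\bmod m_v^{-j}\Z$ is constant on $\calr_j$-cosets because vertical translations by elements of $m_v^{-j}\Z$ act trivially on $\R/m_v^{-j}\Z$, so $\hat y_j$ descends to $y_j:X_j\to S^1(m_v^{-j})$.

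For (2), a $2$-cell $\si$ of $X_j$ is, by Lemma~\ref{lem_cell_structure}, the image $\hat\pi^j(\hat\si)$ of a $2$-cell $\hat\si=[a,a+m^{-j}]\times[b,b+m_v^{-j}]$ of $Y_j$, and $x_j\circ\hat\pi^j = x$ on $\hat\si$, whose image under $x$ is $[a,a+m^{-j}]$, an interval of length $m^{-j}$; the two endpoints are $x$-values of the two vertical edges $\{a\}\times[b,b+m_v^{-j}]$ and $\{a+m^{-j}\}\times[b,b+m_v^{-j}]$ of $\hat\si$, whose images are the vertical $1$-cells of $\si$. For (3), if $\pi_j(p)=\pi_j(p')$ then by Lemma~\ref{lem_properties_of_calr_j}(\ref{item_pair_identifications_descend}) the points $p,p'$ lie in a common coset of $\bar\calr_{j+1}$, which is generated by $(\hat\pi_j\circ\Phi^j)_*\calr$; lifting to $Y_j$ via Lemma~\ref{lem_pi_j_structure}(\ref{item_distance_2}), $p$ and $p'$ are joined by a vertical edge path in $X_j^{(1)}$ of combinatorial length at most $2$, and each such vertical $1$-cell of $X_j^{(1)}$ has $y_j$-image an interval of length $m_v^{-(j+1)}$ (it is the image of a vertical $1$-cell of $Y_{j+1}$); concatenating, $d_{S^1(m_v^{-j})}(y_j(p),y_j(p'))\le 2\,m_v^{-(j+1)}$.

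For (4), let $\bar\si$ be a $2$-cell of $X_{j+1}$; it is the image under $\pi_j$ of some $2$-cell $\si$ of $X_j$, and by (2) applied at level $j$, the $y$-extent argument shows $\si$ itself has $y_j$-image of diameter $m_v^{-j}$... more precisely $\si=\hat\pi^j(\hat\si)$ with $\hat\si$ a $Y_j$-rectangle of vertical extent $m_v^{-j}$, but since $\bar\si$ is \emph{one} $2$-cell of $X_{j+1}$ while $\si$ is a \emph{subdivided} piece, I should instead note that $\pi_j^{-1}(\bar\si)$ is contained in the union of $\si$ together with the finitely many $2$-cells of $X_j$ that get glued onto $\si$ along vertical $1$-cells under $\pi_j$; by Lemma~\ref{lem_pi_j_structure}(\ref{item_si_si'_intersect}) and the combinatorics of $\calr$, $\pi_j^{-1}(\bar\si)$ is covered by $\si$ and at most two adjacent $2$-cells, each of $y_j$-diameter $\le m_v^{-j}$... this overcounts, and the sharp bound $5m_v^{-(j+1)}$ must come from tracking that $\bar\si$ corresponds to a single subdivision step: $\pi_j^{-1}(\bar\si)$ meets at most a bounded vertical strip, and applying (3) to pairs of points in $\pi_j^{-1}(\bar\si)$ that map to the same point of $\bar\si$ together with the fact that the $y_j$-image of a single $2$-cell of $X_{j+1}$ pulled back is contained in a window of $m_v^{-(j+1)}$ translates controlled by the two gluing moves, one gets diameter $\le m_v^{-(j+1)} + 2\cdot 2 m_v^{-(j+1)} = 5 m_v^{-(j+1)}$. \textbf{The main obstacle} is precisely this last bookkeeping in (4): one must carefully identify which $2$-cells of $X_j$ lie in $\pi_j^{-1}(\bar\si)$ and bound the spread of their $y_j$-images, using that $\bar\calr_{j+1}$'s nontrivial cosets are confined to vertical edge paths of length $\le 2$ (Lemma~\ref{lem_properties_of_calr_j}(\ref{item_nontrivial_cosets})); the factor $5$ is not tight and any explicit bounded constant would suffice for the later applications.
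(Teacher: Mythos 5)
Your proof follows the paper's argument throughout: (1)--(3) match essentially verbatim, and for (4), after your self-correction (rightly noting that $\bar\si$ is the image of a $2$-cell of the subdivision $X_j^{(1)}$, not of $X_j$), your final accounting $m_v^{-(j+1)} + 2\cdot 2\,m_v^{-(j+1)} = 5m_v^{-(j+1)}$ is exactly the paper's computation: pick a $2$-cell $\si'$ of $X_j^{(1)}$ with $\pi_j(\si')=\bar\si$, so $\diam y_j(\si')=m_v^{-(j+1)}$, and any $p\in\pi_j^{-1}(\bar\si)$ shares a $\pi_j$-fiber with some $q\in\si'$, so part (3) moves $y_j(p)$ within $2m_v^{-(j+1)}$ of $y_j(\si')$. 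The ``main obstacle'' you flagged is therefore already resolved by the argument you sketched at the end; there is no gap.
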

\begin{proof}
(1).
Since the identifications generating $\calr$ are given by vertical translations by 
multiples of $m_v^{-1}$, the cosets
of $\calr$ are contained in the orbits of the translation 
action $\{e\}\times m_v^{-1}\Z\acts \R^2$; likewise the cosets of $\calr_j$ are contained
in the orbits of $\{e\}\times m_v^{-(j)}\Z$.
Therefore $x$ and   
 $\hat y_j$ descend to $X_j=X_0/\calr_{j}$. 
 
(\ref{item_x_on_2_cell}).  We have $\si=\hat\pi^j(\hat\si)$ for some $2$-cell
$\hat\si$ of $Y_j$.  Then $y_j(\si)=y(\si)$, and the assertion is clear.

(\ref{item_y_j_on_fiber}).  If $\bar e$ is a vertical
$1$-cell of $X_j^{(1)}$, then 
$\bar e=\hat\pi^j(e)$ for some vertical $1$-cell $e$ of $Y_{j+1}$; therefore
$y_j(\bar e)$ has diameter $m_v^{-(j+1)}$.  Applying
Lemma \ref{lem_pi_j_structure}(\ref{item_distance_2}) we 
get $d(y_j(p),y_j(p'))\leq 2\,m_v^{-(j+1)}$.

(\ref{item_y_j_diameter}).  Pick $p,p'\in (\pi_j)^{-1}(\bar\si)$.
Choose $\si$ a $2$-cell of $X_j^{(1)}$ such that 
$\pi_j(\si)=\bar\si$.   Then  there exist $q,q'\in \si$
such that $\pi_j(p)=\pi_j(q)$ and $\pi_j(p')=\pi_j(q')$.
Since $\diam(y_j(\si))=m_v^{-(j+1)}$, by 
(\ref{item_y_j_on_fiber}) we get that
 $\diam(\pi_j)^{-1}(\bar\si)\leq 5\,m_v^{-(j+1)}$.

\end{proof}

\section{Metric structure}
\label{sec_metric_structure}

We now analyze the metric $\hat d_\infty$ by relating it to the geometry
of the approximating cell complexes $X_j$.  The main result in this
section is Proposition \ref{prop_distance_lower_bound}.

Let $\hat d_j$ be the largest pseudodistance on $X_j$ such that 
for every $i\leq j$, and every $2$-cell $\hat\si$ of $Y_i$, the diameter
of $\hat\pi^j(\hat\si)$ is $\leq m^{-i}$.

Before proceeding, we introduce some additional terminology.

A {\bf chain} in a set is a sequence $S_1,\ldots,S_\ell$ of subsets such that
$S_{i-1}\cap S_i\neq\emptyset$ for all $1< i\leq \ell$.  

\begin{definition}
\label{def_j_chain}
Let $\hat\si_1,\ldots,\hat\si_\ell$ be a sequence, 
where $\hat\si_i$ is a $2$-cell of  $Y_{j_i}$.   
Then:
\begin{itemize}
\item  For $j\in \Z\cup \{\infty\}$, the sequence
$\{\hat\si_i\}$ is a {\bf $j$-chain joining $p,p'\in X_j$} if the projections 
$\hat\pi_0^j(\hat\si_0),\ldots,\hat\pi^j(\hat\si_\ell)$
form a chain in $X_j$, and $p\in \hat\pi^j(\hat\si_0)$, $p'\in \hat\pi^j(\hat\si_\ell)$. 
By convention, the empty sequence is a chain joining every point to itself.
\item The {\bf length} and 
{\bf generation} of the sequence $\{\hat\si_i\}$ are
$\sum_im^{-j_i}$ and $\max_ij_i$ respectively.
\end{itemize} 
\end{definition}

By Definition \ref{def_j_chain}, for $p,p'\in X_\infty$, we have
$$
\hat d_\infty(p,p')=\inf\{\length(\{\hat\si_i\})\mid 
\{\hat\si_i\}\;\text{is an $\infty$-chain
joining $p,p'$}\}\,,
$$
and for all $p,p'\in X_j$
  \begin{equation*}
    \hat d_j(p,p')=
       \inf \left\{\length(\{\hat\si_i\})\;\;\middle|\;\;
    \begin{aligned}
    \{\hat\si_i\}\;\text{is }&\text{a $j$-chain of generation}\\
    &\text{$\leq j$ joining $p,p'$}
    \end{aligned}
    \right\},
  \end{equation*}

Note  that it follows that $\hat d_j$ restricts to a discrete metric on each $2$-cell
$\si$ of $X_j$.

\begin{lemma}
If $\hat\si_1,\ldots,\hat\si_\ell$ is an $\infty$-chain of generation $g$,  
then it is a $j$-chain for all $j\geq g$. 
\end{lemma}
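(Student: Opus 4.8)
The plan is to reduce the assertion to the one-step intersection fact already established in Lemma~\ref{lem_pi_j_structure}(\ref{item_limits_intersect}). By Definition~\ref{def_j_chain}, saying that $\{\hat\si_i\}$ is a $j$-chain means precisely that consecutive projected cells $\hat\pi^j(\hat\si_{i-1})$ and $\hat\pi^j(\hat\si_i)$ meet in $X_j$ for every $i$; moreover the generation $g=\max_i j_i$ does not refer to $j$, so by hypothesis it is automatically $\le j$. Thus it suffices to fix a single consecutive pair, say $2$-cells $\hat\si_{i-1}$ of $Y_{j_{i-1}}$ and $\hat\si_i$ of $Y_{j_i}$, and to show $\hat\pi^j(\hat\si_{i-1})\cap\hat\pi^j(\hat\si_i)\neq\emptyset$. (After applying a large power of the self-similarity $\Phi$, which induces compatible isomorphisms $X_m\cong X_{m+k}$ for all $m\in\Z\cup\{\infty\}$, we may and do assume all indices $j_i$, hence $g$ and $j$, are nonnegative, so that the cited lemma applies as stated.)

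Next I would reorder so that $j_{i-1}\le j_i$, and set $g_i:=j_i$; then $g_i\le g\le j$. Since $\{\hat\si_i\}$ is an $\infty$-chain we have $\hat\pi^\infty(\hat\si_{i-1})\cap\hat\pi^\infty(\hat\si_i)\neq\emptyset$, so Lemma~\ref{lem_pi_j_structure}(\ref{item_limits_intersect}), applied with the pair of indices $j_{i-1}\le g_i$, gives $\hat\pi^{g_i}(\hat\si_{i-1})\cap\hat\pi^{g_i}(\hat\si_i)\neq\emptyset$ in $X_{g_i}$.

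Then I would push this intersection forward along the direct system. Since $\calr_m\subset\calr_{m'}$ whenever $m\le m'$, the quotient maps factor as $\hat\pi^j=\pi_{g_i}^j\circ\hat\pi^{g_i}$, and for any map $f$ one has $f(A\cap B)\subset f(A)\cap f(B)$; taking $f=\pi_{g_i}^j$ and $A,B=\hat\pi^{g_i}(\hat\si_{i-1}),\hat\pi^{g_i}(\hat\si_i)$ yields $\hat\pi^j(\hat\si_{i-1})\cap\hat\pi^j(\hat\si_i)\neq\emptyset$. Ranging over all consecutive pairs shows that $\hat\pi^j(\hat\si_0),\ldots,\hat\pi^j(\hat\si_\ell)$ form a chain in $X_j$, which is exactly the assertion that $\{\hat\si_i\}$ is a $j$-chain.

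I do not expect any real obstacle. The only substantive ingredient is the ``descent'' of an intersection from the direct limit $X_\infty$ down to a finite stage, which is exactly what Lemma~\ref{lem_pi_j_structure}(\ref{item_limits_intersect}) supplies; the remaining climb from $X_{g_i}$ up to $X_j$ is purely formal, because images of sets under a map can only merge intersection points, never separate them. The single bookkeeping point is that one must land in the stage $g_i=\max(j_{i-1},j_i)$ before climbing up to the target $j$, rather than trying to apply the cited lemma directly at level $j$, since $\hat\si_i$ need not be a single $2$-cell of $Y_j$.
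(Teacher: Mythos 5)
Your proof is correct and takes essentially the same approach as the paper, which simply cites Lemma~\ref{lem_pi_j_structure}(\ref{item_limits_intersect}) without spelling out the details. You correctly fill in the bookkeeping: apply the cited lemma at the level $\max(j_{i-1},j_i)$ for each consecutive pair, then push the nonempty intersection forward along the factored projections to level $j$; the note about using $\Phi$ to shift to nonnegative indices is harmless (the nonnegativity hypothesis in the cited lemma is merely conventional), and the rest is exactly the argument the paper leaves implicit.
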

\begin{proof}
This follows from Lemma \ref{lem_pi_j_structure}(\ref{item_limits_intersect}).
\end{proof} 

\begin{lemma}
\label{lem_chain_connected}
Let $\C=\C_1\cup\C_2$ be a collection of subsets of a set, and $S,S'\in \C_1$.  
Assume that $\C$ contains a chain from $S$ to $S'$, and that every element of $\C_2$
is contained in some element of $\C_1$.  Then $\C_1$ contains a chain from $S$ to $S'$.
\end{lemma}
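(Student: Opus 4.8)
The plan is to take a chain in $\C$ from $S$ to $S'$ and eliminate the elements of $\C_2$ one at a time, replacing each by a containing element of $\C_1$, while preserving the chain property. Concretely, let $T_1, \dots, T_k$ be a chain in $\C$ with $T_1 = S$, $T_k = S'$, where each $T_i \in \C$; since $S, S' \in \C_1$ we may assume $T_1, T_k \in \C_1$. If every $T_i$ already lies in $\C_1$ we are done, so suppose some $T_i \in \C_2$ (with $1 < i < k$). By hypothesis there is $T_i' \in \C_1$ with $T_i \subset T_i'$. Then $T_{i-1} \cap T_i' \supset T_{i-1} \cap T_i \neq \emptyset$ and likewise $T_i' \cap T_{i+1} \neq \emptyset$, so replacing $T_i$ by $T_i'$ yields a chain of the same length from $S$ to $S'$ with one fewer element of $\C_2$.

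Iterating this substitution at most $k$ times produces a chain $T_1', \dots, T_k'$ from $S$ to $S'$ all of whose members lie in $\C_1$, which is the desired conclusion. (One can phrase the induction formally on the number of indices $i$ with $T_i \in \C_2$: the base case is this number being $0$, and the substitution above reduces it by one while keeping the endpoints fixed.)

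There is essentially no obstacle here; the only point requiring a word of care is the edge case where the given chain has length $1$ or $2$ or where $S = S'$, but these are handled by the same argument (and the convention that the endpoints are taken in $\C_1$), and the monotonicity $A \subset B \Rightarrow (A \cap C \neq \emptyset \Rightarrow B \cap C \neq \emptyset)$ used in the substitution step is immediate. This lemma will be applied later to pass from chains of $2$-cells of various generations $Y_{j_i}$ (the elements of $\C_2$, each contained in a $2$-cell of the coarsest complex) to chains of $2$-cells all at a fixed generation.
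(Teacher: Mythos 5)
Your proposal is correct and follows the same route as the paper's own proof: both eliminate $\C_2$-elements one at a time by replacing each with a containing element of $\C_1$, using the monotonicity of nonempty intersection under taking supersets to preserve the chain property. The paper states this in one sentence; you have merely spelled out the induction and the verification of the intersection condition.
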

\begin{proof}
Starting with a chain in $\C$ from $S$ to $S'$, one may inductively reduce the number
of elements from $\C_2$ by replacing each one with an element of $\C_1$ that contains it.
\end{proof}

\begin{definition}
\label{def_gallery}
A {\bf gallery} in $X_j^{(k)}$ is a sequence of $2$-cells 
$\si_1,\ldots,\si_\ell$ of $X_j^{(k)}$
such that $\si_{i-1}\cap\si_i$ contains a $1$-cell
$e_i$ of $X_j^{(k)}$
for every $1<i\leq \ell$; the gallery is {\bf vertical}  
if for all $1<i\leq\ell$ the $1$-cell $e_i$ is horizontal, and {\bf horizontal} 
if $e_i$ is
vertical for all $1<i\leq \ell$. 
\end{definition}

\begin{lemma}
\label{lem_metric_versus_combinatorial_distance}
There exist $N_1=N_1(C)$, $N_2=N_2(C)$
such that for every $j\in\Z$,
every $\hat d_j$-ball $B(p,Cm^{-j})\subset X_j$ is contained in a combinatorial
$N_1$-ball of $X_j$, and is contained in a union of at most $N_2$ $2$-cells
of $X_j$.  
\end{lemma}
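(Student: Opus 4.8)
The plan is to translate the metric estimate $\hat d_j(p,p')< Cm^{-j}$ into a bound on the combinatorial distance between $p$ and $p'$ in $X_j$, using that $\hat d_j$ is defined through chains of cells, and then to read off both assertions from the bounded‑complexity estimates of Lemma~\ref{lem_bounded_complexity}.

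First I would fix $j$ and $p$, assume harmlessly that $C\ge 1$, and take an arbitrary point $p'\in B(p,Cm^{-j})$. By the formula for $\hat d_j$ recorded just after Definition~\ref{def_j_chain}, there is a $j$-chain $\hat\si_0,\dots,\hat\si_\ell$ of generation $\le j$ joining $p$ to $p'$, where $\hat\si_i$ is a $2$-cell of $Y_{j_i}$ with $j_i\le j$, and $\sum_{i=0}^\ell m^{-j_i}< Cm^{-j}$. Two elementary consequences drive the argument. \emph{(i) The chain is short:} since $j_i\le j$ we have $m^{-j_i}\ge m^{-j}$ for every $i$, so $(\ell+1)m^{-j}\le\sum_i m^{-j_i}< Cm^{-j}$, whence $\ell+1<C$. \emph{(ii) Every chain cell has generation close to $j$:} each single term satisfies $m^{-j_i}\le\sum_i m^{-j_i}<Cm^{-j}$, so $j-j_i<\log_m C$; hence the $2$-cell $\hat\si_i$ of $Y_{j_i}$, which in $Y_j$ subdivides into $m^{j-j_i}m_v^{j-j_i}$ rectangles, consists of at most $K=K(C)$ $2$-cells of $Y_j$, where $K\approx C\,m_v^{\log_m C}$ depends only on $C$ because $m=4$ and $m_v=3L$ are fixed. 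By Lemma~\ref{lem_cell_structure}(2), $\hat\pi^j(\hat\si_i)$ is then a union of at most $K$ closed $2$-cells of $X_j$, and it is connected, being the continuous image of the connected set $\hat\si_i$.

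Combining (i) and (ii), the set $U:=\bigcup_{i=0}^\ell\hat\pi^j(\hat\si_i)$ is a \emph{connected} union of at most $M:=CK$ closed $2$-cells of $X_j$ and contains both $p$ and $p'$. The adjacency graph on these $2$-cells (edge = closures intersect) is then connected, hence of diameter at most $M$; since two closed $2$-cells of $X_j$ that meet share at least a $0$-cell, and each closed $2$-cell of $X_j$ contains at most $9$ open cells (Lemma~\ref{lem_bounded_complexity}(6)) and so has bounded combinatorial diameter, it follows that $p$ and $p'$ lie in cells of $X_j$ at combinatorial distance at most some $N_1=N_1(C)$. As $p'\in B(p,Cm^{-j})$ was arbitrary, $B(p,Cm^{-j})$ lies in the combinatorial $N_1$-neighborhood of any cell of $X_j$ containing $p$; this is the first assertion. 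For the second, the inductive argument behind Lemma~\ref{lem_bounded_complexity}(7) — bounding the number of cells in a combinatorial $R$-ball by induction on $R$ from the link bound of Lemma~\ref{lem_bounded_complexity}(4) and the cell bound of Lemma~\ref{lem_bounded_complexity}(6) — shows that such an $N_1$-neighborhood contains at most $N_2=N_2(C)$ cells, so in particular $B(p,Cm^{-j})$ is covered by at most $N_2$ $2$-cells of $X_j$.

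The only substantive point is step (ii): a priori a $j$-chain of bounded length could involve cells of wildly differing generations, but the length bound forces $j-j_i<\log_m C$ for every $i$, and this is exactly what converts ``bounded metric length'' into ``boundedly many $Y_j$-cells'' and hence into bounded combinatorial spread in $X_j$. Everything after that is bookkeeping with Lemma~\ref{lem_bounded_complexity}; the degenerate cases ($p=p'$, corresponding to the empty chain, and $\ell=0$) are immediate.
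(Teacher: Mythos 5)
Your proof is correct and follows essentially the same route as the paper: extract a $j$-chain of generation $\le j$ and length $<Cm^{-j}$, use the length bound to control both the number of chain members and the generation gap $j-j_i$, project to get boundedly many $2$-cells of $X_j$ forming a connected union, and then invoke Lemma~\ref{lem_bounded_complexity}(\ref{item_ball_complexity}) for the cell count. The only cosmetic differences are the explicit adjacency-graph bookkeeping (which the paper handles implicitly via the chain structure) and the citation of Lemma~\ref{lem_cell_structure}(2) where part (\ref{item_x_j_k_cw_structure}) would be more apt.
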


\begin{proof}
Fix $C<\infty$ and $p\in X_j$.  

If $\hat d_j(p',p)<Cm^{-j}$,  then there is a $j$-chain 
$\hat\si_1^0,\ldots,\hat\si_{\ell_0}^0$ from $p$ to $p'$ of length
$<Cm^{-j}$.  
Letting $j_i$ be the generation of $\hat\si_i^0$, this yields
$$
m^{-j_i}<Cm^{-j}\,,\quad \ell_0\cdot m^{-j}< Cm^{-j}
$$
so

\begin{equation}
\label{eqn_generation_bound}
j-j_i<\frac{\log C}{\log m}
\end{equation}
and
$\ell_0<C$.
Now (\ref{eqn_generation_bound}) implies
that  $\hat\si_i^0$ contains at most $C_1=C_1(C)$ $2$-cells
of $Y_j$.  The collection $\C$ of all $2$-cells of $Y_j$
contained in $\cup_i\hat\si_i^0$ contains at most $C_1\cdot \ell_0$
$2$-cells.  Also $\C$  contains a $j$-chain $\hat\si_1,\ldots,\hat\si_\ell$
from $p$ to $p'$, where $\ell\leq C_1\ell_0<C_1\cdot C$.  Putting 
$N_1=C_1\cdot C$, we have shown that $p'$ lies in the combinatorial
$N_1$-ball of the open cell containing $p$.  

By Lemma \ref{lem_bounded_complexity}(\ref{item_ball_complexity}) the 
$N_1$-ball contains at most $N_2=N_2(C)$ cells.

\end{proof}

The next two lemmas provide lower bounds on the combinatorial length
of certain types of chains of $2$-cells in $X_{j+1}$.

\begin{lemma}
\label{lem_joining_edges}
Let $\si\subset X_j$ be a $2$-cell with vertical $1$-cells $e_1,e_2$.  
If $\bar \si_1,\ldots,\bar \si_\ell
\subset\pi_j(\si)$ is a chain of $2$-cells of $X_{j+1}$ that joins 
$\pi_j(e_1)$ to $\pi_j(e_2)$, 
then $\ell\geq m$.
\end{lemma}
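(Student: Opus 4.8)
The plan is to set up an appropriate Lipschitz ``potential'' on the $2$-cell $\si$ that separates the two vertical edges, and then argue that the chain $\bar\si_1,\ldots,\bar\si_\ell$ must traverse the full horizontal extent of $\pi_j(\si)$. Concretely, recall from Lemma \ref{lem_x_and_y_descend}(\ref{item_x_on_2_cell}) that $x_j:X_j\to\R$ maps $\si$ onto an interval of length $m^{-j}$ whose endpoints are the images of the vertical $1$-cells $e_1,e_2$. Since $x$ descends to $x_{j+1}$ on $X_{j+1}$ as well, and since the projection $\pi_j$ is compatible with the $x$-coordinate, the function $x_{j+1}$ restricted to $\pi_j(\si)$ takes values in an interval $I$ of length $m^{-j}$, with $x_{j+1}(\pi_j(e_1))$ and $x_{j+1}(\pi_j(e_2))$ being the two endpoints of $I$. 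First I would pass to the subdivision: by Lemma \ref{lem_cell_structure}(\ref{item_x_j_k_cw_structure}) the $2$-cells of $X_{j+1}$ sitting inside $\pi_j(\si)$ arise from the $(j{+}1)$-st subdivision of $\si$, and each such $2$-cell is the image of a rectangle of width $m^{-(j+1)}$, so $x_{j+1}(\bar\si_i)$ is an interval of length exactly $m^{-(j+1)}$ for every $i$.

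The key step is then a connectedness/covering argument in the $x$-coordinate. Because $\bar\si_1,\ldots,\bar\si_\ell$ is a chain (consecutive cells meet), the union $\bigcup_i \bar\si_i$ is a connected subset of $\pi_j(\si)$ containing both $\pi_j(e_1)$ and $\pi_j(e_2)$; applying the continuous map $x_{j+1}$, the union $\bigcup_i x_{j+1}(\bar\si_i)$ is a connected subset of $I$ containing both endpoints of $I$, hence equals $I$. Now $I$ has length $m^{-j}$ and is covered by the $\ell$ intervals $x_{j+1}(\bar\si_i)$, each of length $m^{-(j+1)}$; a covering of an interval of length $m^{-j}$ by intervals of length $m^{-(j+1)}$ requires at least $m^{-j}/m^{-(j+1)} = m$ of them. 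Therefore $\ell\geq m$.

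The step I expect to require the most care is verifying that each $x_{j+1}(\bar\si_i)$ genuinely has length $m^{-(j+1)}$ and is a subinterval of $I$ — i.e. that no collapsing in the $x$-direction occurs and that the subdivision structure of $\pi_j(\si)$ is the expected product grid. This is essentially bookkeeping: the equivalence relations $\calr_i$ are generated by \emph{vertical} translations (Lemma \ref{lem_properties_of_calr_j}(\ref{item_coset_description})), so the $x$-coordinate is never identified to anything, and Lemma \ref{lem_cell_structure}(\ref{item_x_j_k_cw_structure}) gives that $\pi_j(\si)$ subdivides into an $m\times m_v$ grid of $2$-cells of $X_{j+1}$ whose $x$-images are precisely the $m$ consecutive subintervals of $I$ of length $m^{-(j+1)}$. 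One minor subtlety: a $2$-cell $\bar\si_i\subset\pi_j(\si)$ of $X_{j+1}$ need not be the image under $\pi_j$ of a single subdivided cell of $\si$ — it could be glued from several — but this only makes $x_{j+1}(\bar\si_i)$ a union of such subintervals, which still has length a multiple of $m^{-(j+1)}$, and the covering count $\ell\ge m$ only improves. Hence the bound $\ell\geq m$ holds in all cases.
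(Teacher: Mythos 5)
Your proof is correct and follows essentially the same approach as the paper: pass to the $x$-coordinate, use Lemma \ref{lem_x_and_y_descend}(\ref{item_x_on_2_cell}) to see that each $x_{j+1}(\bar\si_i)$ is an interval of length $m^{-(j+1)}$ while the endpoints differ by $m^{-j}$, and count. (Your ``minor subtlety'' about $\bar\si_i$ being glued from several subcells is a non-issue: since $\bar\si_i$ is a single $2$-cell of $X_{j+1}$, Lemma \ref{lem_x_and_y_descend}(\ref{item_x_on_2_cell}) applies directly and $x_{j+1}(\bar\si_i)$ has length exactly $m^{-(j+1)}$.)
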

\begin{proof}
Since $\{\bar \si_i\}$ is a chain in $X_{j+1}$, the images under $x_{j+1}:X_{j+1}\ra [0,1]$
form a chain in $[0,1]$ joining $x_{j+1}(\pi_j(e_1))$ to $x_{j+1}(\pi_j(e_2))$.  But 
by Lemma \ref{lem_x_and_y_descend}(2) the image $x_{j+1}(\bar \si_i)$
is a segment of length $m^{-(j+1)}$, while 
$|x_{j+1}(\pi_j(e_1))-x_{j+1}(\pi_j(e_2))|=m^{-j}$.
Thus 
$$
m^{-j}=\diam(\pi_j(e_1)\cup\pi_j(e_2))
\leq \sum_i\diam(x_{j+1}(\bar \si_i))=\ell\cdot m^{-(j+1)}\,
$$
so $\ell\geq m$.
\end{proof}
\begin{definition}
\label{def_stars}
The {\bf open star (resp. closed star)} of a cell $\si$ in a cell complex is the union of the open (respectively closed)
cells whose closure contains $\si$.
\end{definition}

\begin{lemma}
\label{lem_edge_star}
Let $e$ be a vertical $1$-cell of $X_j$, and $\Star(e,X_j)$ be the closed star of $e$
in $X_j$.    Let $ \bar\si_1,\ldots, \bar\si_\ell$ be a chain of $2$-cells in $X_{j+1}$
such that:
\begin{enumerate}
\item $ \bar\si_1,\ldots,\bar\si_\ell\subset \pi_j(\Star(e,X_j))$.
\item $ \bar\si_1\cap \pi_j(e)\neq\emptyset$.
\item There is a $2$-cell $\tau$ of $X_j$  that
is not contained in $\Star(e,X_j)$, such that
$ \bar\si_\ell$ intersects $\pi_j(\tau)$.
\end{enumerate}
Then one of the following holds:
\begin{enumerate}
\renewcommand{\labelenumi}{(\alph{enumi})}
\item
\label{item_vertical_movement} 
For some $ \bar\si_i$, there is a $p\in \pi_j^{-1}(\bar\si_i)$
such that
$d(y_j(p),0)\leq 2m_v^{-(j+1)}$.  \item There is a $2$-cell $\si$ of $X_j$ that is contained 
in $\Star(e,X_j)$, with vertical $1$-cells $e$, $e_1$
such that some subchain of $\{ \bar\si_i\}$ 
is contained in $\pi_j(\si)$ and joins $\pi_j(e)$ to $\pi_j(e_1)$.
\end{enumerate}
Moreover in case (a) we have
\begin{equation}
\label{eqn_lower_bound_on_ell}
\ell\geq \frac{1}{5}\cdot \left[
m_v^{(j+1)}\dist(y_j(\pi_j^{-1}(\bar\si_1)),0)-2\right]\,.
\end{equation}
\end{lemma}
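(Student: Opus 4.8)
The plan is to trace the chain $\bar\si_1,\dots,\bar\si_\ell$ along the $x_j$-coordinate and exploit the structure of the star $\Star(e,X_j)$.  Let $v_1,v_2$ be the endpoints of $e$, so that $\Star(e,X_j)$ consists of $e$ together with the (at most four, by Lemma \ref{lem_bounded_complexity}(\ref{item_bounded_link_complexity})) $2$-cells having $e$ as a face, plus the remaining cells incident to $v_1$ or $v_2$.  Since $\bar\si_\ell$ meets $\pi_j(\tau)$ for some $2$-cell $\tau\not\subset\Star(e,X_j)$, the image $x_j(\pi_j^{-1}(\bar\si_\ell))$ is forced to lie near one of the vertical $1$-cells of $\Star(e,X_j)$ that is \emph{not} $e$ itself; here I use Lemma \ref{lem_x_and_y_descend}(\ref{item_x_on_2_cell}) to read off the $x$-interval of each $2$-cell, and the observation that $\tau$ shares only a $0$-cell (i.e.\ $v_1$ or $v_2$) with $\Star(e,X_j)$.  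On the other hand, by hypothesis (2), $\bar\si_1$ meets $\pi_j(e)$, so $x_j(\pi_j^{-1}(\bar\si_1))$ is an interval of length $m^{-j}$ with one endpoint at $x_j(e)$.

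First I would dispose of the easy alternative: if the chain stays inside a \emph{single} $2$-cell $\si$ of $\Star(e,X_j)$ having $e$ as a vertical $1$-cell, and within it joins $\pi_j(e)$ to the other vertical $1$-cell $\pi_j(e_1)$ of $\si$, then we are in case (b) and there is nothing more to prove.  So assume this does not happen.  Then, following the chain from $\bar\si_1$, at some step the chain must leave the first $2$-cell $\si^{(1)}$ of $\Star(e,X_j)$ that it entered (the one with $e$ as a face), \emph{without} having first reached the opposite vertical $1$-cell $e_1$ of $\si^{(1)}$.  By Definition \ref{def_gallery}/\ref{def_j_chain}, consecutive $\bar\si_i$ intersect, so to pass from $\si^{(1)}$ to a different $2$-cell of $\Star(e,X_j)$ while avoiding the face $e_1$, some $\bar\si_i$ must pass through a neighborhood of the vertex $v_1$ or $v_2$ — this is where hypothesis (1) confining everything to $\pi_j(\Star(e,X_j))$ is used: the only way out of $\si^{(1)}$ staying in the star, other than through $e$ or $e_1$, is near an endpoint of $e$.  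Since $v_1,v_2\in\{y=0\}$ (one can normalize so that $e$ lies in a vertical $1$-cell of $Y_j$ with lower endpoint at height $0$; more precisely $y_j$ of the endpoints of $e$ takes the two values $0$ and $m_v^{-j}$, and after possibly relabeling we look at the one at height $0$), the relevant $\bar\si_i$ contains a point of $\pi_j^{-1}$ at $y_j$-distance at most $\diam(y_j(\text{$2$-cell})) + (\text{fiber error})$ from $0$; Lemma \ref{lem_x_and_y_descend}(\ref{item_y_j_on_fiber}) and (\ref{item_y_j_diameter}) give the bound $2m_v^{-(j+1)}$.  This is exactly conclusion (a).

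For the quantitative estimate \eqref{eqn_lower_bound_on_ell} in case (a): we have a chain of $2$-cells of $X_{j+1}$, each of which has $y_j$-diameter at most $m_v^{-(j+1)}$ (by Lemma \ref{lem_x_and_y_descend}, since a $2$-cell of $X_{j+1}$, viewed via $\pi_j^{-1}$ in $X_j$, has $y_j$-image of diameter at most $5m_v^{-(j+1)}$ — use \eqref{item_y_j_diameter}), and consecutive cells overlap; meanwhile $\bar\si_1$ attains $y_j$-value within $2m_v^{-(j+1)}$ of $\dist(y_j(\pi_j^{-1}(\bar\si_1)),0)$ by definition, and one of the cells reaches $y_j$-value within $2m_v^{-(j+1)}$ of $0$.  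So the chain must span a $y_j$-interval of length at least $\dist(y_j(\pi_j^{-1}(\bar\si_1)),0) - 2m_v^{-(j+1)}$, and since each link of the chain contributes at most $5m_v^{-(j+1)}$, the triangle inequality along the chain gives
$$
\ell \cdot 5m_v^{-(j+1)} \;\geq\; \dist(y_j(\pi_j^{-1}(\bar\si_1)),0) - 2m_v^{-(j+1)}\,,
$$
which rearranges to \eqref{eqn_lower_bound_on_ell}.

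The main obstacle, I expect, is the combinatorial bookkeeping in the middle step: precisely identifying which $2$-cells constitute $\Star(e,X_j)$ and verifying that the \emph{only} routes by which a chain can exit the first $2$-cell $\si^{(1)}$, while remaining inside $\pi_j(\Star(e,X_j))$ and not traversing the opposite vertical edge, must visit the $y_j$-level of an endpoint of $e$.  This requires care because $\pi_j$ can identify vertical $1$-cells (that is the whole point of $\calr$), so $\Star(e,X_j)$ need not be a disk; one must use Lemma \ref{lem_bounded_complexity}(\ref{item_links_connected}) and the explicit description there of the link of a $0$-cell as a quotient of three circles to see that the ``exit near $v_i$'' analysis is still valid, and that the $x_j$-coordinate genuinely forces the chain to commit to one side or the other.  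Everything else is a direct application of the already-established descent properties of $x_j$ and $y_j$ and the metric–combinatorial dictionary.
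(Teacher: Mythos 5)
Your proposal has the correct overall dichotomy and the quantitative estimate in case (a) is handled correctly and matches the paper: the $y_j$-images $y_j(\pi_j^{-1}(\bar\si_i))$ have diameter $\le 5m_v^{-(j+1)}$ by Lemma \ref{lem_x_and_y_descend}(\ref{item_y_j_diameter}), they form a chain in $S^1(m_v^{-j})$, one end is $y_j(\pi_j^{-1}(\bar\si_1))$ and one end lies within $2m_v^{-(j+1)}$ of $0$, and the triangle inequality gives \eqref{eqn_lower_bound_on_ell}. So far so good.

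However, the combinatorial step that you flag as the ``main obstacle'' is genuinely the heart of the lemma, and your sketch of it has real gaps rather than routine bookkeeping. First, the notion of ``the first $2$-cell $\si^{(1)}$ of $\Star(e,X_j)$ that the chain entered'' and of ``leaving $\si^{(1)}$'' is not well defined as you use it: a single $\bar\si_i$ lies in $\pi_j(\si)$ for possibly several $2$-cells $\si$ of $X_j$, since $\pi_j$ identifies vertical $1$-cells of $X_j^{(1)}$ lying in the interiors of distinct $2$-cells of $X_j$. The paper resolves this by choosing, for each $i$, a preimage $\si_i$ in $X_j^{(1)}$ with $\pi_j(\si_i)=\bar\si_i$ and $\si_i\subset\Star(e,X_j)$, and then taking the \emph{unique} $2$-cell $\tau_i$ of $X_j$ with $e\subset\tau_i$ and $\si_i\subset\tau_i$. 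Second, your argument is purely local to $\si^{(1)}$: even if one accepts that leaving $\si^{(1)}$ off $e\cup e_1$ forces the chain near an endpoint of $e$, you do not address what happens after the chain leaves; it could re-enter $\si^{(1)}$, wander through other $2$-cells containing $e$, and exit again, and you need to rule out all such trajectories at once. The paper avoids this by first passing (WLOG) to a \emph{minimal} chain --- condition (i): only $\bar\si_1$ meets $\pi_j(e)$; condition (ii): only $\bar\si_\ell$ meets $\pi_j(\tau')$ for $\tau'\not\subset\Star(e,X_j)$ --- and then showing that $\tau_1=\cdots=\tau_\ell$ (i.e., $i_0=1$), after which (b) follows immediately. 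Third, the precise mechanism by which ``exiting near a vertex forces $y_j\approx 0$'' needs an argument: the paper deduces this from Lemma \ref{lem_pi_j_structure}(\ref{item_distance_2}), which gives a \emph{vertical} edge path of combinatorial length $\le 2$ in $X_j^{(1)}$ connecting the two preimages of a transition point; under $\neg$(a), such a path cannot touch the horizontal $1$-skeleton of $X_j$, so it is forced to cross a vertical $1$-cell of $\D\tau_\ell$, and minimality then pins that $1$-cell down to $e_1$. In short: the strategy is right, the estimate is right, but the middle step you identify as the obstacle is exactly where the minimality reduction and the canonical $\tau_i$ assignment are needed, and without them your argument does not close.
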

\begin{proof}
Suppose (a) holds.   Since the sets
$\{y_j(\pi_j^{-1}(\bar\si_k))\}_{1\leq k\leq i}$ each have
diameter $\leq 5m_v^{-(j+1)}$ by 
Lemma \ref{lem_x_and_y_descend}(\ref{item_y_j_diameter}), and they 
form a chain in $S^1(m_v^{-j})$
joining $y_j(\pi_j^{-1}(\bar\si_1))$ to
$B(0,2m_v^{-(j+1)})\subset S^1(m_v^{-j})$,
the triangle inequality gives (\ref{eqn_lower_bound_on_ell}).  

Now suppose (a) does not hold, i.e. for every $\bar\si_i$ and every $p\in \pi_j^{-1}(\bar\si_i)$,
\begin{equation}
\label{eqn_not_a}
d(y_j(p),0)> 2m_v^{-(j+1)}\,.
\end{equation}
  We may assume without loss of generality that $\bar\si_1,\ldots,\bar\si_\ell$ is a 
minimal chain satisfying the hypotheses of the Lemma, so that: 

\begin{enumerate}
\renewcommand{\labelenumi}{(\roman{enumi})}
\item For all $i>1$, the $2$-cell $\bar\si_i$  does not intersect
$\pi_j(e)$. 
\item For all $i<\ell$, the $2$-cell $\bar\si_i$ does not intersect $\pi_j(\tau')$ for
any $2$-cell $\tau'$ of $X_j$ not contained in $\Star(e,X_j)$.
\end{enumerate} 

For every $i$, there is a $2$-cell $\si_i$ of $X_j^{(1)}$ such that
$\pi_j(\si_i)=\bar\si_i$, and $\si_i\subset\Star(e,X_j)$.  The $2$-cell $\si_i$
is contained in a unique $2$-cell $\tau_i$ of $X_j$ that contains $e$.

Let $\tau$ be as in (3), and pick $\bar p\in \bar\si_\ell\cap \pi_j(\tau)$.
Choose $p_\ell\in \si_\ell$, $p\in\tau$ such that $\pi_j(p_\ell)=\pi_j(p)=\bar p$.
By Lemma \ref{lem_pi_j_structure}(\ref{item_distance_2}) there is a vertical edge 
path $\ga$ in $X_j^{(1)}$  of combinatorial length at most $2$ that
contains $p_\ell$ and $p$.  
Because $\tau\not\subset\Star(e,X_j)$, we know that $e\not\subset \tau$, and
so $\tau\neq\tau_\ell$.  Therefore   $\ga$ must intersect $\D\tau_\ell$ and
$\D \tau$.  In view of (\ref{eqn_not_a}) we conclude that $p_\ell$ and $p$
both lie in the interior of a vertical $1$-cell $e_1\subset \tau_\ell\cap \tau$.  
Since $e\not\subset \tau$, it follows that $e_1\neq e$, so $e_1$
is the second vertical $1$-cell of $\D \tau_\ell$.

Let $i_0$ be the minimal $i$ such that the subchain
$\bar\si_i,\ldots,\bar\si_\ell$ is contained in $\tau_\ell$, i.e. such that
$\tau_i=\ldots=\tau_\ell$.   Suppose $i_0>1$.  Choose $p_{i_0-1}\in \si_{i_0-1}$,
$p_{i_0}\in\si_{i_0}$ such that 
$\pi_j(p_{i_0-1})=\pi_j(p_{i_0})\in \bar\si_{i_0-1}\cap\bar\si_{i_0}
\subset\pi_j(\tau_{i_0-1})\cap \pi_j(\tau_\ell)$.  Reasoning as above,
we get that $p_{i_0-1}$ and $p_{i_0}$ belong to the interior 
of a vertical $1$-cell $e'$ of $X_j$, where
$e'\subset\D\tau_{i_0-1}\cap\D \tau_\ell$.  We cannot have $e'=e$ or
$e'=e_1$ by the minimality of the chain $\bar\si_1,\ldots,\bar\si_\ell$.
This is a contradiction.  Hence $i_0=1$ and 
$\bar\si_1,\ldots,\bar\si_\ell\subset\tau_\ell$.
Putting $\si=\tau_\ell$ we have shown that (b) holds.
\end{proof}

\begin{proposition}
\label{prop_distance_lower_bound}
For all $k\in\Z$ and  $p,p'\in X_k$, 
\begin{equation}
\label{eqn_distance_lower_bound}
\hat d_\infty(\pi_k^\infty(p),\pi_k^\infty(p'))\geq 
\hat d_k(p,p')-2m^{-k}\,.
\end{equation}
\end{proposition}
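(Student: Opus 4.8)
The plan is to prove the lower bound \eqref{eqn_distance_lower_bound} by taking a near-optimal $\infty$-chain joining $\pi_k^\infty(p)$ to $\pi_k^\infty(p')$ and converting it, at the cost of at most $2m^{-k}$ in length, into a $k$-chain of generation $\leq k$ joining $p$ to $p'$ in $X_k$. Let $\hat\si_1,\dots,\hat\si_\ell$ be an $\infty$-chain joining $\pi_k^\infty(p)$ to $\pi_k^\infty(p')$ with length $<\hat d_\infty(\pi_k^\infty(p),\pi_k^\infty(p'))+\eps$; let $g=\max_i j_i$ be its generation. By the first lemma following Definition~\ref{def_j_chain}, this is also a $g$-chain, so it descends to a chain of $2$-cells $\pi^g(\hat\si_1),\dots,\pi^g(\hat\si_\ell)$ in $X_g$ realizing (up to $\eps$) the distance between the images of $p,p'$ in $X_g$. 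So it suffices to show the estimate with $X_g$ in place of $X_\infty$, i.e. to prove for all $g\geq k$ and $p,p'\in X_k$ that $\hat d_g(\pi_k^g(p),\pi_k^g(p'))\geq \hat d_k(p,p')-2m^{-k}$, and then let $\eps\to 0$. Actually a cleaner reduction: prove the one-step inequality $\hat d_{j+1}(\pi_j(p),\pi_j(p'))\geq \hat d_j(p,p')-2m^{-j}+2m^{-(j+1)}$ for $p,p'\in X_j$... but the sum $\sum_{i\geq k}2m^{-i}\cdot(\text{something})$ must telescope to $2m^{-k}$, so the right one-step statement to aim for is that passing from $X_{j+1}$ back to $X_j$ costs at most the difference $2m^{-j}-2m^{-(j+1)}$; since $\sum_{j\geq k}(2m^{-j}-2m^{-(j+1)})=2m^{-k}$, this is exactly what is needed.

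So the core of the argument is the single-step comparison. Given a near-optimal chain of $2$-cells $\bar\si_1,\dots,\bar\si_L$ of $X_{j+1}$ joining $\pi_j(p)$ to $\pi_j(p')$, I want to lift it to a chain of $2$-cells in $X_j$ (equivalently, of $X_j^{(1)}$) joining $p$ to $p'$, losing only a controlled amount of length. Each $\bar\si_i$ lifts to a $2$-cell $\si_i$ of $X_j^{(1)}$ (a subdivision cell of $X_j$), but consecutive lifts $\si_{i-1},\si_i$ need not intersect in $X_j$: the intersection $\bar\si_{i-1}\cap\bar\si_i$ is nonempty in $X_{j+1}$ but the preimage under $\pi_j$ might separate $\si_{i-1}$ from $\si_i$. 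This is where Lemma~\ref{lem_edge_star} enters. The point of that lemma is precisely to analyze what happens when a chain of $X_{j+1}$-cells inside the star of a vertical $1$-cell $e$ of $X_j$ exits that star: either the chain has a cell whose $\pi_j$-preimage comes within $2m_v^{-(j+1)}$ of the level $y_j=0$ (case (a), with the length lower bound \eqref{eqn_lower_bound_on_ell}), or the chain stays inside a single $2$-cell $\si$ of $X_j$ and crosses from one vertical edge $\pi_j(e)$ to the other $\pi_j(e_1)$ (case (b)), in which case Lemma~\ref{lem_joining_edges} forces it to use at least $m$ cells of $X_{j+1}$, hence to have combinatorial length $\geq m\cdot m^{-(j+1)}=m^{-j}$ at that scale. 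Either way, each "defect" where the lift fails to be a chain in $X_j$ is paid for by a segment of the $X_{j+1}$-chain of length $\gtrsim m^{-j}$ — and near the two endpoints $p,p'$ we allow two such defects, accounting for the $2m^{-j}$.

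Concretely, the main step is to walk along $\bar\si_1,\dots,\bar\si_L$ and greedily group consecutive cells into maximal blocks that lift to chains in $X_j$; at the boundary between two blocks, the transition is a collapse caused by the identifications defining $\calr_j$, so the two offending lifts lie in the stars of a common vertical $1$-cell of $X_j$, and Lemma~\ref{lem_edge_star} applies to the maximal sub-chain of the $X_{j+1}$-chain lying in that star. In case (b) the sub-chain has $X_{j+1}$-length $\geq m^{-j}$, which already exceeds the diameter $m^{-j}$ we are trying to bridge, so that block contributes enough. In case (a) we use \eqref{eqn_lower_bound_on_ell}: the sub-chain's length $\geq \tfrac{1}{5}m_v^{-(j+1)}[m_v^{j+1}\dist(y_j(\pi_j^{-1}(\bar\si_1)),0)-2]\cdot m^{-(j+1)}$, and since the next block starts from near $y_j=0$ while needing to get somewhere, a complementary estimate shows the total length absorbed is $\gtrsim m^{-j}$. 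Assembling the lifted blocks into a single $j$-chain from $p$ to $p'$ and summing: its length is at most the length of the $X_{j+1}$-chain, except that at each of the (at most two) endpoint-adjacent transitions we must insert a bridging sub-chain inside a $2$-cell of $X_j$ of length at most $m^{-j}$ (a whole $2$-cell of $X_j$ has $\hat d_j$-diameter $\leq m^{-j}$), giving at most $2m^{-j}$ of extra length, and one checks the generation stays $\leq j$ because we are using $2$-cells of $X_j^{(1)}$, hence of $Y_{j+1}$... here one must be slightly careful: a $j$-chain of generation $\leq j$ is required, so the bridging cells must be genuine $2$-cells of $Y_j$, which is fine since each $2$-cell of $X_j$ is covered by finitely many $2$-cells of $Y_j$ by Lemma~\ref{lem_cell_structure}(\ref{item_x_j_k_cw_structure}) and the diameter bound still holds.

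The hardest part will be the bookkeeping in case (a) of Lemma~\ref{lem_edge_star}: one must argue that the portion of the $X_{j+1}$-chain that "dips down to $y_j=0$ and comes back" genuinely costs at least $\approx m^{-j}$ total, combining \eqref{eqn_lower_bound_on_ell} for the descent with a matching bound for the subsequent ascent in the next block (or for reaching the next vertical $1$-cell), and making sure these contributions are not double-counted across overlapping stars. A clean way to handle this is an amortized/charging argument: assign to each transition between blocks a disjoint sub-chain of the original $X_{j+1}$-chain of length $\geq m^{-j}$, show these sub-chains are pairwise disjoint (by maximality of the blocks), and conclude that the number of interior transitions is at most $(\text{length of }X_{j+1}\text{-chain})/m^{-j}$, so the total inserted bridging length $\leq 2m^{-j}$ only needs to cover the two endpoint transitions while interior transitions are free (the lifted blocks already chain up through genuine intersections). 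Letting $\eps\to 0$ and then iterating / telescoping over $j=k,k+1,\dots,g-1$ and using the first lemma after Definition~\ref{def_j_chain} to pass from $X_g$ to $X_\infty$ yields \eqref{eqn_distance_lower_bound}.
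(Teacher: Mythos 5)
Your overall strategy — take a near-optimal $\infty$-chain, push it down to a $k$-chain, and absorb the cost at the endpoints — matches the paper's, and you correctly identify the key tools (Lemma~\ref{lem_edge_star}, Lemma~\ref{lem_joining_edges}, the vertical-rigidity that forces near-horizontal motion). But there are two genuine gaps, both in the part you flag as ``the hardest part.''

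First, your one-step iteration scheme does not close. You correctly note that to telescope to $2m^{-k}$ you need the one-step estimate to cost at most $2m^{-j}-2m^{-(j+1)}$; but your bridging argument only yields a cost of $\le 2m^{-j}$ per step (one $Y_j$-cell of diameter $\le m^{-j}$ at each of two endpoints). Summing $\sum_{j\ge k}2m^{-j}=\tfrac{2m}{m-1}m^{-k}$ gives a constant strictly worse than $2m^{-k}$. To get the telescoping constant you would need to also \emph{discard} a $Y_{j+1}$-cell of the previous chain at each endpoint when you add the $Y_j$-cell, and argue this is always possible; you gesture at this but don't establish it. The paper sidesteps the issue entirely: it adds the two bridging cells $\hat\si,\hat\si'$ of $Y_k$ \emph{once}, at the very start, paying $2m^{-k}$ up front, and then performs a single global generation reduction that is strictly length-nonincreasing. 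No per-level accounting is needed.

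Second, and more importantly, the object you produce after ``lifting'' is not of the right type. Each $\bar\si_i$ lifts to a $2$-cell $\si_i$ of $X_j^{(1)}$, i.e.\ a cell of generation $j+1$. A chain of such cells is \emph{not} a $j$-chain of generation $\le j$, which is what bounds $\hat d_j$; you still have to coarsen it to $Y_j$-cells without increasing total length. This coarsening is precisely the content of the paper's Steps~1--4: using $m_v\gg m$, one shows a maximal string of generation-$(j+1)$ cells must hug the horizontal $1$-skeleton of $X_j$ (Steps 1--2), slides it onto horizontal edges $e_i'\subset e_i$ of $X_j^{(1)}$ and $X_j$ (Step 3), and then, since $m$ of the $e_i'$ fit inside a single $e_{i_3}$, replaces $\ge m$ generation-$(j+1)$ cells (total length $\ge m^{-j}$) by one generation-$j$ cell (length $m^{-j}$) to strictly decrease the count of top-generation cells without increasing length (Step 4). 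Your ``charging'' paragraph references Lemmas~\ref{lem_edge_star} and~\ref{lem_joining_edges}, which supply the length lower bounds, but the actual mechanism by which those lower bounds let one \emph{remove} high-generation cells while preserving connectivity (Lemma~\ref{lem_chain_connected}, the minimality assumptions, the edge-path shifting) is missing from the proposal. As written, the claim that ``interior transitions are free'' and ``its length is at most the length of the $X_{j+1}$-chain'' is asserted, not proved.
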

\begin{proof}
Fix $k\in\Z$, $p,p'\in X_k$, and let $\bar p=\pi_k^\infty(p)$, 
$\bar p'=\pi_k^\infty(p')$.  Choose $2$-cells $\hat\si,\hat\si'$ of $Y_k$
such that $p\in\hat\pi^k(\si)$, $p'\in\hat\pi^k(\si')$.

Let $\hat\si_1^0,\ldots,\hat\si_{\ell_0}^0$ be an 
$\infty$-chain joining $\bar p$ to $\bar p'$.  We enlarge this
to a new $\infty$-chain $\hat\si,\hat\si_1^0,\ldots,\hat\si_{\ell_0}^0,\hat\si'$,
which we relabel as $\hat\si_1,\ldots,\hat\si_\ell$, where $\ell=\ell_0+2$.
Then
$\{\hat\si_i\}$ is an $\infty$-chain joining $\bar p$ to $\bar p'$ of length 
$$
\leq \length(\{\hat\si_i^0\})+2m^{-k}\,.
$$
  We will show that
we can reduce the generation of the chain $\{\hat\si_i\}$ to $k$ without increasing
its length, so as to obtain a $k$-chain from $\bar p$ to $\bar p'$ of length
at most $\length(\{\hat\si_i\})$. This implies that 
$\hat d_k(p,p')\leq \length(\{\hat\si_i^0\})+2m^{-k}$.
Taking the infimum over all $\infty$-chains from
$p$ to $p'$ yields (\ref{eqn_distance_lower_bound}).

Let $j+1$ be the generation of $\{\hat\si_i\}$.  If $j+1\leq k$ we are done, so we assume
that $j\geq k$.  Moreover, we may assume that there is no other $\infty$-chain 
$\hat\si_0',\ldots,\hat\si_{\ell'}'$ such that:
\begin{itemize}
\item  
$\hat\si_0'=\hat\si_0$, $\hat\si_{\ell'}'=\hat\si_\ell$.
\item 
$\length(\{\hat\si_i'\}) \leq \length(\{\hat\si_i\})$.
\item 
The  generation of $\{\hat\si_i'\}$ is at most $j+1$.
\item $\{\hat\si_i'\}$ has fewer cells of generation $j+1$ than $\{\hat\si_i\}$.
\end{itemize}
 Since $\{\hat\si_i\}$ is an $\infty$-chain, by 
Lemma \ref{lem_pi_j_structure} the
sequence $\hat\si_1,\ldots,\hat\si_\ell$ is a $(j+1)$-chain.

Before proceeding further, we first indicate the rough idea of the 
argument.  Although it takes values in $S^1(m_v^{-j})$, we view
$y_j:X_j\ra S^1(m_v^{-j})$ as a ``height function''.  Since $m_v$ is 
much larger than $m$, a string of cells in 
$\hat\si_1,\ldots,\hat\si_\ell$ of maximal generation $j+1>k$
cannot move efficiently in the ``vertical'' direction, i.e. it cannot
change $y_j$ efficiently.  Thus any such string is forced to move 
roughly horizontally, and it may then be replaced by cells of lower generation
without increasing  $\length(\{\hat\si_i\})$.  This contradicts the choice
of $\{\hat\si_i\}$.

We now resume the proof of the proposition.

Let $\hat\si_{i_{1}},\ldots,\hat\si_{i_{2}}$ be a maximal string of consecutive
cells from $\{\hat\si_i\}$ of generation $j+1$,  so that both $\hat\si_{i_1-1}$ 
and $\hat\si_{i_2+1}$ have
generation $\leq j$.  
For $1\leq i \leq \ell$, let $\si_i=\hat\pi^j(\hat\si_i)$, 
$\bar\si_i=\hat\pi^{j+1}(\hat\si_i)$.  For $i_1\leq i\leq i_2$
let $\hat\tau_i$ be the unique $2$-cell
of $Y_j$ containing $\hat\si_i$, and put $\tau_i=\hat\pi^j(\hat\tau_i)$.

\bigskip
{\em Step 1.
If $e$ is a vertical
$1$-cell of $X_j$, and $\bar\si_{i_3}\cap \pi_j(e)\neq\emptyset$
for some $i_1\leq i_3\leq i_2$, then 
$\dist(y_j((\pi_j)^{-1}(\bar\si_{i_3})),0)< 11m\cdot m_v^{-(j+1)}$.}

Let $\bar\si_{i_4},\ldots,\bar\si_{i_5}$ 
be a maximal
subsequence of $\bar\si_{i_1},\ldots,\bar\si_{i_2}$ such that:
\begin{itemize}
\item $i_4\leq i_3\leq i_5$, i.e. $\bar\si_{i_3}$ belongs to the subsequence.
\item The cells $\bar\si_{i_4},\ldots,\bar\si_{i_5}$ are all   contained in 
$\pi_j(\Star(e,X_j))$.
\end{itemize}

{\bf Claim.} At least one of the two cells $\bar\si_{i_4},\bar\si_{i_5}$
must intersect $\pi_j(\tau)$, for some $2$-cell $\tau$ of $X_j$
that is not contained in $\Star(e,X_j)$.

\begin{proof}[Proof of claim]
Suppose the claim were false.  

Assume first that $\si_{i_4-1}$ has generation
$j+1$.  By the maximality of $\bar\si_{i_3},\ldots,\bar\si_{i_5}$, 
we have  $\bar\si_{i_4-1}\not\subset \pi_j(\Star(e,X_j))$, so 
$\tau_{i_4-1}\not\subset\Star(e,X_j)$.  Then 
$\emptyset\neq\bar\si_{i_4-1}\cap \bar\si_{i_4}
\subset\pi_j(\tau_{i_4-1})\cap\bar\si_{i_4}$,
proving the claim.   Thus we may assume that 
$\si_{i_4-1}$ has generation
$\leq j$, and likewise for $\si_{i_5+1}$.  

Since $\si_{i_4-1}$ is a union of $2$-cells of $X_j$, for some
$2$-cell $\tau$ of $X_j$, we have $\tau\subset\si_{i_4-1}$ and
$\bar\si_{i_4}\cap \pi_j(\tau)\neq\emptyset$.   
If $\tau\not\subset\Star(e,X_j)$, then the claim follows, so we assume
that $\tau\subset\Star(e,X_j)$, and in particular $e\subset\tau$.
Likewise we may assume that $\si_{i_5+1}$ contains a $2$-cell $\tau'$
of $X_j$ that contains $e$.  Now $\si_{i_4-1}\cap\si_{i_5+1}\neq\emptyset$,
so we may shorten the $j+1$-chain $\{\hat\si_i\}$ by deleting 
$\hat\si_{i_4},\ldots,\hat\si_{i_5}$, which is a contradiction.
Thus the claim holds. 
\end{proof}

We now assume without loss of generality that the claim holds for 
$\bar\si_{i_5}$.   Hence $\bar\si_{i_3},\ldots,\bar\si_{i_5}$
satisfies the hypotheses of Lemma \ref{lem_edge_star}.

First suppose that conclusion (a) of Lemma \ref{lem_edge_star} holds.
Then (\ref{eqn_lower_bound_on_ell}) gives
\begin{equation}
\label{eqn_i3_i5}
i_5-i_3+1\geq \frac{1}{5}\cdot \left[
m_v^{(j+1)}\dist(y_j(\pi_j^{-1}(\bar\si_{i_3})),0)-2\right]\,.
\end{equation}
Because $\bar\si_{i_3},\bar\si_{i_5}\subset\Star(e,X_j)$, we have
$\tau_{i_3},\tau_{i_5}\subset\Star(e,X_j)$, so $e\subset\tau_{i_3}\cap \tau_{i_5}$.
It follows that we obtain a $(j+1)$ chain with fewer cells of generation 
$j+1$ by replacing $\hat\si_{i_3},\ldots,\hat\si_{i_5}$ with the two
cells $\tau_{i_3},\tau_{i_5}$.  This new chain has length 
$$
\length(\{\hat\si_i\}_{i=1}^\ell)
-\length(\hat\si_{i_3},\ldots,\hat\si_{i_5})
+\length(\hat\tau_{i_3},\hat\tau_{i_5})
$$
$$
=\length(\{\hat\si_i\}_{i=1}^\ell)
-m^{-(j+1)}(i_5-i_3+1)-2m^{-j}
$$
$$
=\length(\{\hat\si_i\}_{i=1}^\ell)
-m^{-(j+1)}((i_5-i_3+1)-2m)\,.
$$

By the minimality of $\{\hat\si_i\}_{i=1}^\ell$ we obtain
$$
(i_5-i_3+1)-2m\leq 0
$$
Using (\ref{eqn_i3_i5}) we get 
$$
\dist(y_j(\pi_j^{-1}(\bar\si_{i_3})),0)\leq (10m+2)m_v^{-(j+1)}<11m\cdot m_v^{-(j+1)}\,.
$$

Now suppose Case (b) of Lemma \ref{lem_edge_star} holds, and let
$\si$ be as in (b).  Then the subchain
given by (b) satisfies the hypotheses of Lemma \ref{lem_joining_edges}, and so
it contains at least $m$ cells; we may replace these with $\si$ and obtain
a new $j+1$-chain with length at most $\length(\{\hat\si_i\}$, and fewer
cells of generation $j+1$.  This contradicts the choice of $\{\hat\si_i\}$.

This completes Step 1.

\bigskip
{\em Step 2.  We have 
\begin{equation}
\label{eqn_16_m_y_j_bound}
\dist(y_j(\pi_j^{-1}(\bar\si_{i_3})),0)<16m\cdot m_v^{-(j+1)}
\end{equation}
for all $i_1\leq i_3\leq i_2$.}

Suppose $i_3$ violates (\ref{eqn_16_m_y_j_bound}).  Choose $i_4$ maximal
such that $i_3\leq i_4\leq i_2$ and $\tau_i=\tau_{i_3}$ for all $i_3\leq i\leq i_4$.
Since we obtain a comparison chain by replacing $\hat\si_{i_3},\ldots,\hat\si_{i_4}$
with $\hat\tau_{i_3}$, we get that 
$$
m^{-(j+1)}(i_4-i_3+1)=\length(\hat\si_{i_3},\ldots,\hat\si_{i_4})
<\length(\hat\tau_{i_3})=m^{-j}\,,
$$ 
i.e. $i_5-i_3+1<m$.  Applying Lemma \ref{lem_x_and_y_descend}(\ref{item_y_j_diameter})
and the fact that the sets $y_j(\pi_j^{-1}(\bar\si_i))$ form a chain
in $S^1(m_v^{-j})$, we get
\begin{equation}
\label{eqn_lower_height_bound_bar_si}
\dist(y_j(\pi_j^{-1}(\bar\si_{i_4})),0)\geq 11m\cdot m_v^{-(j+1)}\,.
\end{equation}
First suppose $i_4<i_2$, i.e. that $\gen(\hat\si_{i_4+1})=j+1$.    
Then $\tau_{i_4+1}\neq\tau_{i_4}$ and 
$\pi_j(\tau_{i_4+1})\cap\pi_j(\tau_{i_4})\neq\emptyset$.
By (\ref{eqn_lower_height_bound_bar_si}) it follows that 
$\bar\si_{i_4}$ must intersect  $\pi_j(e)$ for some
vertical $1$-cell $e$ of $\tau_{i_4}$.
But then (\ref{eqn_lower_height_bound_bar_si}) contradicts Step 1.

Now suppose $i_4=i_2$.  Then $\gen(\hat\si_{i_4+1})\leq j$, and
so $\si_{i_4+1}$ contains a $2$-cell $\tau$ of $X_j$ such that
$\bar\si_{i_4}\cap \pi_j(\tau)\neq\emptyset$.  If $\tau=\tau_{i_3}=\tau_{i_4}$,
then we may discard $\hat\si_{i_3},\ldots,\hat\si_{i_4}$, contradicting
the minimality of the chain $\hat\si_1,\ldots,\hat\si_\ell$.
Therefore $\tau\neq\tau_{i_3}$, and reasoning as above we find that
$\bar\si_{i_4}\cap \pi_j(e)\neq\emptyset$ for some vertical $1$-cell of
$\tau_{i_3}$.  Using (\ref{eqn_lower_height_bound_bar_si}), we again get a 
contradiction to Step 1.

\bigskip
{\em Step 3. Shifting $\bar\si_{i_1},\ldots,\bar\si_{i_2}$ toward the
$1$-skeleton of $X_j$.}

Pick $i_1\leq i\leq i_2$.
Since $\si_i\subset\pi_j^{-1}(\bar\si_i)$, by Step 2 and 
Lemma \ref{lem_x_and_y_descend}(\ref{item_y_j_diameter}) we have
$$
\dist(y_j(\si_i),0)\leq \dist(y_j(\pi_j^{-1}(\bar\si_i)),0)
+\diam(y_j(\pi_j^{-1}(\bar\si_i)))
$$
$$
<(16m+5)\cdot m_v^{-(j+1)}\,.
$$
Hence there is vertical gallery
(see Definition \ref{def_gallery})
$\mu_{i1},\ldots,\mu_{i\ell_i}$ in $X_j^{(1)}$ with
$\ell_i<16m+5$ that starts from $\si_i=\mu_{i1}$, such that $\mu_{i\ell_i}$ contains
a horizontal $1$-cell $e_i'$ of $X_j^{(1)}$ that is contained in 
a horizontal $1$-cell $e_i$ of $X_j$, see Figure ???.
Since the gallery is vertical, we have 
$x_j(\si_{i-1})=x_j(e_{i-1}'),\,x_j(\si_i)=x_j(e_i')\subset\R$.

{\bf Claim.}  $e_{i-1}'\cap e_i'\neq\emptyset$ for all $i_1<i\leq i_2$.  

\begin{proof}[Proof of claim]
The images $x_j(\si_{i-1}),\,x_j(\si_i)\subset\R$
are intervals of length $m^{-(j+1)}$ that have an endpoint in common
because $x_j(\si_{i-1})=x_{j+1}(\bar\si_{i-1})$, $x_j(\si_i)=x_{j+1}(\bar\si_i)$, and
$\bar\si_{i-1}\cap\bar\si_i\neq\emptyset$.  We may therefore choose
$0$-cells $v_{i-1},v_i$ of $X_j^{(1)}$ such that $v_{i-1}\in e_{i-1}'$,
$v_i\in e_i'$, and $x_j(v_{i-1})=x_j(v_i)$.  Choose $0$-cells $p_{i-1},p_i$
of $X_j^{(1)}$ such that $p_{i-1}\in \si_{i-1}$, $p_i\in \si_i$, 
$\pi_j(p_{i-1})=\pi_j(p_i)$, $x_j(p_{i-1})=x_j(p_i)=x_j(v_{i-1})=x_j(v_i)$.
We may join $p_{i-1}$ to $p_i$ with a vertical edge path in $X_j^{(1)}$
of combinatorial length at most $2$ by 
Lemma \ref{lem_pi_j_structure}(\ref{item_distance_2}).  Using the vertical
cells in the vertical galleries $\{\mu_{i-1n}\}$, $\{\mu_{in}\}$, we get
vertical edge paths $\ga_{i-1}$, $\ga_i$ in $X_j^{(1)}$ joining
$v_{i-1}$ to $p_{i-1}$ and $v_i$ to $p_i$, respectively, where $\ga_{i-1},\,\ga_i$
have combinatorial length $<16m+5$.  Concatenating $\ga_{i-1},\,\ga,\,\ga_i$
we get a vertical edge path joining $v_{i-1}$ to $v_i$ of combinatorial length
$<32m+12=140$.  Since $m_v=3L\geq 300$, this forces $v_{i-1}=v_i$, proving the
claim.
\end{proof}

\bigskip
{\bf Claim.} $\si_{i_1-1}\cap e_{i_1}'\neq\emptyset$ and 
$e_{i_2}'\cap\si_{i_2+1}\neq\emptyset$.

\begin{proof}[Proof of claim]
There is a $2$-cell $\tau\subset\si_{i_1-1}$ of $X_j$
such that $\pi_j(\tau)\cap\bar\si_{i_1}\neq\emptyset$.
We may find $0$-cells $p,p_{i_1}$ of $X_j^{(1)}$ such that
$p\in \tau$, $p_{i_1}\in \si_{i_1}$, and $\pi_j(p)=\pi_j(p_{i_1})$. 
There is a vertical edge path $\ga$ in $X_j^{(1)}$ from $p$ to $p_{i_1}$ of 
combinatorial length at most $2$, and a vertical edge path $\ga_{i_1}$
in $X_j^{(1)}$ of combinatorial length $<16m$ joining $p_{i_1}$
to an endpoint $v_{i_1}$ of $e_{i_1}'$.  Combining $\ga$ and $\ga_{i_1}$
we get a vertical edge path of length $<16m+2$ in $X_j^{(1)}$
starting at $p\in\tau$ and ending in the $1$-skeleton of $X_j$.  This
implies that $v_{i_1}\in \D\tau$.  Hence $\si_{i_1-1}\cap e_{i_1}'\neq\emptyset$
and the claim holds.
\end{proof}

Combining the two claims, we get that
\begin{equation}
\label{eqn_edge_chain}
\si_1,\ldots,\si_{i_1-1},e_{i_1}',\ldots,e_{i_2}',\si_{i_2+1},\ldots,\si_\ell
\end{equation}
forms a chain in $X_j$.  Therefore, after modifying $\hat\si_i$ for $i_1\leq i\leq i_2$
if necessary, we may assume without loss of generality that $\si_i=\mu_{i\ell_i}$,
and in particular $\si_i$ contains $e_i'$.  

\bigskip
{\em Step 4. The final contradiction.}

Pick $i_3$ such that $i_1\leq i_3\leq i_2$.  

Suppose some interior point of $e_{i_3}$ is contained in $\si_{i_1-1}\cup\si_{i_2+1}$.
Since $\si_{i_1-1}\cup \si_{i_2+1}$ is a subcomplex of $X_j$, this implies
that $e_{i_3}$ is contained in $\si_{i_1-1}\cup\si_{i_2+1}$.  
Therefore applying Lemma \ref{lem_chain_connected} we may delete
the edge $e_{i_3}'$ from the collection
(\ref{eqn_edge_chain}), and it will still contain a chain, and
likewise we may delete $\hat\si_{i_3}$ from the collection
$\hat\si_1,\ldots,\hat\si_\ell$ and it will
still contain a chain;
this contradicts the minimality of $\{\hat\si_i\}$.  

Now suppose $e_{i_3}$ is contained in $e_{i_1}'\cup\ldots \cup e_{i_2}'$.  
Taking
the union of the collection (\ref{eqn_edge_chain}) with $e_{i_3}$ and applying
Lemma \ref{lem_chain_connected}, it follows that we may remove from the
collection 
(\ref{eqn_edge_chain}) each $e_i'$ that is 
contained in $e_{i_3}$, and add $e_{i_3}$, and 
the resulting collection will contain a chain from $\si_1$ to $\si_\ell$.
Therefore we may 
remove from $\hat\si_1,\ldots,\hat\si_\ell$ each $\hat\si_i$ with $i_1\leq i\leq i_2$
such that $e_i'\subset e_{i_3}$, and add the $2$-cell $\hat\tau_{i_3}$, and the
resulting collection will contain  
a  $j+1$-chain that has length at most 
$\length(\{\hat\si_i\})$ and fewer cells of generation $j+1$, contradicting
the definition of $\hat\si_1,\ldots,\hat\si_\ell$.

Therefore $e_{i_3}$ is not contained in 
$\si_{i_1-1}\cup \si_{i_1}\cup\ldots\cup \si_{i_2}\cup\si_{i_2+1}$.  But then we may remove
from the $j+1$-chain $\hat\si_1,\ldots,\hat\si_\ell$
some $2$-cell $\hat\si_i$, 
with $i_1\leq i\leq i_2$ and $e_i'\subset e_{i_3}$  and still 
have a chain.  This contradicts the minimality of $\hat\si_1,\ldots,\hat\si_\ell$.

We conclude that $j+1\leq k$, completing the proof of 
Proposition \ref{prop_distance_lower_bound}.

\end{proof}

The following two corollaries of Proposition \ref{prop_distance_lower_bound}
relate the distance in $X_\infty$ or $X_j$ with the combinatorial distance.

\begin{corollary}
  \label{cor_dinfty_comb_dist}
For every $C$ there is an $N=N(C)$ such that if 
$p,p'\in X_j$ and $\hat d_\infty(\pi_j^\infty(p),\pi_j^\infty(p'))<Cm^{-j}$, then
there is a chain of at most $N$ cells of $X_j$ joining $p$ to $p'$.
\end{corollary}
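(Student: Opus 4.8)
The plan is to reduce everything to the combinatorial ball estimates already established for the single complex $X_j$. First I would invoke Proposition \ref{prop_distance_lower_bound} with $k=j$: setting $C'=C+2$, the hypothesis $\hat d_\infty(\pi_j^\infty(p),\pi_j^\infty(p'))<Cm^{-j}$ yields
\[
\hat d_j(p,p')\leq \hat d_\infty(\pi_j^\infty(p),\pi_j^\infty(p'))+2m^{-j}<C'm^{-j},
\]
so $p'$ lies in the $\hat d_j$-ball $B(p,C'm^{-j})\subset X_j$.

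Next I would feed this ball into Lemma \ref{lem_metric_versus_combinatorial_distance}, applied with the constant $C'$ in place of $C$: the ball $B(p,C'm^{-j})$ is contained in a combinatorial $N_1$-ball of $X_j$ with $N_1=N_1(C')$, and by Lemma \ref{lem_bounded_complexity}(\ref{item_ball_complexity}) this combinatorial ball is a subcomplex of $X_j$ containing at most $N=N(C')$ cells. In particular the open cell containing $p$ and the open cell containing $p'$ both lie in this subcomplex.

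Finally I would extract the chain from connectedness: a combinatorial ball in a CW complex is connected, so the relation ``joined by a chain of cells'' has a single equivalence class on its (finitely many) cells — each class is clopen in the subcomplex since, the subcomplex being finite, the union of the closed cells in a class is closed. Hence there is a chain $\si_1,\dots,\si_\ell$ of cells of $X_j$ with $p\in\si_1$, $p'\in\si_\ell$, consecutive cells intersecting, and $\ell$ bounded by the number of cells in the ball, so $\ell\leq N$; setting $N=N(C+2)$ finishes the argument. (Alternatively, the proof of Lemma \ref{lem_metric_versus_combinatorial_distance} already produces such a chain directly, as the projections $\hat\pi^j(\hat\si_i)$ of the $j$-chain it constructs.) I do not anticipate any real obstacle here: all the substance lives in Proposition \ref{prop_distance_lower_bound} and Lemma \ref{lem_metric_versus_combinatorial_distance}, and the only minor point is the bookkeeping observation that the cells of a combinatorial ball form a connected intersection graph.
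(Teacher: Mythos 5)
Your proposal is correct and takes essentially the same route as the paper, whose proof is simply the one-line citation of Proposition~\ref{prop_distance_lower_bound} and Lemma~\ref{lem_metric_versus_combinatorial_distance}. As you observe in your closing parenthetical, the proof of Lemma~\ref{lem_metric_versus_combinatorial_distance} already exhibits a $j$-chain of at most $N_1(C+2)$ $2$-cells of $Y_j$ whose $\hat\pi^j$-images form the desired chain in $X_j$, so the connectedness-of-the-combinatorial-ball detour is not needed.
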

\begin{proof}
This follows from  Proposition \ref{prop_distance_lower_bound}
and Lemma \ref{lem_metric_versus_combinatorial_distance}.
\end{proof}
\begin{corollary}
  \label{cor:cell_dist}
  For  $p_0,p_1\in X_\infty$ let
  \begin{equation}
    \label{eq:cell_dist_s1}
    J(p_0,p_1)=
    \left\{j\;\;\middle|\;\;
    \begin{aligned}
    (\pi_j^\infty)^{-1}(p_0), (\pi_j^\infty)^{-1}(p_1)\;\text{do not}\\
    \text{intersect adjacent cells of}\;X_j
    \end{aligned}
    \right\},
  \end{equation}
  and let $\ul J(p_0,p_1)=\inf J(p_0,p_1)$, where as usual the infimum of the
  empty set  is $\infty$.  Then
  \begin{equation}
    \label{eq:cell_dist_s2}
    m^{-\ul J(p_0,p_1)}\le  \hat d_\infty(p_0,p_1)\le 2m\cdot m^{-\ul J(p_0,p_1)},
  \end{equation}
  where by convention we let $m^{-\ul J(p_0,p_1)}=0$ when $\ul J(p_0,p_1)=\infty$.
\end{corollary}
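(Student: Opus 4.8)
The plan is to derive both inequalities from Proposition \ref{prop_distance_lower_bound}, together with the elementary fact that the (surjective) projection $\pi_j^\infty\colon (X_j,\hat d_j)\to(X_\infty,\hat d_\infty)$ is $1$-Lipschitz. The Lipschitz bound is immediate from the chain descriptions of the two metrics: if $\{\hat\si_i\}$ is a $j$-chain of generation $\le j$ joining $q,q'\in X_j$, then applying $\pi_j^\infty$ to the chain $\hat\pi^j(\hat\si_1),\dots,\hat\pi^j(\hat\si_\ell)$ in $X_j$ yields the chain $\hat\pi^\infty(\hat\si_1),\dots,\hat\pi^\infty(\hat\si_\ell)$ in $X_\infty$, which is an $\infty$-chain joining $\pi_j^\infty(q),\pi_j^\infty(q')$ of the same length; taking the infimum gives $\hat d_\infty(\pi_j^\infty(q),\pi_j^\infty(q'))\le \hat d_j(q,q')$. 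Since $\pi_j^\infty$ is onto, $(\pi_j^\infty)^{-1}(p_i)\ne\emptyset$ for $i=0,1$.

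For the lower bound it suffices to show that $j\in J(p_0,p_1)$ forces $\hat d_\infty(p_0,p_1)\ge m^{-j}$: indeed this gives $\hat d_\infty(p_0,p_1)\ge\sup_{j\in J(p_0,p_1)}m^{-j}=m^{-\ul J(p_0,p_1)}$, the case $J(p_0,p_1)=\emptyset$ being covered by the stated convention. Arguing by contraposition, assume $\hat d_\infty(p_0,p_1)<m^{-j}$ and choose arbitrary $q_i\in(\pi_j^\infty)^{-1}(p_i)$. By Proposition \ref{prop_distance_lower_bound},
\[
\hat d_j(q_0,q_1)\le\hat d_\infty(p_0,p_1)+2m^{-j}<3m^{-j},
\]
so there is a $j$-chain $\hat\rho_1,\dots,\hat\rho_s$ of generation $\le j$ joining $q_0,q_1$ with $\sum_k m^{-i_k}<3m^{-j}$, where $i_k=\gen(\hat\rho_k)$. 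Since $m=4$, a cell with $i_k<j$ would contribute a single term $m^{-i_k}\ge m^{-(j-1)}=4m^{-j}>3m^{-j}$, which is impossible; hence $i_k=j$ for every $k$, and then $s\cdot m^{-j}=\sum_k m^{-i_k}<3m^{-j}$ forces $s\le 2$. Thus $\hat\pi^j(\hat\rho_1),\dots,\hat\pi^j(\hat\rho_s)$ is a chain of at most two closed $2$-cells of $X_j$ (Lemma \ref{lem_cell_structure}), the first containing $q_0$ and the last containing $q_1$, with consecutive cells meeting; so $q_0$ and $q_1$ lie in cells of $X_j$ that are equal or adjacent. In particular $(\pi_j^\infty)^{-1}(p_0)$ and $(\pi_j^\infty)^{-1}(p_1)$ intersect adjacent cells, i.e.\ $j\notin J(p_0,p_1)$.

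For the upper bound, observe first that every cell of $X_j$, and its closure, has $\hat d_j$-diameter at most $m^{-j}$ (it is contained in the image of a single $2$-cell of $Y_j$, which is a $j$-chain of length $m^{-j}$), so the union of two closed cells sharing a point has $\hat d_j$-diameter at most $2m^{-j}$. If $J(p_0,p_1)=\emptyset$, then for every $j$ the sets $(\pi_j^\infty)^{-1}(p_i)$ meet adjacent cells, so one can pick $q_i^j\in(\pi_j^\infty)^{-1}(p_i)$ with $\hat d_j(q_0^j,q_1^j)\le 2m^{-j}$, whence $\hat d_\infty(p_0,p_1)\le\hat d_j(q_0^j,q_1^j)\le 2m^{-j}\to 0$; thus $p_0=p_1$, consistent with the convention $m^{-\ul J}=0$. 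If $J(p_0,p_1)\ne\emptyset$, put $j=\ul J(p_0,p_1)-1$. Then $j\notin J(p_0,p_1)$, so there are adjacent cells $A,B$ of $X_j$ and points $q_0^j\in A\cap(\pi_j^\infty)^{-1}(p_0)$, $q_1^j\in B\cap(\pi_j^\infty)^{-1}(p_1)$, giving
\[
\hat d_\infty(p_0,p_1)\le\hat d_j(q_0^j,q_1^j)\le 2m^{-j}=2m\cdot m^{-\ul J(p_0,p_1)}.
\]

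The one substantive step — and the only place the numerology matters — is the point in the lower bound where the largeness of $m$ (here $m=4>3$) rules out any cell of generation $<j$ in a $\hat d_j$-chain of length $<3m^{-j}$, thereby pinning such a chain down to at most two generation-$j$ cells and hence forcing $q_0,q_1$ into adjacent cells of $X_j$. Everything else is routine bookkeeping with Definition \ref{def_j_chain} and the cell structure from Lemma \ref{lem_cell_structure}.
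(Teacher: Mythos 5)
Your proof is correct and follows essentially the same route as the paper's: both deduce the lower bound by combining Proposition \ref{prop_distance_lower_bound} with the fact that, when $j\in J(p_0,p_1)$, any $j$-chain joining the two preimages has length at least $3m^{-j}$, and both get the upper bound from the observation that a pair of adjacent closed $2$-cells of $X_j$ has $\hat d_j$-diameter at most $2m^{-j}$. Your write-up is slightly more careful in two spots. For the lower bound, you explicitly rule out cells of generation $<j$ in a short chain by noting that a single such cell already contributes $m^{-(j-1)}=4m^{-j}>3m^{-j}$; the paper instead asserts that any such $k$-chain ``must contain at least $3$ cells,'' a phrasing that, read literally, is not correct (a one-cell chain of generation $k-1$ can contain both endpoints without forcing them into adjacent cells of $X_k$), though the intended conclusion $\hat d_k\ge 3m^{-k}$ is valid for exactly the reason you give. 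For the upper bound, you take $j=\ul J(p_0,p_1)-1$ directly, which lies in $\Z\setminus J(p_0,p_1)$ whenever $J(p_0,p_1)$ is nonempty and bounded below; the paper instead routes through the identity $\inf J=1+\sup(\Z\setminus J)$, which tacitly presumes $J(p_0,p_1)$ is upward-closed --- a fact neither stated nor proved there, and one your choice of $j$ makes unnecessary.
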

\begin{proof}
Note that $\ul J(p_0,p_1)=\inf J(p_0,p_1)=1+\sup (\Z\setminus J(p_0,p_1))$.

If $k\not\in J(p_0,p_1)$, then 
$(\pi_k^\infty)^{-1}(p_0)$ and $(\pi_k^\infty)^{-1}(p_1)$ intersect
  adjacent cells, and this gives $\hat d_\infty(p_0,p_1)\leq 2m^{-k}$ by 
  the definition of $\hat d_\infty$.
Thus
\begin{equation*}
\begin{aligned}
\hat d_\infty(p_0,p_1) &\leq 2\,\inf\{m^{-k}\mid k\not\in J(p_0,p_1)\}\\
&\leq 2m\cdot \inf \{m^{-(k+1)}\mid k\not\in J(p_0,p_1)\}\\
&=2m\cdot \sup\{m^{-k}\mid k\in J(p_0,p_1)\}\\
&=2m\cdot m^{\ul J(p_0,p_1)}\,.
\end{aligned}
\end{equation*}

If $k\in J(p_0,p_1)$ then 
 then any $k$-chain  connecting  
$(\pi_k^\infty)^{-1}(p_0)$ with $(\pi_k^\infty)^{-1}(p_1)$
must contain at least $3$ cells, and hence
  \begin{equation}
    \label{eq:cell_dist_p2}
    \hat d_k 
    ((\pi_k^\infty)^{-1}(p_0),(\pi_k^\infty)^{-1}(p_1))\ge 3m^{-k}.
  \end{equation}
The lower bound in (\ref{eq:cell_dist_s2}) then follows applying Proposition \ref{prop_distance_lower_bound}.
\end{proof}

We remark that the following lemma is not really essential to the discussion.
Even
without knowing that $\hat d_\infty$ is a distance, we could quotient out the
sets of zero diameter and work in the resulting metric space, cf. 
Section \ref{sec_generalizations}.

\begin{lemma}
  \label{lem:cell_sep}
Keeping the notation from Corollary \ref{cor:cell_dist},
the set $J(p_0,p_1)$ is nonempty iff $p_0\neq p_1$; in particular 
$\hat d_\infty$ is a distance function on $X_\infty$.
\end{lemma}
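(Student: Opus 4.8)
The \emph{only if} direction is trivial: if $p_0=p_1$ then $(\pi_j^\infty)^{-1}(p_0)=(\pi_j^\infty)^{-1}(p_1)$ for every $j$, so these sets meet a common cell of $X_j$ and $j\notin J(p_0,p_1)$; hence $J(p_0,p_1)=\emptyset$. Granting the \emph{if} direction, the last assertion follows too: symmetry and the triangle inequality for $\hat d_\infty$ are immediate from the chain description, and if $p_0\neq p_1$ then $J(p_0,p_1)\neq\emptyset$, so $\ul J(p_0,p_1)<\infty$ and $\hat d_\infty(p_0,p_1)\geq m^{-\ul J(p_0,p_1)}>0$ by Corollary~\ref{cor:cell_dist}. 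So the work is to prove: \emph{if $p_0\neq p_1$, then $J(p_0,p_1)\neq\emptyset$.} I would record two structural facts about $\calr_\infty$, both consequences of Lemma~\ref{lem_properties_of_calr_j} in the manner of the proof of Lemma~\ref{lem_bounded_complexity}(\ref{item_0_cell_inverse}). First, every generating identification of $\calr_\infty$ is a vertical translation, so each coset of $\calr_\infty$ (and of each $\calr_j$) lies in a single vertical line $V_c=\{x=c\}$, $c=x_\infty(p)$, and $x$ descends to $x_\infty$. Second, since the vertical lines carrying the nontrivial cosets of $\Phi^i_*\calr$ for distinct $i\in\Z$ are disjoint, a nontrivial coset of $\calr_\infty$ is in fact a coset of a single $\Phi^i_*\calr$; such a coset has at most $3$ points, vertical spread $\leq 2m_v^{-(i+1)}$, and lies over an $x$-coordinate in $m^{-(i+1)}\Z\setminus m^{-i}\Z$. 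In particular every fiber $(\hat\pi^\infty)^{-1}(p)$ is finite.

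\emph{Horizontal case.} Suppose $x_\infty(p_0)\neq x_\infty(p_1)$. Since $x_j=x_\infty\circ\pi_j^\infty$ and, by Lemma~\ref{lem_x_and_y_descend}(\ref{item_x_on_2_cell}), every cell of $X_j$ has $x_j$-image an interval of length $\leq m^{-j}$, if $j\notin J(p_0,p_1)$ and $\sigma,\sigma'$ are adjacent cells of $X_j$ met by $(\pi_j^\infty)^{-1}(p_0)$, $(\pi_j^\infty)^{-1}(p_1)$ respectively, then $|x_\infty(p_0)-x_\infty(p_1)|\leq\diam(x_j(\sigma\cup\sigma'))\leq 2m^{-j}$. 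Thus $j\in J(p_0,p_1)$ as soon as $2m^{-j}<|x_\infty(p_0)-x_\infty(p_1)|$, and we are done in this case.

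\emph{Vertical case.} Suppose $x_\infty(p_0)=x_\infty(p_1)=c$; then $F_0:=(\hat\pi^\infty)^{-1}(p_0)$ and $F_1:=(\hat\pi^\infty)^{-1}(p_1)$ are finite disjoint subsets of $V_c$, so $\delta:=\dist(F_0,F_1)>0$. Assume $J(p_0,p_1)=\emptyset$. For each large $j$, the failure of $j\in J(p_0,p_1)$ gives, via Lemma~\ref{lem_cell_structure} (after enlarging to $2$-cells if necessary), two $2$-cells $\hat\sigma_0,\hat\sigma_1$ of $Y_j$ with adjacent projections in $X_j$, points $\hat q_0\in\hat\sigma_0\cap F_0$, $\hat q_1\in\hat\sigma_1\cap F_1$, and $\hat w_0\in\hat\sigma_0$, $\hat w_1\in\hat\sigma_1$ lifting a common point of the two projected cells, so $\hat w_0\sim_{\calr_j}\hat w_1$. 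By the second structural fact the $\calr_j$-coset of $\hat w_0$ is a coset of $\Phi^{i}_*\calr$ for a unique $i=i(j)<j$, whence $\hat w_1=\hat w_0+(0,t)$ with $|t|\leq 2m_v^{-(i+1)}$, and $\hat w_0\in V_{x_0}$ with $x_0\in m^{-(i+1)}\Z\setminus m^{-i}\Z$ and $|x_0-c|\leq m^{-j}$. Since $\hat q_0,\hat w_0$ lie in the single $2$-cell $\hat\sigma_0$ and $\hat q_1,\hat w_1$ in $\hat\sigma_1$ (each of vertical extent $m_v^{-j}$), and $F_0,F_1\subset V_c$,
\begin{equation*}
\delta\ \leq\ |y(\hat q_0)-y(\hat q_1)|\ \leq\ 2m_v^{-j}+|t|\ \leq\ 2m_v^{-j}+2m_v^{-(i(j)+1)}.
\end{equation*}
A short computation from $m^{i(j)+1}x_0\in\Z$, $m^{i(j)}x_0\notin\Z$ and $|x_0-c|\leq m^{-j}$ shows that, after passing to a subsequence, either $i(j)\to\infty$ --- forcing $\delta=0$, a contradiction --- or $i(j)\equiv i_0$ is constant, in which case $x_0$ ranges in the discrete set $m^{-(i_0+1)}\Z$ with $|x_0-c|\to 0$, forcing $c\in m^{-(i_0+1)}\Z\setminus m^{-i_0}\Z$ and $x_0=c$ for $j$ large. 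Refining further to a subsequence on which $\hat q_0,\hat q_1$ are constant (each $F_k$ being finite) and $t=\kappa\,m_v^{-(i_0+1)}$ is constant, and letting $j\to\infty$ (so $\hat w_k\to\hat q_k$), we obtain $\hat q_0\in F_0$, $\hat q_1\in F_1$ on $V_c$ with $\hat q_1=\hat q_0+(0,\kappa\,m_v^{-(i_0+1)})$, where $\kappa\neq 0$ since $\kappa=0$ would already give $\delta\leq 2m_v^{-j}$.

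The step requiring the most care --- and the one I expect to be the main obstacle --- is to conclude from the relation $\hat q_1=\hat q_0+(0,\kappa\,m_v^{-(i_0+1)})$ that $\hat q_0\sim_{\calr_\infty}\hat q_1$; this forces $F_0=F_1$, contradicting $p_0\neq p_1$ and completing the proof. Here one unwinds the definition of $\calr$: restricted to $V_c$ and rescaled by $\Phi^{i_0}$, the identifications $a_{k,\ell,i}\leftrightarrow a_{k,\ell,i}'$ glue a locally finite family of pairwise disjoint \emph{closed} vertical segments (of length $m_v^{-(i_0+1)}$, separated by gaps) to their upward translates by $m_v^{-(i_0+1)}$. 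Since a pair of distinct points in a coset of $\Phi^{i_0}_*\calr$ is displaced by $m_v^{-(i_0+1)}$ or $2m_v^{-(i_0+1)}$, we have $\kappa\in\{\pm1,\pm2\}$, and for $\hat w_0\sim_{\calr_j}\hat w_1$ to be this nontrivial identification one of $\hat w_0,\hat w_1$ must lie in one of the closed domain segments (or, when $|\kappa|=2$, be its distinguished endpoint). Because that family is locally finite with disjoint closed members, the corresponding limit ($\hat q_0$ or $\hat q_1$) lies in a domain segment as well, and reading off the gluing then yields $\hat q_0\sim_{\calr_\infty}\hat q_1$. Tracking which of $\hat q_0,\hat q_1$ plays which role for each $\kappa$ is routine bookkeeping, and every other step above is a direct application of the results of Sections~\ref{sec_cell_structure}--\ref{sec_metric_structure}.
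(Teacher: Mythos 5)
Your proof is correct, and it takes a genuinely different route from the paper's after the shared first step (reducing to $x_\infty(p_0)=x_\infty(p_1)=c$). The paper's argument is combinatorial and stays downstairs in the approximating complexes: it picks $j_0$ so that $c\notin m^{-(j+1)}\Z\setminus m^{-j}\Z$ for all $j\ge j_0$, observes that then $(\pi_{j_0}^\infty)^{-1}(p_i)$ is a single point $\hat p_i\in X_{j_0}$, and shows that $\hat p_0$, $\hat p_1$ may be joined by chains of at most four $2$-cells in arbitrarily fine subdivisions $X_{j_0}^{(j_1)}$ (using the nearest grid points $t_\pm$ and the lower bound on their ``generation'' $g_j$), forcing $\hat p_0=\hat p_1$. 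Your argument instead works upstairs in $\R^2$: it exploits the finiteness of the fibers $F_i=(\hat\pi^\infty)^{-1}(p_i)$ (a consequence of Lemma~\ref{lem_properties_of_calr_j}(\ref{item_nontrivial_cosets}), since a nontrivial $\calr_\infty$-coset is a coset of a single $\Phi^i_*\calr$) together with a compactness/subsequence extraction to produce a fixed displacement $\hat q_1=\hat q_0+(0,\kappa m_v^{-(i_0+1)})$ with $\hat q_i\in F_i$, and then reads the identification $\hat q_0\sim_{\calr_\infty}\hat q_1$ directly off the explicit description of $\calr$, using that the domain segments $\Phi^{i_0}(a_{k,\ell,i})$ form a locally finite family of disjoint closed sets (so the limit $\hat q_0$ or $\hat q_1$ lands in one of them, or, when $|\kappa|=2$, in the discrete set of distinguished endpoints). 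What the paper's route buys is robustness: it never inspects the literal shape of the gluings and so adapts without change to the $n>2$ construction of Section~\ref{sec_n_dim_case}. What your route buys is a concrete picture of exactly which pairs in $\R^2$ become identified, and it sidesteps the slightly delicate bookkeeping around the choice of $j_0$ and the quantity $g_j$. Two small remarks: (i) $\kappa\ne 0$ follows immediately from $\hat q_0\ne\hat q_1$ (since $F_0\cap F_1=\emptyset$), which is cleaner than the argument you gave; (ii) the closed segments $a_{k,\ell,i}$ and $a_{k,\ell,i}'$ do share a common endpoint, so ``pairwise disjoint'' applies to the domain family $\{a_{k,\ell,i}\}_\ell$ alone, which is indeed what your argument uses.
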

\begin{proof}
Clearly $J(p_0,p_1)\neq\emptyset\implies p_0\neq p_1$.
We will show that if $J(p_0,p_1)$ is
  empty then $p_0=p_1$. 
  
For all $j\in \Z$, since $j\not\in J(p_0,p_1)$, the sets
$(\pi_j^\infty)^{-1}(p_0)$, $(\pi_j^\infty)^{-1}(p_1)$ 
intersect adjacent cells of $X_j$, and therefore the values of $x_j$
on these sets agree to within error $2m^{-j}$.
Since
  $x_\infty(p_i)=x_j((\pi_j^\infty)^{-1}(p_i))$, we get that 
  $|x_\infty(p_0)-x_\infty(p_1)|\leq 2m^{-j}$.  As this holds for all $j$ we conclude that
   $x_\infty(p_0)=x_\infty(p_1)$.

  Let $t=x_\infty(p_0)=x_\infty(p_1)$.

If $t\in \cup_{j\in \Z}\,(m^{-j}\Z)$ select $j_0\in\Z$ such that $t\in m^{-j_0}\Z$,
and otherwise let $j_0\in\Z$ be arbitrary.  Then 
$t\not\in m^{-(j+1)}\Z\setminus  m^{-j}\Z$ for every $j\geq j_0$, and by the definition
of $\calr_j$, it follows that $(\pi_{j_0}^\infty)^{-1}(p_i)$ 
contains   a unique element
$\hat p_i\in X_{j_0}$ for $i=0,1$.

Let $j_1\geq 0$ be arbitrary.

{\em Claim. $\hat p_0$ may be joined to $\hat p_1$ 
by a chain of at most four $2$-cells of $X_{j_0}^{(j_1)}$.}

Pick $j\geq j_0$, to be determined later.

If $t\in m^{-j}\Z$, we define $t_\pm =t\pm m^{-j}$; if $t\not\in m^{-j}\Z$
then for some $k_j\in \Z$ we have
$t\in (k_jm^{-j},(k_j+1)m^{-j})$, and then we let $t_-=k_jm^{-j}$, $t_+=(k_j+1)m^{-j}$.
Let $g_j=\min(\{\ell\mid t_-\in m^{-\ell}\Z\},\{\ell\mid t_+\in m^{-\ell}\Z\})$\,.
Since $|t_\pm-t|\leq m^{-j}$, we will have 
\begin{equation}
\label{eqn_gj_bound}
\min(g_j-j_0,j-j_0)>j_1
\end{equation} 
for all but 
finitely many $j$, so we assume (\ref{eqn_gj_bound}) holds.
  
Choose adjacent cells $\tau_0,\tau_1$ of $X_j$
such that $\tau_i\cap (\pi_j^\infty)^{-1}(p_i)\neq\emptyset$, and
cells $\hat\tau_0,\hat\tau_1$ of $X_{j_0}$ such that $\pi_{j_0}^j(\hat\tau_i)=\tau_i$.
Let $v\in\tau_0\cap\tau_1$ be a $0$-cell  of $X_j$, and choose $\hat v_i\in \hat\tau_i$
such that $\pi_{j_0}^j(\hat v_i)=v$.  

{\em Case 1: $x_j(v)=t$.}  As $\pi_{j_0}^j$ is injective on 
$x_j^{-1}(t)$, we have $\hat v_0=\hat v_1$, so $\hat \tau_0\cap \hat \tau_1\neq\emptyset$.
Thus $\hat p_0,\hat p_1$ may be joined by a chain of at most two $2$-cells of 
$X_{j_0}^{(j-j_0)}$, and hence also by a chain of at most two $2$-cells of
$X_{j_0}^{(j_1)}$, proving the claim.

{\em Case 2: $x_j(v)\neq t$.} 
Then $x_j(v)\in\{t_-,t_+\}$.
Since $\pi_{j_0}^j(\hat v_i)=v$ it follows that $\hat v_0$ may be joined
to $\hat v_0$ by a vertical edge path in $X_{j_0}^{(g_j-j_0)}$ of combinatorial
length at most $2$.  
Hence $\hat p_0$ may be joined to $\hat p_1$ may be joined
by a chain of at most four $2$-cells of $X_{j_0}^{(j_1)}$, and the claim holds in this
case.

Since $j_1$ is arbitrary, the claim  forces
$\hat p_0=\hat p_1$.

  \end{proof}

\section{David-Semmes regularity of the projection $\R^2\ra X_\infty$ and
the lower bound on the topological dimension}
\label{sec_n_equals_2_case}

In this section we  prove part (1) of  Theorem
\ref{thm_top_dim_n_anal_dim_1}, and the lower bound in
part (2),  in the $n=2$ case.

Let $\hat d_\infty^Y$, $\al$, and $d_\al$ be as in Section \ref{sec_overview}.
 We recall that
 $\L^2$ denotes  Lebesgue measure on $\R^2$, and for $j\in\Z\cup \{\infty\}$
we let $\mu_j=(\hat\pi^j)_{\#}\L^2$.

\begin{lemma}
\label{lem_hat_d_y_partial_snowflake}
There is a constant $C$ such that $C^{-1}\,d_\al\leq \hat
d_\infty^Y\leq C\,d_\al$.
Moreover, the $Q$-dimensional Hausdorff measure
is uniformly comparable to a Lebesgue measure. 
\end{lemma}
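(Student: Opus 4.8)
The plan is to prove the two assertions in turn, using the ``chain'' description of $\hat d_\infty^Y$. By the same reasoning as in the discussion following Definition~\ref{def_j_chain}, $\hat d_\infty^Y(p,p')$ equals the infimum of $\sum_i m^{-j_i}$ over all finite sequences $\hat\si_1,\dots,\hat\si_\ell$ in which $\hat\si_i$ is a closed $2$-cell of $Y_{j_i}$, $\hat\si_{i-1}\cap\hat\si_i\neq\emptyset$ for $1<i\le\ell$, and $p\in\hat\si_1$, $p'\in\hat\si_\ell$: the right-hand side is a pseudodistance obeying the defining diameter constraints of $\hat d_\infty^Y$, and any such pseudodistance is $\le$ the right-hand side by the triangle inequality. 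Throughout we use that, since $\al=\frac{\log m}{\log m_v}$, one has $m=m_v^\al$, so a $2$-cell of $Y_j$ --- a Euclidean $m^{-j}\times m_v^{-j}$ rectangle --- has $x$-width $m^{-j}$ and $y$-height $m_v^{-j}=(m^{-j})^{1/\al}$; moreover $0<\al<1$ because $2\le m<m_v$.

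For the bound $\hat d_\infty^Y\lesssim d_\al$, given $p=(p_1,p_2)$ and $p'=(p_1',p_2')$ I would concatenate a horizontal chain from $p$ to $(p_1',p_2)$ with a vertical chain from $(p_1',p_2)$ to $p'$. For the horizontal leg, choose $j\in\Z$ with $m^{-j}\le|p_1-p_1'|<m\cdot m^{-j}$; the $2$-cells of $Y_j$ meeting the horizontal segment at height $p_2$ between the two endpoints form, in order, a chain of at most $m+2$ cells, of total length at most $(m+2)m^{-j}\le(m+2)|p_1-p_1'|$. For the vertical leg, choose $j$ with $m_v^{-j}\le|p_2-p_2'|<m_v\cdot m_v^{-j}$; the $2$-cells of $Y_j$ in a fixed column through $p_1'$ covering the vertical segment form a chain of at most $m_v+2$ cells, of total length at most $(m_v+2)m^{-j}=(m_v+2)(m_v^{-j})^\al\le(m_v+2)|p_2-p_2'|^\al$. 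The last cell of the first chain and the first cell of the second both contain $(p_1',p_2)$, so their concatenation is admissible and yields $\hat d_\infty^Y(p,p')\lesssim|p_1-p_1'|+|p_2-p_2'|^\al=d_\al(p,p')$; when one displacement vanishes, simply drop the corresponding leg.

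For the bound $\hat d_\infty^Y\gtrsim d_\al$, let $\hat\si_1,\dots,\hat\si_\ell$ be any admissible chain for $\hat d_\infty^Y(p,p')$. The intervals $x(\hat\si_i)$ have length $m^{-j_i}$ and form a chain in $\R$ whose union contains $p_1$ and $p_1'$, so $|p_1-p_1'|\le\sum_i m^{-j_i}$. Likewise the intervals $y(\hat\si_i)$ have length $m_v^{-j_i}$ and form a chain whose union contains $p_2$ and $p_2'$, so $|p_2-p_2'|\le\sum_i m_v^{-j_i}$; since $0<\al<1$ makes $t\mapsto t^\al$ subadditive,
\[
\sum_i m^{-j_i}=\sum_i(m_v^{-j_i})^\al\ge\left(\sum_i m_v^{-j_i}\right)^\al\ge|p_2-p_2'|^\al .
\]
Hence every admissible chain has length $\ge\max(|p_1-p_1'|,|p_2-p_2'|^\al)\ge\tfrac12 d_\al(p,p')$, and taking the infimum gives $\hat d_\infty^Y\ge\tfrac12 d_\al$. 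Thus $C=\max(2,m_v+2)$ works in both directions. This subadditivity step is the only point with any real content; everything else is bookkeeping with the cell structure.

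Finally, for the measure statement I would show that $\L^2$ is Ahlfors $Q$-regular on $(\R^2,d_\al)$ with $Q=1+\al^{-1}$, and then invoke the standard fact that any Ahlfors $Q$-regular Borel measure on a metric space is comparable to $\h^Q$. For the regularity: a $d_\al$-ball of radius $r>0$ about a point contains, and is contained in, Euclidean rectangles of dimensions comparable to $r\times r^{1/\al}$, hence has $\L^2$-measure comparable to $r\cdot r^{1/\al}=r^{1+1/\al}=r^Q$ (equivalently, for $m^{-j}\le r<m^{-(j-1)}$ such a ball meets boundedly many $2$-cells of $Y_j$ and contains at least one, each of $\L^2$-measure $m^{-j}m_v^{-j}=(m^{-j})^Q$). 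Therefore $\h^Q\approx\L^2$ on $(\R^2,d_\al)$, and by the bilipschitz equivalence $\hat d_\infty^Y\approx d_\al$ just established the same comparison holds for $(\R^2,\hat d_\infty^Y)$.
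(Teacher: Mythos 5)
Your proof is correct and uses essentially the same ideas as the paper's: the paper factors $\hat d_\infty^Y$ as a product $d_{\mathrm{hor}}\times d_{\mathrm{vert}}$ and applies the subadditivity of $t\mapsto t^\alpha$ to the $m_v$-adic chains defining $d_{\mathrm{vert}}$, while you work directly with $2$-cell chains in $\R^2$ and project them to each coordinate axis, reaching the identical subadditivity step; the measure comparison via Ahlfors $Q$-regularity is likewise the paper's argument.
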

\begin{proof}
 The metric on $Y=\real^2$ is uniformly
comparable to a product metric $d_{hor}\times d_{vert}$; $d_{hor}$ is
just the standard metric on $\real$; on the other hand, $d_{vert}$ is the largest
pseudodistance which makes each cell of generation $j\in\zahlen$ of
the standard $m_v$-adic subdivision of
$\real$ have
diameter at most $m^{-j}$. 
Specifically, for any $x,x'\in \R$, the distance $d_{vert}(x,x')$ is the 
infimum of the quantities $\sum_i \length(\si_i)$, where $\si_1,\ldots,\si_\ell$
is a chain of $m_v$-adic intervals joining $x$ to $x'$, and
$\length(\si_i)=m^{-j_i}$ when $\si_i$ has generation $j_i$
(cf. Section \ref{sec_metric_structure}); since 
$|x-x'|\leq \sum_i m_v^{-j_i}=\sum_i (\length(\si_i))^{\frac{1}{\al}}$
one readily checks that $d_{vert}$ is uniformly comparable to
the snowflake of the standard metric on $\real$ by the 
power $\al=\frac{\log m}{\log m_v}$.

Finally, as $d_\infty^Y$-balls of radius $r$
 have Lebesgue measure $\approx r^Q$, we conclude that
$\hmeas Q.\approx \lebmeas 2.$
\end{proof}

\begin{lemma}[David-Semmes regularity of $\hat\pi^\infty$]
\label{lem_david_semmes_regular_map}

\mbox{}
\begin{enumerate}
\item
\label{item_regular_map}
The projection map  
$\hat\pi^\infty:(\R^2,\hat d_\infty^Y)\ra (X_\infty,\hat d_\infty)$
is a David-Semmes regular map.   
\item 
\label{item_ar}
$(X_\infty,\hat d_\infty)$ is
Ahlfors $Q$-regular, where $Q=1+\al^{-1}$.
\item 
\label{item_top_dim_2}
$(X_\infty,\hat d_\infty)$ has topological dimension at least $2$.
\end{enumerate}
\end{lemma}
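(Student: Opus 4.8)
The three assertions are tightly linked, so I would prove them in the order (1), (2), (3). For part (1), recall that a map $f\colon Y\to X$ between metric measure spaces is David-Semmes regular if it is Lipschitz and there is a constant $N$ such that for every ball $B(x,r)\subset X$, the preimage $f^{-1}(B(x,r))$ can be covered by at most $N$ sets of diameter at most $r$. The Lipschitz bound is immediate: $\hat\pi^\infty$ is $1$-Lipschitz from $(\R^2,\hat d_\infty^Y)$ to $(X_\infty,\hat d_\infty)$ by the very definition of $\hat d_\infty$ (it is the largest pseudodistance making this map $1$-Lipschitz, as noted in Section \ref{sec_overview}). For the covering condition, fix $x\in X_\infty$ and $r>0$; choose $j$ with $m^{-j}\le r<m^{-(j+1)}$. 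By Corollary \ref{cor_dinfty_comb_dist} (with $C$ a fixed absolute constant coming from the choice of $j$), any point of $X_\infty$ at $\hat d_\infty$-distance $<Cm^{-j}$ from a fixed point lies in a chain of at most $N_0=N_0(C)$ cells of $X_j$; hence $(\pi_j^\infty)^{-1}(B(x,r))$ is covered by a bounded number, say $N_1=N_1(n)$, of $2$-cells of $X_j$. By Lemma \ref{lem_bounded_complexity}(\ref{item_cell_inverse_bounded}) each such $2$-cell of $X_j$ pulls back under $\hat\pi^j\colon\R^2\to X_j$ to at most $27$ $2$-cells of $Y_j$, each of which has $\hat d_\infty^Y$-diameter $\approx m^{-j}\le r$. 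Therefore $(\hat\pi^\infty)^{-1}(B(x,r))$ is covered by at most $27N_1$ sets of diameter $\lesssim r$; after subdividing each into boundedly many pieces of diameter $\le r$ we get the required uniform bound $N=N(n)$. The main technical point here is precisely the combination of Corollary \ref{cor_dinfty_comb_dist} with the bounded-complexity estimates of Lemma \ref{lem_bounded_complexity}, so most of the work has already been done.

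For part (2), Ahlfors $Q$-regularity of $(X_\infty,\hat d_\infty)$ follows from part (1) together with Lemma \ref{lem_hat_d_y_partial_snowflake}, which tells us that $(\R^2,\hat d_\infty^Y)$ is Ahlfors $Q$-regular for $Q=1+\al^{-1}$ (its balls of radius $r$ have $\L^2$-measure $\approx r^Q$, and $\mathscr H^Q\approx\L^2$). Since $\hat\pi^\infty$ is David-Semmes regular and surjective, and $\mu_\infty=(\hat\pi^\infty)_\#\L^2$, the upper bound $\mu_\infty(B(x,r))\lesssim r^Q$ is immediate from the covering property of part (1); for the lower bound one observes that $B(x,r)$ contains the image of a ball of radius $\approx r$ in $(\R^2,\hat d_\infty^Y)$ (take any preimage point $\hat x$ of $x$ and use that $\hat\pi^\infty$ is $1$-Lipschitz, so $\hat\pi^\infty(B_{\hat d_\infty^Y}(\hat x,r))\subset B(x,r)$), whence $\mu_\infty(B(x,r))\ge\L^2(B_{\hat d_\infty^Y}(\hat x,r))\gtrsim r^Q$. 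Then $\mu_\infty\approx\mathscr H^Q$ on $X_\infty$ by a standard comparison (a regular measure on a metric space is comparable to Hausdorff measure of the matching dimension).

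For part (3), the lower bound on topological dimension, the idea sketched in the overview is to exhibit a subset of $X_\infty$ on which $\hat\pi^\infty$ is injective, and whose image therefore has topological dimension at least $2$. The key observation is that the restriction of $\hat\pi^\infty$ to the boundary $\partial[0,1]^2$ of the unit square is injective: the equivalence relation $\calr_\infty$ is generated by vertical identifications supported on vertical lines with $x$-coordinate in $\frac14\Z\setminus\Z$ and their $\Phi^j$-images, all of which lie strictly in the interior of $[0,1]^2$ (after checking that no identification reaches the boundary — this uses that the generating cells $a_{k,\ell,i}$ have $x$-coordinate $\frac i4+k\notin\Z$, and the scaling $\Phi$ preserves the property of having $x$-coordinate bounded away from $0$ and $1$ for the relevant cells). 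Hence $\hat\pi^\infty$ is a continuous injection on the circle $\partial[0,1]^2$; since a continuous injection from a compact space is a homeomorphism onto its image, $X_\infty$ contains a topologically embedded circle $\gamma$. Now I would invoke a standard topological fact: if a metrizable (indeed, normal) space contains an embedded copy of $S^1$ that does not separate it, or more robustly, one uses that $\hat\pi^\infty$ is also injective on a neighborhood-like region — actually the cleanest route is to argue that $\hat\pi^\infty$ restricted to all of $[0,1]^2$ is injective in a way that forces a $2$-dimensional image, or to use the circle together with a separation/ extension argument (e.g. the embedded circle $\gamma$ bounds in $X_\infty$ in the sense that $\hat\pi^\infty([0,1]^2)$ retracts onto it, ruling out $\dim\le1$ by the fact that a $1$-dimensional space cannot contain such a configuration — an Eilenberg–Otto / Hurewicz-type argument). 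I expect \emph{this last step — extracting the dimension lower bound from the injectivity on $\partial[0,1]^2$ — to be the main obstacle}, since it requires a genuinely topological argument rather than the metric/combinatorial machinery developed so far; the natural tool is the characterization of covering dimension via the non-extendability of maps to spheres, applied to the identity map on the embedded $S^1$, which cannot extend over a $2$-cell's image if $\dim X_\infty\le1$, contradicting the existence of the surjection from $[0,1]^2$.
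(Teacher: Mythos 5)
Parts (\ref{item_regular_map}) and (\ref{item_ar}) of your proposal follow essentially the same route as the paper: one-Lipschitzness from the definition of $\hat d_\infty$; choosing a scale $j$ with $m^{-(j+1)}<r\leq m^{-j}$ (your inequality is written backwards, which must be a slip); passing from a $\hat d_\infty$-ball to a bounded number of $2$-cells of $X_j$ via Proposition~\ref{prop_distance_lower_bound} (or its consequence Corollary~\ref{cor_dinfty_comb_dist}); and then pulling back to $Y_j$ via Lemma~\ref{lem_bounded_complexity}(\ref{item_cell_inverse_bounded}). The derivation of Ahlfors $Q$-regularity from regularity of the map plus Lemma~\ref{lem_hat_d_y_partial_snowflake} is the standard fact the paper invokes in one line; your spelling-out of the two inequalities is correct.

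For part (\ref{item_top_dim_2}) you miss the paper's main, much shorter argument: by part (\ref{item_regular_map}), fibers of $\hat\pi^\infty$ have uniformly bounded cardinality, hence are totally disconnected, and a continuous map with totally disconnected (zero-dimensional) point inverses cannot decrease topological dimension (the paper cites \cite[Thm.~1.24.4]{engelking_topbook}); so $\dim X_\infty\geq \dim\R^2=2$ with no further work. The concrete argument you grope toward --- injectivity of $\hat\pi^\infty$ on $\partial[0,1]^2$ plus a non-extendability/degree argument --- is indeed the paper's secondary proof, but your writeup is tentative and includes a false trail: $\hat\pi^\infty$ is certainly not injective on all of $[0,1]^2$ (it collapses the identified $1$-cells), so that ``cleanest route'' does not exist. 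The version that does work is: Tietze-extend $(\hat\pi^\infty|_{\partial[0,1]^2})^{-1}$ to a map $\hat g:X_\infty\to\R^2$, observe that $0$ is a stable value of $\hat g\circ\hat\pi^\infty$ on $[0,1]^2$ by degree theory, and conclude $\dim X_\infty\geq 2$. Your alternative sketch --- extending the circle map $\gamma\to S^1$ and deriving a retraction $[0,1]^2\to\partial[0,1]^2$, contradicting that $\partial[0,1]^2$ is not a retract of the square --- would also work via the Alexandroff extension characterization of covering dimension, but you should commit to one complete argument rather than listing candidates. Also, your claim that the generating cells $a_{k,\ell,i}$ lie strictly in the interior of $[0,1]^2$ is not quite right (for $\ell=0$, $i=1$ the cell $a_{k,0,1}$ touches $y=0$); the correct observation is that the identifications never pair two boundary points of $[0,1]^2$ with each other, which the paper establishes via the $x$-coordinate and a case check.
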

\begin{proof}
(\ref{item_regular_map}).
Suppose $\si$ is a $2$-cell of $Y_j$.  Then by the definition of 
$\hat d_\infty$, the pullback of $(\hat\pi^\infty)^*\hat d_\infty$ is a pseudodistance
on $\R^2$ with respect to which  the diameter of $\si$ is $\leq m^{-j}$.  
From the definition of $\hat d_\infty^Y$ we therefore have
$\hat d_\infty^Y\leq (\hat\pi^\infty)^*\hat d_\infty$, so
$\hat\pi^\infty:(\R^2,\hat d_\infty^Y)\ra (X_\infty,\hat d_\infty)$ is $1$-Lipschitz.

Pick $\bar p\in X_\infty$, $r>0$.

  We want to show that 
$(\hat\pi^\infty)^{-1}(B(\bar p,r))$ may be covered by a controlled number of balls
of radius comparable to $r$ in $(\R^2,\hat d_\infty^Y)$.    
To that end, we choose $j\in\Z$ such that
$m^{-(j+1)}<r\leq m^{-j}$, and pick $p\in (\pi_j^\infty)^{-1}(\bar p)$.

Since $(\pi_j^\infty)^{-1}(B(\bar p,m^{-j}))\subset B(p,3m^{-j})$ by 
Proposition \ref{prop_distance_lower_bound}, we have
$$
(\hat\pi^\infty)^{-1}(B(\bar p,r))\subset (\hat\pi^\infty)^{-1}(B(\bar p,m^{-j}))
$$
$$
=(\hat\pi^j)^{-1}(\pi_j^\infty)^{-1}(B(\bar p,m^{-j}))
\subset (\hat\pi^j)^{-1}(B(p,3m^{-j}))\,.
$$
Note that $B(p,3m^{-j})$ is contained in a controlled number of $2$-cells of $X_j$
by 
Lemma \ref{lem_bounded_complexity}(\ref{item_ball_complexity}), 
and the inverse image of each of these
may be covered by at most  
$27$ 
cells of $Y_j$ by 
Lemma \ref{lem_bounded_complexity}(\ref{item_cell_inverse_bounded}).
Since a cell of $Y_j$ has $\hat d_\infty^Y$-diameter $\leq m^{-j}<m\cdot r$,
we are done.

(\ref{item_ar}).  This follows immediately from (\ref{item_regular_map}) since
regular maps preserve Ahlfors-regularity.

(\ref{item_top_dim_2}).  By (\ref{item_regular_map}) the point inverses 
$(\hat\pi^\infty)^{-1}(\bar p)$ have controlled cardinality; in particular,
they are totally disconnected.  
Therefore $\hat\pi^\infty$ cannot 
decrease the topological dimension (see for example
\cite[Thm.~1.24.4]{engelking_topbook});  therefore, the topological dimension of 
$X_\infty$ is at least that of $\R^2$.

One may give a more concrete proof that $X_\infty$ has topological dimension $\geq 2$ 
along the following lines.  Let $f$ be
the restriction of $\hat\pi^\infty:\R^2\ra X_\infty$ to the boundary of $[0,1]^2$;
note also that $f$ is injective.  To see this, suppose 
that $p,p'\in \D[0,1]^2$ are distinct points with $f(p)=f(p')$.  Then 
$x(p)=x_\infty(f(p))=x_\infty(f(p'))=x(p')$, and by the definition of the 
equivalence relation we must have $x(p)=x(p')\in m^{-j}\Z\setminus m^{-(j-1)}\Z$
for some $j\in\Z$.  In view of the definition of $\calr_\infty$,
by inspection we get a contradiction to the fact that $p,p'\in\D[0,1]^2$.   
Letting $\ga=f(\D[0,1]^2)\subset X_\infty$ 
be the image, we apply the Tietze
extension theorem to  extend $g=f^{-1}:\ga\ra \D[0,1]^2$ to a continuous map
$\hat g:X_\infty\ra \R^2$.  By degree theory,
the origin is a stable value of the composition 
$$
[0,1]^2\stackrel{\hat\pi^\infty}{\lra}X_\infty\stackrel{\hat g}{\lra}\R^2\,,
$$
so $\hat g$ cannot be approximated by a map to
$\R^2\setminus \{0\}$.  Therefore $X_\infty$ has topological
dimension at least $2$.

\end{proof}

\section{The upper bound on the Assouad-Nagata dimension}
\label{sec_assouad_nagata}

The proof of the upper bound on the topological dimension is more subtle
than the proof of the lower bound.  Note that the existence of a map
$\R^2\ra X_\infty$ with finite point inverses is not by itself enough
to imply that $X_\infty$ has topological dimension $2$: 
recall that the Peano curve $[0,1]\ra [0,1]^2$ is a finite-to-one surjective
map, showing that such maps can  increase the topological dimension.

We now recall the definition of the Assouad-Nagata dimension.
Let   $\C$ be a cover  of a metric space $X$.  Then
$\C$ is  {\em $r$-bounded} if 
$\diam(C)\leq r$ for all $C\in \C$, and $\C$ has {\em $r$-multiplicity
at most $k$} 
 if every  ball of radius $r$ intersects at most $k$ elements of 
$\C$.

\begin{definition}\cite{assouad_nagata,lang_schlichenmaier}
The {\em Assouad-Nagata dimension} of a metric space $X$ is the infimum of the 
integers $n\geq 0$ such that for some $c>0$ and every $r>0$,
there is a  cover $\C$ of  $X$ that is $cr$-bounded and has $r$-multiplicity
at most $n+1$.
\end{definition}
Note that the Assouad-Nagata dimension is bounded below by the topological dimension
\cite[Prop. 2.2]{lang_schlichenmaier}.

\begin{thm}
  \label{thm:nagata_bound}
  The Assouad-Nagata dimension of $X_\infty$ is at most $2$.
\end{thm}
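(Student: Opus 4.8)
The plan is to build, for each scale $r>0$, a cover $\C_r$ of $X_\infty$ that is $cr$-bounded and has $r$-multiplicity at most $3$. The natural candidate comes from the cell structure of the approximating complexes: fix $j$ with $m^{-j}$ comparable to $r$ and take $\C_r$ to be the images under $\pi_j^\infty\colon X_j\to X_\infty$ of the open stars of the $0$-cells of $X_j$, possibly slightly enlarged to an honest open cover. Because $\hat d_j$ and $\hat d_\infty$ differ by at most $2m^{-j}$ on $X_j$ (Proposition~\ref{prop_distance_lower_bound}), and because $(X_j,\hat d_j)$ is doubling at scale $m^{-j}$ with uniformly bounded link complexity (Lemma~\ref{lem_bounded_complexity}), each such set has $\hat d_\infty$-diameter $\lesssim m^{-j}\approx r$, giving the boundedness. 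The first step is therefore to make this cover precise and verify the diameter bound using Proposition~\ref{prop_distance_lower_bound} together with Lemma~\ref{lem_bounded_complexity}(\ref{item_ball_complexity}),(\ref{item_bounded_link_complexity}).

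The heart of the matter is the multiplicity bound: I want every $\hat d_\infty$-ball of radius $r$ to meet at most $3$ elements of $\C_r$. Here is where one must work. By Corollary~\ref{cor_dinfty_comb_dist}, a ball $B(\bar p, r)\subset X_\infty$ pulls back under $\pi_j^\infty$ to a set contained in a chain of a controlled number $N=N(C)$ of cells of $X_j$; this already bounds the multiplicity by a constant depending only on $n$. To push the multiplicity all the way down to $3$ — which is what is needed to conclude Assouad–Nagata dimension $\le 2$ — one should not use the stars of $0$-cells directly but rather pass to a \emph{coloring argument}: decompose the open cells of $X_j$ into a bounded number of classes, group the cells of each class into clusters that are pairwise far apart, and take the $r$-neighborhoods of these clusters. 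This is the standard Lang–Schlichenmaier-type construction of a cover of multiplicity $n+1$ from a complex of dimension $n$, and the point is that $X_j$, although not a simplicial complex in the usual sense, has all the combinatorial bounded-geometry properties (bounded link size, bounded number of subfaces per cell, connected links) established in Lemma~\ref{lem_bounded_complexity}, which is exactly what that construction requires. The key geometric input that makes the clusters of a single color stay separated at scale $r$ after projecting to $X_\infty$ is again the comparison estimate~\eqref{eqn_compression_overview}/Proposition~\ref{prop_distance_lower_bound}: two cells of $X_j$ that are combinatorially far apart have images in $X_\infty$ that are $\hat d_\infty$-far apart.

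So the steps, in order, are: (1) fix the scale and define $\C_r$ as the projection to $X_\infty$ of a carefully chosen cover of $X_j$ built from the cell complex structure; (2) prove $cr$-boundedness of each member using Proposition~\ref{prop_distance_lower_bound} and the doubling/link bounds of Lemma~\ref{lem_bounded_complexity}; (3) produce a bounded coloring of the open cells of $X_j$ and, within each color class, merge cells into well-separated clusters, using that combinatorial $n$-balls in $X_j$ contain at most $N(n)$ cells (Lemma~\ref{lem_bounded_complexity}(\ref{item_ball_complexity})); (4) verify, via Proposition~\ref{prop_distance_lower_bound} and Corollary~\ref{cor_dinfty_comb_dist}, that a $\hat d_\infty$-ball of radius $r$ meets at most one cluster per color, hence at most $n+1=3$ — wait, here one must be careful, since the target is multiplicity $3$ (i.e. Assouad–Nagata dimension $\le 2$), so in the $n=2$ case the coloring must use exactly $3$ colors, which is possible precisely because $X_j$ is a $2$-complex. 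The \textbf{main obstacle} is step (3)–(4): getting the multiplicity down to the optimal value $n+1$, rather than merely a dimension-dependent constant, requires the full strength of the Lang–Schlichenmaier coloring machinery adapted to the non-simplicial complexes $X_j$, and checking that the separation between same-color clusters survives the projection $\pi_j^\infty$ — for which the two-sided distance comparison of Proposition~\ref{prop_distance_lower_bound} is the essential tool.
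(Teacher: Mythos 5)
Your overall plan matches the paper's: a cover by three families indexed by the dimension of cells of $X_k$ (the paper's $\Omega_0,\Omega_1,\Omega_2$), each $cr$-bounded and $r$-separated, with separation transferred from combinatorics to the metric via Proposition~\ref{prop_distance_lower_bound} and Corollary~\ref{cor:cell_dist}. But the mechanism you give in step~(3) is wrong, and it is exactly the missing content. You propose to ``group the cells of each class into clusters that are pairwise far apart.'' For a cell complex this cannot work: adjacent $2$-cells of $X_k$ share a $1$-cell, so their closures are at distance zero, and no partition of the $2$-cells into metrically separated clusters can also cover them all. The paper does the opposite --- it \emph{shrinks} each cell $\sigma$ of dimension $d$ to a core $c_\sigma$ retreating from the lower-dimensional skeleton, and shows that for distinct cells of the same dimension the projections $\pi_k^\infty(c_{\sigma_0})$, $\pi_k^\infty(c_{\sigma_1})$ are $m^{-(k+1)}$-apart.

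What makes this nontrivial, and what your sketch does not engage with, is the anisotropy $m_v\gg m$ and the gluings built into $\calr$. A $2$-cell of $X_k$ is a quotient of an $m^{-k}\times m_v^{-k}$ rectangle, so a naive isotropic retraction from $\partial\sigma$ does not yield separation at scale $m^{-(k+1)}$. The paper instead removes a collar of width $m^{-(k+1)}/2$ in the $x$-direction but only $3m_v^{-(k+1)}$ in the $y_k$-direction (conditions~(\ref{eq:nagata_bound_p1})--(\ref{eq:nagata_bound_p6})), and verifies cell-of-$X_{k+1}$ by cell, using the coordinate functions $x$ and $y_k$ from Lemma~\ref{lem_x_and_y_descend} together with Corollary~\ref{cor:cell_dist}, that cores of distinct cells stay non-adjacent after one more subdivision and gluing. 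Appealing to ``Lang--Schlichenmaier coloring machinery'' does not substitute for this: what the paper takes from \cite[Prop.~2.5]{lang_schlichenmaier} is only the good-covering \emph{criterion}, and the construction of the cover --- the part your proposal defers --- is where the theorem's actual content lies.
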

\begin{proof}
  We exhibit ``good coverings'' in the sense of
\cite[Prop. 2.5]{lang_schlichenmaier}(4): we will show that for every $k\in\Z$,
there is a cover $\C_0\cup\C_1\cup\C_2$ of $X_\infty$ such that
$\C_i$ is $m^{-k}$-bounded and has $m^{-(k+1)}$-multiplicity $1$
for all $i\in\{0,1,2\}$.  This 
implies that the Assouad-Nagata dimension of $X_\infty$ is at most $ 2$.

  \par  Consider the family $\Omega_2$ of $2$-cells of $X_k$; from
  each $\sigma\in\Omega_2$ produce a subset $c_\sigma$ by taking the closure of
  the subset of $\sigma^{(1)}$ obtained by removing those points
  $p\in\sigma^{(1)}$ which satisfy one of the following:
  \begin{equation}
  \begin{aligned}
    \label{eq:nagata_bound_p1}
    d_\real\left(x(p),\min x(\sigma^{(1)})\right)&\le m^{-(k+1)}/2,\\
    d_\real\left(x(p),\max x(\sigma^{(1)})\right)&\le m^{-(k+1)}/2,\\
    d_{S^1}\left(y_k(p),0\right)&\le 3m_v^{-(k+1)}.
  \end{aligned}
\end{equation}
Note that $\diam\dirprj k,\infty,.(c_\sigma)\le m^{-k}$. Consider now
different cells $\sigma_0,\sigma_1\in \Omega_2$; for $i=0,1$ let
$p_i\in \dirprj k,,.(c_{\sigma_i})$; as $c_{\sigma_i}$ does not meet
the $1$-skeleton of $X_k$, any cell 
$\tau$ of $X_{k+1}$ intersecting $\dirprj k,,m.(p_i)$ must intersect a
given cell $\tau_i$ which belongs to the set of those
subcells of $\dirprj k,k+1,.(\sigma_i)$ which meet $\dirprj k,k+1,.(c_{\sigma_i})$. Let $\tau'_i$ be a cell adjacent to $\tau_i$;
recall that $X_{k+1}$ is obtained from $X_k$
by quotienting by 
 $(\hat\pi^k\circ\Phi^k)_*\calr_0$;
and thus (\ref{eq:nagata_bound_p1}) guarantees  that either $\tau'_0$
and $\tau'_1$ are not adjacent or the sets $x(\tau'_1)$ and
$x(\tau'_0)$ are at distance at least $m^{-(k+1)}$.
Thus, applying  Corollary \ref{cor:cell_dist} and the fact that $x$ is
$1$-Lipschitz, we obtain
\begin{equation}
  \label{eq:nagata_bound_p2}
  \dirdst {{\dirprj k,,.(c_{\sigma_0})}},{{\dirprj
      k,,.(c_{\sigma_1})}},.\ge m^{-(k+1)}.
\end{equation}
\par Let $\Omega_1$ be the collection of $1$-cells of $X_k$; from
$e\in\Omega_1$ produce a subset $c_e$ as follows: let $e^{(1)}$ be the
subdivision of $e$ in $X_k^{(1)}$ and let $C_e$ be the collection of
those points $p$ which belong to a $2$-cell of $X_k^{(1)}$ which
intersects $e$; then, if $e$ is vertical, $c_e$ is obtained from $C_e$
by taking the points $p\in C_e$ satisfying:
\begin{equation}
  \label{eq:nagata_bound_p3}
  \begin{aligned}
    d_\real(x(p),x(e))&\le m^{-(k+1)}/2\\
    d_{S^1}(y_k(\tau),0)&\ge 5m_v^{-(k+1)};
  \end{aligned}
\end{equation}
if $e$ is horizontal, $c_e$ is obtained from $C_e$ by taking the
points $p\in C_e$ satisfying:
\begin{equation}
  \label{eq:nagata_bound_p4}
  \begin{aligned}
    d_\real(x(p),\{\max x(e),\min x(e)\})&\ge m^{-(k+1)}\\
    d_{S^1}(y_k(p),0)\le 3m_v^{-(k+1)}.
  \end{aligned}
\end{equation}
Let $e_0,e_1$ be distinct cells in $\Omega_1$. Then either $x(e_0)$
and $x(e_1)$ are  at distance $m^{-(k+1)}$ apart, or there are no adjacent
cells $\tau_i$ of $X_{k+1}$ such that $\tau_i\cap \dirprj
k,k+1,.(c_{e_i})\ne\emptyset$. Thus by Corollary \ref{cor:cell_dist} we
conclude that:
\begin{equation}
  \label{eq:nagata_bound_p5}
  \dirdst {{\dirprj k,k+1,.(e_0)}},{{\dirprj k,k+1,.(e_1)}},.\ge m^{-(k+1)}.
\end{equation}
\par Let $\Omega_0$ be the collection of vertices of $X_k$. For
$v\in\Omega_0$ consider the set $C_v$ consisting of those $2$-cells of
$X_k^{(1)}$ within combinatorial distance $5$ from a cell containing
$v$; then $c_v$ consists of those points $p\in C_v$ satisfying:
\begin{equation}
  \begin{aligned}
  \label{eq:nagata_bound_p6}
  d_\real(x(p),x(v))&\le m^{-(k+1)}\\
  d_{S^1}(y_k(p),0)&\le 5m_v^{-(k+1)}.
\end{aligned}
\end{equation}
Then $\diam\dirprj k,,.(c_v)\le m^{-(k+1)}$. Let $v_0$ and $v_1$ distinct
vertices. If $x(v_0)\ne x(v_1)$ then $\dirdst {{\dirprj
    k,,.(c_{v_0})}},{{\dirprj k,,.(c_{v_1})}},.\ge m^{-(k+1)}.$ Otherwise, there are no adjacent
cells $\tau_i$ of $X_{k+1}$ such that $\tau_i\cap \dirprj
k,k+1,.(c_{v_i})\ne\emptyset$. Thus Corollary \ref{cor:cell_dist}
gives:
\begin{equation}
  \label{eq:nagata_bound_p7}
  \dirdst {{\dirprj
    k,,.(c_{v_0})}},{{\dirprj k,,.(c_{v_1})}},.\ge m^{-(k+1)}.
\end{equation}
\par The families $\left\{\dirprj
  k,,.(c_\sigma)\right\}_{\sigma\in\Omega_2}$, $\left\{\dirprj
  k,,.(c_e)\right\}_{e\in\Omega_1}$ and $\left\{\dirprj
    k,,.(c_v)\right\}_{e\in\Omega_0}$ provide a good covering
  of $X_\infty$.
\end{proof}
\section{The Poincar\'e inequality}
\label{sec_poincare_inequality}
In this section we prove that $(X_\infty,\mu)$ satisfies a Poincar\'e inequality.
Following Semmes \cite{semmes}, we do this by showing that any two points
$p,q\in X_\infty$ can be joined by a ``pencil'' --- a good measured family of curves.  
To obtain such a
family, we first construct a string of horizontal galleries that connects
$p$ to $q$; we then convert this to a  measured family of curves by 
replacing each $2$-cell with
the corresponding 
(appropriately normalized) 
measured family of horizontal geodesics, and then concatenating.
\subsection{Construction of galleries}
\label{subsec:galleries}
\def\Yset#1.{\setbox1=\hbox{$#1$\unskip}Y_{\ifdim\wd1>0pt #1\fi}}
\begin{lemma}
  \label{lem:gall_joins} For each $C>0$ there is  an $L=L(C)$
  such that the following holds.
  For every $j\in \Z$, and every pair $\sigma_{0},\sigma_{1}$ of $2$-cells of $X_j$ such
  that 
  \begin{equation}
    \label{eq:gall_joins_s1}
  \dirdst
  {{\ndirprj j,,.(\sigma_{0})}},{{\ndirprj
      j,,.(\sigma_{1})}},.\le C m^{-j};
\end{equation}
there is a horizontal
  gallery $T$ in $X_j$ whose combinatorial
  length is at most $L$ and which
  starts at
  $\sigma_{0}$ and ends at $\sigma_{1}$.
\end{lemma}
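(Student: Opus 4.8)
The plan is to deduce the lemma from the combinatorial consequence Corollary~\ref{cor_dinfty_comb_dist} of the distance estimate Proposition~\ref{prop_distance_lower_bound}, together with the bounded combinatorics of the complexes $X_j$ and the explicit form of the identifications defining $\calr$. Since the hypothesis provides points $p\in\sigma_{0}$, $p'\in\sigma_{1}$ with $\hat d_\infty(\pi_j^\infty(p),\pi_j^\infty(p'))<(C+1)m^{-j}$, Corollary~\ref{cor_dinfty_comb_dist} (with constant $C+1$) yields an $N=N(C)$ and a chain of at most $N$ cells of $X_j$ joining $p$ to $p'$; replacing each member by a $2$-cell containing it and adding $\sigma_{0},\sigma_{1}$ at the ends produces a sequence $\sigma_{0}=\rho_{0},\dots,\rho_{s}=\sigma_{1}$ of $2$-cells of $X_j$ with $s\le N+2$ and $\rho_{i-1}\cap\rho_{i}\ne\emptyset$ for all $i$. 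As two horizontal galleries sharing a terminal cell concatenate to a horizontal gallery, it suffices to prove: there is a constant $L_{0}$ depending only on $m$ and $m_{v}$ such that any two $2$-cells $\rho,\rho'$ of $X_j$ with $\rho\cap\rho'\ne\emptyset$ are joined by a horizontal gallery of combinatorial length $\le L_{0}$; then $L(C):=(N+2)\,L_{0}$ works, and since $\Phi^{k}$ induces cellular isomorphisms $X_j\to X_{j+k}$ carrying horizontal galleries to horizontal galleries and $\calr_j$ to $\calr_{j+k}$, and $N(C)$ is scale-invariant, $L(C)$ will not depend on $j$.

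Next I would reduce this to the case of two $2$-cells sharing a single $1$-cell. The subcomplex $\rho\cap\rho'$ contains a $0$-cell $v$ of $X_j$, and by the connectedness of the link of $v$ and the bound on its size (Lemma~\ref{lem_bounded_complexity}(\ref{item_links_connected}) and (\ref{item_bounded_link_complexity})) the cells $\rho$ and $\rho'$ are connected through a chain of at most $24$ $2$-cells through $v$, consecutive ones sharing a $1$-cell that contains $v$. When such a shared $1$-cell is vertical, the two cells already form a horizontal gallery of length $2$; so everything comes down to showing that two $2$-cells of $X_j$ sharing a horizontal $1$-cell $e$ can be joined by a horizontal gallery of absolutely bounded length.

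For this I would first note that $e$ has a \emph{unique} lift $\hat e$ under $\hat\pi^j\colon Y_j\to X_j$: if $\hat\pi^j(\hat e)=\hat\pi^j(\hat e')$ for horizontal $Y_j$-edges $\hat e,\hat e'$, then by Lemma~\ref{lem_cell_structure}(\ref{item_face_identification}) $\calr_j$ identifies $\hat e$ with $\hat e'$ by a translation $t$, so $\hat q\sim_{\calr_j}t(\hat q)$ for all $\hat q\in\hat e$ by Lemma~\ref{lem_properties_of_calr_j}(\ref{item_entire_cells_identified}); choosing $\hat q$ interior to the horizontal edge $\hat e$, so that $\hat q$ has $x$-coordinate not in $m^{-j}\Z$, and using that every nontrivial $\calr_j$-coset lies in $\{(x,y)\in\R^2\mid x\in m^{-j}\Z\}$ (Lemma~\ref{lem_properties_of_calr_j}(\ref{item_nontrivial_cosets}) and its analogue for the generators $\Phi^i_*\calr$), we get $t=\id$, i.e.\ $\hat e=\hat e'$. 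The same remark shows distinct $2$-cells of $Y_j$ have distinct images in $X_j$, so each of the two $2$-cells has a unique lift, and since it contains $e$ that lift is adjacent to $\hat e$; hence the two cells are exactly the $\hat\pi^j$-images of the $Y_j$-cells directly above and directly below $\hat e$. The problem is thereby reduced to a single ``row jump'': given a $Y_j$-cell $\hat\sigma=[c\,m^{-j},(c+1)m^{-j}]\times[r\,m_v^{-j},(r+1)m_v^{-j}]$ and the $Y_j$-cell $\hat\sigma^{\uparrow}$ directly above it, exhibit a short horizontal gallery in $X_j$ from $\hat\pi^j(\hat\sigma)$ to $\hat\pi^j(\hat\sigma^{\uparrow})$. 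Here the choice of $\calr$ is used: with $i=(r\bmod 3)+1\in\{1,2,3\}$ and $\ell=(r-i+1)/3$, a direct computation shows that $\Phi^{j-1}(a_{k,\ell,i})$ and $\Phi^{j-1}(a'_{k,\ell,i})$ are precisely the vertical $Y_j$-edges $\{(4k+i)m^{-j}\}\times[r\,m_v^{-j},(r+1)m_v^{-j}]$ and $\{(4k+i)m^{-j}\}\times[(r+1)m_v^{-j},(r+2)m_v^{-j}]$; since these are identified by $\Phi^{j-1}_*\calr\subseteq\calr_j$, the $\hat\pi^j$-images of the $Y_j$-cells immediately to their left — one in row $r$, one in row $r+1$ — share a vertical $1$-cell of $X_j$. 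As $k$ ranges over $\Z$ such a ``transition column'' $4k+i-1$ occurs within $m=4$ columns of $c$; moving horizontally within row $r$ to it (consecutive cells of a row of $X_j$ are distinct and share a vertical $1$-cell, so this is a horizontal gallery), making the transition to row $r+1$, and moving horizontally back within row $r+1$ produces the desired horizontal gallery of length at most an absolute constant.

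The genuinely routine parts are the coordinate computation for $\Phi^{j-1}(a_{k,\ell,i})$, the observation that traversing a row of $X_j$ is a bona fide horizontal gallery (i.e.\ $\hat\pi^j$ does not collapse $2$-cells, again because interiors of $2$-cells have trivial $\calr_j$-cosets), and the bookkeeping of constants. The step I expect to be the main obstacle — and the conceptual core — is the passage from a bounded \emph{chain} to a bounded \emph{horizontal gallery}: one must be sure that every horizontal adjacency forced along the chain can be replaced by a bounded-length horizontal detour. The uniqueness of the lift $\hat e$ is what makes this manageable, since it rules out the a priori dangerous scenario in which the two $2$-cells are images of $Y_j$-cells lying many rows apart; thus no ``long jump'' maneuver across coarse identifications is needed, and the single row-jump construction above suffices.
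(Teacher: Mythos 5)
Your proposal is correct and follows essentially the same route as the paper: reduce via Corollary~\ref{cor_dinfty_comb_dist} to a bounded chain, reduce each step of the chain to an adjacent pair sharing a $1$-cell via link connectivity (Lemma~\ref{lem_bounded_complexity}), and handle the horizontal $1$-cell case using the explicit identifications defining $\calr$. The main difference is one of exposition: where the paper simply asserts ``by the definition of $\calr_{j-1}$'' that two cells sharing a horizontal $1$-cell admit a bounded horizontal gallery, you make the step precise by showing that a horizontal $1$-cell of $X_j$ has a unique lift to $Y_j$ (using that nontrivial $\calr_j$-cosets sit over $m^{-j}\Z$), then doing the coordinate bookkeeping for the row-jump along $\Phi^{j-1}_*\calr$; this is a useful unpacking of the paper's terser argument rather than a different approach.
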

\begin{proof}
  \par We first show that if
  $\sigma_{0}\cap\sigma_{1}\ne\emptyset$ there is a horizontal
  gallery $S$ of length at most $L_0$ starting  at
  $\sigma_{0}$ and ending at $\sigma_{1}$. If $\sigma_{0}$ and
  $\sigma_{1}$  share a vertical $1$-cell we can just take
  $S=\{\sigma_{0},\sigma_{1}\}$. If $\sigma_{0}$ and
  $\sigma_{1}$ share a horizontal $1$-cell, we know
  by the definition of $\calr_{j-1}$
  that there is a horizontal gallery $S$ of length  at most $C_0=C_0(m,m_v)$
  which starts at $\sigma_{0}$ and ends at $\sigma_{1}$. If
  $\sigma_{0}$ and $\sigma_{1}$  share only a
  $0$-cell, we know by
  Lemma~\ref{lem_bounded_complexity}~(\ref{item_links_connected}) that
  we can find a gallery $S$ joining them and of length at
  most $C_1=C_1(m,m_v)$. This can be turned into a horizontal gallery
  of length at most $C_0C_1$ by replacing consecutive cells which
  share a horizontal edge by a horizontal gallery of length at most
  $C_0$ connecting them. Thus one can take $L_0=C_0C_1$.
  
  We now turn to the general case. By
  Corollary~\ref{cor_dinfty_comb_dist} there is a chain $T_0$ in $X_j$
  consisting of at
  most $N(C)$ $2$-cells, which starts at $\sigma_{0}$ and ends at
  $\sigma_{1}$. For any pair of consecutive $2$-cells in $T_0$ we can use
  the discussion in the previous paragraph to construct a horizontal
  gallery connecting them. We thus obtain a horizontal gallery $T$
  of length at most $N(C)L_0$ starting at $\sigma_{0}$ and ending at
  $\sigma_{1}$.
\end{proof}

Next, using Lemma \ref{lem:gall_joins}, we will show that one
can connect any pair of points
$p,q\in X_\infty$ by stringing together  geometrically shrinking
sequences of horizontal galleries.  This is made precise in the
following definition.

\def\galleft#1,#2.{\setbox0=\hbox{$#2$\unskip}{\mathcal
    G}_{#1}\ifdim\wd0>0pt ^{(#2)}\fi}
\def\galright#1,#2.{\setbox0=\hbox{$#2$\unskip}{\mathcal
    T}_{#1}\ifdim\wd0>0pt ^{(#2)}\fi}
\def\fcleft#1,#2.{e_{#1}^{(#2)}}
\def\fcright#1,#2.{f_{#1}^{(#2)}}
\begin{defn}
  \label{defn:string_gallery}
  Let $p,q\in X_\infty$. A \textbf{string of galleries} connecting $p$
  to $q$ consists of four sequences $\{\sigma_j\}_{j\ge j_0}$,
  $\{\tau_j\}_{j\ge j_0}$, $\{\galleft j,.\}_{j> j_0}$, $\{\galright
  j,.\}_{j> j_0}$ which satisfy the following additional
  conditions for some constant $C_0$:
  \begin{description}
  \item[(G1)] $\dirdst p,q,.\approx_{C_0}m^{-j_0}$;
  \item[(G2)] $\sigma_j$ and $\tau_j$ are cells of $X_j$ and
    $\sigma_{j_0}=\tau_{j_0}$;
  \item[(G3)] We have the following control on the distances from $p$
    and $q$:
    \begin{equation}
      \begin{aligned}
        \label{eq:string_gallery_1}
        \dirdst p, {
          \ndirprj j,,.(\sigma_j)
        },.&\le C_0 m^{-j};\\
        \dirdst q, {
          \ndirprj j,,.(\tau_j)
        },.&\le C_0 m^{-j};
      \end{aligned}
    \end{equation}
  \item[(G4)] For each $j\ge j_0$ the vertical faces of $\sigma_j$ and
    $\tau_j$ are ordered: we denote those of $\sigma_j$ by $\fcleft
    j,0.$, $\fcleft j,1.$, and those of $\tau_j$ by $\fcright j,0.$
    and $\fcright j,1.$. We also require $\fcleft j_0,1.=\fcright j_0,0.$;
  \item[(G5)] $\galleft j,.$ (resp.~$\galright j,.$) is a collection
    of $m$ horizontal galleries $\{\galleft j,1.,\cdots,\galleft j,m.\}$
    (resp.~$\{\galright j,1.,\cdots,\galright j,m.\}$); let $\{\fcleft
    {j-1,1},0.,\cdots, \fcleft {j-1,m},0.\}$ (resp.~$\{\fcright
    {j-1,1},1.,\cdots, \fcright {j-1,m},1.\}$) denote the  $1$-cells in
     the first subdivision of $\fcleft j-1,0.$ (resp.~$\fcright j-1,1.$);
    then $\galleft j,i.$ (resp.~$\galright j,i.$) is a
    horizontal gallery of at most $C_0$ cells which connects $\fcleft
    j,1.$ (resp.~$\pi_{j-1}(\fcright {j-1,i},1.)$) to $\pi_{j-1}(\fcleft
    {j-1,1},0.)$ (resp.~$\fcright j,1.$).
  \end{description}
\end{defn}
\begin{lemma}
  \label{lem:string_gallery}
  Suppose $p,q\in X_\infty$ and $\dirdst p,q,.\in[m^{-j_0-1},m^{-j_0})$.
  Then there is a string of galleries
  connecting $p$ to $q$ where the constant $C_0$ does not depend on
  the pair $p,q$.
\end{lemma}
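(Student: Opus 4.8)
The plan is to construct the four sequences by induction on the scale $j\ge j_0$: starting at scale $j_0$ from a single $2$-cell that is close to both $p$ and $q$, and at each later step refining towards $p$ (respectively towards $q$) while using the horizontal gallery accessibility of Lemma~\ref{lem:gall_joins} to install the linking galleries demanded by Definition~\ref{defn:string_gallery}. For the base step I would use that, since $\hat d_\infty(p,q)<m^{-j_0}$, surjectivity of $\pi_{j_0}^\infty$ lets one choose a $2$-cell $\sigma_{j_0}=\tau_{j_0}$ of $X_{j_0}$ containing a point of $(\pi_{j_0}^\infty)^{-1}(p)$; then $\hat d_\infty(p,\pi_{j_0}^\infty(\sigma_{j_0}))=0$ and, by the triangle inequality, $\hat d_\infty(q,\pi_{j_0}^\infty(\sigma_{j_0}))<m^{-j_0}$, so (G1)--(G3) hold with any $C_0\ge m$; ordering the two vertical $1$-cells $v_0,v_1$ of this common cell as $e_{j_0}^{(0)}=v_0=f_{j_0}^{(1)}$ and $e_{j_0}^{(1)}=v_1=f_{j_0}^{(0)}$ gives (G4) at scale $j_0$.

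For the inductive step, assuming $\sigma_j,\tau_j$ satisfy (G3), I would take $\sigma_{j+1}$ to be any $2$-cell of $X_{j+1}$ containing a point of $(\pi_{j+1}^\infty)^{-1}(p)$, and $\tau_{j+1}$ analogously for $q$, so that (G3) holds at scale $j+1$ with the same $C_0$. What then has to be done is to produce the horizontal galleries $\mathcal G_{j+1}^{(i)}$ (and $\mathcal T_{j+1}^{(i)}$) joining the prescribed vertical $1$-cell of $\sigma_{j+1}$ to the prescribed $1$-cells arising from the first subdivision of a vertical face of $\sigma_j$, as in (G5). The relevant pairs of $1$-cells lie in $2$-cells whose $\pi_{j+1}^\infty$-images are within $O(m^{-(j+1)})$, respectively $O(m^{-j})=O(m\cdot m^{-(j+1)})$, of $p$ by (G3) --- here it matters only that $m$ is a fixed constant --- and hence within $O(m^{-(j+1)})$ of each other in $X_\infty$. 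Choosing $2$-cells of $X_{j+1}$ incident to each of the two $1$-cells and applying Lemma~\ref{lem:gall_joins} with $C=O(m)=O(1)$ produces a horizontal gallery between them of combinatorial length at most $L=L(C)$; prepending or appending a bounded horizontal gallery inside a single $2$-cell then arranges that the first and last $2$-cells of the gallery are incident to exactly the required $1$-cells. Because every cell produced at scale $j+1$ stays within $O(m^{-(j+1)})$ of the fixed points $p$ and $q$, all the implied constants --- and therefore $C_0$ --- can be fixed once and for all, independently of the pair $p,q$.

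The step I expect to be the main obstacle is the bookkeeping needed to make (G5) literally true: checking that the linking galleries can be chosen to meet \emph{precisely} the faces named there, with the correct branching across scales (which is exactly what will let the associated families of horizontal geodesics at consecutive scales concatenate into a pencil in Section~\ref{sec_poincare_inequality}), and that each gallery can be taken \emph{horizontal}. Making this precise is where one uses the fine structure of the direct system: the quotient description $X_{j+1}=X_j^{(1)}/\bar\calr_{j+1}$ from Lemma~\ref{lem_properties_of_calr_j}(\ref{item_pair_identifications_descend}), the connectedness of links (Lemma~\ref{lem_bounded_complexity}(\ref{item_links_connected})), and the mechanism in the proof of Lemma~\ref{lem:gall_joins} by which two $2$-cells sharing a horizontal $1$-cell are joined by a short horizontal gallery --- a direct consequence of the identifications $a_{k,\ell,i}\leftrightarrow a'_{k,\ell,i}$ defining $\calr$. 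These tools should let one slide a gallery that ends on the ``wrong'' vertical $1$-cell over to an adjacent one, and convert any short chain of $2$-cells into a short horizontal one, at the cost of only a bounded multiplicative increase in length. Once the face-matching is arranged at every scale, conditions (G1)--(G5) hold with this uniform $C_0$, completing the induction.
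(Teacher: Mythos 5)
Your argument is correct and follows essentially the same route as the paper's: at each scale pick $\sigma_j$ (resp.~$\tau_j$) to be a $2$-cell of $X_j$ meeting $(\pi_j^\infty)^{-1}(p)$ (resp.~$(\pi_j^\infty)^{-1}(q)$), so that (G3) holds trivially, and then obtain the linking galleries of (G5) by applying Lemma~\ref{lem:gall_joins} to the relevant $1$-cells, which lie at $\hat d_\infty$-distance $O(m\cdot m^{-j})$ from one another by (G3) and the triangle inequality; since $m$ is fixed this yields a uniform bound on the gallery lengths. Your base step using $p\in\pi_{j_0}^\infty(\sigma_{j_0})$ plus the triangle inequality is a minor simplification of the paper's (which invokes Proposition~\ref{prop_distance_lower_bound}), and your closing paragraph correctly identifies where the remaining bookkeeping lies.
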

\begin{proof}
  As $\dirdst p,q,.<m^{-j_0}$ by
  Proposition~\ref{prop_distance_lower_bound} we conclude that:
  \begin{equation}
    \label{eq:string_gallery_p1}
    d_{j_0}\left(
      \ndirprj j_0,,a.(p) ,
      \ndirprj j_0,,a.(q)
      \right) < 3m^{-j_0},
    \end{equation}
    and thus $\ndirprj j_0-1,,a.(p)$ and $\ndirprj j_0-1,,a.(q)$
    intersect adjacent cells of $X_{j_0-1}$; we can therefore find a
    $C_0$ independent of $p,q$ and  a single cell
    $\sigma_{j_0}=\tau_{j_0}$ of $X_{j_0}$ such
    that~(\ref{eq:string_gallery_1}) holds with $j=j_0$. For each $j$
    choose $\sigma_j$ and $\tau_j$ such that $\ndirprj
    j,,a.(p)\cap\sigma_j\ne\emptyset$ and $\ndirprj
    j,,a.(q)\cap\tau_j\ne\emptyset$. We now choose a first and a last
    vertical edge for each $\sigma_j$ and $\tau_j$ as in
    \textbf{(G4)}. We now construct the collection of galleries
    $\{\galleft j,.\}_{j>j_0}$ and $\{\galright j,.\}_{j>j_0}$. By
    symmetry we just focus on the construction of $\galleft j,.$. By
    the choice of the cells $\{\sigma_j\}_{j\ge j_0}$ we have that:
    \begin{equation}
      \label{eq:string_gallery_p2}
      d_j\left(
        \sigma_j , \pi_{j-1}(\sigma_{j-1})
        \right) \le C_0m^{-j};
      \end{equation}
      therefore, by possibly enlarging $C_0$, for each $i\in\{1,\cdots,m\}$ we can choose by
      Lemma~\ref{lem:gall_joins} a horizontal gallery consisting of at most $C_0$ cells
  \begin{equation}
    \label{eq:meas_curve_join_p5}
    \galleft j,i.=\{\theta_1,\cdots,\theta_L\}\subset X_{j}
  \end{equation}
  which joins $\fcleft j,1.$ to $\pi_{j-1}(\fcleft {j-1,i},0.)$.
\end{proof}
\begin{corollary}
  \label{cor:mono_galleries}
  Let $p,q\in X_\infty$ be such that $\dirdst
  p,q,.\in[m^{-j_0-1},m^{-j_0})$; then there is a universal constant
  $C_1$ such that, for each $M>j_0$, there is a
  horizontal gallery $\galleft M,.=\{\sigma_0,\cdots,\sigma_L\}$
  consisting of
  $2$-cells of $X_M$ and whose length is at most
  $C_0m^{-j_0}$, where one has:
  \begin{equation}
    \label{eq:mono_galleries_s1}
    \begin{aligned}
      \dirdst{{\ndirprj M,,.(\sigma_0)}},p,.&\le C_0m^{-M};\\
      \dirdst{{\ndirprj M,,.(\sigma_L)}},q,.&\le C_0m^{-M}.
    \end{aligned}
  \end{equation}
\end{corollary}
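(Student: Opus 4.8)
The plan is to build the required gallery by telescoping along a string of galleries and pushing every piece down to the common level $M$. First I would apply Lemma~\ref{lem:string_gallery} to obtain a string of galleries $\{\sigma_j\}_{j\ge j_0}$, $\{\tau_j\}_{j\ge j_0}$, $\{\galleft j,.\}_{j>j_0}$, $\{\galright j,.\}_{j>j_0}$ connecting $p$ to $q$, with the constant $C_0$ independent of the pair $p,q$. Recall in particular that $\sigma_{j_0}=\tau_{j_0}$, that by~(\ref{eq:string_gallery_1}) the cells $\ndirprj j,,.(\sigma_j)$ and $\ndirprj j,,.(\tau_j)$ lie within $C_0m^{-j}$ of $p$ and of $q$ respectively, and that each $\galleft j,1.$ and $\galright j,1.$ is a horizontal gallery of at most $C_0$ $2$-cells of $X_j$ whose two ends are prescribed vertical $1$-cells associated with $\sigma_j$ and $\sigma_{j-1}$ (resp.~with $\tau_j$ and $\tau_{j-1}$).

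Second I would record the elementary fact that horizontal connectivity survives subdivision, once one is careful about heights: a horizontal gallery of $\ell$ $2$-cells of $X_j$ admits, for every $M\ge j$, a refinement to a horizontal gallery of $\ell\,m^{M-j}$ $2$-cells of $X_M$ of unchanged metric length $\ell\,m^{-j}$ --- inside each cell one follows a single row of the $M$-th subdivision, and consecutive rows match along the shared vertical $1$-cells because the relevant $\calr$-identifications are vertical translations by integer multiples of $m_v^{-M}$. In the same spirit, two vertical $1$-cells of $X_j$ both meeting a common $2$-cell of $X_j$ are at $\hat d_j$-distance $\le 2m^{-j}$, so by Lemma~\ref{lem:gall_joins} applied at level $j$ they can be joined by a horizontal gallery of boundedly many $2$-cells of $X_j$, which again refines to $X_M$ with metric length $\lesssim m^{-j}$.

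Now I would refine each $\galleft j,1.$ and $\galright j,1.$ to level $M$ and concatenate them, read from $p$ to $q$, in the order
\[
\galleft M,1.,\ \galleft M-1,1.,\ \dots,\ \galleft j_0+1,1.,\ \galright j_0+1,1.,\ \dots,\ \galright M,1.
\]
inserting between consecutive pieces --- and at the two ends --- a bounded horizontal gallery of $X_M$ bridging the relevant endpoints. Such a bridge is available by the previous paragraph, because in each case the two $1$-cells to be joined lie on a common $2$-cell among the $\sigma_j$ and $\tau_j$; this is precisely what conditions (G4) and (G5) of Definition~\ref{defn:string_gallery} are designed to guarantee. Choosing the refinement heights and the bridges coherently, the result is a single horizontal gallery of $2$-cells of $X_M$; writing it as $\{\sigma_0,\dots,\sigma_L\}$ and, if necessary, prepending $\sigma_M$ and appending $\tau_M$ so that $\sigma_0=\sigma_M$ and $\sigma_L=\tau_M$, this is the gallery asserted by the Corollary. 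Its total metric length is dominated by a convergent geometric series in the scales $m^{-j}$, $j\ge j_0$, hence by a universal multiple of $m^{-j_0}$; enlarging $C_0$ gives the length bound. Finally $\sigma_0=\sigma_M$ together with~(\ref{eq:string_gallery_1}) gives $\dirdst{{\ndirprj M,,.(\sigma_0)}},p,.\le C_0m^{-M}$, and symmetrically $\dirdst{{\ndirprj M,,.(\sigma_L)}},q,.\le C_0m^{-M}$, which is~(\ref{eq:mono_galleries_s1}).

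The step I expect to be the real obstacle is the coherence demanded in the previous paragraph: one must choose simultaneously a height in every refined cell of every $\galleft j,1.$, every $\galright j,1.$ and every bridging gallery so that consecutive $2$-cells genuinely share a vertical $1$-cell of $X_M$, making the concatenation a bona fide horizontal gallery rather than merely a connected chain of $2$-cells. This is where conditions (G4) and (G5) are used in an essential way; everything else is the refinement principle above, a direct quotation of~(\ref{eq:string_gallery_1}), and a summable geometric series.
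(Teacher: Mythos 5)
Your proposal is correct and takes essentially the same route as the paper: both start from the string of galleries supplied by Lemma~\ref{lem:string_gallery} and convert it into a single horizontal gallery of $2$-cells of $X_M$ by refining each piece and bridging across the hub cells $\sigma_{j-1}$, with the total length controlled by a geometric series in $m^{-j}$. The only organizational difference is the order of operations --- you refine each $\galleft j,1.$, $\galright j,1.$ to level $M$ first and then insert the bridges, whereas the paper forms the mixed-generation chain~\eqref{eq:mono_galleries_p1} first and then inductively raises the minimal generation by inserting bridges at successively higher levels; these are the same idea with different bookkeeping. One small imprecision: the ``bounded horizontal gallery of $X_M$'' you insert as a bridge has boundedly many cells only at its native level $j-1$, not at level $M$ (there it has $\sim m^{M-j+1}$ cells of bounded \emph{metric} length $\lesssim m^{-j}$), but your earlier paragraph makes clear this is what you mean. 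Also, the coherence-of-heights point that you correctly flag as the main obstacle is resolved not by \textbf{(G4)}--\textbf{(G5)} alone but by the fact that after fixing the row in the initial cell $\sigma_M$, the rows propagate uniquely through each shared vertical $1$-cell, and there is no constraint to satisfy at the terminal end near $q$; it is worth saying so explicitly, though the paper's own treatment of this point is comparably brief.
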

\begin{proof}
  We use Lemma~\ref{lem:string_gallery} to take a string of galleries
  connecting $p$ to $q$ of length at most $C(C_0)m^{-j_0}$, where
  $C_0$ is the constant in Lemma~\ref{lem:string_gallery}. We now
  truncate this string to obtain a horizontal gallery in $X_M$:
  \begin{equation}
    \label{eq:mono_galleries_p1}
    \begin{split}
    T=\biggl\{
        \sigma_M, \galleft M,1., \pi_{M-1}(\sigma_{M-1}),
        \pi_{M-1}(\galleft
        M-1,1.),\cdots,\\\pi_{j_0}^M(\sigma_{j_0})=\pi_{j_0}^M(\tau_{j_0}),
        \pi_{j_0+1}^M(\galright j_0+1,1.),\cdots,\tau_M
        \biggr\};
      \end{split}
    \end{equation}
    we now inductively modify $T$ to increase the minimal generation
    of cells in $T$ to end up with a gallery consisting of only
    $2$-cells of $X_M$. At the first step, we take the end cell $\sigma$ of
    $\galleft j_0+1,1.$ in $X_{j_0+1}$, and the first cell $\tau$ of
    $\galright j_0+1,1.$ in $X_{j_0+1}$; as both cells intersect some
    vertical faces of a pair of $2$-cells in
    $\pi_{j_0}^{M-1}(\sigma_{j_0})$, we can find a horizontal gallery
    $S$ in $X_{j_0+1}$ connecting $\sigma$ to $\tau$ whose length is at most
    $C(H)m^{-j_0}$. We then replace $\pi_{j_0}^M(\sigma_{j_0})$ by
    $\pi_{j_0+1}^M(S)$. A similar process is applied now to replace
    the cells of $T$ of generation $j_0+1$ by cells of generation
    $j_0+2$. The increase in length is of a factor $C(H)m^{-j_0-1}$
    for each cell of generation $j_0+1$. Continuing inductively, one
    obtains a gallery $T$ consisting of cells of generation $M$ and
    whose length is $\le C(C_0,H,m)m^{-j_0}$.
\end{proof}

\subsection{Constructing Semmes pencils}

\begin{defn}
  \label{defn:meas_curv_fam}
  A measured family of curves $(\Gamma,I_\Gamma,\Omega_\Gamma,\nu_\Gamma)$ is a measurable
  function $\Gamma:I_\Gamma\times\Omega_\Gamma\to X$, where $I_\Gamma$ is an interval of
  $\real$, such that $\nu_\Gamma$ is a probability measure on $\Omega_\Gamma$, and
  there is an $L$ such that for each $\omega\in\Omega_\Gamma$ the map
  $t\mapsto\Gamma(t,\omega)$ is a Lipschitz
  curve in $X$ of Lipschitz constant at most $L$.
  \par We say that a measured family of curves
  $(\Gamma,I_\Gamma,\Omega_\Gamma,\nu_\Gamma)$ joins a set $S_0$ to a set $S_1$ if
  for $\nu_\Gamma$-a.e.~$\omega\in\Omega_\Gamma$ one has $\Gamma(\min
  I_\Gamma,\omega)\in S_0$ and $\Gamma(\max
  I_\Gamma,\omega)\in S_1$. Given a subinterval $I\subset I_\Gamma$ we
  denote by $\Gamma_I$ the restriction $\Gamma:I\times\Omega_\Gamma\to X$. 
  \par To a measured family of curves $(\Gamma,I_\Gamma,\Omega_\Gamma,\nu_\Gamma)$
  we associate a measure $\mu_\Gamma$ by:
  \begin{equation}
    \label{eq:meas_curv_fam_1}
    \mu_\Gamma=\Gamma|_{\#}(\lebmeas.\times\nu_\Gamma).
  \end{equation}
  \par In the following we will often simply write $\Gamma$ instead of
  $(\Gamma,I_\Gamma,\Omega_\Gamma,\nu_\Gamma)$. We finally define the support of
  $\Gamma$ by:
    \begin{equation}
    \label{eq:meas_curv_fam3}
    \spt\Gamma=\left\{p\in X_\infty: \forall r>0\, \nu_\Gamma(\{\omega:\Gamma(I_\Gamma,\omega)\cap B(p,r)\ne\emptyset\})>0\right\}.
  \end{equation}
\end{defn}
\begin{defn}
  \label{defn:Riesz}
  For a metric measure space $(X,\mu)$ recall the definition of the
  Riesz potential $\mu_p$ centered at $p$:
  \begin{equation}
    \label{eq:riesz}
    \mu_p = \frac{d(p,\cdot)}{\mu(B(p,d(p,\cdot)))}\,\mu.
  \end{equation}
\end{defn}
We will denote
the Riesz potential of $(X,\mu_\infty)$ centered on $p\in X_\infty$ by $\mu_{\infty,p}$.
To prove the Poincar\'e inequality and to establish a bound on the
analytic dimension we rely on the following theorem. 
\begin{thm}
  \label{thm:meas_curve_join}
  There is a universal constant $C$ depending only on $(m,m_v)$ such
  that the following holds.
  \begin{description}
\item[(C1)]  For $p,q\in X_\infty$, there is a measured family of curves
  $\Gamma$ joining $p$ to $q$;
\item[(C2)] The Lipschitz constant of
  $\Gamma$ is at most $C$ and $\lebmeas .(I_\Gamma)\le C\dirdst
  p,q,.$;
  \item[(C3)] The measure $\mu_\Gamma$ is controlled by the Riesz potentials
    centered at $p$ and $q$:
  \begin{equation}
    \label{eq:meas_curve_join_s1}
    \mu_\Gamma\le C (\mu_{\infty,p}\on B(p,Cd(p,q)) + \mu_{\infty,q}\on B(q,Cd(p,q))).
  \end{equation}
\end{description}
\end{thm}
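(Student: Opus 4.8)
The idea is to build $\Gamma$ by splicing together, along the galleries produced in Section~\ref{subsec:galleries}, the elementary families of horizontal geodesics carried by $2$-cells. For a $2$-cell $\hat\sigma$ of $Y_j$ --- a rectangle of width $m^{-j}$ and height $m_v^{-j}$ --- let $\Gamma_{\hat\sigma}$ be the family whose curves are the $\hat\pi^\infty$-images of the horizontal slices of $\hat\sigma$; these are geodesic segments of $X_\infty$ of length $m^{-j}$ (since $x_\infty$ and $\hat\pi^\infty$ are $1$-Lipschitz; cf.~Section~\ref{sec_overview}), parametrized on $I=[0,m^{-j}]$, with $\Omega_{\Gamma_{\hat\sigma}}=[0,m_v^{-j}]$ and $\nu_{\hat\sigma}$ equal to normalized Lebesgue measure. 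Directly from Definition~\ref{defn:meas_curv_fam} one gets $\mu_{\Gamma_{\hat\sigma}}=m_v^{j}\,\mu_\infty\on\hat\pi^\infty(\hat\sigma)$, a measure of total mass $m^{-j}$ and of density $m_v^{j}$ with respect to $\mu_\infty$; for a $2$-cell $\sigma$ of $X_j$ we abbreviate $\Gamma_\sigma:=\Gamma_{\hat\sigma}$ for a $\hat\sigma$ with $\hat\pi^j(\hat\sigma)=\sigma$, so that $\mu_{\Gamma_\sigma}=m_v^{j}\,\mu_\infty\on\pi_j^\infty(\sigma)$. The coincidence driving \textbf{(C3)} is that, by Ahlfors $Q$-regularity (Lemma~\ref{lem_david_semmes_regular_map}(\ref{item_ar})) together with $m^{-j}m_v^{-j}=(m^{-j})^{Q}$, the Riesz-potential density $\tfrac{\hat d_\infty(p,\cdot)}{\mu_\infty(B(p,\hat d_\infty(p,\cdot)))}$ is comparable to $m_v^{j}$ exactly on the metric annulus $\{\hat d_\infty(p,\cdot)\approx m^{-j}\}$. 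Next, if $2$-cells $\sigma,\sigma'$ of $X_j$ share a vertical $1$-cell $e$, the geodesics of $\Gamma_\sigma$ ending on $\pi_j^\infty(e)$ and those of $\Gamma_{\sigma'}$ starting there are parametrized by the same height interval and can be concatenated; iterating along a \emph{horizontal} gallery $\sigma_1,\dots,\sigma_\ell$ in $X_j$ produces a measured family of curves of length $\ell m^{-j}$, still with matched height parametrization and with measure $\sum_i\mu_{\Gamma_{\sigma_i}}$. Finally, passing from generation $j-1$ to generation $j$ costs nothing: a vertical $1$-cell of $X_{j-1}$ is subdivided $m_v$-adically in $X_j$, so a curve simply continues into the generation-$j$ cell whose height range contains its height parameter.

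For the assembly, fix $p,q$ with $\hat d_\infty(p,q)\in[m^{-j_0-1},m^{-j_0})$ and take the string of galleries $\{\sigma_j\},\{\tau_j\},\{\galleft j,.\},\{\galright j,.\}$ of Lemma~\ref{lem:string_gallery}. The plan is to construct $\Gamma$ by telescoping in $j$: starting from the single coarse cell $\sigma_{j_0}=\tau_{j_0}$, one splices in, for each $j>j_0$, the horizontal galleries $\galleft j,.$ to march toward $p$ and $\galright j,.$ to march toward $q$, using the refinement data built into \textbf{(G5)} of Definition~\ref{defn:string_gallery} to match the height parametrizations across each change of generation. Since by \textbf{(G3)} the cell $\sigma_j$ lies within $C_0m^{-j}$ of $p$ and contains (a preimage of) $p$, these splices add detours of total length $\sum_{j>j_0}O(m^{-j})=O(m^{-j_0})$, and the curves converge to $p$ (resp.~$q$) as $j\to\infty$. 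Thus $\Omega_\Gamma$ is (essentially) the height interval of $\sigma_{j_0}$, $\nu_\Gamma$ is normalized Lebesgue measure on it, and the curves are reparametrized at constant speed on a common interval $I_\Gamma$ of length equal to the supremal curve length; every curve runs from $p$ to $q$. One cannot instead simply take a weak limit of the single galleries $\galleft M,.$ of Corollary~\ref{cor:mono_galleries} as $M\to\infty$: their measures, which are $m_v^{M}\,\mu_\infty$ restricted to the union of their cells, concentrate on ever thinner sets and would limit to a measure singular with respect to $\mu_\infty$, so superposing galleries at all scales near $p$ and $q$ is essential.

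It remains to check \textbf{(C1)}--\textbf{(C3)}. \textbf{(C1)} holds by construction. For \textbf{(C2)}: each $\galleft j,.$ and $\galright j,.$ consists of $O(1)$ cells of generation $j$ (Lemma~\ref{lem:gall_joins}, Corollary~\ref{cor:mono_galleries}), so every curve of $\Gamma$ has length $\le C\sum_{j\ge j_0}m^{-j}\le C'm^{-j_0}\approx C''\hat d_\infty(p,q)$; hence $\lebmeas.(I_\Gamma)\le C\hat d_\infty(p,q)$, and after the constant-speed reparametrization the Lipschitz constant is $\le1$. For \textbf{(C3)}: because $\sigma_{j_0}=\tau_{j_0}$ is a single cell, every cell $\sigma$ of the pencil lies within $C\hat d_\infty(p,q)$ of $p$ or of $q$, and if $\sigma$ has generation $j$ and occurs in the part marching toward $p$ then $\hat d_\infty(p,\pi_j^\infty(\sigma))\le C_0m^{-j}$; in particular $\spt\Gamma\subset B(p,C\hat d_\infty(p,q))\cup B(q,C\hat d_\infty(p,q))$. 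The curves of $\Gamma$ meeting such a $\sigma$ form, after reparametrization, a subfamily of $\Gamma_\sigma$ of $\nu_\Gamma$-weight $\le1$, so they contribute to $\mu_\Gamma$ at most $\mu_{\Gamma_\sigma}=m_v^{j}\,\mu_\infty\on\pi_j^\infty(\sigma)$. Since the pencil has $O(1)$ cells per generation and $X_j$ has bounded local complexity (Lemma~\ref{lem_bounded_complexity}), at a point $x$ the weights $m_v^{j}$ over the cells through $x$ form a geometric sum dominated by $m_v^{j(x)}$, where $j(x)$ is the finest generation of a pencil-cell through $x$; moreover $m^{-j(x)}\gtrsim\hat d_\infty(p,x)$ (resp.~$\hat d_\infty(q,x)$). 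By Ahlfors $Q$-regularity $m_v^{j(x)}\lesssim\hat d_\infty(p,x)^{1-Q}\approx\tfrac{\hat d_\infty(p,x)}{\mu_\infty(B(p,\hat d_\infty(p,x)))}$, which is the density of $\mu_{\infty,p}$ at $x$; summing these pointwise bounds over the $p$- and $q$-parts of the pencil gives $\mu_\Gamma\le C(\mu_{\infty,p}\on B(p,C\hat d_\infty(p,q))+\mu_{\infty,q}\on B(q,C\hat d_\infty(p,q)))$.

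The main obstacle is the bookkeeping in the telescoping step: one must re-index the height parameter at each change of generation so that the spliced horizontal geodesics assemble into genuine Lipschitz curves from $p$ to $q$, and --- more importantly for the estimate --- so that each cell $\sigma$ of $X_j$ met by the pencil is traversed by its curves essentially monotonically and with total $\nu_\Gamma$-weight $O(1)$. This weight bound, together with the bounded combinatorial width of the galleries near $p$ and $q$, is exactly what forces the per-scale density bound $\mu_\Gamma\lesssim m_v^{j}\mu_\infty$ on the generation-$j$ portion of the pencil; once it is in place, the comparison with the Riesz potentials reduces to the Ahlfors-regularity identity $m^{-j}m_v^{-j}=(m^{-j})^{Q}$ and is routine, as are the measure computation for $\Gamma_{\hat\sigma}$, the geometric-series bound on lengths, and the passage from the cellwise bound to \textbf{(C3)}.
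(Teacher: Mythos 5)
Your proposal takes essentially the same route as the paper: it uses the string of galleries from Lemma~\ref{lem:string_gallery}, parametrizes the transverse measure by the height interval of the coarse cell $\sigma_{j_0}$, computes the pencil measure cell-by-cell via the Fubini density $m_v^j$, and compares with the Riesz potentials through the Ahlfors-regularity identity $m^{-j}m_v^{-j}=(m^{-j})^Q$ --- all of which are the paper's key steps (the paper organizes this as a construction of $\Gamma_p$ and $\Gamma_q$ on explicit telescoping intervals $A_i,B_i$ followed by concatenation, and discards a measure-zero set of heights so that the broken horizontal paths extend uniquely). The bookkeeping you explicitly defer --- re-indexing the height parameter across generations and showing each cell is traversed with bounded weight --- is precisely what the paper's explicit interval scheme and backward-induction argument supply, so your sketch is correct in substance though not complete in detail.
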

\begin{proof}
  Using Lemma~\ref{lem:string_gallery}, we take a string of galleries
  $S=\{$ $\{\sigma_j\}_{j\ge j_0}$, $\{\tau_j\}_{j\ge j_0}$, $\{\galleft
  j,.\}_{j>j_0}$, $\{\galright j,.\}_{j>j_0}$ $\}$ connecting $p$ to
  $q$. The idea of the proof is based on the following observations:
  first, one can use $S$ to string together
  horizontal segments to obtain 
  Lipschitz paths
   joining $p$ to
  $q$. Secondly, the Fubini representations of the measures on the
  cells of $S$ give rise to a 
  ``natural'' transverse measure
   on the family of quasigeodesics connecting
  $p$ to $q$. To ease the
  exposition, we have divided the proof in two steps. In the first we
  discuss how to build a measured family of curves joining  $\ndirprj
  j_0,,.(\sigma_{j_0})$ to $p$.
  In the second we join
  together  two measured families of curves connecting the points $p$ and
  $q$ to $\ndirprj j_0,,.(\sigma_{j_0})=\ndirprj j_0,,.(\tau_{j_0})$. 
  \def\Oset#1.{\setbox1=\hbox{$#1$\unskip}\Omega_{\ifdim\wd1>0pt
      #1\fi}}
    \def\nmes#1.{\setbox1=\hbox{$#1$\unskip}\nu_{\ifdim\wd1>0pt
 #1\fi}}
\vskip10pt
  {\emph{Step 1: Construction of a measured family of curves 
      joining $\ndirprj j_0,,.(\sigma_{j_0})$ to $p$.}}
  \par We now construct a 
  measured family of curves
  $(\Gamma_p,I_{p},\Omega_p,\nu_p)$ joining $\ndirprj
  j_0,,.(\sigma_{j_0})$ to $p$. 
   We let:
  \begin{equation}
    \label{eq:meas_curve_join_x1}
    I_{p}=\left[0,\sum_{j_0\le s}m^{-s}\right],
  \end{equation}
  and introduce the intervals:
  \begin{equation}
    \label{eq:meas_curve_join_p3}
    \begin{aligned}
      A_i&=\left[\sum_{j_0\le s<i}m^{-s},\sum_{j_0\le s<i}m^{-s}+\frac{m^{-i}}{2}\right]\quad(i\ge j_0)\\
      B_i&=\left[\sum_{j_0\le
          s<i-1}m^{-s}+\frac{m^{-i+1}}{2},\sum_{j_0\le s<i}m^{-s}\right]\quad(i>j_0).
    \end{aligned}
  \end{equation}\def\aa#1,#2.{a^{(#1)}_{#2}}\def\bb#1,#2.{b^{(#1)}_{#2}}
  Note that by convention a sum over an empty set of indices is taken
  to be $0$; for example, $A_{j_0}=[0,m^{-j_0}/2]$ and $B_{j_0+1}=[m^{-j_0}/2,m^{-j_0}]$.
  For later convenience let $A_i=[\aa 0,i.,\aa 1,i.]$ and $B_i=[\bb
  0,i.,\bb 1,i.]$. We choose a cell $\hat\sigma_{j_0}\subset Y_{j_0}$
  such that $\hat\pi^{j_0}(\hat\sigma_{j_0})=\sigma_{j_0}$ and let
  $\hat e_{j_0}^{(0)}$ and $\hat e_{j_0}^{(1)}$ denote its vertical faces.
   Let $\Omega_p$ denote the subset of
  $\hat e_{j_0}^{(1)}$ obtained by removing points $p'$ such that
  $y(p')=km_v^{-j}$ for $(k,j)\in\zahlen^2$. Note that $\Omega_p$ has
  full $\hmeas 1.$-measure in $\hat e_{j_0}^{(1)}$. Let $\nmes p.=\hmeas
  1.\on\Oset p.$ .
  \par We define a map $\hat
  H_{j_0}:A_{j_0}\times\Omega_p\to\hat\sigma_{j_0}$ by letting $\hat
  H_{j_0}(\cdot,p')$ denote the unique constant
  velocity horizontal curve in $\hat \sigma_{j_0}$ starting at $p'$ and
  ending in $\hat e^{(0)}_{j_0}$. We let
  $\Gamma_p|A_{j_0}\times\Omega_p=\dirprj j_0,,.\circ\hat H_{j_0}$.
  Then Fubini's
  Theorem implies that:
  \begin{equation}
    \label{eq:meas_curve_join_p4}
    (\hat H_{j_0})_{\#}(\lebmeas . \times\nu_p\on A_{j_0}\times\Omega_p)=m_v^{j_0}\lebmeas 2.\on\hat\sigma_{j_0}.
  \end{equation}
  Note that the Lipschitz constant of $\hat H_{j_0}$ is at most $2$.
 \par As there is a uniform bound on the number of cells of $\Yset j_0.$
   that project to $\sigma_{j_0}$, we conclude from
  (\ref{eq:meas_curve_join_p4}) that:
  \begin{equation}
    \label{eq:meas_curve_join_p4tris}
 (\Gamma_p)|_{\#}(\lebmeas.\times\nu_p\on A_{j_0}\times\Omega_p)\approx m_v^{j_0}\mu\on\spt(\Gamma_p|A_{j_0}\times\Omega_p).
\end{equation}
\par Let $\galleft j_0+1,.=\{\galleft j_0+1,i.\}_{1\le i\le m}$ and
note that for $p'\in\Omega_p$ there is a unique $i=i(p')$ such that
the point $\hat\pi_{j_0}^{j_0+1}\circ\hat H_{j_0}(\aa 1,j_0.,p')$
belongs to the last cell of $\galleft j_0+1,i.$. We now write
$\galleft j_0+1,i.=\{\theta_1,\cdots,\theta_L\}$ and for each $1\le
k\le L$ choose a cell $\hat\theta_k\subset Y_{j_0+1}$ such that
$\hat\pi^{j_0+1}(\hat\theta_k)=\theta_k$. Intuitively, the
collection of cells $C_{j_0+1}=\{\theta_k\}_{1\le k\le L}$
 induces a \emph{broken
  horizontal path} $\hat J(\cdot,p'):B_{j_0+1}\to\Yset j_0+1.$. More
precisely, observe that $\hat\theta_L\cap\dirprj j_0+1,,m.(\dirprj j_0,,.(\hat
H_{j_0}(\aa 1,j_0.,p'))$ consists of a single point
$p'_{\theta_L}$. Then there is a unique horizontal constant
velocity path $\xi_{\theta_L}:[\bb 0,j_0+1.,\bb
0,j_0+1.+L^{-1}m^{-j_0}/2]\to \hat\theta_L$ which starts at
$p'_{\theta_L}$ and ends on the vertical $1$-cell of $\hat\theta_L$ other than the one
containing $p'_{\theta_L}$. Let $q'_{\theta_L}=\xi_{\theta_L}(\bb
0,j_0+1.+L^{-1}m^{-j_0}/2)$. Then again $\hat\theta_{L-1}\cap\dirprj
j_0+1,,m.(\dirprj j_0+1,,.(q'_{\theta_L}))$ consists of a single point
$p'_{\theta_{L-1}}$. This happens because $p'\in\Omega_p$ forces 
the points $p'_{\theta_L}$ and $q'_{\theta_L}$ to lie outside the
$0$-skeleton of the cells containing them. We then observe that there
is a unique horizontal constant
velocity path $\xi_{\theta_{L-1}}:[\bb 0,j_0+1.+L^{-1}m^{-j_0}/2,\bb
0,j_0+1.+2L^{-1}m^{-j_0}/2]\to \hat\theta_{L-1}$ which starts at
$p'_{\theta_{L-1}}$ and ends on the $1$-cell opposite to the one
containing $p'_{\theta_{L-1}}$. The construction of the paths
$\xi_{\theta_{L-2}},\cdots, \xi_{\theta_1}$ continues by backward induction,
constructing $\xi_{\theta_{k}}$ knowing $q'_{\theta_{k+1}}$ as we did
for $\theta_{L-1}$, and we
omit the details. We then let for $1\le k\le
L$:
\begin{equation}
  \label{eq:meas_curve_join_bb4}
  \hat J(\cdot,p')|(\bb 0,j_0+1.+kL^{-1}m^{-j_0}/2,\bb 0,j_0+1.+(k+1)L^{-1}m^{-j_0}/2)=\xi_{k-L},
\end{equation}
and extend $\hat J(\cdot,p')$ to $B_{j_0+1}$ so that it is continuous
from the left and $\hat J(\bb 0,j_0+1.,p')=p'_{\theta_L}$. We then let
\begin{equation}
  \label{eq:meas_curve_join_bb5}
  \Gamma_p|B_{j_0+1}\times\Omega_p = \dirprj j_0+1,,.\circ \hat J.
\end{equation}
Note that $\Gamma_p(\cdot,p')|B_{j_0+1}$ is a Lipschitz curve joining
$\dirprj j_0,,.(\hat H_{j_0}(\aa 1,j_0.,p'))$ to a point on the last face
$e^{(1)}_{j_0+1}$ of $\sigma_{j_0+1}$, and whose Lipschitz constant is at most $2L\le 2C$.
Taking into account that  for $i_1\ne i_2$ the cardinality of the
  set:
  \begin{equation}
    \label{eq:meas_curve_join_p8}
    \left\{(\theta,\theta')\in \galleft j_0+1,i_1.\times\galleft j_0+1,i_2.:
      \mu\left(\dirprj j_0+1,,.(\theta)\cap\dirprj j_0+1,,.(\theta')\right)>0\right\}
  \end{equation}
  is uniformly bounded, an argument similar to the one which
  yielded~(\ref{eq:meas_curve_join_p4tris}) gives:
   \begin{equation}
    \label{eq:meas_curve_join_p9}
(\Gamma_p)|_{\#}(\lebmeas.\times\nu_p\on B_{j_0+1}\times\Omega_p)\approx
m_v^{j_0+1}\mu\on\spt(\Gamma_p|B_{j_0+1}\times\Omega_p).
\end{equation}
The construction then continues by induction on the intervals $A_i$,
$B_i$. Note that the choice of $\Omega_p$ guarantees that, knowing
$\Gamma_p(\cdot,p')|\bigcup_{s\le i}A_s\cap\bigcup_{s=j_0+1}^{i}B_s$, there is
a unique extension $\Gamma_p|B_{i+1}\times\Omega_p$ which is obtained using
steps similar to those we used to construct
$\Gamma_p|B_{j_0+1}\times\Omega_p$. The point is again that $\dirprj
i+1,,m.(\Gamma(\aa 1,i.,p'))$ does not intersect the $0$-skeleton of
$\Yset i+1.$ and so the extension to $B_{i+1}$ is uniquely determined.
In particular, one has also the following
analogue of~(\ref{eq:meas_curve_join_p9}):
\begin{equation}
  \label{eq:meas_curve_join_bb9}
  (\Gamma_p)|_{\#}(\lebmeas. \times\nu_p\on B_{i+1}\times\Omega_p)\approx
m_v^{i}\mu\on\spt(\Gamma_p|B_{i+1}\times\Omega_p).
\end{equation}
Similarly,  knowing
$\Gamma_p(\cdot,p')|\bigcup_{s\le i}A_i\cap\bigcup_{s\le i+1}B_i$, there is
a unique extension $\Gamma_p|A_{i+1}\times\Omega_p$ which is  constructed
similarly to how we constructed
$\Gamma_p|A_{j_0}\times\Omega_p$. Moreover, one has the following
analogue of~(\ref{eq:meas_curve_join_p4tris}):
\begin{equation}
  \label{eq:meas_curve_join_bb4tris}
   (\Gamma_p)|_{\#}(\lebmeas. \times\nu_p\on A_{i+1}\times\Omega_p)\approx m_v^{i+1}\mu\on\spt(\Gamma_p|A_{i+1}\times\Omega_p).
 \end{equation}
 We finally let $\Gamma_p(\sum_{s=j_0}^\infty m^{-s},\cdot)=p$ because
 $\Gamma_p(\aa 1,i.,p')\to p$ and
 $\Gamma_p(\bb 1,i.,p')\to p$ uniformly in $p'$ as $i\nearrow\infty$. Thus
 $\Gamma_p(\cdot,p')$ is a Lipschitz curve of Lipschitz constant at
 most $2C$ which joins $\dirprj j_0,,.(p')$ to $p$. Note that
 because of the choice of the string of galleries $S$ (compare~(\ref{eq:string_gallery_1})),
  there is a universal constant
  $C_0$ such that:
  \begin{equation}
    \label{eq:meas_curve_join_p12}
    \spt\Gamma_p|(A_i\cup B_i)\times\Omega_p\subset B(p,C_0m^{-i}).
  \end{equation}
  Let $\mu_{\Gamma_p}$ denote the measure $(\Gamma_p)|_{\#}(\lebmeas. \times\nu_p\,\on[0,\sum_{s=j_0}^\infty m^{-s}]\times\Omega_p)$.
  Thus~(\ref{eq:meas_curve_join_bb4tris}) and (\ref{eq:meas_curve_join_bb9}) imply that:
  \begin{equation}
    \label{eq:meas_curve_join_p13}
    \frac{d\mu_{\Gamma_p}}{d\mu}\bigm|(X_\infty\setminus B(p,C_0m^{-j}))\lesssim\sum_{i=-\infty
      }^jm_v^i\lesssim m_v^j,
  \end{equation} which gives
  \begin{equation}
    \label{eq:meas_curve_join_p13bis}
    \mu_{\Gamma_p}\lesssim\mu_{\infty,p}\on B(p,Cm^{-j_0}),
  \end{equation}
  where $\mu_{\infty,p}$ denotes the Riesz potential centered on $p$.
  \vskip12pt
  \bigskip{\emph{Step 2: Joining two measured families of curves.}}
  We use \emph{Step 1} to build measured families of curves $\Gamma_p$ and
  $\Gamma_q$ joining
  $\pi_{j_0}^\infty(\sigma_{j_0})=\pi_{j_0}^\infty(\tau_{j_0})$ to $p$
  and $q$ respectively. Note that by condition \textbf{(G4)} in the
  properties of the string $S$, both $\Gamma_p$ and $\Gamma_q$ start
  at $\pi_{j_0}^\infty(\fcleft j_0,1.)=\pi_{j_0}^\infty(\fcright
  j_0,0.)$, where $\fcleft j_0,1.$ denotes the last face of
  $\sigma_{j_0}$ and $\fcright j_0,0.$ the first face of $\tau_{j_0}$.
  \par We now want to concatenate the reverse of $\Gamma_p$ and
  $\Gamma_q$. 
  In fact,
  $\Gamma_p$ is defined on $I_p\times\Omega_p$ and $\Gamma_q$ on
  $I_q\times\Omega_p$ where $\Omega_p$ is a full-measure subset of
  $\fcleft {j_0},1.$ and for $p'\in\Omega_p$ we have $\Gamma_p(\min
  I_p,p')=\Gamma_q(\min I_q,p')=p'$. Note also that the transverse
  measures $\nu_p$ and $\nu_q$ agree; we
  can thus  concatenate the reverse
  of $\Gamma_p$ and $\Gamma_q$ obtaining a measured family of curves
  $\Gamma_{pq}$. More precisely let:
  \begin{equation}
    \label{eq:meas_curve_join2_x2}
    I_{\Gamma_{pq}}=I_p\cup (I_q+\max I_p - \min I_q),
  \end{equation}
  and define:
  \begin{equation}
    \label{eq:meas_curve_join_x3}
    \Gamma_{pq}(t,\omega)=
    \begin{cases}
      \Gamma_p(\max I_{\Gamma_p}-t)&\text{if
        $t\in[0,\lebmeas.(I_{\Gamma_p})]$}\\
      \Gamma_q(t-\min I_{\Gamma_q}+\max I_{\Gamma_p})&\text{if $t\in[\lebmeas.(I_{\Gamma_p}),\lebmeas.(I_{\Gamma_p})+\lebmeas.(I_{\Gamma_q})]$}.
    \end{cases}
  \end{equation}
  For the measure $\nu_{pq}$ we take $\nu_p$.
  Now \textbf{(C1)} follows from the choice of the string $S$, and
  \textbf{(C2)} follows since
  $\Gamma_p$ and $\Gamma_q$ are $2C$-Lipschitz and because
  $I_{\Gamma_{pq}}$ is given by~(\ref{eq:meas_curve_join2_x2}).
  Note that:
  \begin{equation}
    \label{eq:meas_curve_join_x4}
    \mu_{\Gamma}=(\Gamma_{pq})|_{\#}(\lebmeas.\times\nu_{pq}\on
    I_{\Gamma_{pq}}\times\Omega_p)=\mu_{\Gamma_p}+\mu_{\Gamma_q},
  \end{equation}
  and so we get~(\ref{eq:meas_curve_join_s1}) in \textbf{(C3)} by
  using~(\ref{eq:meas_curve_join_p13}).
\end{proof}
\begin{thm}
  \label{thm:poinceq}
  The metric measure space $(X_\infty,\mu)$ admits a
  $(1,1)$-Poincar\'e inequality.
\end{thm}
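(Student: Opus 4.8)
The plan is to run Semmes's ``pencil of curves'' argument \cite{semmes}; Theorem~\ref{thm:meas_curve_join} supplies exactly the pencils that are needed, so the remaining work is the now-standard passage from such a family to a $(1,1)$-Poincar\'e inequality. By a standard reduction it suffices to produce constants $C,\lambda$, independent of the ball, with
\begin{equation*}
\dashint_B |u-u_B|\,d\mu\le C\,r\dashint_{\lambda B}g\,d\mu
\end{equation*}
for every ball $B=B(x_0,r)\subset X_\infty$, every Lipschitz $u:X_\infty\ra\R$, and $g=\Lip u$ the pointwise upper Lipschitz constant of $u$ (an upper gradient of $u$). Recall also that $(X_\infty,\mu)$ is doubling, being Ahlfors $Q$-regular by Lemma~\ref{lem_david_semmes_regular_map}(\ref{item_ar}); in particular $\mu(\lambda B)\lesssim\mu(B)$ for each fixed $\lambda$.

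First I would fix $p,q\in B$ and take the measured family of curves $\Gamma=\Gamma_{pq}$ joining $p$ to $q$ given by Theorem~\ref{thm:meas_curve_join}. For $\nu_\Gamma$-a.e.\ $\omega$ the curve $t\mapsto\Gamma(t,\omega)$ joins $p$ to $q$ and has Lipschitz constant at most $C$, so the upper gradient inequality yields $|u(p)-u(q)|\le C\int_{I_\Gamma}g(\Gamma(t,\omega))\,dt$; integrating over $\omega$ against the probability measure $\nu_\Gamma$ and using $\mu_\Gamma=\Gamma|_{\#}(\lebmeas.\times\nu_\Gamma)$ gives $|u(p)-u(q)|\le C\int_{X_\infty}g\,d\mu_\Gamma$. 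Feeding in (C3) of Theorem~\ref{thm:meas_curve_join}, and using $d(p,q)\le 2r$ together with $g\ge0$ to enlarge the balls, I obtain the pointwise bound
\begin{equation*}
|u(p)-u(q)|\lesssim I(p)+I(q),\qquad I(x):=\int_{B(x,2Cr)}\frac{d(x,y)}{\mu(B(x,d(x,y)))}\,g(y)\,d\mu(y),
\end{equation*}
a truncated Riesz potential of $g$. Since this is pointwise in the pair $(p,q)$, no joint measurability of $\{\Gamma_{pq}\}$ is needed.

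Second I would integrate over $p,q\in B$: from $\dashint_B|u-u_B|\,d\mu\le\dashint_B\dashint_B|u(p)-u(q)|\,d\mu(p)\,d\mu(q)$ and symmetry, $\dashint_B|u-u_B|\,d\mu\lesssim\dashint_B I(p)\,d\mu(p)$. Splitting $B(p,2Cr)$ into the dyadic annuli $B(p,2^{1-k}Cr)\setminus B(p,2^{-k}Cr)$, $k\ge0$, and using doubling to compare $\mu(B(p,d(p,y)))$ with $\mu(B(p,2^{1-k}Cr))$ on the $k$-th one, one gets $I(p)\lesssim\sum_{k\ge0}2^{-k}r\dashint_{B(p,r_k)}g\,d\mu$ with $r_k=2^{1-k}Cr$. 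Averaging over $p\in B$ and interchanging the nonnegative sum with the integrals, the $k$-th term equals $2^{-k}r$ times
\begin{equation*}
\frac1{\mu(B)}\int_{X_\infty}g(y)\left(\int_{\{p\in B:\,d(p,y)<r_k\}}\frac{d\mu(p)}{\mu(B(p,r_k))}\right)d\mu(y).
\end{equation*}
When $d(p,y)<r_k$, doubling gives $\mu(B(p,r_k))\gtrsim\mu(B(y,r_k))$, so the inner integral is $\lesssim1$, and it vanishes unless $y\in\lambda B$ with $\lambda=1+2C$. Hence the $k$-th term is $\lesssim2^{-k}r\dashint_{\lambda B}g\,d\mu$, and summing the geometric series in $k$ gives $\dashint_B|u-u_B|\,d\mu\lesssim r\dashint_{\lambda B}g\,d\mu$, as required.

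The substantive input is Theorem~\ref{thm:meas_curve_join}, which is already available; the remaining steps are elementary Riesz-potential and doubling bookkeeping, so I do not expect a genuine obstacle. It is worth stressing where the endpoint exponent comes from: because $\Gamma_{pq}$ carries an honest transverse measure and (C3) controls $\mu_{\Gamma_{pq}}$ by the Riesz potentials at $p$ and $q$, the argument integrates $g$ itself (to the first power) over $\lambda B$ rather than its maximal function, which is precisely what delivers $p=1$ rather than merely $p>1$.
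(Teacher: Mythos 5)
Your proposal is correct and rests on exactly the same engine as the paper's proof: the measured families of curves from Theorem~\ref{thm:meas_curve_join} together with the Riesz-potential bound (C3). The only real difference is that the paper stops after deriving the pointwise estimate $|u(p)-u(q)|\lesssim\int g\,d\mu_{\infty,p}+\int g\,d\mu_{\infty,q}$ and then cites \cite[Thm.~1.22]{semmes} to pass to the mean-oscillation form, whereas you unfold that citation and reprove the implication by the dyadic-annulus/Fubini computation. Both are fine; your version is self-contained at the cost of a page, the paper's is shorter by outsourcing a standard lemma.

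One small remark: the opening ``standard reduction'' to Lipschitz $u$ with $g=\Lip u$ is both unnecessary and, strictly speaking, the most expensive step you invoke. Passing from a Poincar\'e inequality for Lipschitz functions with $g=\Lip u$ to one for arbitrary pairs (function, upper gradient) is Keith's theorem and is not elementary. You do not need it here: the upper-gradient inequality $|u(p)-u(q)|\le\int_\gamma g$ that you use in Step~1 holds for \emph{any} upper gradient $g$ of $u$, so the whole argument goes through verbatim for arbitrary $(u,g)$, which is exactly how the paper phrases it. Drop the reduction and work with a general upper gradient from the start.
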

\begin{proof}
  The $(1,1)$-Poincar\'e inequality can be proven by appealing to a
  result \cite[Thm.~1.22]{semmes}. As $(X_\infty,\mu)$ is
  Ahlfors-regular\footnote{actually by the discussion in
    \cite[Chap.~4]{juhabook} it is enough to assume $\mu$ doubling}, one has to show the existence of a universal
  constant $C$ such that the following holds: for any pair of points
  $p,q\in X_\infty$ and any pair $(u,g)$, $u$ being a real-valued
  Borel function on $X_\infty$ and $g$ an upper gradient of $u$, one
  has the estimate:
  \begin{equation}
    \label{eq:poinceq_p1}
    \left|u(p)-u(q)\right|\le C\left(\int_{B(p,Cd(p,q))}g\,d\mu_{\infty,p}+\int_{B(q,Cd(p,q))}g\,d\mu_{\infty,q}\right).
  \end{equation}
  Let $\Gamma$ be a measured family of curves joining $p$ to $q$ as in
  Theorem~\ref{thm:meas_curve_join}. Then for each $\omega\in\Omega_\Gamma$
  one has:
  \begin{equation}
    \label{eq:poinceq_p2}
    \left|u(p)-u(q)\right|\le\int_{I_\Gamma}g(\Gamma(t,\omega))\,dt;
  \end{equation}
  integrating~(\ref{eq:poinceq_p2}) in $\nu_\Gamma$ one obtains:
  \begin{equation}
    \label{eq:poinceq_p3}
    \left|u(p)-u(q)\right|\le\int g\,d\mu_\Gamma,
  \end{equation}
  and~(\ref{eq:poinceq_p1}) follows from~(\ref{eq:meas_curve_join_s1}).
\end{proof}

\section{The analytic dimension}
\label{sec:anal_dim}
In this section we prove parts (\ref{item_analytic_dimension_1}) 
and (\ref{item_horizontal_a_rep})
of Theorem \ref{thm_top_dim_n_anal_dim_1}, see Theorem 
\ref{thm:anal_dim}.

We let  $\lipalg X.$ denote the set of bounded real-valued
Lipschitz functions defined on $X$, which is a Banach algebra with norm given
by:
\begin{equation}
  \label{eq:allip_norm}
  \max\left(\|f\|_\infty,\sup_{x_1\ne x_2}\frac{|f(x_1)-f(x_2)|}{d(x_1,x_2)}\right).
\end{equation}
\begin{defn}
  \label{defn:push_forward}
  Consider a bounded linear operator
  $D:\lipalg X.\to L^\infty(\mu)$ where $\mu$ is a Radon measure on
  the metric space $X$. Suppose that $F:X\to Y$ is Lipschitz; then one
  obtains the \textbf{pushforward} $F_{\#}D:\lipalg Y.\to
  L^\infty(F_{\#}\mu)$ of $D$ as
  follows; given $u\in\lipalg Y.$, $F_{\#}Du$ is determined by the
  requirement:
  \begin{equation}
    \label{eq:push_forward_p1}
    \int_Y g\,F_{\#}Du\,dF_{\#}u=\int_Xg\circ F\,D(u\circ
    F)\,d\mu\quad(\forall g\in L^1(F_{\#}\mu)).
  \end{equation}
\end{defn}
The collection of metric measure spaces $\{(X_j,\mu_j)\}_{j\in\zahlen}$ and maps
$\{\pi_j\}_{j\in\zahlen\cup\{\infty\}}$ gives rise to a direct system of operators
$\{D_j\}_{j\in \Z}$ as follows. Let $\hat D$ be the horizontal derivative operator:
\begin{equation}
  \label{eq:hor_der}
  \hat D:\lipalg\real^2.\to L^\infty(\lebmeas 2.);
\end{equation}
we then 
let $D_j=\dirprj \#,j,.\hat D$ for $j\in \zahlen\cup\{\infty\}$. Note that
if $k\ge j$, $\hat\pi^k=\pi_j^k\circ\hat\pi^j$ and so
$\pi_{j,\#}^kD_j=D_k$.  In fact,   the operators $D_j$ are derivations
in the sense of Weaver \cite{weaver00}, i.e.~satisfy a product rule and a weak*
continuity axiom.  These properties are easy to verify, but will not be
used in the following, except in the alternative argument given in
Remark~\ref{rem:anal_dim_weaver}.
\begin{thm}
  \label{thm:anal_dim}
  $(X_\infty,\mu)$ has analytic dimension $1$ and
  $(X_\infty,x_\infty)$ is a differentiability chart. 
    Moreover, (5) in Theorem~\ref{thm_top_dim_n_anal_dim_1} holds.
\end{thm}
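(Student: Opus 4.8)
The plan is to show that every Lipschitz function $u\colon X_\infty\to\R$ obeys a first--order expansion with respect to $x_\infty$, with derivative equal to the operator $D_\infty$ introduced above: $D_\infty u=(\hat\pi^\infty)_\#\hat D(u)$, where $u\circ\hat\pi^\infty$ is Lipschitz on $(\R^2,\hat d_\infty^Y)$ because $\hat\pi^\infty$ is $1$--Lipschitz, and $\hat D(u\circ\hat\pi^\infty)=\partial_1(u\circ\hat\pi^\infty)$ is defined $\L^2$--a.e. Concretely I aim to prove that for $\mu_\infty$--a.e.\ $p$,
\[
u(q)-u(p)=D_\infty u(p)\,\bigl(x_\infty(q)-x_\infty(p)\bigr)+o\bigl(\hat d_\infty(p,q)\bigr)\qquad\text{as }q\to p,
\]
which is exactly the statement that $(X_\infty,x_\infty)$ is a differentiability chart, hence that the analytic dimension is at most $1$. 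For the matching lower bound note that $x_\infty\circ\hat\pi^\infty=x$, so $\hat D(x)\equiv1$ and $D_\infty x_\infty\equiv1$; thus the chart is nondegenerate (equivalently $D_\infty$ is a nonzero Weaver derivation \cite{weaver00}) and the analytic dimension equals $1$.

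Two reductions make the measure--theoretic bookkeeping painless. First, $\calr_\infty$ is generated by countably many translations of $\R^2$, so $\hat\pi^\infty$ carries $\L^2$--null sets to $\mu_\infty$--null sets and conversely. Second, by Lemma~\ref{lem_properties_of_calr_j} a nontrivial fibre of $\hat\pi^\infty$ lies over a point $p$ with $x_\infty(p)\in\bigcup_{j\in\Z}m^{-j}\Z$, a countable set; since $\mu_\infty=\hat\pi^\infty_\#\L^2$ and therefore $\mu_\infty(x_\infty^{-1}(S))=\L^2(S\times\R)=0$ for every countable $S\subset\R$, the map $\hat\pi^\infty$ is injective on a set of full $\mu_\infty$--measure. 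Consequently $\partial_1(u\circ\hat\pi^\infty)$ descends to a function $g_\infty$, defined $\mu_\infty$--a.e., with $|g_\infty|\le\Lip(u)$, and unwinding the definition of the pushforward (on the full--measure set where $\hat\pi^\infty$ is injective and measure preserving) gives $D_\infty u=g_\infty$ in $L^\infty(\mu_\infty)$.

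For the expansion, fix $u$, put $\La=\Lip(u)$, and choose $p$ to be a $\mu_\infty$--Lebesgue point of $g_\infty$ (possible since $(X_\infty,\mu_\infty)$ is doubling, indeed Ahlfors $Q$--regular by Lemma~\ref{lem_david_semmes_regular_map}); set $c=D_\infty u(p)$, the Lebesgue value, and write $\rho=\hat d_\infty(p,q)$. Given $q$ near $p$, apply Theorem~\ref{thm:meas_curve_join} to obtain a measured family of curves $\Gamma$ joining $p$ to $q$. Each curve of $\Gamma$ is, locally in $t$, the $\hat\pi^\infty$--image of a constant--speed horizontal segment of $\R^2$, so by Fubini together with the absolute--continuity estimates in the proof of Theorem~\ref{thm:meas_curve_join} (which also yield $\mu_\Gamma\ll\mu_\infty$), for $\nu_\Gamma$--a.e.\ $\omega$ and a.e.\ $t$ one has $\frac{d}{dt}\bigl[(u-c\,x_\infty)(\Gamma(t,\omega))\bigr]=(g_\infty-c)(\Gamma(t,\omega))\,\frac{d}{dt}x_\infty(\Gamma(t,\omega))$, using that $x_\infty$ descends from $x$. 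Integrating in $t$, then in $\nu_\Gamma$, and using $\Lip(\Gamma)\le C$ and property \textbf{(C3)},
\[
\bigl|u(q)-u(p)-c\,(x_\infty(q)-x_\infty(p))\bigr|\le C\!\int|g_\infty-c|\,d\mu_\Gamma\le C'\Big(\int_{B(p,C\rho)}\!|g_\infty-c|\,d\mu_{\infty,p}+\int_{B(q,C\rho)}\!|g_\infty-c|\,d\mu_{\infty,q}\Big).
\]
It remains to see that each term is $o(\rho)$. Decomposing $B(p,C\rho)$ into dyadic annuli about $p$ and using Ahlfors $Q$--regularity bounds the first term by $\lesssim\rho\sum_{k\ge0}2^{-k}\av_{B(p,2^{-k}C\rho)}|g_\infty-c|\,d\mu$, which is $o(\rho)$ by dominated convergence since $p$ is a Lebesgue point of $g_\infty$. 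For the second term, split $B(q,C\rho)$ at radius $\eps\rho$: on $B(q,\eps\rho)$ the integrand is at most $2\La$ times the Riesz kernel, whose $\mu_{\infty,q}$--mass there is $\lesssim\eps\rho$, contributing $\lesssim\La\eps\rho$; on $B(q,C\rho)\setminus B(q,\eps\rho)\subset B(p,(C+1)\rho)$ the Riesz kernel is $\lesssim(\eps\rho)^{1-Q}$ while $\int_{B(p,(C+1)\rho)}|g_\infty-c|\,d\mu=o(\rho^Q)$ (Lebesgue point again), contributing, for fixed $\eps$, $o(\rho)$. Letting $\eps\to0$ after $q\to p$ yields $o(\rho)$, proving the expansion; hence $(X_\infty,x_\infty)$ is a chart and the analytic dimension is $1$.

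Finally, for part~(\ref{item_horizontal_a_rep}): Fubini's theorem writes $\L^2=\int_{\hat\Ga}\h^1\on\hat\gamma\,d\hat\Ga(\hat\gamma)$, and $\hat\pi^\infty$ restricts to an isometry from each horizontal line of $\R^2$ onto a bi--infinite geodesic of $X_\infty$ (since $\hat d_\infty^Y$ restricted to a horizontal line coincides with the Euclidean length and, on such a line, $\hat\pi^\infty$ is $1$--Lipschitz, $x_\infty$--isometric and injective), so pushing forward gives $\mu_\infty=\int_{\Ga}\h^1\on\gamma\,d\Ga(\gamma)$, an Alberti representation of $(X_\infty,\mu_\infty)$ whose curves are $x_\infty$--curves. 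The along--curve identity established above says that for every Lipschitz $u$ and $\Ga$--a.e.\ $\gamma$ one has $\frac{d}{dt}(u\circ\gamma)=(D_\infty u\circ\gamma)\,\frac{d}{dt}(x_\infty\circ\gamma)$, which is precisely the criterion of \cite{bate} for this representation to be universal. The main obstacle is the differentiation estimate --- in particular controlling the Riesz potential centred at $q$, and rigorously identifying $\frac{d}{dt}(u\circ\gamma)$ with $g_\infty$ along the piecewise--horizontal pencil curves of a space with gluings; the construction of the curve families themselves is already packaged in Theorem~\ref{thm:meas_curve_join}.
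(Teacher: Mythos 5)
Your proof is correct, but it takes a genuinely different route from the paper's main proof of Theorem~\ref{thm:anal_dim}. The paper's primary argument is combinatorial: it fixes an approximate continuity point of $D_\infty f$, uses Corollary~\ref{cor:mono_galleries} to produce a single horizontal gallery in $X_{M_r}$ almost connecting $p$ to $q$, compares averages of $u$ over consecutive $2$-cells via the fundamental theorem of calculus and Fubini, and then carefully pushes $D_{M_r}$ through to $D_\infty$ using the pushforward identity on cells (the estimates~\eqref{eq:anal_dim_in4},~\eqref{eq:anal_dim_in7}). Your argument instead works directly with the Semmes pencil of curves $\Gamma$ from Theorem~\ref{thm:meas_curve_join} and the Riesz potentials of~\textbf{(C3)}, integrating the a.e.\ chain-rule identity $\frac{d}{dt}\bigl[(u-cx_\infty)\circ\Gamma\bigr]=(g_\infty-c)\circ\Gamma\cdot\frac{d}{dt}(x_\infty\circ\Gamma)$ along the measured curve family; this is essentially the plan the paper sketches in Remark~\ref{rem:anal_dim_weaver}. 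What you add beyond that remark is a self-contained derivation of the first-order expansion: the remark stops at $\Lip f\lesssim|D_\infty f|$ and invokes an external theorem on Weaver derivations (\cite{derivdiff}), whereas you get differentiability directly by a careful dyadic-annulus estimate that handles the Riesz potential centred at the \emph{moving} point $q$ via the split at scale $\varepsilon\rho$ with $B(q,C\rho)\setminus B(q,\varepsilon\rho)\subset B(p,(C+1)\rho)$. The price is that you must justify (as you note) that the pointwise identification of $\frac{d}{dt}(u\circ\gamma)$ with $g_\infty\circ\gamma$ holds for $\nu_\Gamma$-a.e.\ broken-horizontal curve --- this needs the absolute continuity $\mu_\Gamma\ll\mu_\infty$ and the Fubini structure established inside the proof of Theorem~\ref{thm:meas_curve_join}, together with the observation that $\hat\pi^\infty$ is injective off the $\mu_\infty$-null set $\{x_\infty\in\bigcup_j m^{-j}\Z\}$, all of which are valid; the paper's cell-average route sidesteps this pointwise bookkeeping by only comparing integrals over cells. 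Your treatment of part~(\ref{item_horizontal_a_rep}) (isometric images of horizontal lines, Fubini disintegration of $\mu_\infty$, and the along-curve derivative criterion of Bate) matches the paper's closing paragraph.
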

\begin{proof}
  Let $f$ be a Lipschitz function; as the analytic dimension is a
  local property, we will assume $f$ to be bounded. Consider an
  approximate continuity point $p$ of
  $D_\infty f$ and fix $M\in\natural$; using that $p$ is an approximate continuity point of
  $D_\infty f$, for each $\varepsilon>0$ one can choose $r_0(\varepsilon)$
  such that, for any $r\le r_0$, any $i\le
  M+\lfloor\log_m(1/r)\rfloor$ and any
  cell $\hat\sigma_i$ of $Y_{i}$ satisfying
  \begin{equation}
    \label{eq:anal_dim_p1}
    \dirprj i,,.(\hat\sigma_i)\subset B(p,2Cr),
  \end{equation}
  one has:
  \begin{equation}
    \label{eq:anal_dim_p2}
    \av_{\dirprj i,,.(\hat\sigma_i)}\left|D_\infty f(p)-D_\infty f \right|\,d\mu\le\varepsilon.
  \end{equation}
  Let $q\in X_\infty$ satisfy $d(p,q)\le r$. We let:
  \begin{equation}
    \label{eq:anal_dim_p3}
    u=f-f(p)-D_\infty f(p)(x_\infty -x_\infty(p));
  \end{equation}
  we will show that:
  \begin{equation}
    \label{eq:anal_dim_p4}
    \left|u(p)-u(q)\right|\lesssim(m^{-M}+\varepsilon) r,
  \end{equation}
  from which the Theorem follows as $M$ and $\varepsilon$ are
  arbitrary.  
  \par Let $M_r=\lfloor\log_m(1/r)\rfloor+M$; we use
  Corollary~\ref{cor:mono_galleries} to obtain a horizontal gallery
  $\galleft M_r,.=\{\sigma_1,\cdots,\sigma_{L_r}\}$ of $2$-cells of
  $X_{M_r}$ such that $\pi_{M_r}^{\infty}(\galleft M_r,.)$ ``almost
  connects'' $p$ to $q$. More precisely, one has:
  \begin{equation}
    \label{eq:anal_dim_p6}
    \begin{aligned}
      d(\ndirprj M_r,\infty,.(\sigma_1),p)&\le Crm^{-M}\\
      d(\ndirprj M_r,\infty,.(\sigma_{L_r}),q)&\le Crm^{-M};
    \end{aligned}
  \end{equation}
  and 
  \begin{equation}
    \label{eq:anal_dim_p7}
    L_rm^{-M}r\le Cr,
  \end{equation}
  where $C$ is a universal constant.
  \par For $j\in\zahlen$ let $S_j$ denote the $1$-skeleton of $Y_j$;
  then $S_j$ is $\hat\pi^j$-saturated,
  i.e.~$(\hat\pi^j)^{-1}(\hat\pi^j(S_j))=S_j$; as $S_j$ is $\lebmeas
  2.$-null, we conclude that $\hat\pi^j(S_j)$ is $\mu_j$-null. Note
  that $\hat\pi^j$ restricts to a homeomorphism mapping $Y_j\setminus
  S_j$ onto $X_j\setminus\hat\pi^j(S_j)$; in particular, $\hat\pi^j$
  maps open cells of $Y_j$ homeomorphically onto open cells of
  $X_j$. Let $v\in\lipalg X_j.$ and $\sigma$ be an open cell of $X_j$;
  using the definition of pushforward, we conclude that $D_jv$
  coincides $\mu_j$-a.e.~with the horizontal derivative $\hat D
  (v\circ\hat\pi^j)$, i.e.:
  \begin{equation}
    \label{eq:anal_dim_in1}
    D_jv=\hat D(v\circ\hat\pi^j)\circ(\hat\pi^j)^{-1}\quad(\text{on $\sigma$}).
  \end{equation}
  \par Consider two adjacent cells $\sigma_i$, $\sigma_{i+1}$ of
  $\galleft M_r,.$;
  as $\sigma_i$ and $\sigma_{i+1}$ share a vertical face, the Fubini
  representation of $\mu_{M_r}\on (\sigma_i\cup\sigma_{i+1})$ and the
  fundamental Theorem of Calculus imply
  
   \begin{multline}
    \label{eq:anal_dim_p8}
      \left|\av_{\sigma_i} (u\circ\pi_{M_r}^\infty)\,d\mu_{M_r} 
      -\av_{\sigma_{i+1}} (u\circ\pi_{M_r}^\infty)\,d\mu_{M_r}\right|\\
      \lesssim 
      m^{-M} r 
      \av_{\sigma_i\cup\sigma_{i+1}}|D_{M_r}(u\circ\pi_{M_r}^\infty)|\,d\mu_{M_r}.\\
    \end{multline}
    
    \par The goal is now to replace 
     the derivative $D_{M_r}(u\circ\ndirprj
    M_r,,.)$ with $D_\infty u$ in the right hand side
    of~\eqref{eq:anal_dim_p8}. Fix a $2$-cell $\sigma$ of $X_{M_r}$
    and let $\hat\sigma$ be the $2$-cell of $Y_{M_r}$ such that
    $\hat\pi^{M_r}(\hat\sigma)=\sigma$. The map $\hat\pi^{M_r}$
    restricts to a homeomorphism between the interiors of $\hat\sigma$
    and $\sigma$; we now define $g\in L^1(\mu_{M_r})$ by letting:
    \begin{equation}
      \label{eq:anal_dim_in2}
      g=
      \begin{cases}
        \frac{\chi_{\sigma}}{\mu_{M_r}(\sigma)}\left(
          \sgn (D_{M_r}(u\circ\ndirprj M_r,,.))
        \right)&\text{on the interior of $\sigma$}\\
        0&\text{elsewhere;}
      \end{cases}
    \end{equation}
    then we have:
    \begin{equation}
      \label{eq:anal_dim_in3}
      g\circ\hat\pi^{M_r} = \frac{\chi_{\hat\sigma}}{\lebmeas 2.(\hat\sigma)}\sgn(D_{M_r}(u\circ\ndirprj M_r,,.))\circ\hat\pi^{M_r},
    \end{equation}
    and using the definition of pushforward~\eqref{eq:push_forward_p1} we get:
    \begin{equation}
      \label{eq:anal_dim_in4}
      \av_{\sigma}|D_{M_r}(u\circ\ndirprj
      M_r,,.)|\,d\mu_{M_r}\le\av_{\hat\sigma}\left|
        \hat D(u\circ\hat\pi^\infty)
        \right|\,d\lebmeas 2..
      \end{equation}
      Let $\hat S$ be the union of the $1$-skeleta of $Y_j$ for
      $j\in\zahlen$; then $\hat S$ is $\lebmeas 2.$-null and
      $\hat\pi^\infty$-saturated so that
      $\mu_\infty(\hat\pi^\infty(\hat S))=0$. Now $\hat\pi^\infty$
      restricts to a homeomorphism between $\real^2\setminus\hat S$ and
      $X_\infty\setminus\hat\pi^\infty(\hat S)$; in particular, we can
      define $g\in L^1(\mu_\infty)$ as:
      \begin{equation}
        \label{eq:anal_dim_in5}
        g =
        \begin{cases}
          \frac{\chi_{\hat\pi^\infty(\hat\sigma)}}{\mu_\infty(\hat\pi^\infty(\hat\sigma))}\left(
            \sgn \hat D(u\circ\hat\pi^\infty)
            \right)\circ(\hat\pi^\infty)^{-1}&\text{on
              $X_\infty\setminus\hat\pi^\infty(\hat\sigma)$}\\
            0&\text{elsewhere;}
        \end{cases}
      \end{equation}
      then:
      \begin{equation}
        \label{eq:anal_dim_in6}
        g\circ\hat\pi^\infty=\frac{\chi_{\hat\sigma}}{\lebmeas 2.(\hat\sigma)}\sgn\hat D(u\circ\hat\pi^\infty)
      \end{equation}
      as an element of $L^1(\lebmeas 2.)$; therefore, using the definition
      of pushforward~\eqref{eq:push_forward_p1}, we get:
      \begin{equation}
        \label{eq:anal_dim_in7}
        \av_{\hat\sigma}\left|
        \hat D(u\circ\hat\pi^\infty)
        \right|\,d\lebmeas
        2.\le\av_{\hat\pi^\infty(\hat\sigma)}|D_\infty u|\,d\mu_\infty.
      \end{equation}
      Combining~\eqref{eq:anal_dim_in7}, \eqref{eq:anal_dim_in4},
      \eqref{eq:anal_dim_p8} we get:
      \begin{equation}
        \label{eq:anal_dim_in8}
        \left|
      \av_{\sigma_i} (u\circ\ndirprj M_r,,.)\,d\mu_{M_r} -
      \av_{\sigma_{i+1}} (u\circ\ndirprj M_r,,.)\,d\mu_{M_r}
      \right|
      \lesssim m^{-M} r
      \av_{\hat\pi^\infty(\sigma_i\cup\sigma_{i+1})}|D_\infty u|\,d\mu_\infty,
    \end{equation}
    and using~\eqref{eq:anal_dim_p2} we conclude that:
    \begin{equation}
      \label{eq:anal_dim_in9}
      \left|
      \av_{\sigma_i} (u\circ\ndirprj M_r,,.)\,d\mu_{M_r} -
      \av_{\sigma_{i+1}} (u\circ\ndirprj M_r,,.)\,d\mu_{M_r}
      \right|
      \lesssim m^{-M} r\varepsilon.
    \end{equation}
    Summing over $i$ and using~(\ref{eq:anal_dim_p7}) we obtain:
    \begin{equation}
      \label{eq:anal_dim_p9}
      \left|
      \av_{\ndirprj M_r,\infty,.(\sigma_1)} u\,d\mu_\infty -
      \av_{\ndirprj M_r,\infty,.(\sigma_{L_r})} u\,d\mu_\infty
      \right|=\left|
      \av_{\sigma_1} u\circ \ndirprj M_r,\infty,.\,d\mu_{M_r} -
      \av_{\sigma_{L_r}} u\circ \ndirprj M_r,\infty,.\,d\mu_{M_r}
      \right|\le C\varepsilon r;
    \end{equation}
    using~(\ref{eq:anal_dim_p6}) and that $u$ is Lipschitz, we obtain:
    \begin{equation}
      \label{eq:anal_dim_p10}
      \left|u(p)-u(q)\right|\le 2C\LIP(u) rm^{-M}+\varepsilon Cr,
    \end{equation}
    which gives~(\ref{eq:anal_dim_p4}). 
    
   Finally, the horizontal derivative operator
    $D_\infty$ is associated with the pushforward under $\hat\pi^\infty$ of the family
    $\hat\Gamma$ of horizontal lines of $\real^n$ equipped with the
    obvious measure; now~(\ref{eq:anal_dim_in4}) can be interpreted as
    saying that this family of lines gives a universal Alberti
    representation in the sense of Bate. Note also that the horizontal
    geodesics are really \emph{gradient curves} for $f$, in the sense
    that $f$ decreases/increases along them with \emph{optimal speed} in the
    sense of \cite{deralb}, see also Remark~\ref{rem:minimal_gradient}.
  \end{proof}
\begin{remark}
  \label{rem:anal_dim_weaver}
  There is an alternative approach to the proof that the analytic
  dimension is $1$, which uses directly the measured families of curves
  constructed in Theorem~\ref{thm:meas_curve_join}. This approach is based on Weaver derivations. By
  \cite[Thm.~5.9]{derivdiff} it suffices to show that for each
  Lipschitz function $f$ the inequality:
  \begin{equation}
    \label{eq:der_lip}
    \Lip f\lesssim |D_\infty f|
  \end{equation}
  holds $\mu_\infty$-a.e. Let $p$ be a Lebesgue point for $|D_\infty f|$. Given a
  point $q$, we consider the measured family of curves $\Gamma$ joining $p$ to $q$
  which was constructed in
  Theorem~\ref{thm:meas_curve_join}; by inspecting the construction
  of $\Gamma$ and using estimates like~\eqref{eq:anal_dim_in4}, \eqref{eq:anal_dim_in7}
  one concludes that there is a universal constant $C$ such that:
  \begin{equation}
    \label{eq:curve_bound}
    |f(p)-f(q)|\le C\int_{B(p,Cd(p,q))\cup B(q,Cd(p,q))}|D_\infty f|\,(d\mu_{\infty,p}+d\mu_{\infty,q});
  \end{equation}
recall that $\mu_{\infty,p}(B(p,Cd(p,q)))\approx\mu_{\infty,q}(B(p,Cd(p,q)))\approx
d(p,q)$. As $p$ is a Lebesgue point for $|D_\infty f|$, the fact that $\mu_\infty$
is doubling implies that:
\begin{equation}
  \label{eq:riesz_leb}
  \begin{aligned}
    \lim_{q\to p}\av_{B(p,Cd(p,q))}|D_\infty f|\,d\mu_{\infty,p}&=|D_\infty f|(p)\\
    \lim_{q\to p}\av_{B(q,Cd(p,q))}|D_\infty f|\,d\mu_{\infty,q}&=|D_\infty f|(q).
  \end{aligned}
\end{equation}
Thus, dividing~(\ref{eq:curve_bound}) by $d(p,q)$ and taking the
$\limsup$ as $q\to p$, (\ref{eq:der_lip}) follows.
\end{remark}
\begin{remark}
  \label{rem:minimal_gradient}
  Let $f$ be Lipschitz and $g_f$ denote its generalized minimal upper gradient. By
  \cite[Secs.~5,6]{cheeger} (see \cite{cks_metric_diff} for a simplified
  proof) we know that $g_f=\Lip f$
  $\mu_\infty$-a.e. However, in these examples one can obtain a direct
  argument under the following lines. As we are dealing with a PI
  space we already know that $\Lip f\le Cg_f$; moreover, as the
  Sobolev space $H^{1,p}(\mu_\infty)$ is reflexive for 
  $p>1$, one can always assume that $g_f$ is the $L^p$-limit of a
  sequence $g_k$ where $g_k$ is an upper gradient of a Lipschitz
  function $f_k$, and where $f_k\to f$ in $H^{1,p}(\mu_\infty)$ and $\Lip f_k\to \Lip
  f$ $\mu$-a.e. It thus suffices to show that
  whenever $g$ is an upper gradient for $f$, one has $g\ge \Lip f$
  $\mu$-a.e. Without loss of generality we can assume $g\in
  L^1_{\text{loc}}(\mu_\infty)$.  We secondly observe that, as $x_\infty$ is a chart
  function, we have $\Lip f= |D_\infty f|$ $\mu$-a.e. Now the Fubini representation of the measure $\lebmeas
  2.$ on $\real^2$ descends to a similar representation of the measure $\mu_\infty$
  in terms of the horizontal geodesics of $X_\infty$. If $p$ is a
  Lebesgue point for $g$ and $|D_\infty f|$, for any $\varepsilon>0$ we can find a
  horizontal line $\gamma$ with $\gamma(0)$ a Lebesgue point for
  $g\circ\gamma$ and $D_\infty f\circ\gamma$, $d(\gamma(0),p)\le\varepsilon$, and
  $g(\gamma(0))\in[g(p)-\varepsilon,g(p)+\varepsilon]$, and
  $|D_\infty f|(\gamma(0))\in[|D_\infty f|(\gamma(0))-\varepsilon,
  |D_\infty f|(\gamma(0))+\varepsilon]$. Applying Lebesgue differentation to
  $g\circ\gamma$ and $D_\infty f\circ\gamma$ at $0$ one gets
  $g(\gamma(0))\ge |D_\infty f|(\gamma(0))$.
\end{remark}

\section{The $n=2$ case
of Theorem \ref{thm_top_dim_n_anal_dim_1} concluded}
\label{sec_n_equals_2_case_concluded}
We now verify (\ref{item_ditto_for_limits}) 
of Theorem \ref{thm_top_dim_n_anal_dim_1} in the $n=2$
case.

Let $\{(\la_kX_\infty,p_k)\}$ and $(Z,z)$ be as in the statement of 
Theorem \ref{thm_top_dim_n_anal_dim_1}.   For every $k$, the map
$\hat\pi^\infty:(\R^n,\la_kd_\al)\ra X_\infty$ is a rescaling of a
David-Semmes regular map, so it is a David-Semmes regular map with
uniform constants.  Therefore there is an $N$ such that for every
$k$, there exists points $\hat p_{k,1},\ldots,\hat p_{k,\ell}$
($\ell\le N$),
such that $(\hat\pi^\infty)^{-1}(B(p_k,r))\subset \cup_i B(p_{k,i},Cr)$
for all $r\in (0,\infty)$.  After passing to a subsequence we may assume
that $\ell$ is constant, and that for each $1\leq i\leq \ell$, the pointed maps
$\hat\pi^\infty: (\R^n,\la_k\hat d_\infty^Y,\hat p_{k,i})\ra (\la_kX_\infty,p_k)$
Gromov-Hausdorff converge as $k\ra\infty$ to a Lipschitz map
$\phi_i:(W_i,w_i)\ra (Z,z)$.  Then the $\phi_i$'s are David-Semmes regular,
and $\cup_i\im(\phi_i)=Z$.

As $\phi_i$ is a light map,  the topological dimension of $(Z,z)$ is at least
$n$.  Note that the properties of satisfying a $(1,1)$-Poincar\'e inequality, 
being Ahlfors $Q$-regular, and having Assouad-Nagata dimension $\leq n$, with uniform constants,
all pass to Gromov-Hausdorff limits.  Therefore (2) and (3) hold.

Part  (\ref{item_analytic_dimension_1})  can be verified in several 
different ways.  

One approach is to implement either of the arguments of
Section~\ref{sec:anal_dim} by passing the ingredients --- the
horizontal galleries of Corollary~\ref{cor:mono_galleries}, the
measured families of curves $\Gamma$
of Theorem~\ref{thm:meas_curve_join} and the derivation $D_\infty$ --- to
the limit space $Z$. More specifically, in adapting Remark~\ref{rem:anal_dim_weaver},
one can associate to $\Gamma$
and $D_\infty$ normal $1$-currents in the sense of Lang and use
compactness of normal currents~\cite[Thm.~5.4]{lang_loc_curr}.

A second approach is to exploit the symmetry of $X_\infty$.  
Using the self-similarity of $X_\infty$
induced by the affine transformation
$\Phi:\R^2\ra \R^2$ we may assume, without loss of
generality, that the scale factor $\la_k$ is $1$ for all $k$. 
Also, note that the action
$\Z^2\acts\R^2$ preserves the equivalence relation $\calr_\infty$
on the open set $\{p\in\R^2\mid x(p)\not\in\Z\}$, and induces
an action on $\{p\in X_\infty\mid x_\infty(p)\not\in\Z\}$ which is a local
isometry.  In particular, for every point $p\in X_\infty$ with
$x_\infty(p)\not\in\Z$, if $r=\dist(x_\infty(p),\Z)$, then
the ball $B(p,r)\subset X_\infty$ is 
measure-preserving
isometric to the ball $B(\hat\pi^\infty(\hat q),r)\subset X_\infty$ for some 
 $\hat q\in (0,1)^2$.
This property passes to the Gromov-Hausdorff limit, allowing one to see that
$\h^Q$-a.e. point of $Z$ lies in a ball that is isometric to a ball in $X_\infty$
itself, and in particular the analytic dimension is $1$.

A third approach involves rescaling the direct system of cell complexes
$\{X_j\}$ and passing to a pointed limit, which is another pointed 
direct system with similar properties.  See Section 11 --- especially
Subsection \ref{subsec_limits_admissible}
 --- for more details.

\section{The $n>2$ case of Theorem \ref{thm_top_dim_n_anal_dim_1}}
\label{sec_n_dim_case}

In this section we will discuss the higher-dimensional version of the 
examples treated in Sections 
\ref{sec_cell_structure}-\ref{sec_n_equals_2_case}.

Pick $n>2$.  We will imitate the $n=2$ construction, but where the last
$(n-1)$ coordinates of $\R^n$ will play the r\^ole of the $y$-coordinate in
$\R^2$.  Thus the notation $(x,y)$ will henceforth mean that $y\in\R^{n-1}$.
The words ``horizontal'' and ``vertical'' will have obvious interpretations
in this new setup.

Let $m=1+3(n-1)$, and pick $m_v\gg m$.  Let $\Phi:\R^n\ra \R^n$ be the 
linear transformation $\Phi(x,y)=(m^{-1}x,m_v^{-1}y)$.  Let
$Y_0$ be the cell structure on  $\R^n$ coming from the tiling
by unit cubes, and for $j\in\Z$ let $Y_j$  be the image of $Y_0$ under $\Phi^j$.

Following the $n=2$ construction,
we  define an equivalence relation $\calr$ on $\R^n$ that is generated by the
identifications of certain pairs of $(n-1)$-cells of $Y_1$ by vertical translation.  
For each
$k\in\Z$,  $d\in\{2,\ldots,n\}$, we perform gluings within the three
vertical hyperplanes
$$
\{(x_1,\ldots,x_n)\in\R^n\mid x_1=k+m^{-1}(1+3(d-2)+(i-1))\}_{i\in\{1,2,3\}}\,,
$$ 
so as to enable horizontal galleries to ``jump'' in the 
$x_d$-coordinate direction.  To define these gluings, for each $k,\ell\in \Z$, 
$2\leq d\leq n$, $1\leq i\leq 3$, we let

 \begin{equation}
a_{k,\ell,d,i}=
    \left\{(x_1,\ldots,x_n)\;\;\middle|\;\;
    \begin{aligned}
    x_1=m^{-1}(1+3(d-2)+(i-1))\,,\;\\
    x_d\in [(3\ell+i-1)m_v^{-1},(3\ell+i)m_v^{-1}]
    \end{aligned}
    \right\},
  \end{equation}

 \begin{equation}
a_{k,\ell,d,i}'=
    \left\{(x_1,\ldots,x_n)\;\;\middle|\;\;
    \begin{aligned}
    x_1=m^{-1}(1+3(d-2)+(i-1))\,,\;\\
    x_d\in [(3\ell+i)m_v^{-1},(3\ell+i+1)m_v^{-1}]
    \end{aligned}
    \right\},
  \end{equation}
and we identify $a_{k,\ell,d,i}$ with $a_{k,\ell,d,i}'$ by the vertical translation
$p\mapsto p+m_v^{-1}e_d$.

The definition of $\calr_\infty$  and the
pseododistance $\hat d_\infty$ on the quotient $X_\infty$ remain the same as
before.  Then $(X_\infty,\hat d_\infty)$ is the metric space of Theorem  
\ref{thm_top_dim_n_anal_dim_1} for general $n$.  

The verification of the
assertions in Theorem~\ref{thm_top_dim_n_anal_dim_1} for $(X_\infty,\hat d_\infty)$ proceeds along the 
same steps, with appropriate modifications, a few of which we indicate here:

\begin{itemize}
\item A gallery is a chain of cells where two consecutive cells share an
$(n-1)$-face. 
\item (cf. Lemma \ref{lem_pi_j_structure}(\ref{item_distance_2}))
If $p_0,p_1\in X_j$ are distinct points with
$\pi_j(p_0)=\pi_j(p_1)$, then $p_i=\hat\pi^j(\hat p_i)$ for a
unique point $\hat p_i\in\R^n$, and the pair $\hat p_0, \hat p_1$
is contained in $\ell\cap (\hat\si_0\cup\hat\si_1)$,
where $\ell\subset\R^n$ is vertical line $\ell\subset \R^n$ parallel to one
the $d^{th}$-coordinate axis for some $2\leq d\leq n$, 
and $\hat\si_0,\hat\si_1$ is a vertical gallery the $(n-1)$-skeleton
of  $Y_{j+1}$ (i.e. a pair of vertical $(n-1)$-cells that share an $(n-2)$-cell).
\item If $\si$ is an $n$-cell of $X_j$ (or $Y_j$), then it has two vertical
$(n-1)$-faces $\tau_0, \tau_1$, where $x_j(\tau_1)=x_j(\tau_0)+m^{-j}$; these
replace the vertical $1$-cells in the $n=2$ case.
\item The cosets of 
$\calr_j$ lie in orbits of $m^{-j}\Z\times m_v^{-j}\Z^{n-1}$, so the
maps $\hat y_j$ and $y_j$ take values in $\R^{n-1}/m_v^{-j}\Z^{n-1}$ instead of 
$\R/m_v^{-j}\Z$.
\item For the proof that $(X_\infty,\hat d_\infty)$ has Assouad-Nagata dimension $\leq n$,
one builds $(n+1)$ ``good families'' of subsets by working with cells of dimension
$0$ through $n$.
\end{itemize}
In addition to the minor points above, there are changes to the proof of 
Proposition \ref{prop_distance_lower_bound} that require more care.
In Steps 1 and 2 of the proof,  rather than using
the distance to $0\in S^1(m_v^{-j})$, one uses the distance to the
$(n-2)$-skeleton in the torus $T^{n-1}(m_v^{-j})=\R^{n-1}/m_v^{-j}\Z^{n-1}$.  
However, the conclusion of
Step 2 only says that $\pi_i^{-1}(\bar\si_{i_3})$ lies at controlled distance
from the $(n-2)$-skeleton of $T^{n-1}(m_v^{-j})$.  To proceed, one has to 
inductively exclude
the possibility that $\pi_i^{-1}(\bar\si_{i_3})$ lies far from the $(n-3)$-skeleton
of $T^{n-1}(m_v^{-j})$, etc.   This is done using a variation on Steps 1 and 2, 
where one uses the distance from the $(n-k)$-skeleton of $T^{n-1}(m_v^{-j})$, 
for $k\geq 3$, and a variation of Lemma \ref{lem_edge_star}.

\section{Generalizations}
\label{sec_generalizations}

In this section we consider generalizations of the examples given in previous 
sections.  It is not our intention to be exhaustive --- it is clear that 
one can further generalize the contruction in different ways.
Our main purpose is  
to illustrate that the same overall scheme of proof applies to a much broader 
class of examples, and to clarify the logical structure of the proofs by 
identifying the essential properties needed.

\subsection{Admissible systems}
Before giving the precise definition, we begin with some observations about the 
examples discussed in earlier sections.  These are direct limits of direct
systems
$$
\ldots\stackrel{\pi_{-1}}{\lra}
X_0\stackrel{\pi_0}{\lra}X_1\stackrel{\pi_1}{\lra}\ldots
\stackrel{\pi_{j-1}}{\lra}X_j\stackrel{\pi_j}{\lra}\ldots
$$
where the $X_j$'s are cell complexes.  Crucial to the analysis is the distinction
between horizontal and vertical directions, and the fact that the first coordinate
function on $\R^n$ descends to a compatible family of functions.   We will
axiomatize this by requiring cells to have distinguished characteristic maps
that induce the horizontal/vertical structure.

Fix integers $n \geq 2$  and $2\leq m\leq m_v$.  Let  
$\{Y_j\}_{j\in\Z}$ be a family of tilings
of $\R^n$, where $Y_j$ is a tiling by translates of the parallelopiped 
$[0,m^{-j}]\times [0,m_v^{-j}]^{n-1}$, and for all $j\in \Z$, the tiling
$Y_{j+1}$ is subdivision of $Y_j$.
Let $G$ be the group of isometries $g:\R^n\ra \R^n$ of the form
$g=\id_{\R}\times h$ for some isometry $h:\R^{n-1}\ra \R^{n-1}$.   
Let $G_j\subset G$ be the subgroup that preserves
the cell structure of $Y_j$; hence for all $g\in G_j$, the linear part
$L(g)$ preserves the set of coordinate vectors $\{\pm e_i\}_{2\leq i\leq n}$.

\begin{definition}
\label{def_admissible_system}
An {\bf admissible direct system} is a tuple
consisting of:
\begin{itemize}
\item A direct system of cell complexes indexed by
the integers
$$
\ldots\stackrel{\pi_{-1}}{\lra}X_0\stackrel{\pi_0}{\lra}\ldots \ldots
\stackrel{\pi_{j-1}}{\lra} X_j\stackrel{\pi_j}{\lra}
\ldots
$$
\item
A  collection $\{x_j:X_j\ra \R\}$ of continuous maps.
\item For each cell $\si$ of $X_j$, a collection $\Phi_\si$
of distinguished characteristic
maps $\phi:\hat\si\ra \si$, where $\hat\si$ is a cell of $Y_j$. 
\item A Radon measure $\mu_j$ on $X_j$ for all $j\in \Z$.
\end{itemize}
The tuple is required to satisfy the following conditions for some 
constants $\De$, $H$:

\begin{description}
\item[(Ax1)] $X_j$ is connected, is a union of its closed $n$-cells, and all links have cardinality at most $\De$.
\item[(Ax2)] Compatibility of distinguished characteristic maps:
	\begin{description}
	\item[(Ax2a)] (Compatibility with $G$) For any cell $\si$ of $X_j$, any
	two elements $\phi_0:\hat\si_0\ra \si$, $\phi_1:\hat\si_1\ra \si$ of 
	$\Phi_\si$ agree up to 
	precomposition with some $g\in G_j$ 	
	such that
	$g(\hat\si_0)= \hat\si_1$.
	In particular, the notions of vertical and horizontal cells in $Y_j$ 
	descend to well-defined notions for cells in $X_j$.	
	\item[(Ax2b)] (Compatibility with face restrictions) The restriction of any 
	distinguished characteristic map $\phi:\hat\si\ra \si$ to a face of 
	$\hat\si$ is a distinguished characteristic map.
	\item[(Ax2c)] (Compatibility with subdivision)  If 
	$\si$ is a cell of $X_j$
	and $\phi:\hat\si\ra \si$
	belongs to $\Phi_\si$, then for any cell $\hat\si'$ of $Y_{j+1}$ 
	contained in $\hat\si$, the composition $\pi_j\circ\phi\restr_{\hat\si'}:\hat\si'\ra 
	\si'\subset X_{j+1}$ is a distinguished characteristic map of some cell
	$\si'$ of $X_{j+1}$.  Moreover all distinguished
	characteristic maps of $X_{j+1}$ may be obtained in this way.
	\item[(Ax2d)] (Compatibility with $x_j$) For every cell $\si$ of $X_j$ and
	every $\phi:\hat\si\ra\si$ in $\Phi_\si$, $x_j\circ\phi=x$, where $x:\R^n\ra\R$
	is the first coordinate function.        
	\end{description}
\item[(Ax3)] $\pi_j:X_j^{(1)}\ra X_{j+1}$ is a surjective  cellular map, where
$X_j^{(1)}$ denotes the subdivision of $X_j$ defined by  restricting
distinguished characteristic maps $\phi:\hat\si\ra\si$ to cells $\hat\si'\in Y_{j+1}$.
\item[(Ax4)] (Fibers have controlled vertical diameter)
For every $j$, and every 
$0$-cell $\bar v$ of $X_{j+1}$, any two elements $v_0,v_1$
of the inverse image $\pi_j^{-1}(\bar v)$  are contained in a vertical edge path
of $X_j^{(1)}$ of combinatorial length at most $H$.
\item[(Ax5)]
\label{item_gallery_accessibility}
 (Gallery accessibility)
For every $j$, and any two adjacent
$n$-cells $\si_0, \si_1$ of $X_j$, there is a
horizontal gallery from $\si_0$ to $\si_1$ of combinatorial length
at most $\De$.
\item[(Ax6)] Compatibility of measures:
\label{item_measure_compatibility}
	\begin{description}
	\item [(Ax6a)]
	\label{item_pushforward_compatible}
	$(\pi_j)_{\#}\mu_j=\mu_{j+1}$ for all $j\in\Z$.
	\item[(Ax6b)] (Lebesgue measure on cells)
	For every $j$, every $n$-cell $\si$ of $X_j$, and every $\phi_\si\in\Phi_\si$,
	the restriction $\mu_j\on\si$ agrees with the pushforward of
        the Lebesgue measure,
	$(\phi_\si)_{\#}\lebmeas n.$, up to a constant $w_\si$.
	\item[(Ax6c)] (Doubling) For any two  adjacent $n$-cells $\si,\si$ of $X_j$, the weights
	$w_\si$, $w_{\si'}$ agree to within a factor of at most $\De$.
	\end{description}
      \end{description}
\end{definition}

\begin{remark}
As with the examples from Sections \ref{sec_overview} and \ref{sec_n_dim_case},
one way of constructing an admissible direct system is by defining an increasing sequence
$\ldots\subset\calr_{j-1}\subset\calr_j\subset\calr_{j+1}\subset\ldots$
of equivalence relations on $\R^n$, such that $\calr_j$ respects the cell structure
on $Y_j$ and the quotients $X_j=\R^n/\calr_j$ satisfy
Definition \ref{def_admissible_system}.
\end{remark}

For any admissible system
we define a pseododistance $\hat d_\infty$ on the direct limit
$X_\infty$ as follows. We let $\hat d_\infty$ be the largest pseudodistance on $X_\infty$ such that, for every $j\in \Z$
and every cell $\hat\si$ of $X_j$, the projection to $X_\infty$ 
has $\hat d_\infty$-diameter at most $m^{-j}$.  
In general this need not define a metric (see the next example), so we
form a metric space 
$(\bar X_\infty,\bar d_\infty)$, the \textbf{limit space} of the
admissible system, by collapsing
subsets of $X_\infty$ of zero $\hat d_\infty$-diameter to points.
We denote by $\nbdirprj j,,.$ the composition
$X_j\stackrel{\pi_j^\infty}{\ra}X_\infty\ra \bar X_\infty$.
The pushforward of $\mu_j$ under $\nbdirprj j,,.$
is independent of $j$ by Axiom \textbf{(Ax6)}, and it thus defines the natural 
measure $\mu_\infty$ on $\bar X_\infty$. 
\begin{example}
To illustrate how the pseudodistance $\hat d_\infty$ can fail to be a distance,
we construct a direct system $\{X_j\}$ that satisfies all the conditions of 
Definition \ref{def_admissible_system} except \textbf{(Ax5)}, 
such that the psuedodistance
$\hat d_\infty$ is not a distance.  It is not hard to modify this example to obtain
an admissible system with the same property.

Let $n=2$, $m=3$, $m_v=6$.  For every $k,\ell\in\Z^2$, we define a pair
of  $2$-cells  
$$
a_{k,\ell}=\left[\frac{1}{3}+k,\frac{2}{3}+k\right]\times
\left[\frac{1}{6}+\ell,\frac{2}{6}+\ell\right]\,,
$$
$$
a_{k,\ell}'=\left[\frac{1}{3}+k,\frac{2}{3}+k\right]\times
\left[\frac{4}{6}+\ell,\frac{5}{6}+\ell\right]\,.
$$
Now let $\calr$ be the equivalence relation on $\R^2$ obtained by identifying $a_{k,\ell}$
with $a_{k,\ell}'$ by the vertical translation $p\mapsto p+\frac12 e_2$, for every
$k,\ell$.  We now define $\calr_j$, $X_j$, $\hat\pi^j$, etc.~as in 
Section \ref{sec_cell_structure}.
 Then $\{X_j=\R^2/\calr_j\}$
is the desired direct system.  Note that there is a Cantor set in the vertical line
$\{x=\frac{1}{2}\}$ that maps to a subset of $X_\infty$ with zero $\hat d_\infty$-diameter.
On the other hand if $\hat p\in\R^2$, $p=\hat\pi^\infty(p)$, then
the set $(\hat\pi^\infty)^{-1}(p)$ is countable, being a countable union of the sets
$(\hat\pi^j)^{-1}(\hat\pi^j(\hat p))$, each of which is finite.

\end{example}

Most  of Theorem \ref{thm_top_dim_n_anal_dim_1} generalizes
verbatim to admissible
systems:

\begin{theorem}
\label{thm_admissible_system}
For every $n,m,H$, there is an $\ul{m_v}=\ul{m_v}(n,m,H)$ such that if
$m_v\geq \ul{m_v}$, then:
\begin{enumerate}
\item $(\bar X_\infty,\bar d_\infty,\mu_\infty)$ is a complete
doubling metric measure 
space satisfying
a $(1,1)$-Poincar\'e inequality.
\item $\bar X_\infty$ has topological and Assouad-Nagata dimension $n$.
\item $(\bar X_\infty,\bar d_\infty,\mu_\infty)$ has analytic dimension $1$.
\item (1)-(3) also hold for any  pointed measured Gromov-Hausdorff  limit
of any sequence of rescalings of $(\bar X_\infty,\bar d_\infty,\mu_\infty)$.
\end{enumerate}
\end{theorem}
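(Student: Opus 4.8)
The plan is to observe that the proof of Theorem~\ref{thm_top_dim_n_anal_dim_1} in Sections~\ref{sec_cell_structure}--\ref{sec_n_equals_2_case_concluded}, with the modifications of Section~\ref{sec_n_dim_case}, used only the structural features now isolated in Axioms~\textbf{(Ax1)}--\textbf{(Ax6)}, provided $m_v$ is large relative to $n$, $m$, $H$. So the proof is a matter of revisiting each step and recording which axiom replaces the concrete input. First I would redo the combinatorics of Section~\ref{sec_cell_structure}: the bounded-complexity statements (analogues of Lemmas~\ref{lem_cell_structure}--\ref{lem_bounded_complexity}) follow from \textbf{(Ax1)}, \textbf{(Ax2)}, \textbf{(Ax3)}; the compatible $1$-Lipschitz functions $x_j$ and the well-definedness of horizontal/vertical type for cells of $X_j$ come from \textbf{(Ax2a)}, \textbf{(Ax2d)}; and the assertion that two distinct points with the same $\pi_j$-image lie on a vertical edge path of bounded combinatorial length (the analogue of Lemma~\ref{lem_pi_j_structure}(\ref{item_distance_2}), with the bound $2$ replaced by $H$) is exactly \textbf{(Ax4)}.

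The metric heart of the argument, Proposition~\ref{prop_distance_lower_bound}, needs the most care. Here one replaces the torus-valued ``height function'' $y_j$ by a local substitute: via a distinguished characteristic map $\phi\colon\hat\si\to\si$ one transports the last $n-1$ coordinates into each $n$-cell $\si$ of $X_j$, and by \textbf{(Ax2a)} these are canonical up to the coordinate-preserving $G_j$-action, so the distance of a point of $\si$ from the union of its vertical faces (and, inductively, from the codimension-$k$ vertical subskeleton of $\si$) is well defined; \textbf{(Ax2b)} makes these local pictures compatible across adjacent cells, and \textbf{(Ax4)} controls the vertical displacement along a $\pi_j$-fiber --- the analogues of Lemma~\ref{lem_x_and_y_descend}(\ref{item_y_j_on_fiber}),(\ref{item_y_j_diameter}). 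With this in hand one reruns the proof of Proposition~\ref{prop_distance_lower_bound} essentially verbatim, in the $n>2$ form of Section~\ref{sec_n_dim_case} with the inductive peeling of vertical skeleta: a maximal run of top-generation cells cannot change the vertical coordinate efficiently since $m_v\gg m$, so it must be roughly horizontal and can be replaced by lower-generation cells without increasing the length; the ``forced equality'' step --- in the concrete case the inequality $m_v=3L\ge 300$ --- is precisely where $m_v\ge\ul{m_v}(n,m,H)$ enters, since the competing vertical edge paths now have length bounded in terms of $n$, $m$, $H$. From the resulting bound $\hat d_\infty(\pi_k^\infty p,\pi_k^\infty p')\ge\hat d_k(p,p')-2m^{-k}$ one gets, as in Corollaries~\ref{cor_dinfty_comb_dist} and \ref{cor:cell_dist}, that $\bar d_\infty$ is comparable to the combinatorial cell-distance, hence that $\bar X_\infty$ is complete and, using \textbf{(Ax6)} together with bounded complexity, that $\mu_\infty$ is doubling.

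The four conclusions then follow as before. For topological dimension: each $n$-cell $\hat\si$ of $Y_j$ maps into $\bar X_\infty$ by $\bar\pi_j^\infty\circ\phi$, and by bounded complexity plus the metric comparison this map has point-inverses of controlled finite cardinality, so, being light, it cannot lower topological dimension; hence $\dim\bar X_\infty\ge n$, while $\dim\le n$ is contained in the Assouad--Nagata bound. That bound is proved exactly as in Theorem~\ref{thm:nagata_bound} and its $n$-dimensional variant: one builds $n+1$ good families of subsets indexed by the cells of $X_k$ of each dimension $0,\dots,n$, separating the members of a single family by Corollary~\ref{cor:cell_dist}. The $(1,1)$-Poincar\'e inequality comes from the Semmes-pencil method: \textbf{(Ax5)} gives the gallery-accessibility lemma (analogue of Lemma~\ref{lem:gall_joins}), hence strings of horizontal galleries joining any $p,q\in\bar X_\infty$; \textbf{(Ax6b)} lets one turn each $n$-cell into a measured family of horizontal geodesics by Fubini and concatenate, and \textbf{(Ax6c)} bounds the resulting density by the Riesz potentials at $p$ and $q$, reproving Theorems~\ref{thm:meas_curve_join} and \ref{thm:poinceq}. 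For analytic dimension $1$, the horizontal direction is well defined by \textbf{(Ax2a)}, \textbf{(Ax2d)}, so one has the horizontal derivation $\hat D$ on each cell, pushes it forward to $D_\infty$ on $\bar X_\infty$, and repeats the argument of Theorem~\ref{thm:anal_dim} (or Remark~\ref{rem:anal_dim_weaver}) to see that $x_\infty$ is a global chart. Finally, for (4): the self-similarity $\Phi$ shifts the index of the system, so after rescaling we may assume the scale factors lie in a bounded range; a pointed measured Gromov--Hausdorff limit of such rescalings is again the limit space of an admissible system with the same constants (this is the rescaling-of-direct-systems discussion; see Subsection~\ref{subsec_limits_admissible}), and in any case all of (1)--(3) --- doubling, the Poincar\'e inequality, Assouad--Nagata dimension $\le n$, topological dimension $\ge n$ via the light maps, analytic dimension $1$ via passing $D_\infty$ and the pencils to the limit --- are stable under such limits with uniform constants, just as in Section~\ref{sec_n_equals_2_case_concluded}.

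The main obstacle is Proposition~\ref{prop_distance_lower_bound} in the axiomatic framework: there is no longer a global quotient map $\R^n\to X_j$ or a genuine torus-valued $y_j$, so the ``how vertical is this chain'' bookkeeping must be set up locally on cells and glued consistently along adjacent cells using \textbf{(Ax2a)}, \textbf{(Ax2b)}; moreover this is the only place the hypothesis $m_v\ge\ul{m_v}(n,m,H)$ is needed, and making the dependence on $n$, $m$, $H$ explicit requires tracking the various vertical-edge-path length bounds through Steps~1--4 of that proof and through the skeleton-peeling induction of the $n>2$ case.
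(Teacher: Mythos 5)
Your overall plan is the one the paper follows: re-derive the combinatorial and metric machinery of Sections~\ref{sec_cell_structure}--\ref{sec_metric_structure} from the axioms, rerun Proposition~\ref{prop_distance_lower_bound} with a quotient-torus-valued height function (the paper uses $y_j$ valued in $\tilde T_j=\R^{n-1}/G_j$, which is precisely your ``local $y_j$ transported by distinguished characteristic maps and glued via \textbf{(Ax2a)}, \textbf{(Ax2b)}''), then get the Nagata bound, the pencils, the chart, and finally handle weak tangents via the rescaled-direct-system argument of Subsection~\ref{subsec_limits_admissible}.

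There is, however, a real gap in your treatment of assertion~(3), the analytic dimension. You identify the disappearance of the global quotient map $\hat\pi^j:\R^n\to X_j$ as the main obstacle for Proposition~\ref{prop_distance_lower_bound}, but then for analytic dimension you say one ``repeats the argument of Theorem~\ref{thm:anal_dim}.'' It does not repeat. The step after \eqref{eq:anal_dim_p8} --- the chain of comparisons \eqref{eq:anal_dim_in4}, \eqref{eq:anal_dim_in7} that lets you replace $D_{M_r}(u\circ\pi^\infty_{M_r})$ by $D_\infty u$ --- rests on $\hat\pi^{M_r}$ and $\hat\pi^\infty$ being measurable bijections off a null skeleton, so that point preimages are $\mu_\infty$-a.e.\ singletons; for a general admissible system the fibers of $\bar\pi_j^\infty$ can be nontrivial on a set of positive measure. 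The paper closes this with a substantive new argument: Lemma~\ref{lem:generalized_metric_structure} shows that a fiber of $\bar\pi_k^\infty$ through a point deep inside a cell of $X_k$ stays inside that open cell, Lemma~\ref{lem_compatible_families} then proves that a projection-compatible family $\{g_j\}$ is a.e.\ constant on fibers, and Lemma~\ref{lem:gen_anal_dim} applies this to $g_j = D_j(u\circ\bar\pi_j^\infty)$ using the finiteness of the fibers of $\pi_j^k$. Without something to this effect, the differentiability estimate does not go through, and your sketch should flag this as a second obstacle comparable in weight to the one you did identify for Proposition~\ref{prop_distance_lower_bound}. (A minor related point: for general admissible systems the derivation $D_j$ is not defined by pushing a fixed $\hat D$ on $\R^n$ forward under a quotient map, since there is no such map; it is defined cell by cell via $D_j(u_j\circ\phi_\sigma)=\partial(u_j\circ\phi_\sigma)/\partial x$ and well-definedness via \textbf{(Ax2a)}, with compatibility under $\pi_j^k$ then checked, which is what your ``$\hat D$ on each cell, pushed forward'' is gesturing at but not quite stating.)
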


Part (1) of Theorem \ref{thm_top_dim_n_anal_dim_1} does not generalize
directly to admissible systems, because, in particular, 
$(\bar X_\infty,\bar d_\infty)$ is not Ahlfors-regular in general.
To formulate a modified statement, we introduce another metric
on the $X_j$'s, which plays the role of the metric $d_\al$ on $\R^n$.

\begin{definition}
For all $j\in\Z$, let $d_j$ be the 
largest pseudodistance on $X_j$ such that $d_j\leq \hat d_j$, 
and every cell of $X_j^{(k)}$ has $d_j$-diameter at most $m^{-(j+k)}$. 
\end{definition}

\begin{theorem}
\label{thm_lipschitz_light}
For every $j\in\Z$:
\begin{enumerate}
\item $(X_j,d_j,\mu_j)$ is a doubling metric measure space.
\item $\bar\pi_j^\infty:(X_j,d_j)\ra (\bar X_\infty,\bar d_\infty)$
is a Lipschitz light map (see Definition \ref{defn:lipschitz-light}); 
in particular, the point inverses
of $\bar\pi_j^\infty$ are uniformly totally disconnected (i.e.~have
Assouad-Nagata dimension $0$).
\item Let $Q=1+\frac{(n-1)\log m_v}{\log m}$. If for every $j$, the inverse image under $\pi_j:X_j\ra X_{j+1}$
of every open $n$-cell of $X_{j+1}$ is a single open $n$-cell of $X_j^{(1)}$,
then $(X_j,d_j)$ is Ahlfors $Q$-regular, and 
$\bar\pi_j^\infty:(X_j,d_j)\ra (\bar X_\infty,\bar d_\infty)$ is a David-Semmes
regular map.
\end{enumerate}
\end{theorem}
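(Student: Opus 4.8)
The plan is to treat $d_j$ as the pushforward under $\hat\pi^j$ of the partial snowflake metric on $\R^n$ — a ``glued partial snowflake metric'' on the cell complex $X_j$ with bounded local combinatorics — and to carry over the arguments of Sections~\ref{sec_cell_structure}--\ref{sec_n_equals_2_case}. First I would prove two localization facts. (i) Since $d_j\le\hat d_j$, the analogue of Lemma~\ref{lem_metric_versus_combinatorial_distance} (together with \textbf{(Ax1)} and the analogue of Lemma~\ref{lem_bounded_complexity}(\ref{item_ball_complexity})) shows that for each $C$ a $d_j$-ball $B(p,r)$ with $r\le Cm^{-j}$ lies in a union of at most $N(C)$ closed $n$-cells of $X_j$. (ii) Within any closed $n$-cell $\bar\sigma$, the restriction of $d_j$ is bi-Lipschitz, via a distinguished characteristic map, to the partial snowflake metric on the box $[0,m^{-j}]\times[0,m_v^{-j}]^{n-1}$: the upper bound comes from chains through the subdivision cells of $\bar\sigma$, and the lower bound from the facts that $x_j\colon(X_j,d_j)\to\R$ is $1$-Lipschitz (immediate from \textbf{(Ax2d)}, the analogue of Lemma~\ref{lem_x_and_y_descend}(\ref{item_x_on_2_cell}), and a chain argument) and that the vertical coordinate, valued in $\R^{n-1}/m_v^{-j}\Z^{n-1}$, is $\alpha$-H\"older with $\alpha=\log m/\log m_v$ — here the constraint $d_j\le\hat d_j$ is what rules out shortcuts through distant subdivisions. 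Combining (i) and (ii), a $d_j$-ball $B(p,r)$ with $m^{-(j+k+1)}<r\le m^{-(j+k)}$ meets only a controlled number of closed $n$-cells of $X_j^{(k)}$.

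For part~(1), doubling of $(X_j,d_j)$ follows from (i) and (ii): such a ball is covered by boundedly many closed $n$-cells of $X_j^{(k)}$ with $m^{-(j+k)}\approx r$, each bi-Lipschitz to a partial snowflake box — which is Ahlfors $Q$-regular, $Q=1+(n-1)\log m_v/\log m$, hence covered by a controlled number of balls of radius $r/2$ — and one chains the covers. For the measure, \textbf{(Ax6b)} gives $\mu_j\on\sigma=w_\sigma(\phi_\sigma)_\#\lebmeas n.$, comparable on $\sigma$ to $\hmeas Q.$ for the metric $d_j$ by the box computation in Lemma~\ref{lem_hat_d_y_partial_snowflake}; since cells meeting a fixed $d_j$-ball of radius $\approx m^{-j}$ are joined by bounded chains of adjacent cells, \textbf{(Ax6c)} makes their weights comparable up to a uniform factor, so $\mu_j(B(p,2r))\lesssim\mu_j(B(p,r))$.

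For part~(2), $\bar\pi_j^\infty\colon(X_j,d_j)\to(\bar X_\infty,\bar d_\infty)$ is $1$-Lipschitz: the pulled back pseudometric $\rho=(\bar\pi_j^\infty)^\ast\bar d_\infty$ satisfies $\rho\le\hat d_j$ (because $\hat\pi_j^\infty\colon(X_j,\hat d_j)\to(X_\infty,\hat d_\infty)$ and $X_\infty\to\bar X_\infty$ are $1$-Lipschitz by construction, cf.~(\ref{eqn_compression_overview})) and assigns every cell of $X_j^{(k)}$ diameter $\le m^{-(j+k)}$ (apply the defining property of $\hat d_\infty$ to the corresponding cell of $Y_{j+k}$), so $\rho\le d_j$ by maximality. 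It then remains to verify the ``light'' clause of Definition~\ref{defn:lipschitz-light}, i.e.~the quantitative total disconnectedness of the fibers: a uniform $C$ such that for all $\bar z$ and all $\lambda>0$, the set of points reachable from a given $p\in(\bar\pi_j^\infty)^{-1}(\bar z)$ by a $\lambda$-chain inside $(\bar\pi_j^\infty)^{-1}(B(\bar z,\lambda))$ has $d_j$-diameter $\le C\lambda$. Here the design of the system enters: such a chain projects (by (i)) to a chain of boundedly many cells per step in $X_{j'}$, $m^{-j'}\approx\lambda$; since $x_\infty$ is $1$-Lipschitz on $\bar X_\infty$ and essentially constant along the fiber, the chain cannot use the horizontal direction, and the only remaining freedom, vertical motion through the fiber, is controlled by iterating \textbf{(Ax4)} across the scales $j',j'+1,\dots$ — at each scale the portion of the fiber supporting the chain collapses by a bounded vertical amount — so the chain has bounded combinatorial length and hence bounded $d_j$-diameter. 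Uniform total disconnectedness (Assouad--Nagata dimension $0$ of the fibers) follows.

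For part~(3), the hypothesis makes $\pi_j$ a bijection on open $n$-cells, so by \textbf{(Ax6a)} an $n$-cell's weight equals that of its unique parent; passing to a sufficiently coarse $X_{j_0}$, where any two cells of $X_j$ have comparably-weighted ancestors by \textbf{(Ax6c)}, forces all weights to be uniformly comparable, hence $\mu_j\approx\hmeas Q.$ for $d_j$. Bounding $\mu_j(B(p,r))$ above and below by $r^Q$ via (i) and (ii), $(X_j,d_j)$ is Ahlfors $Q$-regular. David--Semmes regularity of $\bar\pi_j^\infty$ is then obtained exactly as in Lemma~\ref{lem_david_semmes_regular_map}(\ref{item_regular_map}): for $\bar z\in\bar X_\infty$ and $m^{-(j'+1)}<r\le m^{-j'}$, pick $p'\in(\pi_{j'}^\infty)^{-1}(\bar z)$; by Proposition~\ref{prop_distance_lower_bound}, $(\pi_{j'}^\infty)^{-1}(B(\bar z,m^{-j'}))$ lies in the $\hat d_{j'}$-ball $B(p',3m^{-j'})$, hence in boundedly many closed $n$-cells of $X_{j'}$, and the hypothesis — iterated down to $X_j$ — identifies the $\pi_j^{j'}$-preimage of each such cell with a single closed $n$-cell of $X_j^{(j'-j)}$, of $d_j$-diameter $\le m^{-j'}<mr$; so $(\bar\pi_j^\infty)^{-1}(B(\bar z,r))$ is covered by a controlled number of balls of radius $\approx r$. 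The main obstacle is the vertical bookkeeping in part~(2): making the control of chains through a single fiber precise and uniform in $j$ and in $\lambda$ — this is the analogue here of the hardest step of Proposition~\ref{prop_distance_lower_bound}, and it is where one must also deal with the cases in which $\bar d_\infty$ fails to be a genuine metric on $X_\infty$. A secondary point needing care is the bi-Lipschitz description (ii), i.e.~the exclusion of shortcuts through distant subdivisions.
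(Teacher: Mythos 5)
Your proposal has the right overall shape, but the route diverges from the paper's proof in several places, and there is a genuine gap in the lightness argument of part (2).

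For part (1), you propose to reduce doubling to a bi-Lipschitz identification of each closed cell $(\bar\sigma,d_j|_{\bar\sigma})$ with a snowflake box. This is defensible (the lower bound follows because $x_j$ is $1$-Lipschitz on $(X_j,d_j)$ and $y_j$ satisfies $d_{\tilde T_j}(y_j(q),y_j(q'))\le d_j(q,q')^{1/\alpha}$), but it is considerably more than the paper needs and is precisely the piece you flag as delicate. The paper gets doubling without it: at scales $r\le 4m^{-j+1}$, doubling comes straight from \textbf{(Ax1)} and \textbf{(Ax6)}; at larger scales, one projects to $X_{k-1}$ with $m^{-k}<r\le m^{-k+1}$ and sandwiches the two balls via Proposition~\ref{prop_distance_lower_bound}. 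Your alternative could be made to work, but it is not the paper's argument and adds unneeded machinery.

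For part (2), the key estimate --- that a $\lambda$-chain of cells each meeting a single fiber has $d_j$-diameter $\lesssim\lambda$ --- is Lemma~\ref{lem:generalized_metric_structure}, and its mechanism is not the one you sketch. You propose ``iterating \textbf{(Ax4)} across the scales $j',j'+1,\dots$'', with the fiber ``collapsing by a bounded vertical amount at each scale''; but iterating across all intermediate scales would accumulate vertical error (one gets length $\sim H$ \emph{per scale}, hence unbounded in total), which is the opposite of the needed bound. The correct argument applies \textbf{(Ax4)} exactly once: by Corollary~\ref{cor:cell_dist} the projection of the chain $S$ to $X_k$ lies in a single closed star; \textbf{(Ax4)} then places the projection of $S$ to $X_{k-1}$ inside a combinatorial $C_0$-ball of $X_{k-1}^{(1)}$; and \textbf{(Ax2c)} (the projection restricted to each $n$-cell of $X_j^{(k-1-j)}$ is a combinatorial isomorphism onto its image in $X_{k-1}^{(1)}$) transfers this into coordinate bounds $d_\real(x_j(\sigma_0),x_j(\sigma_i))\le 2m^{-k}$ and $d_{\tilde T_j}(y_j(\sigma_0),y_j(\sigma_i))\le 2C_0m_v^{-k}$, which force $d_j(\sigma_0,\sigma_i)\le 3C_0m^{-k}$ once $m_v$ is large. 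Your characterization of this as ``the analogue of the hardest step of Proposition~\ref{prop_distance_lower_bound}'' overestimates the difficulty: the paper does not reproduce that proposition's generation-reduction induction here but proves the self-contained Lemma~\ref{lem:generalized_metric_structure} instead. Also, your heuristic that the chain ``cannot use the horizontal direction'' is not used; the argument bounds horizontal and vertical motion simultaneously through the coordinate estimates above. As for the concern about $\hat d_\infty$ failing to be a metric, that is handled already in the passage to $\bar X_\infty$ and plays no further role in this lemma.

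For part (3), your plan of passing to a coarse $X_{j_0}$ to uniformize weights is workable but differs from the paper, which shows directly that any two $n$-cells $\sigma_0,\sigma_1$ of $X_j$ have comparable weights: pick $k<0$ so their preimage cells in $X_{j+k}$ are at combinatorial distance $\le 3$ (using Proposition~\ref{prop_distance_lower_bound}) and apply \textbf{(Ax6c)}. Your David--Semmes argument otherwise matches the paper, except that the paper obtains the relevant covering of $(\bar\pi_j^\infty)^{-1}(\bar p_\infty)$ directly from the $\{\Omega_\alpha\}$ constructed in the proof of (2) at scale $m^{-j}$, together with Corollary~\ref{cor:cell_dist}, rather than redoing the cell-count of Lemma~\ref{lem_david_semmes_regular_map} from scratch.
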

We recall here the notion of Lipschitz light map~\cite[Def.~1.14]{cheeger_inverse_l1}.
  \begin{defn}
    \label{defn:lipschitz-light}
    A Lipschitz map $f:X\to Y$ is \textbf{Lipschitz light} if there is
    some $C>0$ such that for every bounded subset $W\subset Y$ the
    $\diam(W)$-components of $f^{-1}(W)$ have diameter at most
    $C\diam(W)$. Note that a Lipschitz light map is also a continuous
    light map.
  \end{defn}

\subsection{The proof of
  Theorem~\ref{thm_admissible_system}}
Overall, the proof of parts (1)-(3) of
Theorem~\ref{thm_admissible_system} follows
closely that of Theorem~\ref{thm_top_dim_n_anal_dim_1}. However,
some modifications are needed and these are explained in this
Subsection.
In Subsection \ref{subsec_limits_admissible} we 
give the proof of item (4), i.e.~the stability under taking weak
tangents.  
Note that even for the specific
examples considered in the previous sections a cleaner treatment is
obtained in the more general context of admissible systems.

The proof of
Proposition~\ref{prop_distance_lower_bound} remains valid using a map
$y_j$ which takes values in the quotient $\tilde T_j$ of $\R^{n-1}\simeq \{0\}\times\R^{n-1}
\subset\R^n$ by the action of
$G_j$, which is a quotient of the torus 
$T^{n-1}=\R^{n-1}/m_v^{-j}\Z^{n-1}$ by the finite group of orthogonal transformations
of $\R^{n-1}$ that preserves the subset $\{\pm e_d\}_{2\leq d \leq
  n}$. Note that in the case $n\ge 3$ one must also modify \emph{Steps
1 and 2} of that argument by using the distances from the
$\{0,1,\cdots,n-2\}$-skeleta of $\tilde T_j$ (compare
Section~\ref{sec_n_dim_case}).
 
  The proof that  the
Assouad-Nagata dimension of $\bar X_\infty$ is at most $n$ can be
carried out as in Theorem~\ref{thm:nagata_bound} by working with the
cells of $X_j$ of dimensions $1,\cdots,n$.  The fact that 
the topological dimension (and hence the
Assouad-Nagata dimension) is at least $n$ can be deduced by adapting
either of the arguments in 
Lemma \ref{lem_david_semmes_regular_map}.

 To prove the Poincar\'e inequality, one can
essentially follow the argument of Theorem~\ref{thm:poinceq}.

We now turn to the proof of assertion~(3) in
Theorem~\ref{thm_admissible_system}, i.e.~that $(\bar X_\infty,\bar
d_\infty,\mu_\infty)$ has analytic dimension $1$.
We first define a direct system of derivations 
$\{D_j:\Lip_b(X_j,d_j)\ra L^\infty(X_j,\mu_j)\}$
as follows.
For every  bounded $d_j$-Lipschitz function $u_j:X_j\ra \R$ we let
$D_ju_j$  be the function in  $L^\infty(\mu_j)$ such that, for every 
$n$-cell $\si$ of $X_j$, we have  
$$
D_j(u_j\circ\phi_\si)=\frac{\D(u_j\circ\phi_\si)}{\D x}
$$
for every distinguished characteristic map $\phi_\si\in\Phi_\si$; this is well-defined
because of \textbf{(Ax2a)}.  Note that the family $\{D_j\}$ is compatible
with the projections $\{\pi_j^k\}$, and by pushforward we get a well-defined derivation
$D_\infty:\Lip_b(\bar X_\infty)\ra L^\infty(\bar X_\infty,\mu_\infty)$.
With this setup, the first part of the proof 
of Theorem~\ref{thm:anal_dim} (or using Weaver
derivations as sketched in Remark~\ref{rem:anal_dim_weaver})
carries over.
The justification after (\ref{eq:anal_dim_p8}) requires modifications for 
general admissible systems, however, as it is based on the fact that for
$\mu_\infty$-a.e. point, the point inverse $(\pi_j^\infty)^{-1}(p)$ contains
only one point.  Instead, one may use the fact that for a Lipschitz
function $f:\bar X_\infty\ra \R$, the function $D_j(f\circ\bar\pi_j^\infty)$
is a.e. constant on the fibers of $\bar\pi_j^\infty$, which is 
established in  Lemmas~\ref{lem:generalized_metric_structure}, \ref{lem_compatible_families},
and \ref{lem:gen_anal_dim}.

\begin{lemma}
  \label{lem:generalized_metric_structure}
Fix $p_\infty\in\bar X_\infty$ and $k\ge j$; let
$S=\{\sigma_0,\cdots,\sigma_L\}$ be a chain of $n$-cells of
$X_j^{(k-j)}$ such that each cell of $S$ intersects $\nbdirprj
j,,a.(p_\infty)$. Assume that:
\begin{equation}
  \label{eq:generalized_metric_structure_s3}
  p_j\in \sigma_0\cap\nbdirprj j,,a.(p_\infty);
\end{equation}
if $m_v$ is sufficiently large there is a universal constant $C_0$
such that:
\begin{equation}
  \label{eq:generalized_metric_structure_s4}
  d_j(p_j,\sigma_L)\le 3C_0m^{-k}.
\end{equation}
Moreover, let $k\ge j$ and denote by $\sigma_k$ the $n$-cell of
$X_k^{(1)}$ containing $\pi_j^k(p_j)$. Assume that any vertical
gallery in $X_k^{(1)}$ from $\sigma_k$ to the horizontal
$(n-1)$-skeleton of $X_k$ has length at least $10H$. Then $\nbdirprj
k,,a.(p_\infty)$ is entirely contained in the open $n$-cell of $X_k$
that contains $\pi_j^k(p_j)$.
\end{lemma}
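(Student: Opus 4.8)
I would prove the two assertions of Lemma~\ref{lem:generalized_metric_structure} in sequence, since the second relies on the first. For the first bound \eqref{eq:generalized_metric_structure_s4}, the point is that a chain $S$ of $n$-cells all of which meet $\nbdirprj j,,a.(p_\infty)$ cannot, after projecting to $X_j$, get far from $p_j$ in the metric $d_j$, because $d_j$ is the combinatorially-defined metric and each $\sigma_i$ has $d_j$-diameter $\le m^{-k}$. More precisely, first I would observe that by Axiom~\textbf{(Ax4)} (fibers have controlled vertical diameter), any two points of $\nbdirprj j,,a.(p_\infty)$ lying in the same fiber of $\pi_j$ are joined by a vertical edge path of length $\le H$ in $X_j^{(1)}$, and iterating this across the fibers over $X_{j+1},\dots,X_k$ one sees that $\nbdirprj j,,a.(p_\infty)$ itself is contained in a union of at most $C_0=C_0(H,n)$ cells of $X_j^{(k-j)}$, hence has $d_j$-diameter $\le C_0 m^{-k}$. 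Since $\sigma_0$ meets $\nbdirprj j,,a.(p_\infty)$ and $\sigma_L$ does too, and each $\sigma_i$ has $d_j$-diameter $\le m^{-k}$, the triangle inequality through the common intersection points of the chain gives $d_j(p_j,\sigma_L)\le \diam_{d_j}\nbdirprj j,,a.(p_\infty)+ m^{-k}\le (C_0+1)m^{-k}\le 3C_0m^{-k}$ once $C_0\ge 1$. (If the chain is allowed to be long, one still only uses that \emph{every} cell meets the fiber, not a length bound, so the estimate is genuinely $O(C_0 m^{-k})$.)

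For the second assertion, the hypothesis is a lower bound of $10H$ on the length of any vertical gallery in $X_k^{(1)}$ from the cell $\sigma_k\ni\pi_j^k(p_j)$ to the horizontal $(n-1)$-skeleton of $X_k$. I would argue by contradiction: suppose some point $q_k\in\nbdirprj k,,a.(p_\infty)$ lies in an open $n$-cell $\sigma'_k$ of $X_k$ with $\sigma'_k\ne \sigma_k^{X_k}$ (the $n$-cell of $X_k$ containing $\sigma_k$). Since $q_k$ and $\pi_j^k(p_j)$ both lie in $\nbdirprj k,,a.(p_\infty)$, they are joined in $X_k^{(1)}$ by a short vertical gallery: this is exactly the content of the higher-dimensional analog of Lemma~\ref{lem_pi_j_structure}(\ref{item_distance_2}) (as spelled out in the bullet list in Section~\ref{sec_n_dim_case}), so repeated application of Axiom~\textbf{(Ax4)}, once for each of the levels $k, k+1, \dots$ down which $p_\infty$ is collapsed into a single fiber representative, produces a vertical edge path / vertical gallery of total combinatorial length at most $(\text{number of collapsing levels})\cdot H$. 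But by the hypothesis any such vertical path that leaves the $n$-cell $\sigma_k^{X_k}$ must cross the horizontal $(n-1)$-skeleton of $X_k$ and therefore has length at least $10H$ — and I would arrange the bookkeeping so that the number of relevant collapsing levels contributing to the vertical motion is bounded (in fact the vertical motion between $\pi_j^k(p_j)$ and $q_k$ comes from a single fiber of $\pi_j^k$ up to the already-counted error, so the vertical path has length $\le cH$ with $c<10$). This contradiction forces $q_k\in\sigma_k^{X_k}$, and since $q_k$ was an arbitrary point of $\nbdirprj k,,a.(p_\infty)$, the whole fiber lies in that single open $n$-cell (it cannot touch the boundary either, by the same path-length estimate applied to the $(n-1)$-skeleton of $X_k$).

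**Main obstacle.** The delicate point is the second part: making precise that the vertical displacement of $\nbdirprj k,,a.(p_\infty)$ is controlled by a \emph{bounded} multiple of $H$ rather than accumulating over infinitely many scales. The fiber $(\bar\pi_k^\infty)^{-1}(p_\infty)$ is a nested intersection of fibers $(\hat\pi^k)^{-1}\!\big((\hat\pi^k)(\text{pt})\big)$ and of the collapsed zero-diameter sets, and \emph{a priori} each deeper level could add another $O(H)$ to the vertical spread; the content of the lemma is that the hypothesis ``$\ge 10H$'' is exactly the gap that rules this out once one is inside a cell of $X_k$. I would handle this by using the product structure $\hat y_k\colon X_k\to \tilde T_k=\R^{n-1}/G_k$ from Lemma~\ref{lem_x_and_y_descend} (in its admissible-system version) to measure the vertical coordinate: the fiber over $p_\infty$ has $\hat y_k$-image of diameter $\lesssim H m_v^{-(k+1)}$ by Axiom~\textbf{(Ax4)}, which is much smaller than the $\hat y_k$-width $m_v^{-k}$ of a single $n$-cell of $X_k$ provided $m_v$ is large, so the fiber cannot straddle the horizontal $(n-1)$-skeleton — and the quantitative form of this, combined with the assumed $10H$ gallery bound, is what pins the fiber into one open $n$-cell. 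The routine verifications — that the relevant analog of Lemma~\ref{lem_pi_j_structure}(\ref{item_distance_2}) holds for the admissible system, and that the constants can be absorbed — I would leave to the reader, citing Axioms~\textbf{(Ax3)}, \textbf{(Ax4)}.
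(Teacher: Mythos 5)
There is a genuine gap in your proof of the first bound \eqref{eq:generalized_metric_structure_s4}, and a related one in the second assertion. You claim that iterating Axiom \textbf{(Ax4)} ``across the fibers over $X_{j+1},\ldots,X_k$'' shows that $(\bar\pi_j^\infty)^{-1}(p_\infty)$ is contained in at most $C_0=C_0(H,n)$ cells of $X_j^{(k-j)}$. This is false: already a single fiber of $\pi_j$ (i.e.\ two points of $X_j$ with the same image in $X_{j+1}$) sits, by \textbf{(Ax4)}, at the two ends of a vertical edge path in $X_j^{(1)}$ of length up to $H$, so in the $(k-j)$-fold subdivision $X_j^{(k-j)}$ the two endpoints can be roughly $Hm_v^{k-j-1}$ cells apart vertically — far more than a bounded number once $k>j+1$. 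So the fiber need not have $d_j$-diameter $O(m^{-k})$; the lemma's conclusion is about how far the \emph{connected chain} $S$ can wander, and your reduction conflates those two things. The paper's way around this is Corollary~\ref{cor:cell_dist}, which lives on top of the hard estimate Proposition~\ref{prop_distance_lower_bound}: since all cells of $S$ meet the fiber, their projections to $X_k$ meet the fiber $\nbdirprj k,,a.(p_\infty)$, and Corollary~\ref{cor:cell_dist} confines \emph{that} set at level $k$ to the star of a single cell of $X_k$ regardless of how many further levels of collapsing occur. Only then does one apply \textbf{(Ax4)} — a single time, to go from $X_k$ back to $X_{k-1}^{(1)}$ — and use the combinatorial-isomorphism property of \textbf{(Ax2c)} on $n$-cells of $X_j^{(k-1-j)}$ to pull the bounded combinatorial ball back to $X_j^{(k-j)}$, whence the $x_j$ and $y_j$ estimates and the conclusion $d_j(p_j,\sigma_i)\le 3C_0m^{-k}$.

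Your treatment of the second assertion suffers from the same omission. You correctly identify the main obstacle — that the vertical spread of the fiber could a priori accumulate over infinitely many levels — but your proposed resolution (a geometric-sum bound of $\lesssim Hm_v^{-(k+1)}$ on the $\hat y_k$-image, plus the $10H$ gallery hypothesis) is left at the level of a sketch, and in particular the claim that $\hat y_k$-thinness alone ``pins the fiber into one open $n$-cell'' is not justified: a thin set can still straddle the horizontal $(n-1)$-skeleton. The paper's actual argument is shorter and avoids the issue: for $q\in\nbdirprj k,,a.(p_\infty)$ sitting in a cell $\sigma'_k$ of $X_k^{(1)}$, one observes that both $\pi_k(\sigma_k)$ and $\pi_k(\sigma'_k)$ meet $\nbdirprj k+1,,a.(p_\infty)$, hence by Corollary~\ref{cor:cell_dist} are \emph{adjacent} cells of $X_{k+1}$; then \textbf{(Ax4)} applied once gives a vertical gallery of length at most $H$ from $\sigma_k$ to a neighbor of $\sigma'_k$, and the $10H$ hypothesis forces this gallery to stay inside the open $n$-cell of $X_k$ containing $\pi_j^k(p_j)$. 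In short: both parts of the lemma lean essentially on Corollary~\ref{cor:cell_dist} (hence on Proposition~\ref{prop_distance_lower_bound}), which is the missing ingredient in your proposal.
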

\begin{proof}
 Let $S=\{\sigma_0,\cdots,\sigma_L\}$ be a chain of $n$-cells of
  $X_j^{(k-j)}$ such that each cell of $S$ intersects $\nbdirprj
  j,,a.(p_\infty)$. Fix $p_j\in \sigma_0\cap\nbdirprj
  j,,a.(p_\infty)$. By
  Corollary \ref{cor:cell_dist}, $\ndirprj j,k,.(S)$
  lies in the star of a cell of $X_k$; by Axiom \textbf{(Ax4)} we
  conclude that
  $\ndirprj j,k-1,.(S)$ lies in a combinatorial ball of
  radius $C_0$ in $X_{k-1}^{(1)}$ which is centered on an $n$-cell of
  $X_{k-1}^{(1)}$. Consider now an $n$-cell $\sigma$ of
  $X_j^{(k-1-j)}$; by Axiom \textbf{(Ax2c)} the map $\ndirprj j,k-1,.:X_j^{(k-j)}\to
  X_{k-1}^{(1)}$ is a combinatorial isomorphism when restricted to
  $\sigma$. Thus, for $0\le i\le L$ we have:
  \begin{equation}
    \label{eq:lipschitz_light_p7}
    \begin{aligned}
      d_{\real}(x_j(\sigma_0),x_j(\sigma_i))&\le 2m^{-k}\\
      d_{\tilde T_j}(y_j(\sigma_0),y_j(\sigma_i))&\le 2C_0m_v^{-k};
    \end{aligned}
  \end{equation}
  therefore, if $m_v$ is sufficiently large compared to $C_0$ and $m$
  we must have:
  \begin{equation}
    \label{eq:lipschitz_light_p8}
    d_j(\sigma_0,\sigma_i)\le 3C_0m^{-k}.
  \end{equation}
  \par Let now $\sigma_k$ denote the $n$-cell of $X_k^{(1)}$
  containing $\pi_j^k(p_j)$ and assume that any vertical gallery in
  $X_k^{(1)}$ from $\sigma_k$ to the horizontal $(n-1)$-skeleton of
  $X_k^{(1)}$ has length at least $10H$. Let $q\in\nbdirprj
  k,,a.(p_\infty)$ and choose a cell $\sigma'_k$ of $X_k^{(1)}$ which
  contains $q$ and intersects $\ndirprj k,,a.(p_\infty)$. As both
  cells $\pi_k(\sigma_k)$ and $\pi_k(\sigma'_k)$ meet $\nbdirprj
  k+1,,a.(p_\infty)$, by Corollary \ref{cor:cell_dist}
  they must be adjacent. However, as $\sigma'_k$ intersects $\ndirprj
  k,,a.(p_\infty)$, by Axiom \textbf{(Ax4)} there is a vertical
  gallery in $X_k^{(1)}$ of length at most $H$ from $\sigma_k$ to a
  cell $\tau$ adjacent to $\sigma'_k$. Thus $q$ lies in the open cell
  of $X_k$ containing $\pi_j^k(p_j)$.
\end{proof}

\begin{lemma}
\label{lem_compatible_families}
Let  $\{g_j:X_j\ra \R\}_{j\in\Z}$ be a family of bounded measurable functions,
and suppose that $\{g_j\}$ 
is compatible with projection in the sense that, for all $j\leq k$,
one has $g_k\circ\pi_j^k=g_j$   $\mu_j$-a.e. Moreover, let $g_\infty\in L^\infty(\bar
  X_\infty,\mu_\infty)$ be the function satisfying:
  \begin{equation}
    \label{eq:compatible_families_s1}
    \nbdirprj {j\#},,.(g_j\mu_j)=g_\infty\mu_\infty.
  \end{equation}
  Then for every $j\in \Z$, 
and $\mu_\infty$-a.e.~$p\in \bar X_\infty$, if $\tilde\mu_j(p)$ is the disintegration of
$\mu_j$  with respect to $\bar\pi_j^\infty$, then $g_j$ is
$\tilde\mu_j(p)$-a.e. equal to $g_\infty(p)$ on 
 $(\bar\pi_j^\infty)^{-1}(p)$.
\end{lemma}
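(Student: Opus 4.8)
The plan is to reduce the statement to showing that, for $\mu_\infty$-a.e.\ $p$, the function $g_j$ is $\tilde\mu_j(p)$-a.e.\ \emph{constant} on the fiber $(\bar\pi_j^\infty)^{-1}(p)$, and then to exhibit such a constant by pushing $g_j$ far enough down the direct system, using Lemma~\ref{lem:generalized_metric_structure} to control the fibers. For the reduction, note that for every Borel $A\subset\bar X_\infty$ one has, by \eqref{eq:compatible_families_s1} and the disintegration $\mu_j=\int\tilde\mu_j(p)\,d\mu_\infty(p)$,
\[
\int_A g_\infty\,d\mu_\infty=\int_{(\bar\pi_j^\infty)^{-1}(A)}g_j\,d\mu_j=\int_A\Big(\int g_j\,d\tilde\mu_j(p)\Big)\,d\mu_\infty(p),
\]
so that $g_\infty(p)=\int g_j\,d\tilde\mu_j(p)$ for $\mu_\infty$-a.e.\ $p$; hence once $g_j$ is known to be $\tilde\mu_j(p)$-a.e.\ equal to a constant on the fiber, that constant must be $g_\infty(p)$. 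Using the compatibility hypothesis, pick a single $\mu_j$-null set $N\subset X_j$ off of which $g_j=g_k\circ\pi_j^k$ holds simultaneously for all $k\ge j$ (a countable union of null sets); by Fubini for the disintegration, $\tilde\mu_j(p)(N)=0$ for $\mu_\infty$-a.e.\ $p$. Thus it suffices to show that for $\mu_\infty$-a.e.\ $p$ there is a scale $K=K_p\ge j$ such that $(\bar\pi_K^\infty)^{-1}(p)$ is a single point $q^\ast\in X_K$: then $(\bar\pi_j^\infty)^{-1}(p)=(\pi_j^K)^{-1}(q^\ast)$ and, off $N$, $g_j=g_K\circ\pi_j^K\equiv g_K(q^\ast)$ on this set, so $g_j$ is $\tilde\mu_j(p)$-a.e.\ constant on the fiber, and by the first display the constant is $g_\infty(p)$.

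To produce the scale $K$, I would first observe that by the ``moreover'' part of Lemma~\ref{lem:generalized_metric_structure}, if at scale $k$ the $n$-cell of $X_k^{(1)}$ containing a fixed point of $X_j$ lying over $p$ is at vertical combinatorial distance $\ge 10H$ from the horizontal $(n-1)$-skeleton of $X_k$, then $(\bar\pi_k^\infty)^{-1}(p)$ lies in a single open $n$-cell $\sigma_k$ of $X_k$. In the cube model this ``good scale'' condition is exactly the requirement that the $(n-1)$ base-$m_v$ digits at scale $k$ of the vertical coordinates of a lift of $p$ lie in $[10H,\,m_v-10H]$; for $m_v\ge\ul{m_v}(n,m,H)$ this has probability bounded below uniformly in $k$, and the digits at distinct scales are independent for $\lebmeas n.$-a.e.\ lift, hence (via Axiom~\textbf{(Ax6b)} and the characteristic maps) the condition holds for infinitely many $k\ge j$ at $\mu_\infty$-a.e.\ $p$, by the second Borel--Cantelli lemma. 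Fix such a $p$ and good scales $k^{(1)}<k^{(2)}<\cdots$. Since the projection maps identify only cells of dimension $\le n-1$ (cf.\ Lemma~\ref{lem_pi_j_structure}\,(\ref{item_si_si'_intersect})), distinct open $n$-subcells of $\sigma_{k^{(1)}}$ have images under $\pi_{k^{(1)}}^{k^{(i)}}$ with disjoint interiors, so the preimage in $\sigma_{k^{(1)}}$ of the open $n$-cell $\sigma_{k^{(i)}}$ is a single open $n$-subcell $\hat\sigma^{(i)}$, and $(\bar\pi_{k^{(1)}}^\infty)^{-1}(p)\subset\hat\sigma^{(i)}$ for every $i$. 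As $i$ grows the cells $\hat\sigma^{(i)}$ are nested (a finer open cell meeting a coarser one lies inside it) and in the cube model are sub-parallelepipeds whose sidelengths $m^{-k^{(i)}}\times(m_v^{-k^{(i)}})^{n-1}$ tend to $0$; hence $(\bar\pi_{k^{(1)}}^\infty)^{-1}(p)=\bigcap_i\hat\sigma^{(i)}$ is a single point, and we may take $K=k^{(1)}$.

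I expect the main obstacle to be this last construction: showing that the hypothesis of the ``moreover'' part of Lemma~\ref{lem:generalized_metric_structure} is met for infinitely many $k$ at $\mu_\infty$-a.e.\ $p$. This requires (i) identifying that hypothesis with a genuine digit condition on a lift of $p$, so that a $\mu_\infty$-a.e.\ statement becomes a $\lebmeas n.$-a.e.\ one; (ii) a uniform lower bound on the probability of the digit condition, valid once $m_v$ is large; and (iii) enough independence across scales to invoke the second Borel--Cantelli lemma. The remaining ingredients — the disintegration identities of the first paragraph and the ``nested shrinking cells'' argument, which uses only that the projections collapse faces rather than whole $n$-cells — are routine.
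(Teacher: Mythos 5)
Your proposal takes a genuinely different route from the paper's, and the route has a real gap when applied to general admissible systems (which is the generality the lemma is stated in --- it sits inside the proof of Theorem~\ref{thm_admissible_system}).

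Your key reduction is to find a scale $K=K_p$ at which $(\bar\pi_K^\infty)^{-1}(p)$ is a \emph{single point}; once that is done, the rest is as you say. But the nesting step that produces this single point --- ``the preimage in $\sigma_{k^{(1)}}$ of the open $n$-cell $\sigma_{k^{(i)}}$ is a single open $n$-subcell $\hat\sigma^{(i)}$'' --- is not a consequence of the admissible-system axioms. It amounts to the extra hypothesis appearing in Theorem~\ref{thm_lipschitz_light}(3), namely that $\pi_j^{-1}$ of each open $n$-cell of $X_{j+1}$ is a \emph{single} open $n$-cell of $X_j^{(1)}$. That hypothesis is deliberately \emph{not} part of Definition~\ref{def_admissible_system}: Axiom~\textbf{(Ax3)} only requires $\pi_j$ to be a surjective cellular map, and the equivalence relations defining an admissible system are permitted to identify entire $n$-cells (cf.\ the example given just after the definition, and the fact that Theorem~\ref{thm_lipschitz_light}(3) adds the injectivity-on-$n$-cells assumption precisely because it does not come for free). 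Without it, $(\pi_{k^{(1)}}^{k^{(i)}})^{-1}(\sigma_{k^{(i)}})\cap\sigma_{k^{(1)}}$ can be a union of several open $n$-subcells, the fiber can be spread over several of them, and the intersection over $i$ need not shrink to a point. Consequently, $(\bar\pi_K^\infty)^{-1}(p)$ need not be a singleton for any finite $K$, and in fact $\tilde\mu_j(p)$ need not be a finite sum of Dirac masses. The lemma as stated is therefore strictly weaker than ``the fiber is essentially a point'' and your reduction proves too much.

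The paper's proof avoids this by never asserting that the fiber collapses. It uses Lemma~\ref{lem:generalized_metric_structure} only to get, for infinitely many ``good'' scales $k$, that the whole $j$-level fiber of $p$ projects into a \emph{single open $n$-cell} of $X_k$; it then applies Lebesgue differentiation to $g_j$ in $X_j$: for approximate-continuity points $q,q'$ in the same fiber and a good scale $k$, the $n$-cells of $X_j^{(k-j)}$ containing $q$ and $q'$ have the same image in $X_k$, so their averages of $g_j=g_k\circ\pi_j^k$ agree, and letting $k\to\infty$ gives $g_j(q)=g_j(q')$. That argument uses only what the axioms supply. Your closing concern about the Borel--Cantelli/independence step is legitimate but orthogonal: the paper's assertion $\mu_j(S_\eta^c)=0$ needs the same ingredients (Axiom~\textbf{(Ax6b)} making the measure Lebesgue on cells, plus independence of vertical digits across scales), so that step is shared by both approaches and is not where your proposal diverges. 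If you want a proof in the generality of the lemma, you should replace the ``collapse to a single point'' step by the paper's ``single open $n$-cell at infinitely many scales plus Lebesgue differentiation'' argument; alternatively, your argument is a correct (and arguably cleaner) proof under the stronger hypothesis of Theorem~\ref{thm_lipschitz_light}(3), which does hold for the concrete examples of Sections~\ref{sec_overview} and~\ref{sec_n_dim_case}.
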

\begin{proof}
For $\eta\in[1,\lfloor\frac{m_v}{3H}\rfloor]$ and $k\ge j$ let
$S_{\eta,k}$ be the set of points $q\in X_j$ such that:
\begin{enumerate}
\renewcommand{\labelenumi}{(\alph{enumi})}
\item $q$ does not belong to the $(n-1)$-skeleton of $X_j^{(k-j)}$ for any 
$k\geq j$.
\item 
\label{item_far_from_boundary} If $\si$ is the
$n$-cell of $X_k^{(1)}$ containing  $\pi_j^k(q)$, then any  vertical
gallery in $X_k^{(1)}$ from $\si$ to the horizontal $(n-1)$-skeleton of $X_k$
has length at least  $\eta H$.
\end{enumerate}
If $q\in S_{\eta,k}$ and $\eta\ge 10$, by
Lemma~\ref{lem:generalized_metric_structure} the fibre $\nbdirprj
k,,a.(\bar\pi_j^\infty(q)) $ is entirely contained in the open
$n$-cell of $X_k$ which contains $\pi_j^k(q)$.
\par We now look at the set of points $S$ in $X_j$ such
that~(b) occurs infinitely often:
\begin{equation}
  \label{eq:compatible_families_p1}
  S_{\eta}=\bigcap_{k\ge 1}\bigcup_{k'\ge k} S_{\eta,k'};
\end{equation}
then $\mu_j(S^c_\eta)=0$. Consider now the subset
$\Omega\subset\bar X_\infty$:
\begin{equation}
  \label{eq:compatible_families_p2}
  \Omega=\left\{
    p\in\bar X_\infty: \nbdirprj j,,a.(p)\cap S^c_{10}\ne\emptyset
    \right\};
  \end{equation}
  then by Lemma~\ref{lem:generalized_metric_structure} $\nbdirprj
  j,,a.(\Omega)\subset S_9^c$ and so $\mu_\infty(\Omega)=0$. Thus for
  $\mu_\infty$-a.e.~$p\in\bar X_\infty$ we can assume that $\nbdirprj
  j,,a.(p)\subset S_{10}$.
  \par As the Lebesgue Differentiation Theorem holds in $X_j$, for
  $\mu_\infty$-a.e.~$p\in\bar X_\infty$ there is a subset
  $T_p\subset(\bar\pi_j^\infty)^{-1}(p)$ of full $\tilde\mu_j(p)$ measure
such that  $T_p\subset  S_{10}$ and every $q\in T_p$ is an approximate
continuity point of $g_j$.  Let $q,q'\in T_p$, and $\eps>0$.  For any
$k_0\geq j$ we can find $k\ge k_0$ such that $q\in S_{10,k}$; let $\hat\si$, $\hat\si'$ be the
$n$-cells of $X_j^{(k-j)}$ which contain $q$, $q'$ respectively.  Provided
$k_0$ is sufficiently large, we will have
\begin{equation}
  \label{eq:compatible_families_p3}
\max\left(
\left|\av_{\hat\si} g_j\;d\mu_j-g_j(q)\right|, \left|\av_{\hat\si'} g_j\;d\mu_j-g_j(q)\right|
\right)<\eps\,.
\end{equation}
As $q\in S_{10,k}$, we have
$\pi_j^k(\hat\si)=\pi_j^k(\hat\si')$, and then
\begin{equation}
  \label{eq:compatible_families_p4}
\av_{\hat\si}g_j\;d\mu_j=\av_{\pi_j^k(\hat\si)}g_k\;d\mu_k
=\av_{\pi_j^k(\hat\si')}g_k\;d\mu_k=\av_{\hat\si'}g_j\;d\mu_j\,.
\end{equation}
Therefore $|g_j(q)-g_j(q')|<2\eps$, and as $\eps$ was arbitrary, we
have $g_j(q)=g_j(q')$. Now the lemma follows
because~(\ref{eq:compatible_families_s1})
and the Disintegration Theorem imply
that for $\mu_\infty$-a.e.~$p\in\bar
X_\infty$ $g_\infty(p)$ is the $\tilde\mu_j(p)$ average of $g_j$.
\end{proof}
\begin{lemma}
  \label{lem:gen_anal_dim}
For $j\in \Z$ and $p\in \bar X_\infty$, we let $\tilde\mu_j(p)$ denote the disintegration of 
$\mu_j$ with respect to $\bar\pi_j^\infty$.
Let $u:\bar X_\infty\ra \R$ be Lipschitz, and $u_j=u\circ
\bar\pi_j^\infty$. Let $D_\infty u\in L^\infty(\bar
X_\infty,\mu_\infty)$ be the function satisfying:
\begin{equation}
  \label{eq:eq:gen_anal_dim_s1}
  \nbdirprj {j\#},,.(D_ju_j\cdot\mu_j)=D_\infty u\cdot\mu_\infty.
\end{equation}
Then for
$\mu_\infty$-a.e. $p\in X_\infty$, one has:
\begin{equation}
  \label{eq:gen_anal_dim_s2}
  D_ju_j=D_\infty u(p)\quad(\text{$\tilde\mu_j(p)$-a.e.})
\end{equation}
\end{lemma}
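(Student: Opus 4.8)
The plan is to obtain the lemma as an essentially immediate consequence of Lemma~\ref{lem_compatible_families}, applied to the family $g_j:=D_j u_j$, $j\in\Z$. Three things have to be checked: that each $g_j$ is a bounded measurable function; that $\{g_j\}_{j\in\Z}$ is compatible with the projections in the sense of Lemma~\ref{lem_compatible_families}, i.e.\ $g_k\circ\pi_j^k=g_j$ $\mu_j$-a.e.\ for all $j\le k$; and that the function attached to $\{g_j\}$ by~\eqref{eq:compatible_families_s1} is precisely $D_\infty u$ as defined by~\eqref{eq:eq:gen_anal_dim_s1}. The first point is a matter of unwinding definitions: since $\bar\pi_j^\infty\colon(X_j,d_j)\to(\bar X_\infty,\bar d_\infty)$ is Lipschitz by Theorem~\ref{thm_lipschitz_light}(2), the function $u_j=u\circ\bar\pi_j^\infty$ is $d_j$-Lipschitz (and we may assume it bounded, truncating $u$ if necessary, since the assertion is local and holds $\mu_\infty$-a.e.), so $D_j u_j\in L^\infty(\mu_j)$ is defined.

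The heart of the matter is the compatibility $D_j u_j=(D_k u_k)\circ\pi_j^k$ $\mu_j$-a.e.\ for $j\le k$, which I would prove cell by cell. First note that $\bar\pi_j^\infty=\bar\pi_k^\infty\circ\pi_j^k$ (the structure maps of the direct system followed by the metric quotient), hence $u_j=u_k\circ\pi_j^k$. Now fix an $n$-cell $\hat\si$ of $Y_j$, a distinguished characteristic map $\phi\colon\hat\si\to\si\subset X_j$, and an $n$-cell $\hat\si'$ of $Y_k$ contained in $\hat\si$. Iterating Axiom \textbf{(Ax2c)}, the composition $\pi_j^k\circ\phi|_{\hat\si'}\colon\hat\si'\to X_k$ is a distinguished characteristic map of an $n$-cell of $X_k$, and by Axiom \textbf{(Ax2d)} every characteristic map occurring here intertwines the first coordinate $x$ on $\R^n$. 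Using the defining property of $D_j$ and $D_k$ together with the fact that $\D/\D x$ is a local operator, we get on $\hat\si'$, $\lebmeas n.$-a.e.,
\begin{align*}
  (D_j u_j)\circ\phi&=\frac{\D(u_j\circ\phi)}{\D x}=\frac{\D(u_k\circ\pi_j^k\circ\phi)}{\D x}\\
  &=\frac{\D(u_k\circ\pi_j^k\circ\phi|_{\hat\si'})}{\D x}=(D_k u_k)\circ\pi_j^k\circ\phi .
\end{align*}
Letting $\hat\si'$ range over the $n$-cells of $Y_k$ inside $\hat\si$ and then $\hat\si$ over the $n$-cells of $Y_j$, and pushing these identities forward by the characteristic maps while recalling that by Axioms \textbf{(Ax1)} and \textbf{(Ax6b)} the open $n$-cells of $X_j$ carry full $\mu_j$-measure and that there $\mu_j$ is (a constant times) the pushforward of $\lebmeas n.$ under a characteristic map, one concludes $D_j u_j=(D_k u_k)\circ\pi_j^k$ $\mu_j$-a.e.

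Finally, combining this compatibility with Axiom \textbf{(Ax6a)} (so that $(\pi_j^k)_{\#}\mu_j=\mu_k$) and the identity $\nbdirprj {k\#},,.\circ(\pi_j^k)_{\#}=\nbdirprj {j\#},,.$, one sees that the measure $\nbdirprj {j\#},,.(D_j u_j\cdot\mu_j)$ does not depend on $j$; hence $D_\infty u$ defined by~\eqref{eq:eq:gen_anal_dim_s1} is well posed and equals the function produced by Lemma~\ref{lem_compatible_families} from the family $g_j=D_j u_j$. Invoking that lemma then gives exactly~\eqref{eq:gen_anal_dim_s2}. I expect the only somewhat delicate part to be the bookkeeping in the cell-by-cell step---keeping track of the $G_k$-ambiguity of the distinguished characteristic maps and of the passage between $Y_j$, $Y_k$ and the iterated subdivisions $X_j^{(k-j)}$---but Axioms \textbf{(Ax2a)}--\textbf{(Ax2d)} are designed precisely for this, so no genuinely new ingredient beyond Lemma~\ref{lem_compatible_families} is required.
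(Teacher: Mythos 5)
Your proposal is correct and follows the same overall route as the paper: reduce the lemma to Lemma~\ref{lem_compatible_families} applied to $g_j=D_j u_j$, so that the only substantive point is the compatibility $D_j u_j=(D_k u_k)\circ\pi_j^k$ $\mu_j$-a.e. The paper verifies this compatibility slightly differently: it introduces the full-measure set $S$ of points lying off every $(n-1)$-skeleton of $X_j^{(m-j)}$, interprets $D_j u_j(q)$ as the derivative $(u_j\circ\gamma_q)'(0)$ along a unit-speed horizontal segment $\gamma_q$, and observes that for $q\in S$ one has $\pi_j^k\circ\gamma_q=\gamma_{\pi_j^k(q)}$ near $0$, forcing the two derivatives to coincide. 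You instead argue cell by cell, using Axiom \textbf{(Ax2c)} to recognize $\pi_j^k\circ\phi|_{\hat\si'}$ as a distinguished characteristic map in $X_k$ and Axiom \textbf{(Ax2d)} plus locality of $\D/\D x$ to equate the two sides; pushing forward via the characteristic maps and Axiom \textbf{(Ax6b)} gives the $\mu_j$-a.e.~identity. These are two phrasings of the same fact --- that $\pi_j^k$ locally intertwines the horizontal derivative operator on the complement of the skeleta --- so I'd count the proofs as essentially equivalent, with yours being a bit more axiomatic and the paper's a bit more dynamical. One small remark: the boundedness of $g_j=D_j u_j$ needed for Lemma~\ref{lem_compatible_families} comes for free from $u$ being Lipschitz (one has $\|D_j u_j\|_\infty\lesssim\Lip(u)$), independently of whether $u$ itself is bounded; the truncation you mention is only needed so that $u_j\in\lipalg X_j.$ lies in the domain on which $D_j$ was declared.
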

\begin{proof}
  We apply Lemma~\ref{lem_compatible_families}; thus it suffices to
  show that for $k\ge j$ one has:
  \begin{equation}
    \label{eq:gen_anal_dim_p1}
    D_ku_k\circ\pi_j^k=D_ju_j\quad(\text{$\mu_j$-a.e.})
  \end{equation}
  Let $S$ be the set of points $q\in X_j$ which do not belong to the
  $(n-1)$-skeleton of $X_j^{(m-j)}$ for any $m\ge j$. Note that
  \begin{equation}
    \label{eq:gen_anal_dim_p2}
    \mu_j(S)=\mu_k(\pi_j^k(S))=0.
  \end{equation}
  Now, for $\mu_j$-a.e.~$q$, $D_ju_j(q)$ equals
  $(u_j\circ\gamma_q)'(0)$ where $\gamma_q$ is a unit-speed horizontal
  segment with $\gamma_q(0)=q$ and along which $x_j\circ\gamma_q$ is
  non-decreasing. As the fibres of $\pi_j^k$ are finite, we also have
  that for $\mu_j$-a.e.~$q$, $D_ku_k(\pi_j^k(q))$ equals
  $(u_k\circ\gamma_{\pi_j^k(q)})'(0)$ where $\gamma_{\pi_j^k(q)}$ is a unit-speed horizontal
  segment with $\gamma_{\pi_j^k(q)}(0)=\pi_j^k(q)$, and along which $x_k\circ\gamma_{\pi_j^k(q)}$ is
  non-decreasing. If $q\in S$ for some $\delta>0$ one has:
  \begin{equation}
    \label{eq:gen_anal_dim_p3}
    \pi_j^k(\gamma_q(t))=\gamma_{\pi_j^k(q)}(t)\quad(|t|\le\delta);
  \end{equation}
  as $u_j=u_k\circ\pi_j^k$ we conclude that~\eqref{eq:gen_anal_dim_p1} holds.
\end{proof}

\subsection{Proof of Theorem~\ref{thm_lipschitz_light}}
\label{subsec_lipschitz_light_proof}

\begin{proof}[Proof of Theorem~\ref{thm_lipschitz_light}]
  We first prove (1). Note that Axioms~\textbf{(Ax6)}, \textbf{(Ax1)}
  imply the existence of a $C=C(\Delta,n)$ (independent of $j$) such
  that:
  \begin{equation}
    \label{eq:lipschitz_light_p1}
    \mu_j\left(B_{X_j}(p_j,r)\right)\ge
    C^{-1}\mu_j\left(B_{X_j}(p_j,2r)\right)\quad(\forall r\le
    4m^{-j+1}, p_j\in X_j).
   \end{equation}
  Assume that $r>m^{-j}$, and let $k$ be such that $m^{-k}<r\le
  m^{-k+1}$; and choose a point $p_{k-1}\in\ndirprj k-1,j,a.(p_j)$. As
  $\ndirprj k-1,j,.$ is $1$-Lipschitz:
  \begin{equation}
    \label{eq:lipschitz_light_p2}
    B_{X_{k-1}}(p_{k-1},r)\subset\ndirprj k-1,j,a.\left(B_{X_j}(p_j,r)\right);
  \end{equation}
  on the other hand, Proposition~\ref{prop_distance_lower_bound}
  implies that:
  \begin{multline}
    \label{eq:lipschitz_light_p3}
    \ndirprj k-1,j,a.\left(B_{X_j}(p_j,2r)\right)\subset\nbdirprj
    k-1,,a.\left(B_{\bar X_\infty}(\nbdirprj j,,.(p_j),2r)\right)\\\subset B_{X_{k-1}}(p_{k-1},4m^{-k+1});
  \end{multline}
  thus, combining~(\ref{eq:lipschitz_light_p2}),
  (\ref{eq:lipschitz_light_p3}) one gets:
  \begin{equation}
    \label{eq:lipschitz_light_p4}
    \begin{split}
      \mu_j\left(B_j(p_j,r)\right)&\ge\mu_{k-1}\left(B_{X_{k-1}}(p_{k-1},r)\right)\\&\ge
      C^{-2-\log_2m}\mu_{k-1}\left(B_{X_{k-1}}(p_{k-1},4m^{-k+1})\right)\\
      &\ge C^{-2-\log_2m}\mu_j\left(B_j(p_j,2r)\right).
    \end{split}
  \end{equation}
  We now prove (2), i.e.~that $\nbdirprj j,,.$ is a Lipschitz-light
  map. We will show that there is a universal constant $C$ such that
  for each $(k,j,p_\infty)\in\zahlen^2\times \bar X_\infty$, $\nbdirprj
  j,,a.(p_\infty)$ can be covered by a family of sets
  $\{\Omega_\alpha\}_\alpha$ such that:
  \begin{equation}
    \label{eq:lipschitz_light_p5}
    \diam\Omega_\alpha\le Cm^{-k},
  \end{equation}
  and
  \begin{equation}
    \label{eq:lipschitz_light_p6}
    d(\Omega_\alpha,\Omega_\beta)\ge m^{-k}\quad(\alpha\ne\beta).
  \end{equation}
  Note that by Corollary~\ref{cor:cell_dist} any two points of
  $\nbdirprj j,,a.(p_\infty)$ must belong to adjacent cells of $X_j$, so
  the case of interest is $k>j$.
  \par For each $n$-cell $\sigma$ of $X_j^{(k-j)}$ intersecting
  $\nbdirprj j,,a.(p_\infty)$ let $\Omega_\sigma$ denote the set of
  points of $\nbdirprj j,,a.(p_\infty)$ that can be connected to
  $\sigma$ using a chain $S=\{\sigma_0,\cdots,\sigma_L\}$ of cells of
  $X_j^{(k-j)}$, such that each $\sigma_i$ intersects $\nbdirprj
  j,,a.(p_\infty)$.
  From~\eqref{eq:generalized_metric_structure_s4} in Lemma~\ref{lem:generalized_metric_structure} we conclude that:
  \begin{equation}
    \label{eq:lipschitz_light_p9}
    \diam\Omega_\sigma\le 6C_0m^{-k}.
  \end{equation}
  On the other hand, if $\Omega_\sigma\ne\Omega_{\sigma'}$, then
  $\Omega_\sigma$ and $\Omega_{\sigma'}$ do not intersect adjacent cells
  of $X_j^{(k-j)}$ and hence:
  \begin{equation}
    \label{eq:lipschitz_light_p10}
    d_j(\Omega_\sigma,\Omega_{\sigma'})\ge m^{-k}.
  \end{equation}
  \par We now turn to the proof of (3). We first observe that if
  $\sigma$ is an $n$-cell of $X_j$, one has:
  \begin{equation}
    \label{eq:lipschitz_light_p11}
    \mu_j(\sigma)=w_\sigma m^{-jQ},
  \end{equation}
  where $w_\sigma$ is the weight in Axiom \textbf{(Ax6b)}. Let
  $\sigma_0$, $\sigma_1$ be $n$-cells of $X_j$. If
  \begin{equation}
    \label{eq:lipschitz_light_p12}
    d_j(\sigma_0,\sigma_1)\le m^{-j},
  \end{equation}
  the cells $\sigma_0$, $\sigma_1$ are adjacent and hence
  \begin{equation}
    \label{eq:lipschitz_light_p13}
    \frac{w_{\sigma_0}}{w_{\sigma_1}}\le\Delta
  \end{equation}
  by Axiom \textbf{(Ax6c)}. Assume now that
  $d_j(\sigma_0,\sigma_1)>m^{-j}$. As $X_j$ is connected
  (\textbf{(Ax1)}) there is an $N\ge 2$ such that:
  \begin{equation}
    \label{eq:lipschitz_light_p14}
    (N-1)m^{-j}<d_j(\sigma_0,\sigma_1)\le N m^{-j};
  \end{equation}
  let $k<0$ be such that $m^{-k-1}<N\le m^{-k}$. Using
  Proposition~\ref{prop_distance_lower_bound} we see that:
  \begin{equation}
    \label{eq:lipschitz_light_p15}
    d_k\left(\ndirprj j+k,j,a.(\sigma_0),\ndirprj
      j+k,j,a.(\sigma_1)\right)\le 3m^{-j-k}.
    \end{equation}
    Now by Axiom \textbf{(Ax3)}
    \begin{equation}
      \label{eq:lipschitz_light_p16}
      \ndirprj j+k,j,.:X_{j+k}^{(-k)}\to X_j
    \end{equation}
    is a surjective cellular map; moreover, we are assuming that
    $\ndirprj j+k,j,.$ is injective on the complement of the
    $(n-1)$-skeleton of $X_{j+k}^{(-k)}$. Thus, if
    $\mathring{\sigma_i}$ denotes the interior of $\sigma_i$, there is
    a unique $n$-cell $\tilde\sigma_i$ of $X_{j+k}$ such that:
    \begin{equation}
      \label{eq:lipschitz_light_p17}
      \ndirprj
      j+k,j,a.(\mathring{\sigma_i})\cap\tilde\sigma_i\ne\emptyset.
    \end{equation}
    By~(\ref{eq:lipschitz_light_p15}) $\tilde\sigma_0$,
    $\tilde\sigma_1$ are at combinatorial distance at most $3$ and
    hence:
    \begin{equation}
      \label{eq:lipschitz_light_p18}
      \frac{w_{\tilde\sigma_0}}{w_{\tilde\sigma_1}}\le\Delta^3;
    \end{equation}
    but $w_{\sigma_i}=w_{\tilde\sigma_i}$ and so the weights
    $w_{\sigma_0}$ and $w_{\sigma_1}$ are comparable up to a uniformly
    bounded multiplicative factor. We therefore find a universal
    constant $C$ such that, for each $j\in\zahlen$, each $p_j\in X_j$
    and each $r\le 3m^{-j+1}$ one has:
    \begin{equation}
      \label{eq:lipschitz_light_p19}
      \mu_j\left( B_{X_j}(p_j,r) \right)\approx_C r^{-jQ}.
    \end{equation}
    Suppose now that $m^{-k}<r\le m^{-k+1}$ and let $p_k\in\ndirprj
    k,j,a.(p_j)$; then, arguing as in the proof of (1), we obtain:
    \begin{equation}
      \label{eq:lipschitz_light_p20}
      B_{X_{k-1}}(p_{k-1},m^{-k})\subset\ndirprj k-1,j,a.\left(
        B_{X_j}(p_j,r) \right)\subset B_{X_{k-1}}(p_{k-1},3m^{-k+1});
    \end{equation}
    therefore, by enlarging $C$, we have
    that~(\ref{eq:lipschitz_light_p19}) holds also for
    $r>3m^{-j+1}$. This proves that each $(X_j,d_j,\mu_j)$ is
    Ahlfors-regular, where the constant in the Ahlfors-regularity
    condition is independent of $j$.
    \par We now show that $\nbdirprj j,,.$ is David-Semmes regular
    (where the constants again do not depend on $j$). Let $k<0$; as
    the map in~(\ref{eq:lipschitz_light_p16}) is injective on the
    complement of the $(n-1)$-skeleton of $X_{j+k}^{(-k)}$, and as by
    \textbf{(Ax1)} there is a uniform bound $\Delta$ on the
    cardinality of each link of $X_j$, we conclude that there is a
    universal constant $C=C(\Delta)$ such that for each $p_j\in X_j$
    one has that $\ndirprj j+k,j,a.(p_j)$ has cardinality at most
    $C$. As $k$ and $j$ are arbitary, we conclude that for
    $p_\infty\in X_\infty$ also $\ndirprj j,,a.(p_\infty)$ has
    cardinality at most $C$. Fix now $\bar p_\infty\in\bar X_\infty$
    and let $\{\Omega_\alpha\}$ be a family of subsets of $X_j$
    which cover $\nbdirprj j,,a.(\bar p_\infty)$, and which were obtained
    in the proof of (2). In constructing the $\{\Omega_\alpha\}_\alpha$ there
    was the freedom to choose a scale $m^{-k}$, which in this case we
    take to be $m^{-j}$, so that (\ref{eq:lipschitz_light_p5}),
    (\ref{eq:lipschitz_light_p6}) hold with $k=j$. For each
    $\Omega_\alpha$ there is a $p_{\infty,\sigma}\in[\bar p_{\infty}]$
    such that:
    \begin{equation}
      \label{eq:lipschitz_light_p21}
      \ndirprj j,,a.(p_{\infty,\sigma})\cap\Omega_\alpha\ne\emptyset.
    \end{equation}
    Fix now one $p_\infty\in[\bar p_{\infty}]$. Then by
    Corollary~\ref{cor:cell_dist} the sets $\ndirprj j,,a.(p_\infty)$
    and $\ndirprj j,,a.(p_{\infty,\sigma})$ must intersect adjacent
    cells of $X_j$ and so $\ndirprj
    j,,a.(p_{\infty})\cap\Omega_\alpha\ne\emptyset$. As distinct
    $\Omega_\alpha$'s are disjoint, we conclude that the cardinality
    of the set $\{\Omega_\alpha\}$ is at most $C$. Let now $\bar
    q_\infty\in B_{\bar X_\infty}(\bar p_{\infty},m^{-j})$; then
    $\nbdirprj j,,a.(\bar q_{\infty})$ is contained in a
    $3m^{-j}$-neighbourhood of $\bigcup_\alpha\Omega_\alpha$; as each
    set $\Omega_\alpha$ intersects $\ndirprj
    j,,a.(p_{\infty})$ we conclude that:
    \begin{equation}
      \label{eq:lipschitz_light_p22}
      \nbdirprj j,,a.(\bar q_{\infty})\subset\bigcup_{p\in\ndirprj j,,a.(p_\infty)}B(p,6(C_0+1)m^{-j});
    \end{equation}
    thus the David-Semmes regularity condition holds with constants
    $3(C_0+1)$ and $C$.
\end{proof}

\subsection{Preservation of admissibility under limits}
\label{subsec_limits_admissible}

\mbox{}

We now turn to the proof of (4) in
Theorem~\ref{thm_admissible_system}, i.e.~the stability of assertions
(1)--(3) under the operation of taking weak tangents. 
This is an immediate consequence of  the following lemma and (1)-(3) of
Theorem~\ref{thm_admissible_system}.

\begin{lemma}
\label{lem_weak_tangent_admissible}
Any weak tangent of an admissible system is, modulo rescaling, 
measure-preserving isometric to the  
limit space of some admissible
system.
\end{lemma}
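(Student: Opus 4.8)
The plan is to build an admissible system directly out of the combinatorial data of the weak tangent. Let $(Z,z,\mu_Z)$ be a pointed measured Gromov--Hausdorff limit of a sequence of rescalings $(\la_k \bar X_\infty, p_k, \mu_\infty)$ of the limit space of an admissible system $\{X_j\}$. First I would use the self-similarity $\Phi$ (Axioms \textbf{(Ax2c)} and \textbf{(Ax6)} guarantee that rescaling by $m$ shifts the index of the system) to replace each $\la_k$ by a factor in $[1,m)$, so after passing to a subsequence we may assume $\la_k$ is a fixed constant and absorb it into the metric. Next, for each $k$, Proposition~\ref{prop_distance_lower_bound} and Lemma~\ref{lem_metric_versus_combinatorial_distance} show that the sequence of pointed cell complexes $\{(X_j, \nbdirprj j,,.^{-1}(p_k))\}_j$ approximates $(\bar X_\infty, p_k)$ with uniform constants; by the uniform bound $\De$ on link cardinalities (Axiom \textbf{(Ax1)}) together with the bounded-complexity estimates (the analogue of Lemma~\ref{lem_bounded_complexity}(\ref{item_ball_complexity})), for each fixed $j$ and each radius $R$ there are only finitely many isomorphism types of the pointed combinatorial ball $B_{X_j}\big(\nbdirprj j,,.^{-1}(p_k), R m^{-j}\big)$, decorated with its distinguished characteristic maps, its measure weights $w_\si$ (normalized at the basepoint cell), and the restrictions of $x_j$ and $y_j$. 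A diagonal extraction over $j$ and $R$ then produces a subsequence along which all these decorated pointed combinatorial balls stabilize, yielding a pointed cell complex $X_j^\infty$ for each $j$ with distinguished characteristic maps into suitable tilings $Y_j^\infty$ of $\R^n$, compatible measures $\mu_j^\infty$, functions $x_j^\infty$, and projection maps $\pi_j^\infty: X_j^{\infty\,(1)}\to X_{j+1}^\infty$ (the latter also stabilize since they are locally combinatorial).

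Having constructed the candidate system $\{X_j^\infty\}$, the second step is to verify Axioms \textbf{(Ax1)}--\textbf{(Ax6)}. Each axiom is a ``local, bounded-complexity'' statement about finite pieces of the $X_j$'s: the link bound, the compatibility of distinguished characteristic maps with $G$, with faces, with subdivision, and with $x_j$, the surjectivity and cellularity of $\pi_j$, the vertical-diameter bound on fibers (Axiom \textbf{(Ax4)}), gallery accessibility (Axiom \textbf{(Ax5)}), and the three measure-compatibility conditions. Each of these holds for $\{X_j\}$ with constants $\De, H$ independent of $j$, and each involves only finitely many cells, so it is preserved verbatim under the stabilization described above; I would check them one at a time, noting that the constants $\De, H$ pass to the limit unchanged and that $m, m_v, n$ are unchanged. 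In particular the hypothesis $m_v \geq \ul m_v(n,m,H)$ of Theorem~\ref{thm_admissible_system} is inherited, so $\{X_j^\infty\}$ is a bona fide admissible system.

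The third and final step is to identify the limit space $(\bar X_\infty^\infty, \bar d_\infty^\infty, \mu_\infty^\infty)$ of the new system with $(Z,z,\mu_Z)$ by a measure-preserving isometry. By Proposition~\ref{prop_distance_lower_bound} applied to both systems, the metric on each $\bar X_\infty$ (resp.\ $\bar X_\infty^\infty$) is determined, up to additive error $2m^{-j}$ at scale $m^{-j}$, by the combinatorics of $X_j$ (resp.\ $X_j^\infty$) via the distance $\hat d_j$; since these combinatorial data agree on larger and larger balls as $k\to\infty$, the Gromov--Hausdorff approximations used to define $Z$ and the ones exhibiting $\bar X_\infty^\infty$ as a limit of its own $X_j^\infty$'s can be matched, producing a basepoint-preserving isometry $Z \cong \bar X_\infty^\infty$; the measures agree because $\mu_j^\infty$ is the stable limit of the (renormalized) $\mu_j$ and the pushforwards $\nbdirprj j,,.^{}{}_{\#}\mu_j^\infty = \mu_\infty^\infty$ are weak-$*$ limits of the corresponding pushforwards for $\bar X_\infty$. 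The main obstacle is the bookkeeping in the diagonal stabilization: one must extract a single subsequence along which the \emph{decorated} pointed balls (metric combinatorics together with $\Phi_\si$, the weights $w_\si$, and the values of $x_j, y_j$) converge simultaneously for all $j$ and all radii, and verify that the finiteness of types genuinely follows from Axioms \textbf{(Ax1)}, \textbf{(Ax6c)}, and the combinatorial distance bounds of Section~\ref{sec_metric_structure} --- once that is set up carefully, the verification of the axioms and the identification of the limit space are routine.
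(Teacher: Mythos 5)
Your proposal is correct and follows essentially the same route as the paper's proof: factor each scale $\la_k$ into an integer power of $m$ (absorbed via index-shift/self-similarity) times a bounded factor, then use the bounded-complexity axioms to extract, by a diagonal argument, a limiting admissible system whose decorated combinatorial balls agree with those of the rescaled systems, and finally invoke Proposition~\ref{prop_distance_lower_bound} to match the resulting limit space with the weak tangent. The only structural difference is presentational --- the paper isolates the compactness/diagonalization step as a separate auxiliary lemma about \emph{sequences} of admissible systems with uniform constants, whereas you inline it --- but the ingredients and logic are the same.
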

\begin{proof}
Let $(Y,\nu,p)$ be a weak tangent of $(\bar X_\infty,\mu_\infty)$.
Thus there exists   a sequence of basepoints $\{p_\al\}$ in $\bar X_\infty$, as well 
as sequences $\{\la_\al\}$, $\{\la_\al'\}$ of scale factors, such that
the sequence $\{(\la_\al \bar X_\infty, \la_\al'\mu_\infty,p_\al)\}$
converges in the pointed measured Gromov-Hausdorff topology to the pointed
(doubling) metric measure space $(Y,\nu,p)$.   

For each $\al$, we have $\la_\al=a_\al m^{j_\al}$ for unique elements
$j_\al\in \Z$ and 
$a_\al\in [1,m)$.  After passing to a subsequence, we may assume that
$a_\al$ converges to some  $a_\infty\in [1,m]$.  Without loss of generality,
we may replace $\la_\al$
with $m^{j_\al}$, since the resulting sequence will converge to the same limit,
modulo rescaling by $a_\infty^{-1}$.  

Now observe that for every $\al$, the rescaled metric measure space
$(\la_\al \bar X_\infty,\la_\al'\mu_\infty)$ is measure-preserving isometric
to the limit space of an admissible system obtained from 
$\{(X_j,\mu_j)\}$  by shifting the indices by 
$j_\al$, and rescaling the measures;  moreover this new admissible
system satisfies Definition \ref{def_admissible_system} 
where the constants $m,m_v, H, \De$ are independent of $\al$.
Thus Lemma \ref{lem_weak_tangent_admissible}
is reduced to the following lemma.
\end{proof}

\begin{lemma}
Suppose $\{(X_{j,\al},\mu_{j,\al})\}_{\al\in\N}$ is a sequence of admissible systems with
uniform constants, and $q_{j,\al}\in X_{j,\al}$ are projection compatible
basepoints chosen such that
\begin{equation}
\label{eqn_ball_measure_controlled}
0<
\liminf_{\al} \mu_{0,\al}(B(q_{0,\al},1))\leq \limsup_{\al} \mu_{0,\al}(B(q_{0,\al},1))
<\infty\,.
\end{equation}
Then after passing to a subsequence, the sequences of pointed admissible
systems converge in a natural sense to a pointed admissible system
$(X_{j,\infty},\mu_{j,\infty},q_{j,\infty})$
whose  limit space
$(\bar X_{\infty,\infty},\mu_{\infty,\infty},q_{\infty,\infty})$ is measure-preserving
isometric to the pointed measured Gromov-Hausdorff limit of 
the sequence of pointed limit spaces 
$\{(\bar X_{\infty,\al},\mu_{\infty,\al},q_{\infty,\al})\}$.
\end{lemma}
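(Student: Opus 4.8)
The plan is to treat an admissible system as a combinatorial object of uniformly bounded local complexity, extract a subsequential limit by a diagonal argument over levels $j$ and combinatorial radii $R$, verify that each axiom of Definition~\ref{def_admissible_system} passes to the limit, and finally identify the limit space of the limiting system with the pointed measured Gromov--Hausdorff limit of the $\bar X_{\infty,\alpha}$. First I would normalize the setup: since all systems share the constants $m,m_v$, I may assume the ambient tilings $\{Y_{j,\alpha}\}$ are all equal to one fixed standard family $\{Y_j\}$ (the subdivision of $[0,m^{-j}]\times[0,m_v^{-j}]^{n-1}$ into translates of the next parallelepiped is the standard product subdivision up to a fixed isometry) and that $G_j$ is fixed. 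Fixing $j$, Axiom \textbf{(Ax1)} bounds every link of $X_{j,\alpha}$ by $\Delta$, so, arguing as in Lemma~\ref{lem_bounded_complexity}(\ref{item_ball_complexity}), the combinatorial $R$-ball about $q_{j,\alpha}$ contains at most $N_0(R,\Delta,n)$ cells; enlarging $R$ boundedly I may take it connected, using that $X_{j,\alpha}$ is connected and locally finite.

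Next I would argue there are only finitely many possibilities for this ball and extract. The combinatorial type of the $R$-ball, together with the horizontal/vertical labeling of its cells (\textbf{(Ax2a)}), the $G_j$-orbit of the distinguished characteristic maps of each cell (a finite datum, as only the finitely many $g\in G_j$ matching one cell of $Y_j$ to another are relevant), and the combinatorial action of $\pi_{j,\alpha}$ relating $X_{j,\alpha}^{(1)}$-cells to $X_{j+1,\alpha}$-cells (\textbf{(Ax3)}), takes only finitely many values. Fixing a reference $n$-cell $\sigma_{0,\alpha}\ni q_{0,\alpha}$, Axiom \textbf{(Ax6c)} forces $w_{\sigma,\alpha}/w_{\sigma_{0,\alpha}}$ to lie in a fixed compact subinterval of $(0,\infty)$ on every bounded combinatorial ball, while the normalization (\ref{eqn_ball_measure_controlled}), together with the bounded-geometry estimate that $\mu_{0,\alpha}(B(q_{0,\alpha},1))$ is comparable up to a uniform constant to $w_{\sigma_{0,\alpha}}$ (of the kind in the proof of Theorem~\ref{thm_lipschitz_light}), pins $w_{\sigma_{0,\alpha}}$ between two positive constants. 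A diagonal extraction over $(j,R)\in\Z\times\N$ then yields a subsequence along which, for every $j$ and $R$, the labeled combinatorial $R$-ball at level $j$ — including its level-$(j+1)$ projection data — is eventually independent of $\alpha$, and all the ratios $w_{\sigma,\alpha}/w_{\sigma_{0,\alpha}}$ converge, as does $w_{\sigma_{0,\alpha}}$ to some $w_{0,\infty}\in(0,\infty)$.

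I would then build the limit system: let $X_{j,\infty}$ be the increasing union of the stabilized combinatorial balls, inheriting the horizontal/vertical labeling and, cell by cell up to $G_j$, the distinguished characteristic maps $\Phi_\sigma$; the function $x_{j,\infty}$ defined by $x_{j,\infty}\circ\phi=x$ is then automatically well-defined and continuous, by \textbf{(Ax2a)}, \textbf{(Ax2b)}, \textbf{(Ax2d)}; let $\pi_{j,\infty}$ be the cellular map read off from the stabilized inter-level data; and let $\mu_{j,\infty}$ assign the $n$-cell $\sigma$ the weight $w_{\sigma,\infty}=w_{0,\infty}\lim_\alpha(w_{\sigma,\alpha}/w_{\sigma_{0,\alpha}})$, with $\mu_{j,\infty}\restr\sigma$ equal to $w_{\sigma,\infty}$ times the pushforward of $\lebmeas n.$. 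Each of \textbf{(Ax1)}--\textbf{(Ax6)} is a condition on a bounded combinatorial neighborhood (links; adjacent $n$-cells; a cell and its first subdivision; fibers of $\pi_j$, which by \textbf{(Ax4)} have combinatorial diameter $\le H$; galleries of length $\le\Delta$ by \textbf{(Ax5)}), so it holds in $X_{j,\infty}$ because it holds in $X_{j,\alpha}$ on the stabilized ball for all large $\alpha$; connectedness is preserved as an increasing union of connected complexes. Thus $\{(X_{j,\infty},\mu_{j,\infty})\}$ is an admissible system with the same constants, pointed by the projection-compatible basepoints $q_{j,\infty}$, and, with the natural notion of convergence just used, the pointed systems converge to it.

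The last step — which I expect to be the main obstacle — is to identify the limit space $(\bar X_{\infty,\infty},\mu_{\infty,\infty},q_{\infty,\infty})$ with the pointed measured Gromov--Hausdorff limit of $\{(\bar X_{\infty,\alpha},\mu_{\infty,\alpha},q_{\infty,\alpha})\}$. For this I would use that, for an admissible system, both $\bar d_\infty$ and $\mu_\infty$ on a bounded ball are determined, with estimates uniform in $(m,m_v,H,\Delta)$, by the combinatorics of $\{X_j\}$ at bounded levels: by the admissible-system version of Corollary~\ref{cor:cell_dist} (whose proof rests on Proposition~\ref{prop_distance_lower_bound}, valid in this generality) one has $\bar d_\infty(p,p')\approx m^{-\ul J(p,p')}$ with $\ul J(p,p')$ the purely combinatorial quantity recording the last level at which the fibers of $p$ and $p'$ still meet adjacent cells, while by \textbf{(Ax6a)} the $\mu_\infty$-measure of a cell image at a fixed level is read off from the weights at that level. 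Hence, on each $\bar d_\infty$-ball of fixed radius about the basepoint, the metric-measure structure of $\bar X_{\infty,\alpha}$ depends only on the stabilized combinatorial data and the converging weights, so it Gromov--Hausdorff converges to that of $\bar X_{\infty,\infty}$; the collapsing of zero-$\hat d_\infty$-diameter subsets (which produces $\bar X_\infty$ from $X_\infty$) and completeness are likewise preserved, giving the desired measure-preserving isometry. The work lies in making this metric-to-combinatorics comparison quantitative and uniform in $\alpha$, so that Gromov--Hausdorff convergence genuinely follows from the combinatorial stabilization; the preceding compactness extraction and the axiom-by-axiom check are routine once the bounded-complexity bookkeeping is in place.
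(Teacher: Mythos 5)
Your proposal is correct and takes essentially the same route as the paper: a finiteness/compactness (diagonal) extraction over levels $j$ and combinatorial radii, using the uniform bound $\Delta$ on links to get finitely many combinatorial types, the normalization (\ref{eqn_ball_measure_controlled}) plus \textbf{(Ax6c)} to control and converge the weights, and finally Proposition~\ref{prop_distance_lower_bound} (you invoke it via the admissible-system version of Corollary~\ref{cor:cell_dist}, which rests on it) to promote the level-$j$ combinatorial convergence to pointed measured Gromov--Hausdorff convergence of the limit spaces. The paper's proof is deliberately terse and elides the axiom-by-axiom verification and the weight bookkeeping that you spell out, but it relies on exactly the same ingredients.
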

\begin{proof}
This is a consequence of standard
finiteness/compactness arguments applied to the 
controlled-geometry complexes of the admissible systems, so we will be 
brief.

Pick $j\in\Z$.  Since the parameter $\De$ is independent of $\al$, for any $N$, there
are only finitely many possibilities for the combinatorial $N$-ball
centered at $q_{j,\al}$, up to a homeomorphism preserving the collection of distinguished
characteristic maps.  Therefore, after passing to a subsequence, there is a pointed
cell-complex $(X_{j,\infty},q_{j,\infty})$ with a collection of distinguished
characteristic maps, such that for all $N$ and large $\al$, there is  a 
homeomorphism $\Psi_{j,\al,N}$ from the combinatorial $N$-ball in $X_{j,\al}$
centered at $q_{j,\al}$ to the combinatorial $N$-ball in
$X_{j,\infty}$ centered at $q_{j,\infty}$, such that $\Psi_{j,\al,N}$
respects distinguished characteristic maps, 
$\Psi_{j,\al,N}(q_{j,\al})\ra q_{j,\infty}$ as $\al\ra\infty$, and
for all $N'\geq N$ the maps $\Psi_{j,\al,N}$, $\Psi_{j,\al,N'}$ are compatible
on the $N$-balls for large $\al$.

Using (\ref{eqn_ball_measure_controlled}), we get that the 
$\mu_{j,\al}$-measure of any $n$-cell of $X_{j,\al}$ containing $q_{j,\al}$
is controlled.  Therefore, passing to a subsequence again, there is a family
of measures $\{\mu_{j,\infty}\}$ and a compatible system of projection maps
$\{\pi_{j,\infty}^k:X_{j,\infty}\ra X_{k,\infty}\}$ which define an admissible
system, such that the maps $\{\Psi_{j,\al,N}\}$ are asymptotically
measure-preserving and 
compatible with projection. 

It follows from Proposition \ref{prop_distance_lower_bound} that for all
$R$, the $\hat d_{j,\al}$-ball  $B(q_{j,\al},R)\subset X_{j,\al}$ is a  Gromov-Hausdorff approximation to within
error $\lesssim m^{-j}$ of the $\bar d_{\infty,\al}$-ball 
$B(q_{\infty,\al},R)\subset \bar X_{\infty,\al}$,
so the maps $\{\Psi_{j,\al,N}\}$ induce the 
pointed measured Gromov-Hausdorff convergence  
$$
(\bar X_{\infty,\al},\bar d_{\infty,\al},\mu_{\infty,\al},q_{\infty,\al})
\lra 
(\bar X_{\infty,\infty},\bar d_{\infty,\infty},\mu_{\infty,\infty},q_{\infty,\infty})\,.
$$

\end{proof}

\bibliography{piex2}

\def\cprime{$'$} \def\cprime{$'$} \def\cprime{$'$}
\begin{thebibliography}{MTW13}

\bibitem[Ass82]{assouad_nagata}
P.~Assouad.
\newblock Sur la distance de {N}agata.
\newblock {\em C. R. Acad. Sci. Paris S\'er. I Math.}, 294(1):31--34, 1982.

\bibitem[Bat15]{bate}
D.~Bate.
\newblock Structure of measures in {L}ipschitz differentiability spaces.
\newblock {\em J. Amer. Math. Soc.}, 28(2):421--482, 2015.

\bibitem[BP99]{bourpaj}
M.~Bourdon and H.~Pajot.
\newblock Poincar\'e inequalities and quasiconformal structure on the boundary
  of some hyperbolic buildings.
\newblock {\em Proc. Amer. Math. Soc.}, 127(8):2315--2324, 1999.

\bibitem[CC00]{cheeger_colding_ricci3}
Jeff Cheeger and Tobias~H. Colding.
\newblock On the structure of spaces with {R}icci curvature bounded below.
  {III}.
\newblock {\em J. Differential Geom.}, 54(1):37--74, 2000.

\bibitem[Che99]{cheeger}
J.~Cheeger.
\newblock Differentiability of {L}ipschitz functions on metric measure spaces.
\newblock {\em Geom. Funct. Anal.}, 9(3):428--517, 1999.

\bibitem[CK13]{cheeger_inverse_l1}
J.~Cheeger and B.~Kleiner.
\newblock Realization of metric spaces as inverse limits, and bilipschitz
  embedding in {$L_1$}.
\newblock {\em Geom. Funct. Anal.}, 23(1):96--133, 2013.

\bibitem[CK15]{piex}
J.~Cheeger and B.~Kleiner.
\newblock Inverse limit spaces satisfying a {P}oincar\'e inequality.
\newblock {\em Anal. Geom. Metr. Spaces}, 3:15--39, 2015.

\bibitem[CKS]{cks_metric_diff}
Jeff Cheeger, Bruce Kleiner, and A.~Schioppa.
\newblock Infinitesimal structure of differentiability spaces, and metric
  differentiation (in preparation).

\bibitem[Eng78]{engelking_topbook}
R.~Engelking.
\newblock {\em Dimension theory}.
\newblock North-Holland Publishing Co., Amsterdam-Oxford-New York; PWN---Polish
  Scientific Publishers, Warsaw, 1978.
\newblock Translated from the Polish and revised by the author, North-Holland
  Mathematical Library, 19.

\bibitem[Gig]{gigli}
N.~Gigli.
\newblock The splitting theorem in non-smooth context.
\newblock arXiv:1302.5555.

\bibitem[GMR]{gigli_mondino_rajala}
A.~Gigli, A.~Mondino, and T.~Rajala.
\newblock Euclidean spaces as weak tangents of infinitesimally hilbertian
  metric spaces with ricci curvature bounded below.
\newblock arXiv:1304.5359.

\bibitem[Hei01]{juhabook}
J.~Heinonen.
\newblock {\em Lectures on Analysis on Metric Spaces}.
\newblock Springer, New York, N.Y., 2001.
\newblock Universitext.

\bibitem[HK98]{heinonen_koskela}
J.~Heinonen and P.~Koskela.
\newblock Quasiconformal maps in metric spaces with controlled geometry.
\newblock {\em Acta Math.}, 181(1):1--61, 1998.

\bibitem[HP09]{haissinsky_pilgrim}
P.~Ha{\"{\i}}ssinsky and K.~Pilgrim.
\newblock Coarse expanding conformal dynamics.
\newblock {\em Ast\'erisque}, (325):viii+139 pp. (2010), 2009.

\bibitem[Laa00]{laakso}
T.~Laakso.
\newblock {A}hlfors {$Q$}-regular spaces with arbitrary {$Q>1$} admitting weak
  {P}oincar\'e inequality.
\newblock {\em Geom. Funct. Anal.}, 10(1):111--123, 2000.

\bibitem[Laa02]{laakso_a_infty}
T.~Laakso.
\newblock Plane with {$A_\infty$}-weighted metric not bi-{L}ipschitz embeddable
  to {${\Bbb R}^N$}.
\newblock {\em Bull. London Math. Soc.}, 34(6):667--676, 2002.

\bibitem[Lan11]{lang_loc_curr}
U.~Lang.
\newblock Local currents in metric spaces.
\newblock {\em J. Geom. Anal.}, 21(3):683--742, 2011.

\bibitem[LS05]{lang_schlichenmaier}
U.~Lang and T.~Schlichenmaier.
\newblock Nagata dimension, quasisymmetric embeddings, and {L}ipschitz
  extensions.
\newblock {\em Int. Math. Res. Not.}, (58):3625--3655, 2005.

\bibitem[MN]{mondino_naber}
A.~Mondino and A.~Naber.
\newblock Structure theory of metric-measure spaces with lower ricci curvature
  bounds i.
\newblock arXiv:1405.2222.

\bibitem[MTW13]{mackay_tyson_wildrick}
J.~Mackay, J.~Tyson, and K.~Wildrick.
\newblock Modulus and {P}oincar\'e inequalities on non-self-similar
  {S}ierpi\'nski carpets.
\newblock {\em Geom. Funct. Anal.}, 23(3):985--1034, 2013.

\bibitem[Raj12]{rajala}
T.~Rajala.
\newblock Local {P}oincar\'e inequalities from stable curvature conditions on
  metric spaces.
\newblock {\em Calc. Var. Partial Differential Equations}, 44(3-4):477--494,
  2012.

\bibitem[{Sch}13]{deralb}
A.~{Schioppa}.
\newblock {Derivations and Alberti representations}.
\newblock {\em ArXiv e-prints}, November 2013.

\bibitem[Sch14]{derivdiff}
A.~Schioppa.
\newblock On the relationship between derivations and measurable differentiable
  structures.
\newblock {\em Ann. Acad. Sci. Fenn. Math.}, 39(1):275--304, 2014.

\bibitem[Sem]{semmes}
S.~Semmes.
\newblock Finding curves on general spaces through quantitative topology with
  applications for {S}obolev and {P}oincar\'e inequalities.
\newblock {\em Selecta Math. (N.S.) {\rm 2:155--295, 1996}}.

\bibitem[Sem96]{semmes_no_good_parametrizations}
S.~Semmes.
\newblock Good metric spaces without good parameterizations.
\newblock {\em Rev. Mat. Iberoamericana}, 12(1):187--275, 1996.

\bibitem[Wea00]{weaver00}
N.~Weaver.
\newblock Lipschitz algebras and derivations. {II}. {E}xterior differentiation.
\newblock {\em J. Funct. Anal.}, 178(1):64--112, 2000.

\end{thebibliography}

\bibliographystyle{alpha}

\end{document}